\newtheorem{thm}{Theorem}[section]
\newtheorem{theo}[thm]{Theorem}
\newtheorem*{thm*}{Theorem}
\newtheorem{cor}[thm]{Corollary}
\newtheorem{coro}[thm]{Corollary}
\newtheorem*{cor*}{Corollary}
\newtheorem{prop}[thm]{Proposition} 
\newtheorem*{prop*}{Proposition} 
\newtheorem*{properties*}{Properties} 
\newtheorem{lem}[thm]{Lemma} 
\newtheorem{lemma}[thm]{Lemma} 
\newtheorem*{lem*}{Lemma}
\newtheorem*{claim*}{Claim} 
\newtheorem*{fact*}{Fact}
\newtheorem*{qst*}{Question}
\newtheorem*{pb*}{Problem}
\theoremstyle{definition}
\newtheorem{defi}[thm]{Definition} 
\newtheorem*{dfn*}{Definition}
\newtheorem{defn}[thm]{Definition}
\newtheorem{question}[thm]{Question}
\theoremstyle{remark}
\newtheorem*{algo*}{Algorithm} 
\newtheorem*{rem*}{Remark}
\newtheorem{rem}[thm]{Remark}
\newtheorem{rmk}[thm]{Remark}
\newtheorem*{example*}{Example}
\newtheorem{conv}[thm]{Convention}
\newcommand{\semidirect}{\ltimes}
\newcommand{\actson}{\curvearrowright}
\newcommand{\ad}{{\rm ad}}
\newcommand{\diam}{\mathop{\mathrm{diam}\;}}
\newcommand {\calA} {{\mathcal {A}}}   
\newcommand {\calB} {{\mathcal {B}}}   
\newcommand {\calC} {{\mathcal {C}}}   
\newcommand {\calE} {{\mathcal {E}}}   
\newcommand {\calF} {{\mathcal {F}}}   
\newcommand {\calG} {{\mathcal {G}}}   
\newcommand {\calH} {{\mathcal {H}}}
\newcommand {\calL} {{\mathcal {L}}}   
\newcommand {\calM} {{\mathcal {M}}}   
\newcommand {\calN} {{\mathcal {N}}}   
\newcommand {\calO} {{\mathcal {O}}}   
\newcommand {\calP} {{\mathcal {P}}}   
\newcommand {\calQ} {{\mathcal {Q}}}
\newcommand {\calT} {{\mathcal {T}}}
\newcommand {\calW} {{\mathcal {W}}}
\newcommand {\bbC} {{\mathbb {C}}}   
\newcommand {\bbD} {{\mathbb {D}}}   
\newcommand {\bbF} {{\mathbb {F}}}
\newcommand {\bbJ} {{\mathbb {J}}}
\newcommand {\bbN} {{\mathbb {N}}}
\newcommand {\bbR} {{\mathbb {R}}}
\newcommand {\bbX} {{\mathbb {X}}}   
\newcommand {\bbZ} {{\mathbb {Z}}}
\newcommand {\lk}{{\rm lk}}\newcommand {\G}{\Gamma}
\newcommand*{\longhookrightarrow}{\ensuremath{\lhook\joinrel\relbar\joinrel\rightarrow}}
\newcommand {\tto}{ \twoheadrightarrow }
\renewcommand{\phi}{\varphi}
\newcommand {\onto} {\twoheadrightarrow}
\newcommand {\into} {\hookrightarrow}
\newcommand{\ol}[1]{\overline{#1}}
\newcommand{\normal} {\vartriangleleft}
\newcommand{\Isom} {{\mathrm{Isom}}}
\newcommand{\Out} {{\mathrm{Out}}}
\newcommand{\Hom} {{\mathrm{Hom}}}
\newcommand{\Aut} {{\mathrm{Aut}}}
\newcommand{\define}[1]{\textit{#1}}
\newcommand{\mo}{{-1}}
\renewcommand{\tilde}{\widetilde}
\newcommand{\Z}{\mathbb{Z}}
\newcommand{\C}{\mathbb{C}}
\newcommand{\Q}{\mathbb{Q}}
\newcommand{\bk}[1]{{\langle #1 \rangle}}
\newcommand{\out}[1]{{\mathrm{Out}\left(#1\right)}}
\newcommand{\outo}{\mathrm{Out}_0}
\newcommand{\nsgp}{\triangleleft}
\newcommand{\charleq}{\triangleleft_c}
\newcommand{\chargeq}{\triangleright_c}
\newcommand{\lucs}{l_{\mathrm{u.c.s}}}
\newcommand{\tffgn}{\ensuremath{\mathfrak{T}}}
\newcommand{\fgn}{\ensuremath{\mathfrak{N}}}
\newcommand{\alg}{\mathscr}
\newcommand{\aut}[1]{\ensuremath{\mathrm{Aut}\left(#1\right)}}
\newcommand{\tron}[1]{\ensuremath{\mathrm{Tr}_1(n,#1)}}
\newcommand{\trzn}[1]{\ensuremath{\mathrm{Tr}_0(n,#1)}}
\newcommand{\malcev}{\sqrt}
\newcommand{\tuple}[1]{\MakeUppercase{#1}}
\newcommand{\diag}[1]{\ensuremath{\mathrm{diag}(#1)}}
\newcommand{\commentOut}[1]{\iffalse #1 \fi \begin{center} ***COMMENTED OUT CONTENT***\end{center}}
\newcommand{\QH}{\ensuremath{{(QH)}}}
\newcommand{\orbfun}[1]{\ensuremath{{\pi_1^{\mathrm{orb}}\left(#1\right)}}}
\newcommand{\orbbdy}[1]{\ensuremath{{\partial^{\mathrm{orb}}#1}}}
\newcommand{\topbdy}[1]{\ensuremath{{\partial^{\mathrm{top}}#1}}}
\newcommand{\dihedral}{\ensuremath{\bbZ_2*\bbZ_2}}
\newcommand{\RQH}{\ensuremath{{(rQH)}}}
\newcommand{\tp}[2]{\ensuremath{\mathrm{tp}^{\mathrm{orb}}_{#1}{\left(#2\right)}}}
\newcommand{\ncl}[1]{\bk{\bk{#1}}}
\newcommand{\tfi}[1]{{\mathbf{i}_{\mathrm{t.f.}}\left({#1}\right)}}
\tikzstyle{bvertex}=[circle, fill, inner sep=0pt, minimum size=6pt]
\tikzstyle{wvertex}=[circle, draw, inner sep=0pt, minimum size=6pt]
\begin{document}

\title{Deciding Isomorphy using Dehn fillings, the  splitting case}
\author{François Dahmani} 
\address{François Dahmani\\ Univ. Grenoble Alpes,  CNRS, Institut Fourier  \\
    F-38000 Grenoble
  (France)} \email{francois.dahmani@univ-grenoble-alpes.fr}
\author{Nicholas  Touikan} 
\address{Nicholas Touikan\\
  Department of Mathematics and Statistics\\
  University of New Brunswick\\
  P.O. Box 4400, Fredericton, New Brunswick\\
  E3B 5A3 (Canada)\\
} \email{nicholas.touikan@unb.ca}
\thanks{During the preparation of this article, the first author was
  supported by the ANR grant   2011-BS01-013-02, and the Institut
  Universitaire de France, and the second
  author was supported by an NSERC PDF, ANR-2010-BLAN-116-01 GGAA, and a Fields postdoctoral
  fellowship.}

\begin{abstract}  We solve Dehn's isomorphism problem for virtually
  torsion-free relatively hyperbolic groups with nilpotent parabolic
  subgroups.

  We do so by reducing the isomorphism problem 
  to three algorithmic problems in the parabolic subgroups, namely the
  isomorphism problem, separation of torsion (in their outer
  automorphism groups) by congruences, and the mixed Whitehead
  problem, an automorphism group orbit problem.  The first step of the
  reduction is to compute canonical JSJ decompositions. Dehn
  fillings and the given solutions of the algorithmic problems in the
  parabolic groups are then used to decide if the graphs of groups
  have isomorphic vertex groups and, if so, whether a global
  isomorphism can be assembled.

  For the class of finitely generated nilpotent groups, we give
  solutions to these algorithmic problems by using the arithmetic
  nature of these groups and of their automorphism groups.
\end{abstract}
\maketitle

\vspace{.3cm}

\begin{flushright}

{\small {\it Le dual reste loin, solitaire et plaintif, \\ Cherchant
    l'isomorphie et la trouvant rebelle.}}

\vspace{0.2cm}

{\small {André Weil}}

\end{flushright}
\vspace{0.2cm}
\tableofcontents

\section{Introduction}


\subsection{The isomorphism problem in geometric group theory, and algebra}

The isomorphism problem for finitely presented groups 
 asks for an algorithm that, given
two finite presentations, decides whether they define isomorphic
groups. It is one of the main classical algorithmic problems in group theory, and
understanding which classes have solvable isomorphism problem is 
subtle,  difficult, and important as a theoretical classification and structuration device. To this date,  the
approach of geometric group theory  has revealed itself to be one of the few
fruitful strategies.

In a series of works, the isomorphism problem was studied for certain
classes of groups that exhibit  a negatively curved
geometry. Sela
\cite{Sela-1995} introduced an elegant strategy for solving it for
a crucial class of torsion-free  hyperbolic groups, based on the solvability of
equations in these groups, and on a rigidity
criterion relating solutions of equations and splittings of the
group. 
 Bumagin, Kharlampovich and Miasnikov
\cite{BKM-Isomorphism} solved the problem for  limit groups,
a class of relatively hyperbolic groups. Groves and the first author
\cite{DGr_ihes}, following and simplifying Sela's strategy, completed
his original solution  into a
solution for all torsion-free hyperbolic groups.
Subsequently,  Guirardel and the first author \cite{DG_gafa}, also following Sela's
strategy, overcame the difficulties posed by torsion and gave a solution to the isomorphism problem for {all}
hyperbolic groups.

In a more algebraic coloration, a solution of the isomorphism problem
for virtually nilpotent groups, was found by Grunewald and Segal
\cite{GS2}, and for virtually polycyclic groups, by Segal
\cite{Seg}. The first of these studies relies on the fascinating fact
that automorphism groups of finitely generated nilpotent groups are
always arithmetic groups \cite{Auslander-1969}, and the solution is
a reduction to an orbit problem for a rational action of such an
arithmetic group on an algebraic variety. The solution to the
isomorphism problem for polycyclic groups is similar in method, but
significantly more involved.

\subsection{On relatively hyperbolic groups}

The case of  relatively hyperbolic groups is  natural and
appealing due to its many examples.  Relatively hyperbolic groups are
algebraic (or coarse) analogues of fundamental groups of finite volume negatively
curved manifolds, and their peripheral subgroups are the analogues of
the groups of the cusps of these manifolds. In the case of hyperbolic
manifolds, these peripheral subgroups are virtually abelian. In the case of pinched
negative curvature, these peripheral subgroups are, by the
Margulis Lemma, virtually nilpotent. In the generality of relatively
hyperbolic groups, they can be anything.

In the simplest example of relatively hyperbolic groups, the free
products of arbitrary peripheral groups, it appears that one will need
to restrict the class of peripheral subgroups in order to proceed on
the isomorphism problem. The most natural and tame class of peripheral
subgroups is certainly the class of abelian groups, and indeed, Groves
and the first author have proved the isomorphism problem to be
solvable for torsion-free groups that are hyperbolic relative to
abelian subgroups \cite{DGr_ihes} (the so-called toral relatively
hyperbolic groups.)  This class is already somewhat satisfying in
several aspects because it contains limit groups as well as the
fundamental groups of finite volume hyperbolic manifolds. Toral
relatively hyperbolic groups also correspond to the rare class with
peripheral subgroups in which the problem of equations is known to be
decidable. It is far from being completely satisfying though, as it
doesn't cover the case of finite volume manifolds with pinched
(non-constant) negative curvature.

For these later groups, there are fundamental difficulties, sometimes
proven impossibilities, to carrying out the Sela's strategy.
Nevertheless, we manage to prove the following result.

\begin{theo}\label{theo;ip-nilpotent}
  There is an algorithm which, given two presentations $\bk{X\mid R}$,
  $\bk{Y \mid S}$ of virtually   
  torsion-free groups that are hyperbolic relative
  to a family  of finitely generated nilpotent groups, decides if the
  presentations yield isomorphic groups.
\end{theo}

As a particular case, our approach provides a new, logically
independent proof of an earlier result of Groves and the first named
author, which was the main result of \cite{DGr_ihes}.

\begin{theo}[{\cite[Theorem A]{DGr_ihes}}] \label{theo;ihes}
  The isomorphism problem is solvable for toral relatively hyperbolic
  groups.
\end{theo}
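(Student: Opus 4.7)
The plan is to apply Theorem~\ref{theo;main} to the class $\calC$ of all finitely generated free abelian groups. A toral relatively hyperbolic group is, by definition, torsion-free and hyperbolic relative to a finite family of finitely generated abelian subgroups; being torsion-free, each such parabolic is of the form $\Z^n$, hence in $\calC$. It therefore suffices to verify the five bulleted hypotheses of Theorem~\ref{theo;main} for this $\calC$.

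First I would dispatch the easy axioms. Subgroups of $\Z^n$ are again free abelian of rank at most $n$, and the only torsion-free virtually cyclic groups (the trivial group and $\Z$) belong to $\calC$; effective coherence and algorithmic tractability both follow from integer linear algebra (Smith and Hermite normal forms), and residual finiteness of $\Z^n$ is standard. The isomorphism problem in $\calC$ is trivial: $\Z^m\cong\Z^n$ iff $m=n$.

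There remain the two more substantive algorithmic axioms. For the mixed Whitehead problem in $\Z^n$, conjugation in $\Z^n$ is trivial, so each instance reduces to the question: given two ordered tuples $S_1,S_2\subset\Z^n$, does some $A\in GL_n(\Z)$ satisfy $A\cdot S_1=S_2$? Assembling the tuples into integer matrices, this becomes the comparison of left $GL_n(\Z)$-orbits on $n\times k$ integer matrices, settled by computing Hermite normal forms, and the extension to the ``mixed'' version (several tuples with interdependent automorphisms of several factors) stays inside linear algebra over $\Z$. For the property that congruences effectively separate the torsion in $\Out(\Z^n)=GL_n(\Z)$, I would invoke Minkowski's classical theorem: for any prime $p\geq 3$, the principal congruence subgroup $\ker\bigl(GL_n(\Z)\to GL_n(\Z/p\Z)\bigr)$ is torsion-free, and these kernels are manifestly computable from matrix entries modulo~$p$.

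The step that would be hardest in principle is the congruence-torsion separation, which is the genuinely novel ingredient in the framework of Theorem~\ref{theo;main}; here, fortunately, it is handed to us by classical arithmetic. With all five hypotheses in place, Theorem~\ref{theo;main} produces an algorithm deciding isomorphism of toral relatively hyperbolic groups with unmarked peripheral structure. Since in the toral setting the conjugacy classes of maximal parabolic subgroups are canonically determined by the abstract group (as the maximal abelian subgroups of rank at least~$2$, up to conjugacy), the unmarked version coincides with the abstract group isomorphism problem, completing the reduction.
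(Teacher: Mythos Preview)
Your proposal is correct and follows exactly the route the paper takes: the paper's own justification is the single sentence preceding the theorem, which invokes Minkowski's theorem for the congruence-separation axiom (cf.\ Remark~\ref{rem;GLnZ3}) and dismisses the mixed Whitehead problem in $\Z^n$ as linear algebra, then applies Theorem~\ref{theo;main}. Your final paragraph on canonicity of the peripheral structure is also the paper's mechanism (Corollary~\ref{cor;canonicity-i} specialized to abelian parabolics), so there is nothing to add.
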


\subsection{A change of strategy}

Sela's solution  involves pursuing isomorphisms in sets of solutions of systems
of equations and inequations. As soon as nilpotent groups are introduced this approach fails due to
the undecidability of their universal theories
\cite{Romankov-undecidable}. This is a critical obstruction;
furthermore this phenomenon is far from exceptional, in fact it
appears to be generic within the class of finitely generated nilpotent
groups \cite{duchin2015equations,garreta2016properties}. 

 Given this theoretical obstruction   Guirardel and the first author proposed a change in 
strategy, and
showed in \cite{DG_charac}  that the isomorphism problem is decidable
in the class of
relatively hyperbolic groups with residually finite peripheral
subgroups, that are \emph{rigid} (in the sense that they do not have
elementary decomposition as amalgamation or HNN extension).
This is achieved through     an application of  the
Dehn filling construction performed in \cite{DGO} (see originally \cite{Osi_DF},
and \cite{GrM_DF} for the Dehn Filling theorem), that enables  
 a reduction of the problem to an already solved case (namely
the case of hyperbolic groups with torsion). This technique amounts to
rigorously solving the isomorphism problem by using algebraic and geometric approximations.

We follow the strategy proposed by  \cite{DG_charac}.
 However, in  \cite{DG_charac}  only
rigid relatively hyperbolic groups are shown to be characterized by
Dehn fillings, and counterexamples are shown in the non-rigid case. A
particular attention must be granted to the splitting case: when the
groups have decompositions (or splittings) as amalgamated free products or HNN
extension over elementary subgroups. 

\subsection{Canonical decompositions}

 In some  cases, canonical spaces of elementary splittings exist. For instance when the groups
are torsion-free, they have  Grushko decompositions into free
products, whose isomorphism type is uniquely defined.  
If the groups are
freely indecomposable, Guirardel and Levitt developed the theory of
 JSJ decompositions over virtually cyclic or parabolic subgroups, and this allows to
 obtain a canonical decomposition as a finite graph of groups.
 Computing such group
invariants is a crucial step in our solution to the isomorphism problem.

Such canonical decompositions must be maximal in some sense and
computing them is difficult in general.
 In all previous results, such computations were done using
{equationnal} techniques: the guarantee that a decomposition is maximal
is given by the lack of solutions to certain equations and inequations.
 Again, we encounter the difficulty of undecidability of universal
 theory of some nilpotent groups.

Instead we compute the JSJ decomposition using work of the second
author \cite{Tou}, which avoids having to solve equations, and the
Guirardel Levitt characterizations of JSJ splittings
\cite{2016arXiv160205139G}; the resulting algorithm has a certain
structural simplicity. The following is the combination of Theorem
\ref{thm;compute_JSJ} below, and of Corollary
\ref{cor:ctf-compute-can-splittings}, which is a consequence of
\cite[Theorem C]{Tou}, which is recalled here as Theorem
\ref{theo;TouC}.

We first need a technical, but natural definition.

\begin{defi}[Heriditarily algorithmically tractable and
	algorithmically bounded torsion class]\label{def;tract} Let $\calC$
	be a
	class of groups.
	\begin{enumerate}
		\item\label{it:AT} We say $\calC$ is \define{algorithmically
			tractable} if the presentations of the groups in $\calC$ are
		recursively enumerable and there is a uniform solution to the 
		conjugacy problem and the generation problem, i.e. to decide if a
		tuple of elements generate the group, over these presentations.
		\item\label{it:HAT} We say $\calC$ is \define{heriditarily
			algorithmically tractable} if it is algorithmically tractable and
		closed under taking subgroups in the following way:
                given a finite
		generating set $S$ of a subgroup $\bk{S} \leq H,$ where $H \in \calC$,
		then $\bk{S} \in \calC$ and there is a uniform procedure to
		construct a presentation for $\bk{S}$ with generators in $S$. This
		property is called \emph{effective
			coherence}. 
                    \item Finally we say $\calC$ has
                      \define{algorithmically bounded torsion} it is
                      algorithmically tractable and there is a uniform
                      algorithm which, for every presentation
                      $\langle S \mid R\rangle$ of a group in $\calC$,
                      produces $\calF_{\langle S \mid R\rangle}$, a
                      finite list of words that contains a conjugacy
                      representative of each torsion element of
                      $\langle S\mid R\rangle$.
	\end{enumerate}
\end{defi} 

Algorithmically tractable was defined in \cite{Tou} and is required to
determine if a relatively hyperbolic group is rigid. It follows from
\cite{BCRS-PF} that the class of virtually polycyclic groups satisfy
all three items of this definition.

\begin{thm}[Theorem \ref{thm;compute_JSJ}, 
  Corollary \ref{cor:ctf-compute-can-splittings}]
Let $\calC$ be a  hereditarily algorithmically tractable class of groups  with algorithmically bounded torsion. 
%
%
There is an algorithm to find the canonical JSJ decomposition
  of any one-ended virtually torsion free relatively hyperbolic group $(G,\calP)$, with $\calP$ in $\calC$.

\end{thm}


Even once one has computed canonical JSJ splittings for two groups
$G_1, G_2$, in the prospect of deciding whether they are isomorphic,
and even if one has computed that vertex groups of one decomposition
are isomorphic to the vertex groups of the other, one still doesn't
know whether $G_1$ and $G_2$ are isomorphic.  The problem of
assembling isomorphisms between vertex groups into global isomorphisms
of graphs of groups introduces new and interesting subtleties. We
illustrate this with an example.

\subsection{An example}

Let $G_a, G_b$ be groups that are hyperbolic relative to some subgroup  $P_a, P_b$
respectively,  non-isomorphic, rigid,  and with trivial outer
automorphism group.  Now let $P$ be a group in which $P_a$ and $P_b$ each
embed in two different ways via $i_k:P_a \to P$, $j_k: P_b\to 
P$, $k=1,2$. Let $\Gamma_1,\Gamma_2$ be the amalgams \[\Gamma_k = G_a
*_{P_a} P *_{P_b} G_b\] for the attaching maps $i_k, j_k;
k=1,2$. These groups are relatively hyperbolic, by the Combination
Theorem \cite{Dah_CoC}, and the splitting of $\Gamma_k$ that is exhibited, is the
canonical JSJ splitting of $(\Gamma_k,P)$. It follows that $(\Gamma_1,
P)$ is isomorphic to $(\Gamma_2, P)$ if and only if there is an
automorphism $\alpha$ of $P$ such that the maps $\alpha\circ i_1$ and
$\alpha\circ j_1$ are respectively conjugated in $P$ to $i_2$ and to
$j_2$.   
 If we fix a tuple of generators for the edge groups,
deciding the existence of such an automorphism $\alpha$ is an instance of the
{mixed Whitehead problem} in $P$ (see Definition \ref{def:mwhp}.) We will in fact solve the mixed
Whitehead problem for nilpotent groups, using the powerful theory of
\cite{GS1} and the arithmeticity  of the automorphism  group of  a finitely
generated nilpotent group  and of its holomorph.

If we allow $(G_a, P_a)$ and $(G_b, P_b)$ to have
 non-trivial outer
automorphisms, the situation is further complicated: $(\Gamma_1,P)$ is
isomorphic to $(\Gamma_2,P)$ if and only if there are automorphisms
$\beta_a \in \aut{G_a, P_a},\beta_b \in \aut{G_b, P_b}$ and $\alpha \in \aut{P}$
such that the following hold:
\begin{eqnarray} \alpha \circ i_1 \circ \beta_a|_{P_a} &\sim_P& i_2\label{e:conj1}\\
  \alpha \circ j_1 \circ \beta_b|_{P_b} &\sim_P& j_2\label{e:conj2}
\end{eqnarray} where $\sim_P$ denotes conjugacy in $P$. Note that a
choice of $\alpha$, to satisfy (\ref{e:conj1}), depends on $\beta_a$,
and a choice of $\beta_b$, to satisfy (\ref{e:conj2}), depends on
$\alpha$; all three automorphisms are {interdependent}.

There is some comfort to be found in the fact the outer automorphism
groups of rigid relatively hyperbolic groups are {finite}, thus
leaving finitely many possibilities for $\beta_a,\beta_b$ up to conjugacy. But
unfortunately, we still do not know how to effectively compute this
finite group in general. 

A closer look at the previous example reveals
that knowing $\beta_a$ and $\beta_b$ is not necessary. What matters is
their restriction to $P_a$ and $P_b$ respectively, up to conjugacy in
$P_a$. 
 In other words, we
are interested in the image of the natural morphism $\out{G_a, [P_a]} \to
\out{P_a}$, which, as we will see,  is well defined in our case, and which is a finite
subgroup of $\out{P_a}$.     In order to identify that image (which is
done in Proposition \ref{prop;orbit_computation}, see Proposition \ref{prop;intro}
in the discussion below for a first account),  we will need a
property that we call having congruences separating the torsion.
In Theorem \ref{thm:find-deep-enough},   we ensure that, in the cases in which we are
interested, this property is satisfied, and for that,  we take advantage of the fact
that, if $P_a$ is  
finitely generated nilpotent, its outer automorphism    
  group $\out{P_a}$ is arithmetic (a condition
that actually holds in the much broader class of polycyclic-by-finite
groups, by \cite{Baues_Grunewald}.)

\subsection{Dehn fillings, and congruences separating the torsion}

Given a relatively hyperbolic group $(G, P)$, and a subgroup $N$ of
$P$, normal in $P$, the operation of quotienting $G$ by the normal
closure of $N$ (in $G$) is a Dehn Filling. If $N$ avoids a certain
finite set of non-trivial elements, then Osin's hyperbolic Dehn
filling theorem ensures that the quotient
$G/\langle\langle N \rangle\rangle$ is hyperbolic relative to $P/N$
\cite{Osi_DF}\cite{GrM_DF}. The geometry of this construction was
studied in \cite{DGO}, and in \cite{DG_charac} it was used to show
that, if $P$ is residually finite, and if $(G, P)$ is rigid, then a
certain characteristic sequence of Dehn fillings of $(G, P)$
characterizes $(G, P)$ up to isomorphism.  In this paper, we further
show that, if $P$ has finite characteristic quotients for which the
congruence in $\out{P}$ has torsion-free kernel, then the sequence of
Dehn fillings characterizes the image of $\out{G, [P]} \to
\out{P}$. The arithmeticity of outer automorphisms groups of nilpotent
groups will permit us to show that these groups satisfy this property,
that we call {having congruences subgroups separating the
  torsion}. For instance this property is a classical theorem of
Minkowski for $\bbZ^n$, namely $GL(n,\bbZ)\to GL(n,\bbZ/3\bbZ)$ has
torsion-free kernel.  For the purpose of the isomorphism problem, we
will need an effective version of this property, and we will prove
that one can effectively compute a finite congruence separating the
torsion for any torsion-free nilpotent group.  A statement that gives
a flavour of what we must accomplish is the following:

\begin{prop}\label{prop;intro}
  Let $G$ be a finitely generated group hyperbolic relative to
  $P<G$. Assume that $P$ is residually finite, and that congruences
  separate the torsion in $\Out(P)$. Let $S$ be a generating
  tuple     
  of $P$. Then, there exists a finite index
  characteristic subgroup $N \charleq P$, such that the quotient map
  to the Dehn filling $G \to \bar G = G/\langle\langle N\rangle
  \rangle$ induces an isomorphism
  \[  \Out(G, [P])/\Out(G, [S]) \to \Out(\bar G, [\bar P])/ \Out(\bar G,
  [\bar S]).\]
\end{prop}

\subsection{Main results}

 We now state our main result.


\begin{theo}\label{theo;main} Let $\calC$ be a hereditarily   algorithmically
  tractable   class   with algorithmically bounded torsion, 
   satisfying the following properties:
  \begin{itemize}
  \item all groups in $\calC$ are residually finite,
  \item the isomorphism problem is 
   solvable in $\calC$,
  \item in $\calC$, congruences effectively separate the
    torsion (see Definition \ref{def;CST}),
  \item the mixed Whitehead problem is effectively solvable in $\calC$
    (see Definition \ref{def:mwhp}.)
  \end{itemize}
  There is an algorithm which decides if two explicitly given
  virtually torsion-free relatively hyperbolic groups $(G,\calP)$, $(H,\calQ)$
  whose peripheral subgroups belongs to $\calC$, are isomorphic as
  groups with unmarked peripheral structure.
\end{theo}

It is sometimes helpful to allow some peripheral subgroups to be
virtually cyclic, even if this could cause them to fall outside the class
$\calC$. This is harmless, since all the properties above
are classically true for virtually cyclic groups, and we will allow
extending the peripheral structure to include some maximal virtually
cyclic groups.

Theorem  \ref{theo;main} can be applied to the class
$\calC=\calC_{ab}$ of finitely generated 
abelian groups, and implies in particular the result of Theorem \ref{theo;ihes}.  The fact that
congruences subgroups effectively separate torsion is, as we
mentioned, a theorem of Minkowski. Instances of the mixed Whitehead
problem (i.e. how to bring one tuple of vectors to another tuple) can
be solved using the standard methods for modules over principal ideal
domains. The other properties required for $\calC$ in Theorem
\ref{theo;main} are classical.

But Theorem  \ref{theo;main} can also be applied for the class $\calC$
of  finitely generated  nilpotent groups, arguably the  next most natural class
$\calC$ of parabolic subgroups
of relatively hyperbolic groups.   This class
of groups (in fact, the class of virtually polycyclic groups) is
already known to be  heriditarily algorithmically tractable with algorithmically bounded torsion 
\cite{BCRS-PF}.

\begin{theo}[{Theorems
    \ref{thm:find-deep-enough}}, and \ref{thm:mwhp}] \label{theo;nilp}
  Finitely generated nilpotent groups have congruences that
  effectively separate the torsion in their outer automorphism group,
  and have solvable mixed Whitehead problem.
\end{theo}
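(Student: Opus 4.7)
Let $P$ be a finitely generated nilpotent group. The torsion subgroup $T\le P$ is finite and characteristic, and $P/T$ is torsion-free; the essential case is therefore torsion-free $P$. For such $P$, Malcev's theorem realises $P$ as a cocompact lattice in a unique simply connected nilpotent Lie group $P_\bbR$ with $\bbQ$-form $P_\bbQ$, and $\Aut(P)$ coincides with the stabiliser of this lattice inside the $\bbQ$-algebraic group $\Aut(P_\bbQ)$. Consequently $\Aut(P)$, and hence $\Out(P)$, is an arithmetic group which, in suitable Malcev coordinates, sits as a subgroup of $GL_n(\bbZ)$, where $n$ is the Hirsch length of $P$.

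\textbf{Part 1 (congruences effectively separate torsion).} I take as congruences the kernels of $\Aut(P)\to\Aut(P/P^{(m)})$, where $P^{(m)}$ is an appropriate characteristic finite-index subgroup (for instance the isolator of the verbal subgroup generated by $m$-th powers, adapted to the nilpotency class). The crucial input is Minkowski's lemma: the principal congruence subgroup of level $m\ge 3$ in $GL_n(\bbZ)$ is torsion-free. Transported through the Malcev embedding, this shows that the corresponding congruence kernel in $\Aut(P)$ is torsion-free, and a short additional argument tracking inner automorphisms in Malcev coordinates passes the statement down to $\Out(P)$. Effectivity then follows from two standard facts: (i) by Minkowski's bound, the orders of finite subgroups of $\Aut(P)$ are bounded by a computable function of the Hirsch length; and (ii) $P^{(m)}$, the quotient $P/P^{(m)}$, and the induced automorphism actions are all computable in the algorithmic framework of \cite{BCRS-PF}. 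Given a finite list $F$ of torsion classes in $\Out(P)$, one enumerates $m\ge 3$ and tests whether each element of $F$ has non-trivial image in the finite group $\Out(P/P^{(m)})$; termination is guaranteed by torsion-freeness of the kernel. The case where $P$ has torsion is handled by pulling back congruences from the torsion-free quotient $P/T$ and using that the natural map $\Aut(P)\to\Aut(P/T)\times\Aut(T)$ has finite, computable kernel.

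\textbf{Part 2 (mixed Whitehead).} Given tuples $\bar g,\bar h\in P^k$, the mixed Whitehead problem asks whether some $\alpha\in\Aut(P)$ sends each $g_i$ to a conjugate of $h_i$. I would translate this to an orbit problem for the arithmetic action $\Aut(P)\curvearrowright P^k$ modulo componentwise $P$-conjugacy. The required ingredients are all available: conjugacy and centraliser computations in $P$ are effective (Mal'cev, Blackburn, Grunewald--Segal); in Malcev coordinates the actions of $\Aut(P)$ on $P$ and of $P$ by conjugation are both polynomial; and the orbit problem for an arithmetic subgroup of $GL_N(\bbZ)$ acting linearly on a $\bbZ$-lattice with polynomial constraints is decidable by the effective polycyclic machinery of \cite{BCRS-PF}. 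Assembling these gives an algorithm deciding mixed Whitehead.

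\textbf{Main obstacle.} The delicate step is Part 1: choosing the family $\{P^{(m)}\}$ so that the induced kernels in $\Aut(P)$ genuinely correspond to principal congruence subgroups in the arithmetic envelope, and descending torsion-freeness from $\Aut(P)$ to $\Out(P)$ while retaining an effective bound on the level $m$ needed to witness a given torsion class. Part 2 is then largely a matter of bookkeeping, once the arithmetic structure of $\Aut(P)$ and the effective polycyclic toolkit are in place.
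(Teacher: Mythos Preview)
Your Part 2 is essentially the paper's approach: reformulate the mixed Whitehead problem as an orbit problem for the semidirect product $\Aut(G)\ltimes G^r$, realise this as an explicitly given arithmetic group via the Grunewald--Segal embedding, and then apply an arithmetic orbit algorithm. The only correction is that the relevant orbit algorithm is Algorithm~A of Grunewald--Segal \cite{GS1}, not the material in \cite{BCRS-PF}; the latter does not solve general orbit problems for arithmetic groups acting rationally.

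Part 1, however, has a genuine gap where you write ``a short additional argument tracking inner automorphisms in Malcev coordinates passes the statement down to $\Out(P)$.'' This is not short; it is the heart of the proof. The issue is that finite-order elements of $\Out(P)$ need not lift to finite-order elements of $\Aut(P)$: an automorphism $\beta$ can have infinite order in $\Aut(P)$ while $[\beta]$ has finite order in $\Out(P)$, precisely when some power $\beta^d$ is inner but $\beta$ is not. Minkowski's lemma applied to $\Aut(P)\hookrightarrow GL_n(\bbZ)$ tells you nothing about such $\beta$, and your Minkowski bound on orders of finite subgroups of $\Aut(P)$ is similarly irrelevant to these classes. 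Consequently your effectivity argument is also circular: you propose to test a ``finite list $F$ of torsion classes in $\Out(P)$'', but producing such a list is exactly the missing ingredient.

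The paper handles this by an induction on the upper central series. At each step there are two homomorphisms $r:\Out(N)\to\Out(\nu_1 N)$ and $p:\Out(N)\to\Out(N/\nu_1 N)$; torsion outside $\ker r\cap\ker p$ is separated by pulling back congruences from the factors (your Minkowski idea, essentially). The \emph{elusive} elements, those in $\ker r\cap\ker p$, are then shown to form an explicitly computable finite set, arising from the isolator of $\Phi(\nu_2 N)$ inside $\Hom^*(N,\nu_1 N)$. Finally, Segal's argument (Proposition~\ref{prop:segal}), which invokes the nontrivial result \cite[Prop.~3.3a]{RSZ} on centralisers in profinite completions of polycyclic groups, guarantees that each elusive class survives in some finite quotient; one then searches for a congruence deep enough for this finite list. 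None of this is visible in your sketch.
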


From this theorem, and from \cite{GS2}, the five assumptions of  Theorem \ref{theo;main} on peripheral
subgroups are therefore satisfied for finitely generated nilpotent
groups. Theorem \ref{theo;main} and  Corollary
\ref{cor;canonicity} then imply Theorem \ref{theo;ip-nilpotent}.

Theorem \ref{theo;main}, as well as the methods to prove it, are
general and it is likely that in the future they will be applicable to
even larger classes of relatively hyperbolic groups.  The limitations
that constrain us to remain in the class of virtually torsion-free
relatively hyperbolic groups with nilpotent parabolics motivate the
following questions:

\begin{question}\label{qu:cong-sep+mwhp}
  Are there other classes of groups that are residually finite, hereditarily algorithmically tractable
  with algorithmically bounded torsion,
  where Theorem \ref{theo;nilp} holds? Is this the case
  for virtually polycyclic groups?
\end{question}
Any positive answer to Question \ref{qu:cong-sep+mwhp} will enlarge
the class of peripheral subgroups of relatively hyperbolic in which we
can solve the isomorphism problem.
\begin{question}\label{qu:detect-rigid-torsion}
  Is there an algorithm to detect the rigidity of relatively
  hyperbolic groups that works in the presence of torsion?
\end{question}
A positive answer to Question \ref{qu:detect-rigid-torsion}, or a
positive answer to the long standing question of whether hyperbolic
groups are residually finite, would enable us to remove the
``virtually torsion-free'' clause from the statement of Theorem
\ref{theo;main}.

\subsection{Algorithmic problems in the automorphism groups of
  nilpotent groups}

The proof of Theorem \ref{theo;nilp} has a definitely more algebraic
flavor. Let us briefly discuss what we do.

Given $N$ a finitely generated nilpotent group, the mere existence of
a congruence $K\normal N$ separating the torsion in ${\rm Out }(N)$ is
non-trivial. A property of profinite topology for virtually polycyclic
groups due to Ribes, Zalesski\u{\i}, and Segal \cite{RSZ} states that
in these groups, the closure of the centralizer of a subgroup (in the
profinite completion) is equal to the centralizer of the subgroup in
the closure. For any finite order outer-automorphism $[\alpha]$ of
$N$, the application of this property to the semi-direct product
$\mathbb{Z} \semidirect N $ with structural automorphism $\alpha$
shows that in deep enough characteristic finite quotient of $N$,
$\alpha$ is not inner (Proposition \ref{prop:segal}.)  The
arithmeticity of ${\rm Out }(N)$ implies that there are only finitely
many conjugacy classes of finite subgroups in it. Applying the above
argument to each conjugacy class of finite order outer-automorphisms
produces a finite family of characteristic finite index subgroups of
$N$, and their intersection is a congruence separating the torsion.

 However, as we hinted, 
there is still one difficulty left, and that is the computability of
such a suitable congruence subgroup. This amounts to the computability
of a complete set of conjugacy classes of finite subgroups in ${\rm
  Out }(N)$. Using the upper central series, we can reveal some
classes by
induction on the nilpotency rank, but there might be automorphisms of
finite order in ${\rm Out} (N)$,  that are not visible through
this induction. We call them elusive. Their analysis and computation
is done by means of commutative algebra in Proposition
\ref{prop:coset-list}, and Corollary \ref{cor:construct-elusive}. 

The mixed Whitehead problem, which is the object of the second half of
Theorem \ref{theo;nilp}, has an already interesting particular case:
given two tuples $(s_1, \dots, s_k)$ and $(t_1,\dots, t_k)$ of
elements in $N$, one needs to find an automorphism $\alpha$ of $N$
such that $\alpha(s_i) = t_i$ for each $i$ (or prove that there is not
such $\alpha$). In the general case, one has several tuples
$S_1, \dots, S_r$, $T_1, \dots T_r$ on each side, and one accepts that
$\alpha$ sends each tuple $S_i$ to a conjugate of $T_i$, the
conjugator (in $N$) being possibly different for different tuples
$S_j$, see Definition \ref{def:mwhp}. In the simpler case, one can use
the orbit decidability of Grunewald and Segal \cite{GS1}. This is
possible by virtue of the arithmeticity of the automorphism group of
$N$, a result that dates back to \cite{Auslander-1969}, and due to the
fact that its action on tuples of elements is a rational action on an
algebraic variety.  The situation in the general case is more
complicated because it is a two quantifier problem (one quantifier
being on the automorphism group of $N$, the other being on elements of
the group $N$.)  We will remedy this by re-expressing the mixed
Whitehead problem as an orbit problem for a certain semidirect product
(see Proposition \ref{prop:orbit-reduction}) which turns out, in the
nilpotent case, to be explicitly arithmetic (Proposition
\ref{prop:explicit-semidirect}.)

 We remain tantalized, with legitimate reason (i.e. Question \ref{qu:cong-sep+mwhp}), by the case of virtually nilpotent
groups, or even polycyclic-by-finite groups.  
Although our arguments for Theorem \ref{theo;nilp}  do not generalize to these
groups, it is difficult to ignore the fact that polycyclic-by-finite groups have congruences that
  separate the torsion in their outer automorphism group (Proposition
  \ref{prop:segal}) as well as arithmetic outer automorphism groups \cite{Baues_Grunewald}.

\subsection{Acknowledgement}
Both authors wish to warmly thank Dan Segal, who kindly explained how
to use a key feature of polycyclic groups, proved in \cite{RSZ}, and
Vincent Guirardel, who, among other discussions, showed us how to
simplify our original argument for section \ref{sec;MWP_orbit_pb}. The
authors are also extremely grateful for the anonymous referee's
numerous and insightful comments, suggestions, corrections, and
warnings. The majority of these have lead to improvements to the
paper.

\subsection{Organization}
In Section \ref{sec:setting} we gather much of the setting, give
terminology, as well as some basic results. In Section
\ref{sec;canonical-splittings}, we discuss canonical splittings of
relatively hyperbolic groups, following Guirardel and Levitt's general
treatment of the JSJ theory. We explain how to compute, using the
machinery of \cite{Tou}, and arguments developed in \cite{DG_gafa}, a
Grushko decomposition for some torsion-free relatively hyperbolic
groups, and a Dunwoody-Stallings decomposition, as well as a canonical
JSJ splitting for some virtually torsion-free relatively hyperbolic
groups.  In Section \ref{sec;gog-isos}, we give a general discussion
of the isomorphism problem for graphs of groups, its relations with
orbit problems in the automorphism groups of vertex groups, and
formulate a useful reduction for the relatively hyperbolic setting
(this reduction actually differs from that in \cite{DG_gafa}.)  In the
case of one-ended relatively hyperbolic groups, we single out the
orbit problem on marked parabolic tuples in rigid vertex groups and
the mixed Whitehead problem in parabolic vertex groups as the two
natural algorithmic problems needed to decide if we can assemble a
collection of ``local'' isomorphisms of vertex groups to a global
isomorphism of graphs of groups.  In Section \ref{sec;orbits}, we
explain a key feature of this paper, namely how to use congruence
subgroups and Dehn fillings to solve some orbit problems given in the
previous section. We provide the proof of Theorem \ref{theo;main} in
section \ref{sec;Solving}. This is done in two steps, first the case
of one-ended relatively hyperbolic groups, and second the reduction of
the general case to the one-ended case. This second reduction is
immediate in the torsion-free case. In the presence of torsion,
however, the argument is more involved: we must work with
Dunwoody-Stallings decompositions and some new results about
normalizers of finite subgroups need to be proved. In Section
\ref{sec;nilp}, we prove Theorem \ref{theo;nilp}. This last section is
somewhat different from the rest of the paper because it uses
algebraic techniques for nilpotent groups, profinite methods, and the
arithmeticity of nilpotent groups and their automorphism groups.

\section{Setting}\label{sec:setting}
\subsection{Peripheral structures, markings, and
  automorphisms}\label{sec:periph-struct}

Let $G$ be a group.  
An \define{unmarked peripheral structure} on $G$ is a
finite disjoint union of conjugacy classes of subgroups:\[ \calP =
[P_1]\sqcup \ldots \sqcup [P_n]
\] where $P_i$ is a subgroup and $[P_i] = \{gP_ig^\mo | g \in G\}$.
  It is also convenient to specify an unmarked
peripheral structure as follows $\calP = \{[P_1], \ldots,
[P_n]\}$.
  A subgroup $H \leq G$ is called
\define{peripheral} if $H \in \calP$.

An \emph{unmarked ordered peripheral structure} $\calP_{uo}$ on $G$ is
a tuple $\calP_{uo}=([P_1], \dots, [P_n])$, where $P_i$ and $P_j$ are
not conjugate in $G$ if $i\neq j$. Any unmarked ordered peripheral
structure produces an unmarked peripheral structure via: \[ ([P_1],
\dots, [P_n]) \mapsto [P_1]\sqcup \ldots \sqcup [P_n].
\] Conversely, an unmarked peripheral structure $[P_1]\sqcup \ldots
\sqcup [P_n]$ produces a choice of $n!$ unmarked ordered peripheral
structures.

A \emph{marked peripheral structure} $\calP_m = (S_1, \dots,
S_n)$   
 is a tuple of tuples of $G$ (each
$S_i$ is a tuple of $G$.) The underlying unmarked ordered peripheral
structure $\calP_{uo}$ is \[\calP_{uo} =([\langle S_1\rangle], \dots,
[\langle S_n\rangle] ).\] We also say that $\calP_m$ is a
\define{marking} of $\calP_{uo}$.

Given a group $G$ with an unmarked ordered peripheral structure
$\calP_{uo}$, the group $\Aut(G, \calP_{uo})$ is the group of
automorphisms of $G$ that preserves the tuple $\calP_{uo}$. In other
words, it is the group of automorphisms that send each $P_i$ to a
conjugate of itself in $G$.  
Let us emphasize that the conjugators can
be different for each $P_i$. Inner automorphisms of $G$ form obviously
a normal subgroup of $\Aut(G, \calP_{uo})$, and $\Out(G, \calP_{uo})$ is defined
as usual by quotienting $\Aut(G, \calP_{uo})$ by this subgroup.

Given a marked peripheral structure $\calP_m= ( S_1, \dots, S_n)$ in
$G$, the group $\Aut(G, \calP_m)$ consists of the automorphisms of $G$
sending the marking tuple $S_i$ to a \emph{conjugate} of itself (again
conjugators can be different for different indices $i$). The group
$\Aut(G, \calP_m)$  contains all inner automorphisms, and  $\Out(G,
\calP_m)$ is defined as the quotient of  $\Aut(G, \calP_m)$ by the
group of inner automorphisms. If $\calP_u$ is the unmarked ordered
structure underlying $\calP_m$, there is a natural inclusion $\Out(G,
\calP_m) \hookrightarrow \Out(G, \calP_{uo})$ as a subgroup.

\subsection{Relatively hyperbolic groups}

Our definition of relative hyperbolicity will be in terms of the
hyperbolicity of a space $X$ constructed from a Cayley graph.

\begin{defn}[Horoballs and depth. See
  {\cite{GrM_DF}}]\label{defn:comb-horoball}
  Let $(Y,d_y)$ be a discrete metric space. A \emph{combinatorial
    horoball over $Y$} is a 1-complex obtained by taking
  $Y\times \bbR_{\geq 0}$ and for each integer $n$ and $y_1,y_2 \in Y$
  adding an edge between $(y_1,n),(y_2,n)$ if and only if
  $d_Y(y_1,y_2)\leq 2^n$. Such an edge is at \define{depth} $n$. A
  \define{horoball of depth $m$} is the union of  all edges of depth
  at least $m$ and points $(y,r), r\geq m$.
\end{defn}

Such horoballs exponentially distort the metric on $Y$, while keeping
it proper. A common variation on definitions by Gromov and Bowditch of
relatively hyperbolic groups is the following (see for instance {\cite{GrM_DF}}).

\begin{defn}[Relatively hyperbolic groups]\label{def;RHG}
  Let $(G,\calP)$ be a group equipped with a peripheral structure and
  let $(C_{G,S},d)$ be a Cayley graph for $G$ associated to some
  finite generating set $S$ of $G$ equipped with the path metric
  $d$. A \define{Groves-Manning space with combinatorial horoballs}
  $X$ is obtained from $C_{G,S}$ by choosing conjugacy representatives
  of the subgroups in $\calP$, and attaching a combinatorial horoball
  to each left coset of these peripheral subgroup, equipped with the
  subspace metric. The pair $(G,\calP)$ is relatively hyperbolic if
  $X$ is $\delta$-hyperbolic for some $\delta$.
\end{defn}

 We remark that the action of $G$ on $X$, in this case,  is free and proper, but not cocompact in general. 
 

\begin{defn}
  If $(G,\calP)$ is relatively hyperbolic any subgroup $H \leq G$ that
  is contained in a peripheral subgroup is called \define{parabolic}.
\end{defn}

In particular \define{maximal parabolic} subgroups are the peripheral
subgroups for the structure $\calP$, and correspond exactly to the
stabilizers of horoballs.  It is classical that, in a relatively
hyperbolic group, there are at most finitely many conjugacy classes of
peripheral subgroups. We will also often consider that the peripheral
structure given is {\it unmarked ordered}, thus making an implicit
choice.

The following is standard, (and an easy exercise on horoball stabilizers, in our context).   See for instance
\cite[Theorem 1.4]{Osi}.  

\begin{prop} \label{prop;parab_own_norm} Let $(G, \calP)$ be a
  relatively hyperbolic group. Then, each  group in $\calP$ is
  almost malnormal in the sense that  if $P \in \calP$, and if $P\cap gPg^{-1}$ is  infinite, then $g\in P$.
\end{prop}


\subsection{Finding parabolic subgroups}

\begin{defn}\label{defn;given}
  A relatively hyperbolic group $(G,\calP)$ is \define{barely given}
  if we are given a finite presentation of $G$, it is
  \define{explicitly given} if we are given a finite presentation of
  $G$ and a marking of its peripheral structure tuple, and it is
  \define{strongly explicitly given} if it is explicitly given and a
  finite presentation over the marking is given for each
  representative of peripheral subgroup.
\end{defn}

A class of groups is said to be recursive if there exists an algorithm
enumerating precisely the finite presentations of the groups in this
class. The following result is useful for all theoretical algorithmic
purposes that have to be solved from a mere presentation of a
relatively hyperbolic group and a solution to its word problem.

\begin{theo}[{\cite[Theorem 3]{DG_PPS}}] \label{theo;presentation}
  Let $\calC$ be a recursive class of finitely presented groups.

  There is an algorithm that, given a presentation of a group, and a
  solution to its word problem, terminates if and only if the group is
  relatively hyperbolic group with respect to subgroups in the class
  $\calC$, and outputs a finite generating set and a finite
  presentation of each representative of maximal parabolic group, and
  an isoperimetry constant for the relative isoperimetric inequality.
\end{theo}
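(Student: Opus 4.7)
The plan is a parallel enumeration in the spirit of Papasoglu's semi-decision of hyperbolicity. First, I would enumerate candidate marked peripheral structures on $G$: tuples $(S_1,\dots,S_n)$ of tuples of words in the given generators of $G$, each $S_i$ being a candidate generating tuple for a maximal parabolic subgroup $P_i=\langle S_i\rangle$. In parallel, using the recursiveness of $\calC$, I enumerate finite presentations $\langle Y_i\mid R_i\rangle$ of groups in $\calC$ and attempt to match each $P_i$ to such a presentation by using the word problem in $G$ to verify that every relator of $R_i$ becomes trivial in $G$ under the substitution $Y_i\mapsto S_i$. This yields a strongly explicitly given candidate relatively hyperbolic structure on $G$, with parabolics described as quotients of presentations from $\calC$.

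Next, for each such candidate and each trial integer $K\in\bbN$, I would run a semi-decision procedure that halts if and only if $(G,\calP)$ satisfies a relative linear isoperimetric inequality with constant $K$ on the relative presentation obtained by adjoining the parabolic generators and relators. By Osin's characterization, this is equivalent to $(G,\calP)$ being relatively hyperbolic. The semi-decidability of a linear isoperimetric inequality in the relative setting adapts Papasoglu's key observation: if linearity holds with constant $K$, then checking area versus perimeter on all relative van Kampen diagrams up to a computable bounded size certifies the inequality in finite time. Triviality of a given relative word in $G$ is verifiable because the word problem of $G$ is given and the parabolic relators have been matched as above. If the candidate data does not correspond to a relatively hyperbolic structure, this inner verification simply never halts for any $K$, but the outer parallel search keeps processing other candidates.

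If $(G,\calP)$ is indeed relatively hyperbolic with parabolics in $\calC$, then some enumeration entry will eventually present the correct marking $(S_1,\dots,S_n)$, together with correct presentations from $\calC$ and a valid constant $K$, so the algorithm terminates and outputs precisely these data. Conversely, if no such structure exists, no entry certifies, and the procedure runs forever; thus termination characterizes relative hyperbolicity with peripheral subgroups in $\calC$, as required.

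The main obstacle is to make precise the semi-decidability of the relative linear isoperimetric inequality starting from a bare presentation of $G$: one needs to show that a finite family of diagrams of bounded combinatorial size suffices to certify linearity, and one needs to handle the interaction between the parabolic alphabets and the global relators of $G$. A secondary issue is to ensure that the enumerated presentation in $\calC$ actually presents $\langle S_i\rangle$ rather than merely surjecting onto it; the word-problem matching gives a surjection, and the isoperimetric verification, combined with the fact that maximal parabolic subgroups in a relatively hyperbolic group are malnormal and relatively quasiconvex, promotes this surjection to an isomorphism. These technical points are the content of \cite[Theorem 3]{DG_PPS}, which we invoke as a black box.
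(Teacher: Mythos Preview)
The paper does not prove this theorem at all: it is quoted verbatim as \cite[Theorem~3]{DG_PPS} and used as a black box. Your proposal gives a plausible high-level sketch of the strategy behind that reference (parallel enumeration of candidate peripheral structures and relative isoperimetric constants, combined with a Papasoglu-style semi-decision of linear isoperimetry), and you yourself close by invoking \cite{DG_PPS} as a black box for the technical core; so in substance your treatment and the paper's coincide.
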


Since the parabolic subgroups will be assumed to be residually finite
we can always solve the word problem:
 
\begin{lem}[{ \cite[Corollary
    6.6]{DG_charac}}]\label{lem;word-problem}
  The word problem is uniformly solvable in the class of finitely
  presented relatively
  hyperbolic groups with residually finite peripheral subgroups. 

\end{lem}



In other words, for all theoretical purposes (under the suitable
assumptions), from a barely given relatively hyperbolic group we can
get a strongly explicit presentation. That being said, these
explicitly given presentations may not be \emph{canonical}. We
therefore need the following extra results:

\begin{lem}[Corollary of {\cite[Lemma 5.4]{Osi}}] 
  \label{lem;parabolics-undistorted}
  Let $(G,\calP)$ be relatively hyperbolic. Then every peripheral subgroup $P \in
  \calP$ is undistorted in $G$.
\end{lem}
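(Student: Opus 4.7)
The plan is to reduce directly to the cited result \cite[Lemma 5.4]{Osi}. Fix a finite generating set $X$ of $G$ that contains a finite generating set $X_P \subset X$ for a representative $P$ of each conjugacy class in $\calP$; such a generating set exists because $\calP$ has only finitely many conjugacy classes and, e.g.\ by Theorem \ref{theo;presentation}, each representative of a peripheral subgroup admits a finite generating set that can be absorbed into $X$. One inequality of the quasi-isometric embedding is trivial: since $X_P \subset X$, every $X_P$-word representing an element $p \in P$ is also an $X$-word, so $|p|_X \leq |p|_{X_P}$.

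For the reverse inequality, the plan is to invoke Osin's Lemma 5.4 as a black box. In Osin's setup, $(G,\calP)$ admits a finite relative presentation with a linear relative Dehn function, and the cited lemma provides a quantitative comparison between an $X$-geodesic representative of an element $p \in P$ and a word over $X_P$ representing the same element. Concretely it yields a constant $C = C(G,\calP,X)$ such that $|p|_{X_P} \leq C \, |p|_X$ for every $p \in P$. Combined with the trivial inequality, this is precisely the assertion that the inclusion $P \hookrightarrow G$ is a quasi-isometric embedding, i.e.\ that $P$ is undistorted.

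The main (and essentially only) obstacle is notational: Osin phrases his result in terms of a relative generating set in which the peripheral subgroups appear as formal subalphabets, whereas we want an estimate with respect to the concrete finite generating set $X$. The translation is routine once one observes that a single relative letter from $P$ appearing in an $X$-geodesic can be replaced by a word in $X_P$ of uniformly bounded length, so the linear relative isoperimetric bound transfers directly into the sought-after linear comparison of absolute word lengths. No new geometric input beyond \cite[Lemma 5.4]{Osi} is required.
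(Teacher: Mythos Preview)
Your approach matches the paper's exactly: the paper gives no proof of this lemma, merely labeling it a corollary of \cite[Lemma 5.4]{Osi}, and your invocation of that result is all that is required. One caveat: your final sentence about ``a single relative letter from $P$ appearing in an $X$-geodesic'' being replaceable by an $X_P$-word of ``uniformly bounded length'' is garbled---an $X$-geodesic over a finite alphabet contains no relative letters, and uniformly bounded $X_P$-length would force $P$ to be finite---but this extra commentary is superfluous since the citation alone suffices.
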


\begin{thm}[{\cite[Theorem 1.8]{Drutu-Sapir}}]\label{thm;induced-parabolics}
  Let $G = \bk{S}$ be a finitely generated group that is hyperbolic
  relative to subgroups $H_1,\ldots,H_n$. Let $G'$ be an undistorted
  finitely generated  subgroup of $G$. Then $G'$ is relatively
  hyperbolic with respect to subgroups $H_1',\ldots,H'_m$, where each
  $H_i'$ is one of the intersections $G' \cap gH_jg^\mo$.
\end{thm}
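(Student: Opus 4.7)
My plan is to use the asymptotic cone characterization of relative hyperbolicity, which is the natural framework for Drutu--Sapir's work. Recall that a finitely generated group $K$ is hyperbolic relative to $\{L_1,\ldots,L_k\}$ if and only if every asymptotic cone of $K$ is tree-graded with respect to the collection of ultralimits of the cosets of the $L_i$. The strategy is to identify the pieces in the asymptotic cone of $G'$ as intersections of pieces in the cone of $G$ with the subcone of $G'$, then read off the peripheral structure.

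First I would fix a word metric $d_{G'}$ on $G'$ and the ambient word metric $d_G$. Because $G'$ is undistorted, the identity map $(G', d_{G'}) \to (G, d_G)$ is a quasi-isometric embedding, so taking any ultrafilter $\omega$ and scaling sequence $(s_n)$, the asymptotic cone $\operatorname{Cone}_\omega(G', d_{G'})$ embeds bilipschitzly as a closed subspace $Y$ of $X = \operatorname{Cone}_\omega(G, d_G)$. Since $(G,\calP)$ is relatively hyperbolic, $X$ is tree-graded with pieces $\{X_\alpha\}$ being ultralimits of cosets $g_n H_{j(n)}$ (together with $\bbR$-trees, but those do not affect the argument).

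The key step is to show that $Y$ inherits a tree-graded structure with pieces $Y_\alpha = Y \cap X_\alpha$ (discarding singletons). Tree-gradedness of $Y$ follows from general principles about tree-graded spaces: any closed geodesic subspace of a tree-graded space is tree-graded by its intersections with the original pieces, provided these intersections are geodesic and one verifies the two-piece intersection axiom. The axioms transfer essentially for free because any simple loop in $Y$ is a simple loop in $X$ and hence contained in a single piece $X_\alpha$, forcing it into $Y \cap X_\alpha$.

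Having established that $Y$ is tree-graded, I would translate this back to the group via Drutu--Sapir's reverse direction: the non-trivial pieces $Y_\alpha$ must be (up to rescaling and ultralimit) the ultralimits of cosets of a finite collection of subgroups of $G'$. Each such subgroup is, by construction, of the form $G' \cap gH_jg^{-1}$ for appropriate $g \in G$ and index $j$, since the pieces in $X$ come from conjugates of the $H_j$. The main obstacle is the finiteness claim: showing that only \emph{finitely many} conjugacy classes of intersections $G' \cap gH_jg^{-1}$ give rise to pieces. This requires an almost-malnormality argument, combined with the fact that the $H_j$ are almost malnormal in $G$, so that large pieces in $Y$ correspond to intersections with uniformly bounded coarse stabilizers, and a Ramsey-type pigeonhole along $\omega$-almost every sequence forces finiteness of conjugacy classes of such $G' \cap gH_jg^{-1}$ that are infinite. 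Once this finiteness is obtained, the reverse direction of the asymptotic cone characterization yields that $G'$ is hyperbolic relative to the finite list $\{H'_1,\ldots,H'_m\}$ of such intersection subgroups, completing the proof.
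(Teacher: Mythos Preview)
The paper does not prove this theorem; it is quoted verbatim as \cite[Theorem 1.8]{Drutu-Sapir} and used as a black box. There is therefore no ``paper's own proof'' to compare your attempt against.

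That said, your sketch follows the spirit of Dru\c{t}u--Sapir's original argument, so a few remarks on the sketch itself are in order. The main gap is in the passage from $X$ to $Y$. You embed $\operatorname{Cone}_\omega(G')$ \emph{bilipschitzly} into $X$, not isometrically, and then argue that a closed geodesic subspace of a tree-graded space inherits a tree-graded structure. But $Y$ is not a geodesic subspace of $X$: geodesics of $Y$ (for the cone of $d_{G'}$) are only quasi-geodesics of $X$. In particular, a simple loop in $Y$ need not be a simple loop in $X$, so your transfer of the piece axiom via ``simple loops stay in a single piece'' does not go through as stated. Dru\c{t}u--Sapir handle this by working not directly with cones but with their metric characterisation of being asymptotically tree-graded (their conditions $(\alpha_1)$--$(\alpha_3)$ on saturations and projections of geodesics), which is robust under quasi-isometric embedding; the cone picture is the intuition, not the engine.

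Your treatment of finiteness of the peripheral family is also too vague to count as a proof: the actual argument uses that cosets $gH_j$ coming $R$-close to $G'$ for large $R$ force $G'\cap gH_jg^{-1}$ to be infinite, together with local finiteness of the coset pattern, to extract finitely many conjugacy classes. A Ramsey/pigeonhole along the ultrafilter is not what is used and would not by itself give a statement independent of the chosen $\omega$ and scaling sequence.
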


\begin{cor}\label{cor;canonicity-i}
  Let $G$ be equipped with two relatively hyperbolic structures
  $\calP, \calQ$ that both consist of non-virtually cyclic nilpotent
  groups. Then $\calP = \calQ$.
\end{cor}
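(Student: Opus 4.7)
The plan is to build a bijection between $\calP$ and $\calQ$ at the level of $G$-conjugacy classes of subgroups, by a symmetric ``each conjugates into the other'' argument. Fix $P \in \calP$. By Lemma~\ref{lem;parabolics-undistorted} applied to $(G,\calP)$, the subgroup $P$ is undistorted in $G$, and undistortion is a property of the word metric of $G$ alone, so it is insensitive to which peripheral structure we put on $G$. I would then apply Theorem~\ref{thm;induced-parabolics} to the undistorted subgroup $P$ with respect to the other structure $(G,\calQ)$: this endows $P$ itself with a relatively hyperbolic structure whose peripheral subgroups are of the form $P \cap gQg^{-1}$ with $Q \in \calQ$ and $g \in G$.

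The next step uses that $P$ is nilpotent and, by assumption, not virtually cyclic. Being nilpotent, $P$ is amenable and contains no nonabelian free subgroup; but a relatively hyperbolic group that is neither virtually cyclic nor has some peripheral subgroup equal to the whole group must contain a nonabelian free subgroup (Tits alternative for relatively hyperbolic groups). Hence the induced structure on $P$ must have $P$ itself as one of its peripherals, yielding $g \in G$ and $Q \in \calQ$ with $P \subseteq gQg^{-1}$. The symmetric argument applied to $Q \in \calQ$ (valid since peripherals of $\calQ$ are also nilpotent and non-virtually cyclic) produces $h \in G$ and $P' \in \calP$ with $Q \subseteq hP'h^{-1}$, and chaining gives
\[ P \;\subseteq\; gQg^{-1} \;\subseteq\; gh\,P'\,(gh)^{-1}. \]

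To conclude I would invoke the horoball description of peripherals in Definition~\ref{def;RHG} together with Proposition~\ref{prop;parab_own_norm}: the peripherals of $(G,\calP)$ are exactly the stabilizers of the $G$-orbit of horoballs, a parabolic subgroup lies in a unique horoball, so a peripheral subgroup is never properly contained in any parabolic subgroup. The outer inclusion above is therefore an equality, and combined with the middle inclusion this forces $P = gQg^{-1}$, so $[P]_G \in \calQ$. The symmetric statement gives the reverse containment and hence $\calP = \calQ$. The principal obstacle is the dichotomy step: one must invoke (or extract from the geometric definition) that a non-virtually-cyclic amenable group cannot carry a relatively hyperbolic structure with all peripherals proper; this is the only input beyond the two cited results and the point where I would expect to spend the most care.
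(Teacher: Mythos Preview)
Your proof is correct and follows essentially the same route as the paper: apply Lemma~\ref{lem;parabolics-undistorted} and Theorem~\ref{thm;induced-parabolics} to force each peripheral of one structure into a conjugate of a peripheral of the other, using that a non-virtually-cyclic nilpotent group admits only the trivial relatively hyperbolic structure, then conclude by symmetry. You have simply spelled out more carefully the two steps the paper leaves terse --- the ``trivial structure'' step (via absence of nonabelian free subgroups) and the final chaining/maximality argument --- but the strategy is identical.
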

\begin{proof}
  By Lemma \ref{lem;parabolics-undistorted} and Theorem
  \ref{thm;induced-parabolics} any $Q \in \calQ$ must be hyperbolic
  with respect to its intersection with the groups in $\calP$. Since
  nilpotent groups can only have the trivial relatively hyperbolic
  structure, $Q$ must be contained in a group in $\calP$. By symmetry
  we get $\calQ = \calP$. 
\end{proof}

In fact the same argument goes through for any class of so-called
\emph{not relatively hyperbolic} groups
(c.f. \cite{Drutu-Sapir}). Since we can recognize virtually cyclic
groups, and nilpotent groups are residually finite and recursively
enumerable, we immediately get:

\begin{cor}\label{cor;canonicity}
  Let $\bk{X\mid R}$ be a presentation of a group $G$ that is known to
  have a relatively hyperbolic structure $\calP$ consisting of
  non-virtually cyclic nilpotent groups. Then $\calP$ can be found
  effectively and explicitly.
\end{cor}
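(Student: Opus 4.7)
The plan is to feed the hypothesis into Theorem \ref{theo;presentation} and then invoke the uniqueness given by Corollary \ref{cor;canonicity-i}. There is essentially no new argument to make; the proof is an assembly of preceding machinery.

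First I would produce an effective solution to the word problem of $G$ from the bare presentation $\bk{X\mid R}$. Since finitely generated nilpotent groups are residually finite, the hypothesis on $\calP$ meets the assumptions of Lemma \ref{lem;word-problem}, which yields an explicit algorithm. Next I would verify that the class $\calC$ of finitely generated non-virtually-cyclic nilpotent groups is recursive in the sense required by Theorem \ref{theo;presentation}. One enumerates pairs $(d,c)$ of positive integers and, for each, enumerates finite lists of extra relators adjoined to the standard presentation of the free nilpotent group of rank $d$ and class $c$; this exhausts finite presentations of all finitely generated nilpotent groups. From any such presentation one can compute the lower central factors and hence the Hirsch length, and then filter out those of Hirsch length at most one (including the finite ones), thereby enumerating precisely $\calC$.

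With these two inputs in hand I would apply Theorem \ref{theo;presentation} to $\bk{X\mid R}$ with the class $\calC$. By the hypothesis of the corollary, $G$ is relatively hyperbolic with peripheral subgroups in $\calC$, so the procedure terminates and returns, for some relatively hyperbolic structure $\calP'$ on $G$ whose parabolics all lie in $\calC$, a finite generating tuple in $G$ together with a finite presentation for each maximal parabolic subgroup. This is precisely the data that makes $(G,\calP')$ strongly explicitly given in the sense of Definition \ref{defn;given}. Both $\calP$ and $\calP'$ consist of non-virtually-cyclic finitely generated nilpotent groups, so Corollary \ref{cor;canonicity-i} forces $\calP = \calP'$, and the procedure has thus effectively and explicitly found $\calP$. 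The only mildly subtle point is the recursiveness of $\calC$, since deciding nilpotency of an arbitrary finite presentation is undecidable; the trick is the top-down enumeration through the universal nilpotent groups $N_{d,c}$, after which nothing really remains to do.
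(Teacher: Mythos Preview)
Your proposal is correct and follows exactly the approach the paper intends: solve the word problem via Lemma~\ref{lem;word-problem} using residual finiteness, feed Theorem~\ref{theo;presentation} the recursive class of non-virtually-cyclic finitely generated nilpotent groups, and invoke Corollary~\ref{cor;canonicity-i} for uniqueness. The paper's proof is the one-line remark preceding the corollary (``Since we can recognize virtually cyclic groups, and nilpotent groups are residually finite and recursively enumerable, we immediately get''), and you have simply fleshed out the recursiveness of~$\calC$ via the Hirsch-length filter, which is exactly the right way to make that remark precise.
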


\subsection{Finding finite subgroups}

\begin{lem}\label{lem:make-balls}
  Let $(G,\calP)$ be relatively hyperbolic where the groups in $\calP$
  are finitely presented and in which we can solve the word problem.
  Then, given a presentation of $G$, generating sets and presentations
  of representatives of conjugacy classes of $\calP$, it is possible
  to construct arbitrarily large balls in the Groves-Manning space $X$
  (Definition \ref{def;RHG}) and compute a hyperbolicity constant
  $\delta$.
\end{lem}

In order to compute a hyperbolicity constant we will need the
following mesoscopic Cartan-Hadamard theorem by Delzant and Gromov
(although we will use a formulation due to Coulon).  We state a
consequence of the theorem in the special case where $X$ is the
Groves-Manning space.

\begin{thm}[See {\cite[Théorème 4.3.1]{delzant2008courbure}} or
  {\cite[Theorem A.1]{coulon2013small}}]\label{thm:mesoscopic}
  Let $\rho >0$,let $\sigma > 10^7\rho$ and suppose $\sigma$ is
  greater than the longest relation in a finite presentation of
  $G$. If every ball of radius $\sigma$ in $X$ is $\rho$-hyperbolic,
  then $X$ is $\delta=300\rho$-hyperbolic.
\end{thm}

\begin{proof}[Proof of Lemma \ref{lem:make-balls}] 
  First, by \cite{DG_PPS}, one can compute an explicit bound for the
  linear relative isoperimetric inequality, and this allows, given
  $R>0$, to construct the ball of radius $2^R$ of the Cayley graph. By
  the solution (\cite[Theorem 5.6]{Osi}) of the special parabolicity
  problem and the word problem for each $P \in \calP$ we can then 
  partition the vertices of this ball into left $P$ cosets.
  One can then construct the combinatorial horoballs attached to each
  of these cosets. Finally, any element of the group that is at
  distance at most $R$ from the identity for the metric of the
  Groves-Manning space, is in the ball of radius $2^{R/2}$ for the
  word metric; thus we can construct the ball of radius $R$, centered
  at the identity, in $X$. Note that, because $G$ is finitely
  presented we are given a $\sigma$ as in the statement of Theorem
  \ref{thm:mesoscopic}.

  On the one hand any metric ball that is contained inside a
  horoball is known to be $20$-hyperbolic (see \cite[Remark
  3.9]{GrM_DF}) on the other hand we can construct arbitrarily large
  balls about the identity. Since $X$ is Gromov hyperbolic it follows
  by Theorem \ref{thm:mesoscopic} that we will eventually construct a
  sufficiently large ball around the identity to certify a
  hyperbolicity constant $\delta$.
\end{proof}

\begin{lem}\label{lem:ball-of-finites}
  Let $(G,\calP)$ be relatively hyperbolic where the groups in $\calP$
  are finitely presented and in which we can solve the word
  problem. Then we can construct a finite subset $P_2 \subset G$ which
  contains a conjugacy class of every non-parabolic finite subgroup of
  $G$.
\end{lem}
\begin{proof}
  We recall a classical fact, that any finite subgroup of a relatively
  hyperbolic group either is in a peripheral subgroup, or is
  conjugated to a subgroup close to the identity (for a word metric).
  Let $X$ be a Groves-Manning cusped space for $(G,\calP)$ and suppose it
  is $\delta$-thin. By \cite{bogopolskii1996finite} any finite group
  $F$ of isometries of $X$ has a minimally displaced vertex $x_F$ that
  is moved at most $3\delta+1$ by elements of $F$.

  If $x_F$ is at depth more than $3\delta+1$ inside a horoball then
  $F$ is parabolic. It follows that for any non-parabolic subgroup of
  $(G,\calP)$, the minimally displaced vertex is at depth at most
  $3\delta+1$. Let $P_0$ be the set of vertices at most distance at most
  $3\delta+1$ from the identity. Let $P_1$ be the set of vertices a
  distance at most $3\delta+1$ from $P_0$. And let $P_2$ be the set of
  vertices in the Cayley graph of $G$ (contained in $X$) that are
  connected by paths consisting of vertical edges to points in
  $P_1$. By Lemma \ref{lem:make-balls} the set $P_2$ can be
  constructed algorithmically, and in $P_2$ we can find a copy of a
  conjugate of every non-parabolic finite subgroup of $G$.  
\end{proof}

\begin{cor}\label{cor:decide-non-par-finite}
  Let $(G,\calP)$ be relatively hyperbolic where the groups in $\calP$
  are finitely presented and in which we can solve the word
  problem. Then given a generating set $S$ of a non-parabolic subgroup
  $\bk S = H \leq G$ it is possible to decide if $H$ is
  finite.
\end{cor}
\begin{proof}
  Given $S$ and the solution to the word problem, we enumerate the
  elements of $H$ and construct a multiplication table. If $H$ is
  finite then it must have cardinality less than $\left|P_2\right|$,
  so a complete multiplication table for $H$ will be constructed. On
  the other hand, as we enumerate, if we find that $\left| H \right| >
  \left|P_2\right|$, then we can conclude that $H$ is infinite.
\end{proof}

\subsection{Graphs of groups and trees}
\subsubsection{Splittings} 
\begin{defn}\label{defn;gog}
  A graph $X$ consists of a set $V(X)$ of vertices and a set $E(X)$ of
  oriented edges, with a fix-point free involution $\bar{ }: E(X)\to
  E(X)$ and incidence maps $t:E(X) \to V(X)$, $o:E(X) \to V(X)$ that
  satisfy $o(e) = t(\bar{e})$. If $e$ is an edge, $t(e)$ is called its
  terminal vertex, and $o(e)$ is called its origin vertex. Given $v$ the
  set of edges $e$ such that $t(e)= v$ is denoted by $\lk(v)$.
  \begin{itemize}
  \item A \define{graph of groups} structure $\mathbb{X}$ on a graph $X$ is
    the data of a group $\Gamma_v$ for each vertex $v$, and a group
    $\Gamma_e$ for each edge $e$ such that $\Gamma_e=\Gamma_{\bar e}$,
    and of a monomorphism $i_e : \Gamma_e \to \Gamma_{t(e)}$ for each
    edge $e$.   
    We write
    $\mathbb{X} =(X, \{\Gamma_v, v\in V(X)\}, \{\Gamma_e, e\in E(X)\},
    \{i_e, e\in E(X)\})$.
  \item The \define{Bass group} is the free product of the vertex
    groups and of the free groups on $E(X)$ subject to relations $\bar
    e = e^{-1}, \forall e\in E(X)$ and $ei_e(g)e^{-1} =i_{\bar e}(g)$.
  \item The \define{fundamental group $\pi_1(\bbX, \tau)$ of a graph
      of groups $\bbX$ with maximal subtree $\tau \subset X$}, is the
    quotient of the Bass group by the normal subgroup generated by the
    set of edges in $\tau$.
\end{itemize}
\end{defn}

Classically, the groups $\G_v, v\in V(X)$ are called vertex groups,
the groups $\Gamma_e, e\in E(X)$ are edge groups, and the maps $i_e,
e\in E(X)$ are the attaching maps.  For convenience, for every vertex
group $\G_v$, we introduce the (unmarked) adjacency peripheral
structure $(\G_v, \calA_u)$, consisting of the conjugacy classes of
the groups $i_e(\G_e), e\in \lk(v)$.  For a choice of a generating
tuple $S_e=S_{\bar e}$ of $\G_e = \G_{\bar e}$, we can define the
marked peripheral structure $(\G_v, \calA_m)$ induced by $i_e$.

We say that a group is \define{explicitly given} as a fundamental
group of a graph of groups if a graph of groups is given, and a
presentation of all vertex groups, a generating set of all edge
groups, the image of the attaching maps on these generating sets, for
which the group is isomorphic to the fundamental group of this graph
of groups (once chosen a maximal subtree.)

There is a universal covering $T_{\bbX}$  
   of $\bbX$, a tree, called \define{the
  Bass-Serre tree}, on which the fundamental group $\pi_1(\bbX,
\tau)$ 
acts, and $X \simeq \pi_1(\bbX,\tau)\backslash T_\bbX$. Conversely,
given only the abstract group $\pi_1(\bbX,\tau)$ and the action on
$T_\bbX$ we can recover the graph of groups $\bbX$. We refer to
\cite{Serre} for further details, but we assume the reader is fluent
in Bass-Serre theory.

\subsubsection{Vocabulary on trees}\label{sec;vocab_trees}
 
We now define or recall a few adjectives and operations related to
trees. A $G$-tree is \define{reduced} 
 if any two adjacent vertices in different $G$-orbits have
different stabilizers. A vertex of a $G$-tree is \emph{inessential} if
it has valence 1.

Given a group with a peripheral structure $(G,\calP)$, a
\define{$(G,\calP)$-tree} is a tree with an action of $G$ such that
each group in $\calP$ is elliptic. A \emph{splitting of $(G,\calP)$}
is a splitting of $G$ such that the Bass-Serre tree is a
$(G,\calP)$-tree. A $(G,\calP)$-tree is \define{minimal} if there is
no proper invariant subtree. $(G, \calP)$ is said to be \define{freely
  indecomposable }if all $(G,\calP)$-tree with trivial edge
stabilizers have a $G$-fixed point. We sometimes redundantly say
\define{relatively freely indecomposable} to emphasize the role of the
peripheral structure and avoid ambiguity.

Let $T$ be a $(G,\calP)$-tree. A \define{collapse} of $T$ is a
$(G,\calP)$-tree $S$ with a $G$-equivariant map $T\tto S$ such that
each edge of $T$ is sent on an edge, or on a vertex of $S$, and no
pair of edges of $T$ in different orbits are sent in the same edge
(they may be sent on the same vertex though.)  A \define{refinement}
of $T$ is a $(G,\calP)$-tree $S$ such that $T$ is a collapse of $S$.

The following is well known and useful.
\begin{lemma} \label{lem;comp_ell} If $T$ is a $G$-tree and $\hat{T}$
  is a refinement of $G$, then any edge stabilizer in $T$ stabilizes
  an edge in $\hat{T}$.
\end{lemma}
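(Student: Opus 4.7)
The plan is to work with the $G$-equivariant collapse map $p\colon \hat T \twoheadrightarrow T$ coming from the refinement, and to produce, for each edge $e$ of $T$, a canonical edge $\hat e$ of $\hat T$ preserved by $\Stab(e)$. The first observation is that for every vertex $v$ of $T$, the preimage $p^{-1}(v) \subset \hat T$ is a subtree: given $x, y \in p^{-1}(v)$, the geodesic from $x$ to $y$ in $\hat T$ projects under $p$ to a loop at $v$ in $T$, which must be constant since $T$ is a tree, so every edge along the geodesic is collapsed and the entire geodesic lies in $p^{-1}(v)$.

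For an edge $e$ of $T$ with endpoints $v_1, v_2$, the next step is to show that the (unique) geodesic in $\hat T$ joining the disjoint subtrees $p^{-1}(v_1)$ and $p^{-1}(v_2)$ consists of exactly one edge $\hat e$ of $\hat T$, whose image under $p$ is $e$. The key point is that this geodesic projects to the unique path $[v_1, v_2] = e$ in $T$; since $p$ sends vertices of $\hat T$ to vertices of $T$, any interior vertex $w$ of the geodesic would have to map to $v_1$ or $v_2$ and would therefore lie in $p^{-1}(v_1) \cup p^{-1}(v_2)$. But a geodesic connecting two disjoint subtrees of a tree cannot meet them at interior vertices, so the geodesic has no interior vertices and is therefore a single edge, necessarily mapping to the non-degenerate edge $e$.

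To conclude, $\Stab(e)$ fixes $e$ setwise and hence permutes the pair $\{v_1, v_2\}$; by equivariance of $p$, it preserves the set $p^{-1}(v_1) \cup p^{-1}(v_2)$, and therefore stabilizes the unique geodesic in $\hat T$ between these two subtrees, namely $\hat e$. This yields $\Stab(e) \leq \Stab(\hat e)$, as required. The only delicate point is the uniqueness of the lift $\hat e$; once that is in hand, invariance under $\Stab(e)$ comes for free from the equivariance of $p$ and the characterization of $\hat e$ as a geodesic between $G$-equivariantly determined subtrees.
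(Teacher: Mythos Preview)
Your argument is correct and follows the same strategy as the paper: take the collapse map $p\colon \hat T\to T$, look at the preimages $p^{-1}(v_1)$, $p^{-1}(v_2)$ of the endpoints of $e$, and consider the segment $\sigma$ joining these two disjoint subtrees.

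The one difference is that you go further and argue that $\sigma$ consists of a \emph{single} edge. This is true under the paper's definition of a collapse (edges map to edges or vertices), but the paper does not bother: it simply observes that $G_e$ preserves the two subtrees $p^{-1}(v_1)$ and $p^{-1}(v_2)$, hence preserves $\sigma$ setwise and fixes its two endpoints (the unique points of $\sigma$ in each subtree), and therefore fixes \emph{every} edge of $\sigma$ pointwise. This is slightly more economical, since it sidesteps the need to analyse $p(\sigma)$.

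A small presentational point on your version: the sentence ``this geodesic projects to the unique path $[v_1,v_2]=e$'' is asserted rather than proved, and is really the content of the claim. The clean way to organise it is the contrapositive: no interior vertex of $\sigma$ lies in $p^{-1}(v_1)\cup p^{-1}(v_2)$ (by the geodesic property), so the walk $p(\sigma)$ from $v_1$ to $v_2$ never revisits $v_1$ or $v_2$; in a tree any such walk is exactly the edge $e$, forcing $\sigma$ to have no interior vertices. With that reordering your argument is complete.
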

\begin{proof} Consider $e$ an edge in $T$, $v,w$ its vertices, and
  $G_e$ its stabilizer. Let $T'_v, T'_w$ the preimages of $v,w$ by the
  collapse map $\hat{T} \to T$. These are trees, obviously disjoint,
  hence, let $\sigma$ be the segment joining them in $\hat{T}$. The
  collapse map sends $\sigma$ on $e$, hence, $G_e$ stabilizes $\sigma$
  setwise, and also its ends points, since  their $G_e$-orbit
  are respectively in $T'_v\cap \sigma$ and $T'_w\cap \sigma$ (which
  are singletons.) Since $\sigma$ is a segment, all its edges are
  fixed by $G_e$. 
\end{proof}

Two $(G,\calP)$-trees are \emph{compatible} if they are collapses of a
single $(G,\calP)$-tree (or equivalently if they have a common
refinement.)   
 We say that a
$(G,\calP)$-tree $T$ \emph{dominates} a $(G,\calP)$-tree $T'$ it there
is an equivariant map $T\to T'$.  An equivalence class for the
relation of mutual domination is a \emph{deformation space}.

Let $\calE$ be a family of subgroups of $G$; we say that a
$(G,\calP)$-tree is \define{over} $\calE$ if all edge stabilizers are
in $\calE$. A $(G, \calP)$-tree over $\calE$ is \define{universally
  compatible} over $\calE$ relative to $\calP$ if it is compatible
with any other $(G, \calP)$-tree over $\calE$.  In other words, such
a tree $T$ is universally compatible if for any other $(G,
\calP)$-tree $S$ over $\calE$, there exists a $(G, \calP)$-tree over
$\calE$, denoted by $\hat T$, and (equivariant) collapse maps $\hat T
\to T$ and $\hat{T} \to S$.
  
\subsubsection{Splittings of relatively hyperbolic groups}\label{sec;rhg-splittings}
We are now interested in splittings of relatively hyperbolic
groups. Let $(G,\calP)$ be relatively hyperbolic, the class $\calE$ of
\define{elementary subgroups} of $(G,\calP)$ is the collection of
virtually cyclic and parabolic subgroups. An \define{elementary}
$(G,\calP)$-tree is a $(G,\calP)$ tree over  $\calE$. A relatively
hyperbolic group is \define{rigid} if there is no non-trivial, reduced
elementary $(G,\calP)$-splitting.

Following \cite{Bow_periph}, we say that a $(G,\calP)$-tree (or a
$(G,\calP)$-splitting) is \define{peripheral} if it is a bipartite
with white and black vertices, such that the collection of black
vertex stabilizers is $\calP$. 

Every peripheral tree $S$ is a $(G,\calP)$-tree, but the converse is
not true in general.  A necessary condition is that all edge
stabilizers are subgroups of groups in $\calP$.  Another necessary
condition, possibly less important, is that each group of $\calP$
coincides with the stabilizer of a vertex.  In particular, it is
possible to modify a bipartite $(G,\calP)$-tree in which black vertex
stabilizers contain $\calP$ by equivariantly adding valence 1
vertices fixed by groups in $\calP$, attached by an edge to a vertex
in the their fixed-point set. This motivates the following definition.

A  $(G,\calP)$-tree $T$  is \define{essentially
  peripheral}   if it is obtained by
collapsing edges adjacent to inessential (i.e. valence 1) black vertices of
a peripheral tree $S$.
Then, $S$ is said to be a \define{peripheral refinement}
of $T$; they both lie in the same deformation space. Peripheral
splittings of relatively hyperbolic groups are elementary, and we have
the following characterization.

\begin{lemma} \label{lem;carac_ess_periph} A splitting $\bbX$ of
  $(G,\calP)$ is essentially peripheral if and only if it is
  bipartite, all black vertex groups being in the collection $\calP$,
  and no white vertex groups are parabolic.
\end{lemma}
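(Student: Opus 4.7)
The plan is to prove both implications by tracking vertex stabilizers through collapse and refinement operations, using (as implicit in Bowditch's framework for peripheral splittings) that white vertex stabilizers in a peripheral tree are themselves never parabolic.

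For the forward direction, I would let $T_\bbX$ be obtained from a peripheral refinement $S$ by collapsing edges adjacent to valence-$1$ vertices. Bipartiteness is preserved because at each valence-$1$ vertex there is only one edge to collapse, and the merged vertex inherits the color of the non-valence-$1$ endpoint; no same-color adjacency is created. For the black-vertex condition, any valence-$1$ white neighbor $v$ absorbed into a black vertex $b$ has $G_v$ equal to the stabilizer of the unique edge at $v$, which sits inside $G_b \in \calP$, so the merged stabilizer is still $G_b$. For the third condition, any valence-$1$ black neighbor $b_i$ with stabilizer $P_i \in \calP$ absorbed into a white vertex $w$ satisfies $P_i = G_{e_i} \leq W := G_w$, so the merged stabilizer equals $W$ and remains non-parabolic because it was already non-parabolic in the peripheral tree $S$.

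For the reverse direction, I would construct a peripheral refinement $S$ of $T_\bbX$ by attaching pendant black vertices, orbit by orbit, for each $P \in \calP$ not already realized as a black-vertex stabilizer of $T_\bbX$. Such a $P$ is elliptic and hence stabilizes some vertex $v$; if $v$ were black, maximality of both $P$ and $G_v$ in $\calP$, combined with triviality of intersections of distinct maximal parabolics in the torsion-free setting, would force $P = G_v$, contradicting that the orbit is unrealized. Therefore $v$ is white, and by condition (3) the stabilizer $G_v$ properly contains $P$ without being parabolic. Attaching a $G$-orbit of pendant black vertices at the appropriate white vertices, each with stabilizer $P$ and joined by a pendant edge with stabilizer $P$, produces a bipartite $(G,\calP)$-tree $S$ whose black-vertex stabilizers are exactly $\calP$, thus a peripheral tree. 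Collapsing the new valence-$1$ pendants recovers $T_\bbX$, so $T_\bbX$ is essentially peripheral.

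The main obstacle is the forward direction's third clause: one must know that white vertex stabilizers in a peripheral tree are never parabolic, which is part of Bowditch's definition but not spelled out in the bare phrase ``the collection of black vertex stabilizers is $\calP$''. Without this convention, subdivisions such as the natural peripheral splitting of $\bbZ^2 *_\bbZ \bbZ^2$ (where the central white vertex has parabolic stabilizer $\bbZ$) would give counterexamples. Once the convention is in force, the remainder is routine bookkeeping of stabilizers using Proposition \ref{prop;parab_own_norm} together with the almost-malnormality of maximal parabolics.
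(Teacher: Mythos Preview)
The paper states this lemma without proof, treating it as a routine consequence of the definitions, so there is no argument to compare against. Your approach is the natural one, and both directions are essentially correct.

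The obstacle you flag in the forward direction is not a convention to be assumed but a fact that follows from relative hyperbolicity—indeed from the very almost-malnormality you invoke at the end. Your proposed counterexample $\bbZ^2 *_{\bbZ} \bbZ^2$ is not relatively hyperbolic with respect to the two $\bbZ^2$ factors: they meet in an infinite subgroup, violating the finite-intersection property of distinct maximal parabolics. More generally, suppose $w$ is a white vertex of valence at least two in a peripheral tree $S$ with $G_w$ infinite and contained in some $P \in \calP$. Since $G_w \leq P = G_{b_P}$, the group $G_w$ fixes the segment $[w, b_P]$; the first black vertex along it carries a maximal parabolic containing $G_w$, hence equal to $P$ by almost-malnormality, so $w$ is adjacent to $b_P$. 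Any other black neighbour $b'$ of $w$ carries some $P' \neq P$, and the edge $(w,b')$ has stabilizer inside $G_w \cap P' \leq P \cap P'$, which is finite—trivial in the torsion-free case. Thus, once edge stabilizers are infinite (as they are wherever the lemma is applied), no such $w$ exists and the forward direction goes through. The missing step is therefore not a matter of Bowditch's convention but a direct consequence of the tools you already list.
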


\begin{defn}\label{defn;induced-structure}
  Let $T$ be an essentially peripheral $(G,\calP)$-tree, with the
  convention that black vertex stabilizers are the groups of
  $\calP$. Let $w$ be a white vertex and $G_w$ its stabilizer in $G$.
  Then the \emph{induced peripheral structure on $G_w$}
  is \[\calP_T(G_w) = \{ P\cap G_w \mid P \in \calP\}.\]
\end{defn}

Note that this provides a well defined induced peripheral structure on
a vertex group of a splitting. It may help to be more explicit, as in
the following.

\begin{lemma} \label{lem;describe_induced} Let $(G,\calP)$ be
  relatively hyperbolic.  Let $w$ a white vertex of a peripheral
  $(G,\calP)$-tree, and ${\rm lk}(w)$ its link (the set of edges
  adjacent to $w$.)  The induced peripheral structure of the vertex
  stabilizer $G_w$ is $\calP_T(G_w) = \{ P\in \calP, P<G_w\} \cup \{
  G_e, e\in {\rm lk}(w) \}$. 
\end{lemma}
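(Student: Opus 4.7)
The plan is to unpack Definition \ref{defn;induced-structure}: $\calP_T(G_w)=\{P\cap G_w : P\in \calP_T\}$, where in a peripheral tree $T$ the collection $\calP_T$ of black vertex stabilizers coincides with the set of all conjugates of the peripheral representatives, and each $P\in\calP_T$ stabilizes a unique black vertex $b$ with $P=G_b$. Consequently $P\cap G_w$ is the pointwise stabilizer of $\{b,w\}$, i.e. the stabilizer of the geodesic segment $[b,w]$ in $T$, and the computation reduces to a case analysis on the length of this segment.

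Because $T$ is bipartite with endpoints of opposite color, the length of $[b,w]$ is odd. If the length equals one, then $b$ is the far endpoint of some $e\in\lk(w)$, $[b,w]=e$, and $P\cap G_w=G_e$; this immediately yields the inclusion $\{G_e : e\in\lk(w)\}\subseteq \calP_T(G_w)$. If the length is at least three, the segment contains a black vertex $v\neq b$, whence $P\cap G_w\subseteq G_b\cap G_v$, the intersection of two distinct maximal parabolics. I would then invoke the almost malnormality of maximal parabolics in a relatively hyperbolic group to conclude that this intersection is trivial in the torsion-free setting implicit in the paper, so these cases contribute nothing non-trivial to $\calP_T(G_w)$.

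For the reverse inclusion, assume $P\in\calP$ satisfies $P<G_w$. Then $P$ fixes both $w$ and its own black vertex $b$, hence the entire segment $[b,w]$. A length-$\geq 3$ segment would place $P$ inside a second maximal parabolic $G_v$, contradicting the maximality of $P$; so $b$ is adjacent to $w$ via some $e\in\lk(w)$. The chain $G_e\subseteq G_b=P\subseteq G_w\cap G_b=G_e$ then forces $P=G_e$, establishing $\{P\in\calP : P<G_w\}\subseteq \{G_e : e\in\lk(w)\}$. Combining with the forward inclusion yields the equality asserted by the lemma.

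The main obstacle is the length-$\geq 3$ case, which relies on the almost malnormality of distinct maximal parabolics in $(G,\calP)$, an input beyond the bare Bass--Serre data. All the remaining steps are routine manipulations of the bipartite peripheral structure of $T$ together with the elementary identity that the stabilizer of an edge is the intersection of the stabilizers of its two endpoints.
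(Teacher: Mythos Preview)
Your argument is correct and follows essentially the same approach as the paper: both rely on the fact that two distinct maximal parabolics intersect finitely (trivially, in the torsion-free setting) to force the black vertex $b$ fixed by $P$ to be adjacent to $w$. The paper's version is slightly more compressed---it observes $P\cap G_w\subseteq G_b\cap G_{b'}$ for $b'$ the black neighbour of $w$ on $[w,b]$ and concludes $G_b=G_{b'}$ directly---whereas you do an explicit case split on the length of $[w,b]$ and then separately verify that $\{P\in\calP:P<G_w\}\subseteq\{G_e:e\in\lk(w)\}$, but the content is the same.
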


\begin{proof} All these groups are clearly in $\calP_T(G_w)$, since
  the vertex at the other end of each $e\in {\rm lk}(w)$ is black. If
  $P\in \calP$, then it fixes a black vertex $b$, and $P\cap G_w$
  stabilizes the first edge $e$ of the segment $[w,b]$. Let $b'$ be
  the other vertex of $e$. Then $G_{b'} \cap G_b$ is infinite, which,
  in a relatively hyperbolic group implies $G_b=G_{b'}$ (both groups
  are peripheral.)
\end{proof}

Bowditch proved:

\begin{thm}[First clause of {\cite[Theorem
    1.3]{Bow_periph}}]\label{thm;induced-struct}
  Suppose that $(G,\calP)$ is a relatively hyperbolic group. Suppose
  $\Gamma_v$ is some non-peripheral vertex group of some peripheral
  splitting $\bbX$ of $G$. Then $\Gamma_v$ is hyperbolic relative the
  induced peripheral structure $\calP_\bbX(\Gamma_v)$.
\end{thm}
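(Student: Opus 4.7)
The plan is to deduce the theorem from the general principle that a relatively quasi-convex subgroup of a relatively hyperbolic group is itself relatively hyperbolic, with peripheral subgroups given by its infinite intersections with conjugates of the ambient peripheral subgroups (a fact developed in various forms by Bowditch, Osin, Hruska, and Mart\'inez-Pedroza). The work thus reduces to two steps: (a) establishing that $\Gamma_v$ is relatively quasi-convex in $(G,\calP)$, and (b) identifying the induced peripheral structure with $\calP_\bbX(\Gamma_v)$.

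For (a), I would fix a proper hyperbolic space $X$ with $G$-action and horoballs $\calH$ as in Definition \ref{def;RHG}, and let $\tilde v$ be the white vertex of the Bass-Serre tree $T_\bbX$ stabilized by $\Gamma_v$. Since $\bbX$ is peripheral, each adjacent black vertex $b$ has stabilizer $G_b\in\calP$, which stabilizes a unique horoball $H_b\in\calH$; the edge stabilizer $G_e$ for $e=[\tilde v,b]$ is contained in $G_b$. The plan is to build a $\Gamma_v$-invariant subspace $X_v\subset X$ by taking, for each $\Gamma_v$-orbit of edges in $\lk(\tilde v)$, the corresponding horoball orbit $\Gamma_v\cdot H_b$, and attaching these to a $\Gamma_v$-equivariant, $\Gamma_v$-cocompact connected piece of $X\setminus\bigcup\calH$. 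The finiteness of $\lk(\tilde v)/\Gamma_v$ (inherited from finiteness of the edge set of $\bbX$ modulo $G$) together with the cocompactness of $G$ on $X\setminus\bigcup\calH$ give a proper cocompact action of $\Gamma_v$ on the non-horoball part of $X_v$; combined with a quasi-convexity check for $X_v$ inside $X$, this verifies Definition \ref{def;RHG} for $(\Gamma_v,\calP_\bbX(\Gamma_v))$.

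For (b), the general quasi-convexity theorem yields peripheral subgroups on $\Gamma_v$ of the form $\Gamma_v\cap gPg^{-1}$ for $P\in\calP$ whose intersection with $\Gamma_v$ is infinite. The key fact, already invoked in the proof of Lemma \ref{lem;describe_induced}, is that two peripheral subgroups of $(G,\calP)$ with infinite intersection must coincide; hence each such infinite intersection equals a whole peripheral subgroup $P'\leq\Gamma_v$, which must fix an edge $e\in\lk(\tilde v)$, making $P'=G_e$. This matches $\calP_\bbX(\Gamma_v)$ as described there.

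I expect the main obstacle to lie in step (a): verifying that $X_v$ is globally quasi-convex in $X$, and that the $\Gamma_v$-action is properly discontinuous on $X_v$ minus the interiors of the attached horoballs. Peripherality of $\bbX$ is indispensable here, since it ensures that every parabolic subgroup of $G$ contributing a nontrivial intersection to $\Gamma_v$ is already visible as a horoball stabilizer at an adjacent black vertex, preventing distant horoballs from producing unaccounted-for parabolic behavior inside $\Gamma_v$.
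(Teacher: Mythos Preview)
The paper does not prove this theorem at all: it is stated as a citation of Bowditch's result (the first clause of \cite[Theorem 1.3]{Bow_periph}) and is used as a black box, notably in the proof of Proposition \ref{prop;RH_passing_to_a_vertex}. There is therefore no proof in the paper to compare your proposal against.

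As for your sketch on its own merits: the overall strategy via relative quasi-convexity is sound and is indeed one standard route to Bowditch's theorem. Your step (b), however, is slightly misformulated. You write that an infinite intersection $\Gamma_v\cap gPg^{-1}$ ``equals a whole peripheral subgroup $P'\leq\Gamma_v$''; this is not what follows from the malnormality of peripheral subgroups. The correct argument, essentially that of Lemma \ref{lem;describe_induced}, is: $gPg^{-1}$ fixes a black vertex $b$, the infinite intersection fixes the segment $[\tilde v,b]$ and hence the first edge $e\in\lk(\tilde v)$, so $\Gamma_v\cap gPg^{-1}\leq G_e\leq G_{b'}$ with $b'$ the black endpoint of $e$; since $G_{b'}$ and $gPg^{-1}$ are peripheral with infinite intersection they coincide, whence $\Gamma_v\cap gPg^{-1}=G_e$. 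So the intersections are exactly the edge groups, not a priori full peripheral subgroups. Your step (a) is where the genuine analytic content lies, and the quasi-convexity of $X_v$ in $X$ is the nontrivial point; this is precisely the substance of Bowditch's argument.
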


The reader can check that the same conclusion holds if $\bbX$ is
essentially peripheral instead of peripheral, by looking at the
peripheral refinement. An elementary splitting of $(G,\calP)$ is not
always essentially peripheral: an edge group can be
non-parabolic virtually cyclic. We therefore find it convenient to
define the following augmented peripheral structure.

\begin{defn}\label{defn;augmented-periph}
  Let $T$ be a bipartite $(G,\calP)$-tree (with black and white
  vertices), and let $\calB_T$ denote the set of stabilizers in $G$ of
  the black vertices of $T$. We say that the \define{augmented
    peripheral structure} of $(G,\calP)$ induced by $T$ is $\calP_T =
  \calP \cup \calB_T$. Let $\bbX$ be a bipartite splitting (with black
  and white vertices) of $(G,\calP)$, the \define{augmented peripheral
    structure} $\calP_\bbX$ of $(G,\calP)$ induced by $\bbX$ is that
  induced by the Bass-Serre tree of $T_\bbX$.
\end{defn}

If we are only increasing our peripheral structure to contain
virtually cyclic groups, then the augmented peripheral structure is,
yet again, relatively hyperbolic:

\begin{prop}\label{prop;RH_passing_to_a_vertex} Let  $(G,\calP)$ be a
  relatively hyperbolic group.  Let $T$ be a bipartite elementary
  $(G,\calP)$-tree with maximal elementary black
  vertices. 
  Let $G_v$ be a
  vertex stabilizer. Then $G_v$ is hyperbolic relative to
  $\calP_T(G_v)$. 
\end{prop}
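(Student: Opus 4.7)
The plan is to first augment the peripheral structure $\calP$ so that it absorbs the black vertex stabilizers introduced by $T$, establish that the enlarged pair is still relatively hyperbolic, and finally restrict to $G_v$ using Drutu-Sapir.

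Each black vertex stabilizer, being a maximal elementary subgroup of $(G,\calP)$, is either a maximal parabolic (hence already a member of $\calP$) or a maximal infinite virtually cyclic (loxodromic) subgroup of $G$. Writing $\calV$ for the collection of conjugacy classes of the latter, one has $\calP_T = \calP \cup \calV$. The pair $(G,\calP_T)$ is then itself relatively hyperbolic, by the standard preservation result (due to Osin, and essentially already present in the theory of relatively quasiconvex subgroups) that adjoining a finite collection of pairwise non-conjugate, almost malnormal maximal loxodromic subgroups to the peripheral structure of a relatively hyperbolic group again yields a relatively hyperbolic group. The required almost malnormality of $\calP \cup \calV$ follows from the maximality hypothesis on black vertex stabilizers together with standard facts on elementary subgroups: any infinite group sitting inside two distinct maximal elementary subgroups would, by uniqueness of the ``maximal elementary hull'' of an infinite subgroup, force these two maximal elementaries to coincide.

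Because $T$ exhibits $G$ as the fundamental group of a finite graph of groups with finitely generated (elementary) edge groups, each vertex stabilizer $G_v$ is finitely generated and undistorted in $G$. Applying Theorem \ref{thm;induced-parabolics} of Drutu-Sapir to the relatively hyperbolic pair $(G,\calP_T)$ and to the undistorted subgroup $G_v$ yields a relatively hyperbolic structure on $G_v$ whose parabolic subgroups are precisely the infinite intersections $G_v \cap gQg^{-1}$ for $g \in G$ and $Q \in \calP_T$. By Definition \ref{defn;induced-structure} these are exactly (the infinite members of) $\calP_T(G_v)$, yielding the proposition. (When $v$ is black, the conclusion is immediate, since $G_v \in \calB_T \subseteq \calP_T$ makes $G_v$ itself an element of $\calP_T(G_v)$.) The principal obstacle is the first step: the careful invocation of the adjoining result for maximal loxodromic subgroups. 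Without the maximality hypothesis on the black stabilizers, almost malnormality of $\calP_T$ could fail and the entire strategy would collapse, so the hypothesis on $T$ is used in an essential way precisely there.
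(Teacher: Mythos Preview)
Your first step---adjoining the maximal loxodromic black vertex stabilizers to $\calP$ and invoking Osin's result to conclude that $(G,\calP_T)$ is relatively hyperbolic---is exactly what the paper does.

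The divergence is in the second step. The paper applies Bowditch's theorem on peripheral splittings (Theorem~\ref{thm;induced-struct}) directly: once $(G,\calP_T)$ is relatively hyperbolic, $T$ is essentially peripheral for $\calP_T$, and Bowditch's result immediately gives that each non-peripheral vertex group is relatively hyperbolic with the induced structure. No undistortedness is needed as input.

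Your route via Dru\c{t}u--Sapir has a genuine gap. You assert that ``$T$ exhibits $G$ as the fundamental group of a finite graph of groups with finitely generated (elementary) edge groups, [so] each vertex stabilizer $G_v$ is finitely generated and undistorted in $G$.'' Finite generation follows, but undistortedness does not follow from the graph-of-groups structure alone: the vertex group $\langle a\rangle$ of $BS(1,2)=\langle a,t\mid tat^{-1}=a^2\rangle$ is exponentially distorted despite cyclic edge groups. Undistortedness of $G_v$ is in fact true here, but it requires an argument---e.g.\ that vertex groups of peripheral splittings are relatively quasiconvex, hence undistorted---which essentially goes through Bowditch's machinery anyway. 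So your detour both introduces an unjustified step and, once patched, offers no economy over the paper's direct appeal to Theorem~\ref{thm;induced-struct}.

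A secondary issue: Theorem~\ref{thm;induced-parabolics} says $G_v$ is hyperbolic relative to \emph{some} of the intersections $G_v\cap gQg^{-1}$, not all of them; you would still need to match this output with $\calP_T(G_v)$ as defined.
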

\begin{proof}
  First, note that $G$ itself is hyperbolic relative to $\calP_T$, the
  collection of maximal elementary subgroups stabilizing some edge,
  since it is obtained from $\calP$ by adding maximal virtually cyclic
  edge groups by\cite[Corollary 1.7]{Osin_ESBG}.  The result now
  follows from Theorem \ref{thm;induced-struct}.
\end{proof}

\section{Canonical splittings and deformation spaces of relatively hyperbolic
  groups}\label{sec;canonical-splittings}

In the first two parts of this section  we recall definitions
and key properties of Dunwoody-Stallings and JSJ decompositions, and
we show how to compute such canonical splittings of relatively
hyperbolic groups, provided we have a procedure to certify
rigidity. In Section \ref{sec:detect-rigid} we will show how how to
use the second author's results to decide rigidity; thus giving
algorithms to compute these canonical splittings.

\subsection{ Dunwoody-Stallings decompositions}

\begin{defn}[relative Dunwoody-Stallings decomposition]
  Let $(G,\calP)$ be a group with a peripheral structure. $G$ is
  one-ended relative to $\calP$ if it has a fixed point on any
  $(G,\calP)$-tree of which the edge stabilizers are finite.
  \define{A Dunwoody-Stallings decomposition} of $(G,\calP)$ is a
  reduced splitting $\bbD$ of $(G,\calP)$ such that every edge group
  is finite and every vertex group is one-ended relative to $\calP$.
\end{defn}

A Dunwoody-Stallings decomposition can be obtained by successively 
refining splittings of $(G,\calP)$ and terminating when all the vertex
groups are relatively one-ended. It exists for finitely presented groups by
\cite{Dunwoody-1985}. Although such splittings, strictly speaking, are
not canonical, the corresponding \emph{deformation space} is.
Specifically if $\bbD_1$ and $\bbD_2$ are two Dunwoody-Stallings
decompositions then there are $G$-equivariant maps:
\begin{eqnarray*}
  T_{\bbD_1}&\onto&T_{\bbD_2}\\
  T_{\bbD_2}&\onto&T_{\bbD_1}\\
\end{eqnarray*}
between the dual Bass-Serre trees whose compositions are homotopic to
the identity. The following easy fact tells us how to compute such a
decomposition.

\begin{prop}\label{prop:compute-DS}
  If there is a procedure to decide if each vertex group that occurs
  in a sequence of splittings of $(G,\calP)$ with finite edge groups
  \[ \bbX_1,\ldots,\bbX_d
  \] obtained from a sequence of refinements of the vertex groups is
  one ended relative to $\calP$, then there is a procedure to produce
  a Dunwoody-Stallings decomposition relative to $\calP.$
\end{prop}

If $(G,\calP)$ is torsion-free then the 
Dunwoody-Stallings decompositions are in fact   \define{Grushko
  decompositions relative to $\calP$:}
\begin{equation}
  \label{eqn:grushko}
  G = H_1 * \ldots * H_p*\bbF_q,
\end{equation}
where each subgroup in $\calP$ is conjugate into some $H_i$ and each
$H_i$ is freely indecomposable relative to $\calP$. The $H_i$ are
canonical up to permutation of the indices and conjugacy and the pair
$(p,q)$ known as Scott complexity is well defined for $(G,\calP)$.

\subsection{JSJ decompositions}

We now turn our attention to one-ended relatively hyperbolic
groups. Group theoretical JSJ theory (as initiated by Rips and Sela
\cite{R-S-JSJ}) gives a complete description of their elementary
splittings. We start by presenting some key points of this theory,
following Guirardel and Levitt's treatment. For the experts, we may
anticipate by saying that  
the canonical JSJ decomposition will
be the (collapsed) tree of cylinders for co-elementarity of the
compatibility JSJ deformation space, as defined in \cite{2016arXiv160205139G}. We
will provide a new characterization, Theorem \ref{thm:JSJ-charac-II},
of this splitting that is particularly well-suited for computations.

We follow \cite{2016arXiv160205139G} (see also \cite{GL_trees}); recall the terminology introduced
in Sections \ref{sec;vocab_trees} and \ref{sec;rhg-splittings}. Let
$(G, \calP)$ be a relatively freely indecomposable, relatively
hyperbolic group, and let $\calE$ be the class of its elementary
subgroups. The \emph{tree of cylinders for co-elementarity} $T_c$ of
an elementary $(G,\calP)$-tree $T$ is constructed as follows
(\cite[Definition 4.3]{GL_trees}, \cite[Definition 7.2]{2016arXiv160205139G}.)

Define an equivalence relation on the set of edges of $T$: two edges
are equivalent if their stabilizers are subgroups of the same maximal
elementary subgroup. The equivalence classes are the \emph{cylinders}
$Cyl(T)$ of $T$. The set of vertices of $T_c$ is the union of
$V_1=Cyl(T)$ and $V_2\subset V(T)$ where $V_1$ is the set of
cylinders, and $V_2$ is the set of vertices of $T$ in at least two
cylinders. The set of edges of $T_c$ is given by the membership
relation if a vertex in $w \in V_2$ is contained in a cylinder in $c
\in V_1$, then we connect the vertices $w$ and $c$ of $T_c$ by an
edge; thus $T_c$ is bipartite. The group $G$ acts naturally on
$T_c$. The stabilizer of a vertex in $V_1$ is the global stabilizer of
the cylinder. By \cite[Proposition 6.1]{GL_trees} the \emph{collapsed
  tree of cylinder} for an elementary $(G,\calP)$-tree coincides with
the tree of cylinders $T_c$ (we mention this tree because  it appears in some
results we quote.)

\begin{lemma}\label{lem;group_of_cyl} If $(G,\calP)$ is a
   relatively
  hyperbolic group, and $T$ an elementary $(G,\calP)$-tree with
  infinite edge stabilizers, then, in the tree of cylinders,  
  the stabilizer of a vertex in $V_1$  is the unique
  maximal elementary group of $G$  containing the stabilizers of the edges
  contained in the  cylinder.
\end{lemma} 
\begin{proof}  
Let $C\in  Cyl(T)$ be a cylinder. Assume that this equivalence class
correspond to the inclusion of edge stabilizers in $E$, some
maximal elementary subgroup of $G$. Let $g\in G$ stabilizing
$C$. For an edge $e\in C$, $gc\in C$, hence both their stabilizers $G_e$ and
$g G_e  g^{-1}$  are in $E$. Hence  $G_e \subset E\cap g^{-1}Eg$,  and
$G_e$      being infinite,  
by Proposition \ref{prop;parab_own_norm}, if $E$ is parabolic,   $g\in
E$, and  it is also true if $E$ is virtually cyclic, non-parabolic. Conversely, if $g\in E$ it
clearly preserves $C$ as a set. 

\end{proof}
  
Recall that, by \cite[Theorem 9 and 10]{2016arXiv160205139G} there exists a
$(G,\calP)$-tree that is invariant by automorphisms of $(G,\calP)$, universally compatible over the class $\calE$ of elementary subgroups, relative
to $\calP$, and maximal for domination.  We consider $T_\bbJ$ its tree
of cylinders for co-elementarity. We call this tree (and the
associated splitting) the \emph{canonical elementary JSJ} tree (and
splitting, or decomposition) of $(G, \calP)$ (but we often drop the
word elementary.) 

Before continuing we must define the necessary notation and
terminology for $\QH$ subgroups. We refer the reader to
\cite[\S5]{2016arXiv160205139G} for a comprehensive treatment of $\QH$
 subgroups and a good treatment of  hyperbolic 2-orbifolds. In the
spirit of \cite{Dunwoody-Sageev} we have the following:

\begin{defn}[Finite-by-orbifold group]\label{defn:finite-by-orbifold}
  A \define{finite-by-orbifold group} is a pair $(Q,Q \actson P)$
  where:
  \begin{enumerate}[(i)]
  \item $Q$ is a group,
  \item $P$ is a complete, infinite, simply connected, planar, Gromov
    hyperbolic 2-complex, and
  \item the action $Q \actson P$ is properly discontinuous and
    cocompact.
  \end{enumerate}
  In particular $Q$ sits inside a short exact sequence
  \begin{equation}
    \label{eq:finite-by-orbifold}
    1 \to F \into Q \onto \orbfun{\calO}\to 1    
  \end{equation}
  where $F$ is the finite kernel of the action of $Q$ on $P$ and
  $\orbfun{\calO}$ is the orbifold fundamental group of the orbifold
  $\calO$ dual to the action of $Q/F$ on $P$. $\calO$ is called the
  \define{base of the finite-by-orbifold group}. $Q$ naturally
  inherits a peripheral structure $\calP_{Q,P}$ called the
  \define{peripheral boundary structure} which consists of the
  stabilizers of the connected components of $\partial P$.
\end{defn}





An orbifold $\calO$ has an orbifold fundamental group $\orbfun\calO$,
and has an orbifold boundary $\orbbdy\calO \subset \topbdy\calO$
consisting of regular points whose connected components are either
simple closed curves or arcs with endpoints in mirrors. If a connected
component $C \subset \orbbdy\calO$ is a circle, then we say it is
\define{$\bbZ$-type} and it's inclusion in $\calO$ corresponds to a
conjugacy class of cyclic groups in $\orbfun\calO$. If $C$ is an arc
between mirrors, then we say that it is \define{infinite dihedral or
  $\dihedral$-type} and its inclusion in $\calO$ corresponds to a
conjugacy class of an infinite dihedral subgroup. We call these
subgroups the \define{peripheral boundary subgroups} and we denote by
$\calP_\calO$ the corresponding peripheral structure on
$\orbfun{\calO}$. If $\calO$ is the base of a finite-by-orbifold
group $(Q,\calP_Q)$ then it follows from Definition
\ref{defn:finite-by-orbifold} that $\calP_Q$ consists of the preimages
of the groups in $\calP_\calO$ via the surjection given in
(\ref{eq:finite-by-orbifold}).

\begin{defn}[$\QH$  subgroup. See {\cite[Definition
    5.13]{2016arXiv160205139G}}]
  \label{defn:fbo} Let $(G,\calP)$ be a one-ended relatively
  hyperbolic group and let $\bbX$ be an elementary splitting. A
  \define{$\QH$ subgroup} is a vertex group $G_q$ of $\bbX$ such that
  equipped with $(G_q,\calP_\bbX(G_q))$, the peripheral structure
  induced by $\bbX$ (Definition
  \ref{defn;induced-structure}), satisfies the following:
  \begin{enumerate}
  \item $G_q$ can be made into a finite-by-orbifold group
    $(G_q,G_q\actson P)$, and
  \item the induced peripheral structure coincides with the peripheral
    boundary structure (Definition \ref{defn:finite-by-orbifold}),
    i.e. \[ \calP_\bbX(G_q) = \calP_{G_q,P}.
    \]
  \end{enumerate}
\end{defn}

The equality above implies that the  
 infinite intersections of $\QH$ subgroups
with parabolic subgroups of $(G,\calP)$ are  peripheral
boundary subgroups.

\begin{prop}[c.f. {\cite[Theorem 9.18 and Corollary 9.20]{2016arXiv160205139G}}]\label{prop:jsj-charac-1} 
  Let $(G,\calP)$ be a one-ended relatively hyperbolic group. The
  canonical JSJ decomposition $\bbJ$ of $(G,\calP)$ is an elementary
  splitting satisfying the following properties:
  \begin{enumerate}
  \item\label{it:JSJ-part1} The underlying graph $J$  
    is bipartite with
    black and white vertices.
    \begin{enumerate}[(i)]
    \item\label{it:max-elem-vert-gp} The black vertex groups $G_b$ are
      maximal elementary vertex groups.
    \item\label{it:white-gps} The white vertex groups $G_w$ are
      non-elementary and fall into one of the following two
      categories:
      \begin{enumerate}[(a)]
      \item \label{it:white-qh}$(G_w,\calP_\bbJ)$ is isomorphic to a $\QH$  group
        $(Q,\calP_Q)$. 
    \item\label{it:rigids} $(G_w,\calP_\bbJ)$ is rigid (in the sense
      previously defined that there is no non-trivial, reduced
      elementary $(G_w,\calP_\bbJ)$-splitting) and not isomorphic to a
      $\QH$ subgroup. Also for any two different adjacent edges, the
      image of the edge groups in $G_w$ are not conjugated in $G_w$
      into the same maximal elementary subgroup of $G_w$.
      \end{enumerate}
      
    \end{enumerate}
  \item\label{it:univ-compat} $\bbJ$ is universally compatible over the class $\calE$ of elementary subgroups,
   relative to $\calP$.
  \item\label{it:ue-edge-gps} Every edge stabilizer in $T_\bbJ$ is
    elliptic in any $(G,\calP)$-tree over $\calE$.
    
  \item\label{it:essential} $(G_w,\calP_\bbJ)$ is essential,
    i.e. there are no valence-1 vertices in $T_\bbJ$.
  \item\label{it:auto-invariant} $\bbJ$ is canonical, in the sense that it is invariant by automorphisms of $(G,\calP)$. 
  \end{enumerate}
\end{prop}

\begin{proof}
  By definition, $T_\bbJ$ is the collapsed tree of cylinders from
  \cite[Theorem 9.18]{2016arXiv160205139G}, thus is bipartite,
  invariant by automorphisms, and by Lemma \ref{lem;group_of_cyl}, up
  to choice of colouring, the stabilizers of black vertex groups are
  maximal elementary.  This proves (\ref{it:max-elem-vert-gp}) and
  (\ref{it:auto-invariant}). Next, (\ref{it:univ-compat}) follows from
  \cite[Corollary 9.20]{2016arXiv160205139G}.  By construction of
  cylinders, the white vertex groups are non-elementary.  The rigidity
  of non-$\QH$ white vertex groups is a consequence of the maximality
  for domination (for the universally compatible trees) of $T_\bbJ$,
  thus the groups given by (\ref{it:white-qh}) are the only
  non-rigid white vertex groups.  The claim (\ref{it:rigids}) is a
  consequence that black vertex groups are groups of cylinders for
  coelementarity.


  By Lemma \ref{lem;comp_ell}, every edge group in $\bbJ$ will remain
  elliptic after a refinement and, of course, also after a collapse.
  So, by universal compatibility, (\ref{it:ue-edge-gps})
  follows. (\ref{it:essential}) is an immediate consequence of
  requiring the action of $G$ on $T_\bbJ$ to be minimal.
  
\end{proof}

\begin{defn}\label{defn:inessential}
  An edge $e$ of a $G$-tree $T$ is called \define{inessential} if its
  stabilizer equals that of one of its vertices.  
\end{defn}

\subsubsection{Structure, algorithmic recognition, and presentation
  of virtually infinite cyclic subgroups.}

First we recall some generalities about these groups.

\begin{lem}\label{lem:barely-infinite}
  Every proper quotient of $\bbZ$ and $\bbZ_2*\bbZ_2$ is finite.
\end{lem}

\begin{lem}[See {\cite[Lemma 11.4]{hempel20043}}]\label{lem:vz-struct}
  If $G$ is virtually infinite cyclic then it sits in an exact
  sequence\[ 1 \to K \to G \to Q \to 1
  \] where $Q$ is either $\bbZ$ or $\bbZ_2*\bbZ_2$, and $K$ is a
  maximal normal finite subgroup. We call $Q$ the
  \define{$\Isom(\bbR)$-quotient} and $K$ the
  \define{$\Isom(\bbR)$-kernel.}
\end{lem}

The next result tells us how to detect and present virtually
infinite cyclic groups given certain algorithmic
properties.

\begin{defn}\label{defn:tf-index}
  If a group $H$ is virtually torsion-free, then a \define{torsion-free index}
  is a number $\tfi H \in \bbZ_{\geq 0}$ such that the intersection\[
    \bigcap_{[H:K]\leq \tfi H} K
  \] is torsion free.
\end{defn}

\begin{lem}[Detecting virtually infinite cyclic from presentations]\label{lem:vz-parabolic}
  Suppose we are given a finite group presentation $P= \bk{X\mid R}$,
  a solution to the word problem for this presentation, and an upper
  bound $\tfi P \leq L$. Then there is an algorithm that decides if
  $P$ is virtually infinite
  cyclic.
\end{lem}

\begin{proof}
  We first give a characterization of virtually infinite cyclic
  groups. Let $L$ be the upper bound for $\tfi P$ and let
  \[ Q = \bigcap_{[P:K]\leq L}K.
  \]
  \emph{Claim: $Q\approx \bbZ$ if and only if $P$ is virtually
    infinite cyclic.} By definition, $Q$ is torsion-free and of finite
  index in $P$. If $P$ is virtually infinite cyclic  then, by
  Lemma \ref{lem:vz-struct}, $Q \approx \bbZ$.
  The converse is obvious. The claim is proved.


  Suppose now that we are given a finite group presentation $P$ that
  satisfies the statement of the lemma. We can find presentations of
  all its finite index subgroups up to index $M$ as well as a
  presentation for their intersection; thus we can find a presentation
  for $Q$. On the other hand we can decide, using our solution to the
  word problem in $P$, whether the generators of $Q$ commute, and if
  so, using $\bbZ$-module calculations decide if $Q \approx \bbZ$.

\end{proof}

We leave the proof of the following to the reader. 

\begin{lem}\label{lem:tfi-monotonic}
  If $P \leq H$ then $\tfi P \leq \tfi H$.
\end{lem}

\begin{cor}\label{cor:present-par-vz}
  If $P$ virtually torsion free, hereditarily algorithmically tractable,
  and with algorithmically bounded torsion, then we can decide whether
  a collection of elements $S \subset P$ generates a virtually
  infinite cyclic subgroup, and find a presentation for the subgroup.
\end{cor}
\begin{proof}
  By hereditary algorithmic tractability we can find a presentation
  for $\bk{S}$. Since $P$ is residually finite and torsion is
  algorithmically bounded, we can find a finite index torsion free
  subgroup and therefore give an upper bound for $\tfi P$. By Lemma
  \ref{lem:tfi-monotonic} this gives an upper bound for $\tfi{\bk S}$,
  and the result follows from Lemma \ref{lem:vz-parabolic}.
\end{proof}

We now show how to detect and present non-parabolic virtually infinite
cyclic subgroups of relatively hyperbolic groups.

\begin{lem}\label{lem:enum-vz}
  There is an algorithm that enumerates presentations of all virtually
  infinite cyclic groups, finds their $\Isom(\bbR)$-kernels, and gives
  an algorithm to solve the word problem for these groups.
\end{lem}

This lemma has an elementary approach, based on computations with
group presentations and basic algebra of finite groups, as well as a
cohomological approach, based on the classification of group
extensions (see \cite[\S IV.6]{brown_cohomology_1994}.) Although these
approaches are certainly more elegant, in the interest of providing
the shortest possible complete argument we have the following.
\begin{proof}
  
  Enumerate all group presentations. In parallel, for each group
  presentation, run Papasoglu's algorithm \cite{Pap} which will
  terminate if a presentation defines a hyperbolic group. For each
  presentation $P$ of a hyperbolic group $H$, knowing the
  hyperbolicity constant $\delta$, we can find a maximal finite normal
  subgroup $K$ as well as solve the word problem for $H$. Enumerate
  all Tietze transformations on the presentation of $H/K$. If via
  Tietze transformations we either obtain $\bk{a \mid }$ or
  $\bk{a,b \mid a^2,b^2}$, then stop that branch and output the
  presentation $P$ and the normal subgroup $K$. It is obvious that
  this algorithm will output exactly the presentations of the
  virtually infinite cyclic groups.
\end{proof}

\begin{lem}[Injectivity for virtually infinite
  cyclic groups]\label{lem:vz-inj}
  Let $G$ be virtually infinite cyclic and let it sit in the short
  exact sequence \[ 1 \to K \to G \to Q \to 1 \] given in Lemma
  \ref{lem:vz-struct}. Let $\varphi: G \to H$ be a homomorphism such
  that
  \begin{enumerate}
  \item\label{it:inf-img} $\varphi(G)$ is infinite, and 
  \item the intersection with $K$, the $\Isom(\bbR)$-kernel, and
    $\ker(\varphi)$ is trivial:\[ K \cap \ker(\varphi) = \{1\}.
      \]
  \end{enumerate}
  Then $\varphi$ is injective.
\end{lem}
\begin{proof}
  Suppose towards a contradiction that there was some element
  $a \in G \setminus K$ such that $\varphi(a)=1$. Then its image
  $\bar a \in Q$ is non-trivial, so by Lemma \ref{lem:barely-infinite}
  $Q/\ncl{\bar a}$ is finite. On the one hand we have the commutative
  diagram with exact rows:\[
    \begin{tikzpicture}[xscale=3,yscale=1.5]
      \node (tl1) at (0,0) {$1$};
      \node (K) at (1,0) {$K$};
      \node (G) at (2,0) {$G$};
      \node (Q) at (3,0) {$Q$};
      \node (tr1) at (4,0) {$1$};
      \node (bl1) at (0,-1) {$1$};
      \node (K/a) at (1,-1) {$K/(K\cap\ncl{a})$};
      \node (G/a) at (2,-1) {$G/\ncl{a}$};
      \node (Q/a) at (3,-1) {$Q/\ncl{\bar{a}}$};
      \node (br1) at (4,-1) {$1$};
      \draw[->] (tl1) -- (K);
      \draw[->] (K)-- (G);
      \draw[->] (G)-- (Q);
      \draw[->] (Q) -- (tr1);
      \draw[->] (bl1) -- (K/a);
      \draw[->] (K/a)-- (G/a);
      \draw[->] (G/a)-- (Q/a);
      \draw[->] (Q/a) -- (br1);
      \draw[->>] (K)--(K/a);
      \draw[->>] (G)--(G/a);
      \draw[->>] (Q)--(Q/a);
    \end{tikzpicture}
    \] which implies that $G/\ncl{a}$ is finite. On the other hand
    $\varphi$ factors through $G/\ncl{a}$, which contradicts that
    $\varphi$ has infinite image.
\end{proof}

We now have an algorithm to produce presentations of virtually
infinite cyclic groups.

\begin{lem}[Presenting virtually infinite cyclic groups]\label{lem:present-vz}
  Suppose we are given a group $H$, a finite generating set $\bk{S}=H$
  as well as a solution to the word problem for words in
  $\bk{S}$. Then there is procedure that terminates if and only if $H$
  is virtually infinite cyclic and outputs a presentation
  $\bk{S\mid R_S}$ for $H$, with the generating set $S$.
\end{lem}
\begin{proof}
  Use the algorithm of Lemma \ref{lem:enum-vz} to enumerate
  presentations $P_i=\bk{X_i\mid R_i}$ of virtually cyclic groups. Let
  us also denote by $P_i$ the group defined by the presentation. In
  parallel for each $P_i$, enumerate the mappings
  $X_i \to \bk{S}$. Because we can solve the word problem in $\bk{S}$
  and $P_i$, we can determine which of these mappings extend to a
  homomorphism. In parallel, for each homomorphism that is found,
  enumerate its image and check if $S$ is contained in the image. If
  such a homomorphism is found, then we have a surjection onto $H$.
  In parallel, for each found epimorphism $\varphi:P_i \onto H$,
  verify that $\varphi$ is injective on $K_i$, the
  $\Isom(\bbR)$-kernel of $P_i$. If this is the case then
  $P_i \approx H$ by Lemma \ref{lem:vz-inj}, which implies that $H$ is
  virtually cyclic and using $\varphi$, we can get the desired
  presentation. Conversely, if $H$ is virtually infinite cyclic, then
  one branch of this process will terminate with a desired presentation for
  $H$.
\end{proof}


%
%
%
%

\begin{prop}\label{prop;present-np-vz}
  Let $(G,\calP)$ be relatively hyperbolic, and suppose we are given
  a uniform solution to the conjugacy problem for the groups in
  $\calP$. Given a generating set $S$ of a subgroup
  $H=\bk S \leq (G,\calP)$, there is an algorithm which
  \begin{enumerate}
  \item\label{it:decide-virt-Z} decides if $\bk S$ is non-parabolic
    and virtually infinite cyclic, and, if so, 
  \item\label{it:present-virt-Z} computes a presentation of $\bk S = \bk{S \mid R_S}$.
  \end{enumerate}
\end{prop}

\begin{proof}
  We first show that (\ref{it:decide-virt-Z}) is decidable. Because we
  are assuming that the we have a solution to the conjugacy problem in
  the parabolic subgroups by \cite[Theorem 5.6]{Osi} we can decide if
  the elements of $S$ are all conjugate into some parabolic subgroup.

  If this isn't the case, then we must determine if $H$ is virtually
  cyclic. In this case $H$ is either $\bbZ$-type, i.e. it maps
  surjectively onto $\bbZ$ and the commutator subgroup $[H,H]$ is
  finite, or $H$ is $(\bbZ_2*\bbZ_2)$-type, i.e. it maps surjectively
  onto $\bbZ_2*\bbZ_2$ and the commutator subgroup $[H,H]$ infinite. Note
  however that in the $(\bbZ_2*\bbZ_2)$-case, the commutator subgroup
  $[H,H]$ itself is $\bbZ$-type. It follows that if $H$ is virtually
  cyclic, then its second derived subgroup $H^{(2)}=[[H,H],[H,H]]$ is
  finite. To see that this condition is sufficient, note that
  any non-parabolic subgroup of $(G,\calP)$ that is not
  virtually cyclic contains a free group of rank 2 and therefore has
  infinite second derived subgroup.

  In particular either $H^{(2)}$ has cardinality less than
  $\left|P_2\right|$, where $P_2 \subset G$ is the subset computed in
  Lemma \ref{lem:ball-of-finites}, or $H^{(2)}$ is infinite. We now
  start two parallel processes. One of them is to enumerate
  $H^{(2)}$. If at any point we find more than $|P_2|$ distinct
  elements of $H^{(2)}$ we stop and correctly declare that $H$ is not
  virtually infinite cyclic. The other process is to run the procedure
  given by Lemma \ref{lem:present-vz}, which will terminate if and
  only if $H$ is virtually infinite cyclic and will produce a
  presentation for $H$ with the generators $S$, as desired.

\end{proof}

\begin{cor}\label{cor;elem-cor}
  If $(G,\calP)$ is a virtually torsion free relatively hyperbolic group,
  where $\calP$ lies in a hereditarily algorithmically tractable class
  of virtually torsion free groups with algorithmically bounded
  torsion, then there is an algorithm which takes a finite generating
  set $S$ of a subgroup $H \leq G$ and outputs the following:
  \begin{enumerate}
  \item whether $H$ is parabolic,
  \item whether $H$ is virtually cyclic, and 
  \item a presentation $H=\bk{S|R(S,H)}$.
  \end{enumerate}
	Finally, there is an algorithm deciding whether $H$ is maximal elementary.
\end{cor}

\begin{proof}
	The first item is an application of the solution to the special parabolicity problem \cite[Theorem 5.6]{Osi}. The second and third items are consequences of Corollary \ref{cor:present-par-vz} if $H$ is found to be parabolic, and of Proposition \ref{prop;present-np-vz} otherwise.  
	For the last claim, if $H$ is parabolic, one can find a
        peripheral subgroup in which it is contained, and one uses the
        solution to the generation problem given by assumption to
        check whether these two are equal. If it is virtually cyclic
        non-parabolic, we refer the reader to the proof of \cite[Lemma
        2.8(3)]{DG_gafa}, since the proof is identical, up to
        replacing the result of Lys\"enok (to verify whether a loxodromic element is a proper power) by \cite[Theorem 1.16(3)]{Osi}.  
\end{proof}

\subsubsection{Structure and algorithmic recognition of $\QH$ subgroups}
The following is obvious if the $\QH$ subgroups are fundamental groups
of compact surfaces with boundary, by gluing the said surfaces along
their boundaries we obtain larger surfaces. The general case is the
same but the proof is more subtle, as it involves more than
cut-and-paste topological arguments. In particular, given two
finite-by-orbifold groups whose fibers have non-trivial automorphism
groups, even once a continuous identification map has been defined
between two boundary components there will be many ways in which the
finite-by-orbifold groups can be amalgamated.

\begin{lem}[$\QH$  subgroups amalgamate nicely]\label{lem:nice-amalgam}
  Let $\bbX$ be an elementary splitting of a relatively hyperbolic
  group $(G,\calP)$ with a bipartite underlying graph where black
  vertex groups are maximal elementary and white vertex groups are
  non-elementary. Suppose there is a valence 2 non-parabolic black
  vertex $b$ both of whose edges are inessential and adjacent to
  vertices  carrying $\QH$  subgroups. Then the collapse
 of  the two edges adjacent to $b$ produces a vertex group that is a
  larger $\QH$  subgroup.
  
  
  
  
\end{lem}
\begin{proof}
	We only need to analyze the fundamental group of the subgraph whose two edges are the adjacent edges of $b$. Let $a, c$ be their end vertex (possibly $a=c$). 
	First, after collapse of one of the edges, which is inessential at $b$,  the situation reduces to the case of an amalgamation of two $\QH$  subgroups, that we may write $A*_B C$ if $a\neq c$, or, if $a=c$,   of the HNN extension $A*_C$ of  a $\QH$ subgroup over two  peripheral subgroups. Note
	that in the HNN extension case, these peripheral subgroups must be
	non-conjugate in $A$, since otherwise we will obtain a non parabolic $\bbZ\oplus\bbZ$
	subgroup contradicting relative hyperbolicity. We produce the argument in the case of the amalgamation, the case of the HNN extension being similar.

	Recall that being $\QH$ subgroups, $A$ and $C$ split as short
        exact sequences $ 1\to F\to A\to A_q\to 1$ and
        $ 1\to F'\to B\to B_q\to 1$, in which $F, F'$ are finite, and
        $A_q, B_q$ are fundamental groups of $2$-orbifolds with
        boundary components corresponding to the peripheral
        structure. Since the edge group $B$ is in the peripheral
        structure of $A$, which corresponds to  boundary components, it
        splits as the short exact sequence $1\to F \to B \to Q \to 1$,
        in which $F$ is the finite group appearing in the exact
        sequence of $A$, and $Q$ is either $\bbZ$ or $\bbZ_2*\bbZ_2$.
        The same consideration for the attaching map of $B$ in $C$
        thus reveals that $F'=F$, that this group is normal in
        $A*_B C$, and is in the kernel of the action on the Bass-Serre
        tree of the amalgam.  It follows that the quotient of $A*_B C$
        by $F$ is acting on this Bass-Serre tree, which reveals
        its structure as an amalgam $A_0 *_Q C_0$. This new group is
        obviously a $2$-orbifold group, whose boundary subgroups are
        the images of the peripheral structures of $A$ and $C$,
        without the conjugates of $B$ (in the HNN extension case, this
        observation uses that the two images in $A$ of the edge group
        $B$ are non-conjugate in $A$, and thus correspond to distinct
        boundary components). Thus $A*_B C$ is a $\QH$ group as
        expected.

\end{proof}

\begin{defn}\label{defn:RQH}
  A $\QH$ group $(G_q,\calP_\bbX(G_q))$ is called \define{rigid
    quadratically hanging or $\RQH$} if it does not admit any
  essential elementary splittings (relative to $\calP_\bbX(G_q)$.)
\end{defn}

These groups are classified in \cite[\S 5.1.3]{2016arXiv160205139G},
but there they are called \define{small orbifolds}. The most famous
example is the pair of pants. We note that if we amalgamate any two
$\RQH$ groups in the manner of Lemma \ref{lem:nice-amalgam}, and that
the resulting orbifold still has boundary, then the resulting $\QH$ 
subgroup is no longer rigid.

\begin{lem}[$\QH$  groups are maximal]\label{lem:qhf-maximal}
  Assume that, in $\bbJ$, $b$ is a non-parabolic black vertex of
  valence $2$, whose neighbours 
  both carry $\QH$  groups. Then at least one edge is essential.
\end{lem}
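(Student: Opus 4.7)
The plan is to argue by contradiction. Suppose both edges $e_1, e_2$ at $b$ are inessential, so $G_{e_i}$ equals either $G_b$ or the white vertex stabilizer $Q_i = G_{w_i}$. Writing each QH group as $Q_i = \pi_1(\Sigma_i)$, the edge group $G_{e_i}$ is elementary while $Q_i$ is not (a genuine QH vertex group is non-elementary), so inessentiality forces $G_{e_i} = G_b$. In particular $G_b$ is infinite cyclic and coincides with a boundary subgroup $\langle \delta_i \rangle$ of $Q_i$ corresponding to a boundary curve $\delta_i \subset \partial \Sigma_i$ for each $i = 1, 2$.

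Next, collapse the $G$-orbits of $e_1$ and $e_2$ in $T_\bbJ$ to obtain an elementary $(G,\calP)$-tree $T'$. The three vertices $w_1, b, w_2$ merge into a single vertex $v$ whose stabilizer is $Q := Q_1 \ast_{G_b} Q_2 = \pi_1(\Sigma)$, where $\Sigma := \Sigma_1 \cup_\delta \Sigma_2$ is the surface obtained by identifying $\delta_1$ with $\delta_2$; the identified curve $\delta \subset \Sigma$ is then essential and non-peripheral in $\Sigma$. Because each $\Sigma_i$ has complexity at least that of a pair of pants (otherwise $Q_i$ would be at most cyclic and not a genuine QH vertex group), $\Sigma$ contains at least a four-holed sphere, and one can pick an essential simple closed curve $\gamma$ on $\Sigma$ whose geometric intersection with $\delta$ is nonzero (on a four-holed sphere any two non-isotopic essential curves cross). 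Now refine $T'$ by equivariantly replacing $v$ with the Bass-Serre tree of the cyclic splitting of $Q$ along $\gamma$; the outcome is an elementary $(G,\calP)$-tree $T''$ in which $\langle \gamma \rangle$ stabilizes an edge.

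Finally, by the universal compatibility clause~(\ref{it:univ-compat}) of Proposition~\ref{prop:jsj-charac-1}, the trees $\bbJ$ and $T''$ admit a common refinement $\hat T$. By Lemma~\ref{lem;comp_ell}, both $G_b = \langle \delta \rangle$ and $\langle \gamma \rangle$ stabilize edges of $\hat T$. Restricting to the minimal $Q$-invariant subtree of $\hat T$ yields a $Q = \pi_1(\Sigma)$-tree with virtually cyclic edge stabilizers in which $\langle \delta \rangle$ and $\langle \gamma \rangle$ each stabilize edges. By the standard correspondence between cyclic splittings of a surface group and disjoint systems of essential simple closed curves on the surface, this forces $\delta$ and $\gamma$ to be disjointly representable on $\Sigma$, contradicting their essential intersection. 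The main technical point in this plan is precisely this last implication: deducing from the mere existence of a common refinement that $\delta$ and $\gamma$ admit disjoint representatives, which rests on the well-known dictionary identifying edges of a cyclic $\pi_1(\Sigma)$-tree with disjoint simple closed curves on $\Sigma$.
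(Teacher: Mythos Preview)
Your overall strategy matches the paper's: assume both edges are inessential, glue the two $(QH)$ surfaces along the shared boundary curve $\delta$, pick a simple closed curve $\gamma$ crossing $\delta$, and derive a contradiction from the resulting splitting. Two points deserve attention.

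First, you tacitly assume the two white neighbours of $b$ are distinct in the quotient graph, writing $Q=Q_1*_{G_b}Q_2$. The statement only says $b$ has valence~$2$ with both neighbours $(QH)$; nothing prevents both edges from going to the \emph{same} white vertex, in which case collapsing yields an HNN extension of a single surface group over two boundary components rather than an amalgam. The paper handles this case separately (and checks the two boundary components are distinct, lest a rank-$2$ abelian group act hyperbolically). You should treat it as well; the rest of the argument goes through once you note the glued surface still carries $\delta$ as a non-peripheral curve.

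Second, your final step is more delicate than necessary and not quite justified as written. You produce a common refinement $\hat T$ and assert that $\langle\delta\rangle$ and $\langle\gamma\rangle$ each stabilize an \emph{edge} of the minimal $Q$-subtree of $\hat T$. Lemma~\ref{lem;comp_ell} only gives you edges of $\hat T$, and those edges need not lie in the minimal $Q$-subtree; if you only know both subgroups are \emph{elliptic} in that subtree, the curve dictionary tells you each is disjoint from the curve system, not from one another. The paper sidesteps this entirely: once you have the elementary $(G,\calP)$-tree $T''$ (split along $\gamma$), simply observe that $\langle\delta\rangle=G_b$ is \emph{not elliptic} in $T''$ because $\gamma$ crosses $\delta$, while $G_b$ is an edge stabilizer of $T_\bbJ$; this directly contradicts item~(\ref{it:ue-edge-gps}) of Proposition~\ref{prop:jsj-charac-1}. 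No common refinement is needed.
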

\begin{proof}
  Suppose towards a contradiction that both are inessential, and let
  us call $v$ and $w$ the white neighbours of $b$.  By Lemma
  \ref{lem:nice-amalgam} the collapse of the edges $e,f$ between $b,v$
  and $b,w$, which identifies $v,b,w$ to the vertex $\bar q$, yields a
  new splitting $\bar\bbJ$ with a $\QH$ subgroup
  $(G_{\bar q},\calP_{\bar\bbJ}(G_{\bar q}))$. Let $\calO$ be a base
  orbifold for $(G_{\bar q},\calP_{\bar\bbJ}(G_{\bar q}))$.  The
  subgroup $G_b = G_e=G_f \leq G_{\bar q}$ is the edge group of an
  elementary splitting of $(G_{\bar q},\calP_{\bar\bbJ}(G_{\bar q}))$
  and is either carried by a simple closed curve in $\calO$, or an arc
  terminating in the mirrors. In both cases call this path
  $\gamma \subset \calO$.

  In both cases, there is a simple closed curve or arc connecting
  mirrors in $\calO$, representing an essential elementary splitting
  of $\orbfun\calO$, that crosses $\gamma$. It follows that there is
  an elementary splitting of
  $(G_{\bar q},\calP_{\bar\bbJ}(G_{\bar q}))$ in which $G_e$ is
  hyperbolic, and therefore a splitting of $(G,\calP)$ in which $G_e$
  is not elliptic contradicting (\ref{it:ue-edge-gps}) of Proposition
  \ref{prop:jsj-charac-1}.
\end{proof}

\begin{thm}\label{thm:JSJ-charac-II}
  Let $(G,\calP)$ be a relatively freely indecomposable, relatively
  hyperbolic group.  The canonical JSJ splitting $\bbJ$ of $(G,\calP)$
  is the unique essential splitting that satisfies
  (\ref{it:JSJ-part1}) of Proposition \ref{prop:jsj-charac-1} and for
  which, whenever $b$ is a non-parabolic black vertex of valence $2$,
  whose neighbours both carry $\QH$ groups, at least one edge is
  essential.
\end{thm}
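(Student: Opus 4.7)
That $\bbJ$ satisfies both conditions is already essentially recorded: Proposition~\ref{prop:jsj-charac-1}, items (\ref{it:JSJ-part1}) and (\ref{it:essential}), gives the bipartite structure with the required vertex groups and essentiality, and Lemma~\ref{lem:qhf-maximal} is the $(QH)$-maximality clause verbatim.

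\textbf{Uniqueness.} Let $\bbY$ be any essential $(G,\calP)$-splitting satisfying~(\ref{it:JSJ-part1}) and the $(QH)$-maximality condition; the goal is $\bbY=\bbJ$. My plan is to invoke the universal compatibility of $T_\bbJ$ (Proposition~\ref{prop:jsj-charac-1}(\ref{it:univ-compat})) to produce a common refinement $\hat T$ over $\calE$ with equivariant collapse maps $\pi_\bbJ\colon \hat T \to T_\bbJ$ and $\pi_\bbY\colon \hat T \to T_\bbY$, chosen minimal among common refinements, and then to describe $\hat T$ so tightly that $\pi_\bbJ$ and $\pi_\bbY$ must identify $T_\bbJ$ with $T_\bbY$.

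The first step is to analyse $\pi_\bbJ$ vertex by vertex. Over a rigid white vertex $v$, the preimage $\pi_\bbJ^{-1}(v)$ is an elementary $(G_v,\calP_\bbJ|_{G_v})$-tree, which rigidity collapses to a point. Over a black maximal-elementary vertex $b$, the same conclusion holds: if $G_b$ is maximal parabolic then $G_b\in\calP$ is elliptic in $\hat T$ by definition of a $(G,\calP)$-tree; if $G_b$ is virtually cyclic, torsion-freeness forbids any non-trivial elementary splitting; in either subcase, minimality of $\hat T$ shrinks $\pi_\bbJ^{-1}(b)$ to a point. Only at $(QH)$ white vertices can $\pi_\bbJ^{-1}(v)$ be non-trivial, and there standard surface-group arguments identify it with the dual tree of a disjoint family of essential, non-peripheral simple closed curves. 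Consequently $\hat T$ is $T_\bbJ$ with some $(QH)$ vertices cut along curve systems; every new edge attaches a virtually cyclic curve-stabilizer black vertex---of valence $2$ between two $(QH)$ pieces---via an edge group equal to that new black vertex group, so every new edge is \emph{inessential}.

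Next I would analyse $\pi_\bbY$ and argue it collapses precisely the new cut-curve edges and no others. If $\pi_\bbY$ collapsed an edge of $T_\bbJ$, a black maximal-elementary vertex and a white $(QH)$ or rigid vertex would merge, producing in $T_\bbY$ a vertex group that is neither maximal elementary nor $(QH)$ nor, by the adjacency clause (\ref{it:rigids}), rigid, contradicting (\ref{it:white-gps}) for $\bbY$. Conversely, if any cut-curve edge survived in $T_\bbY$, its endpoint would be a valence-$2$ black vertex between two $(QH)$ whites with \emph{both} incident edges inessential, contradicting the $(QH)$-maximality we imposed on $\bbY$. Collapsing exactly the cut-curve edges reassembles each cut $(QH)$ piece into the corresponding $(QH)$ vertex of $T_\bbJ$, yielding $T_\bbY=T_\bbJ$ and $\bbY=\bbJ$. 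The main obstacle I expect is the clean implementation of the first step at black parabolic vertices: although $G_b\in\calP$ is elliptic in $\hat T$, a priori $\pi_\bbJ^{-1}(b)$ can carry extra $G_b$-orbits of edges, and one must use minimality of $\hat T$ delicately to excise them without breaking either collapse map; a close runner-up is the hybrid-collapse case in the final step, where (\ref{it:rigids}) must be invoked non-trivially to exclude partial mergers that might otherwise masquerade as rigid vertices of $\bbY$.
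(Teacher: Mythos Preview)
Your existence argument is fine and matches the paper. The uniqueness argument, however, has a genuine gap at exactly the point you flag as ``the main obstacle.'' Ellipticity of $G_v$ in $\hat T$ (for $v$ black or rigid white) does \emph{not} force $\pi_\bbJ^{-1}(v)$ to be a point, even for a minimal common refinement: the edges of that preimage collapse under $\pi_\bbJ$, so by minimality they must map to edges of $T_\bbY$, but nothing you have said rules this out. To exclude such edges you would have to analyse how they sit in $T_\bbY$, which is precisely what you are trying to determine; the argument becomes circular. The same circularity undermines your last paragraph: asserting that a surviving cut-curve edge yields in $T_\bbY$ a valence-$2$ black vertex flanked by two $(QH)$ whites with both edges inessential presupposes that $\pi_\bbY$ does not merge those $(QH)$ pieces with anything else, which you have not established.

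The paper's proof avoids this trap by aiming for something weaker than a full description of $\hat T$. It runs a \emph{symmetric} argument (Lemma~\ref{lem;same-DS}): for any two trees $T_1,T_2$ satisfying the hypotheses and admitting a common refinement, every vertex stabilizer of $T_1$ is elliptic in $T_2$; the $(QH)$-maximality clause is used only to exclude the possibility that a $(QH)$ vertex group of $T_1$ acts nontrivially on the image in $T_2$ of its minimal subtree in $\hat T$. This puts $T$ and $T_\bbJ$ in the same deformation space. The proof then observes (Lemma~\ref{lem;carac_toc}) that any tree satisfying~(\ref{it:JSJ-part1}) is its own tree of cylinders for co-elementarity, and concludes by invoking the uniqueness of the tree of cylinders within a deformation space \cite[Corollary~4.10]{GL_trees}. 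This two-step route---deformation space plus tree-of-cylinders uniqueness---replaces your delicate edge-by-edge bookkeeping with an off-the-shelf invariant, and is what makes the argument go through cleanly.
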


The proof of this Theorem follows from the next two lemmas.

   \begin{lemma}\label{lem;same-DS}
     If $T$ is a $(G,\calP)$-tree satisfying (\ref{it:JSJ-part1}) of
     Proposition \ref{prop:jsj-charac-1} and is such that whenever $b$
     is a non-parabolic black vertex of valence $2$, whose
     neighbours 
     both carry $\QH$  groups, at least one edge is essential, then
     $T$ is in the same deformation space as $T_\bbJ$.
  \end{lemma}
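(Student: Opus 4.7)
The plan is to show that $T$ and $T_\bbJ$ have the same elliptic subgroups --- equivalent, by standard Bass--Serre theory, to lying in the same deformation space --- by verifying that vertex stabilizers of each tree are elliptic in the other. The argument proceeds by vertex type.

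First I would address edges. Since every edge of $T$ is incident to a black vertex with maximal elementary stabilizer, every edge of $T$ has elementary stabilizer, so $T$ is a $(G,\calP)$-tree over $\calE$. Universal compatibility of $T_\bbJ$ over $\calE$ relative to $\calP$ (Proposition~\ref{prop:jsj-charac-1}(\ref{it:univ-compat})) then gives a common refinement $\hat T$ of $T$ and $T_\bbJ$ over $\calE$. By Lemma~\ref{lem;comp_ell}, each edge stabilizer of $T$ fixes an edge of $\hat T$; the collapse $\hat T \to T_\bbJ$ then places it in an edge or vertex stabilizer of $T_\bbJ$, so it is elliptic in $T_\bbJ$. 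Conversely, Proposition~\ref{prop:jsj-charac-1}(\ref{it:ue-edge-gps}) gives directly that edge stabilizers of $T_\bbJ$ are elliptic in $T$.

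For vertex stabilizers I would argue case by case. Black stabilizers are maximal elementary: parabolic ones are automatically elliptic in every $(G,\calP)$-tree; a virtually cyclic one that failed to be elliptic in the opposite tree would have an axis whose edge stabilizers are infinite subgroups commensurable with it, which, by maximality of elementary subgroups and Proposition~\ref{prop:jsj-charac-1}(\ref{it:max-elem-vert-gp}), would coincide with the stabilizer of the adjacent black vertex on the axis and hence fix it --- a contradiction. For a rigid white vertex $v$ of $T$, a non-trivial action of $G_v$ on $T_\bbJ$ yields a non-trivial elementary splitting of $G_v$; this splitting is relative to $\calP_T|_{G_v}$, because the parabolics of $G$ inside $G_v$ together with the edge groups of $T$ incident to $v$ (which together make up $\calP_T|_{G_v}$, cf.\ Lemma~\ref{lem;describe_induced}) are elliptic in $T_\bbJ$; this contradicts rigidity of $(G_v,\calP_T|_{G_v})$. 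The symmetric argument handles rigid vertices of $T_\bbJ$.

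The QH case is the main obstacle, requiring a careful cross-tree analysis. If a QH vertex $G_v$ of $T$ with surface $\Sigma_v$ acts non-trivially on $T_\bbJ$, one obtains a non-trivial cyclic splitting of $(G_v, \calP_T|_{G_v})$ relative to the boundary, dual to an essential multi-curve $C$ on $\Sigma_v$, and the complementary subsurfaces' fundamental groups embed into vertex stabilizers of $T_\bbJ$. A subsurface with elementary fundamental group sitting in a maximal elementary vertex can only be a disk or annulus, incompatible with the essentiality and pairwise non-parallelism of curves in $C$. A non-elementary subsurface group $\pi_1(\Sigma') \subset G_u$ with $G_u$ rigid in $T_\bbJ$ gives, via an interior essential simple closed curve of $\Sigma'$ (or in degenerate cases such as the pair of pants, via its surface-subgroup configuration with peripheral subgroups in $\calP_\bbJ|_{G_u}$), a non-trivial cyclic splitting of $(G_u, \calP_\bbJ|_{G_u})$, contradicting rigidity. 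Finally, if $\pi_1(\Sigma') \subset G_u$ with $G_u$ QH in $T_\bbJ$ and $\Sigma' \subsetneq \Sigma_u$, one traces a bordering interior curve $\gamma \in C$ back to $T$: the maximal cyclic $G_b$ containing $\pi_1(\gamma)$ must fix $v$ in $T$ (otherwise a fixed segment in $T$ from $v$ would force the first incident edge of $v$ to have $\pi_1(\gamma)$-containing stabilizer, making $\gamma$ boundary-parallel in $\Sigma_v$), and then combining this with the analogous analysis for the adjacent subsurface on the other side of $\gamma$ produces in $T$ a valence-two black vertex between two QH white vertices with both incident edges inessential --- directly contradicting the hypothesis. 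The symmetric argument for QH vertices of $T_\bbJ$ then completes the proof.
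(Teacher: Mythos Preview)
Your overall strategy---showing that $T$ and $T_\bbJ$ have the same elliptic subgroups by a case analysis on vertex type, after producing a common refinement $\hat T$---matches the paper's, and your treatment of edge groups, black vertices, and rigid white vertices is essentially correct. The genuine gap is in the $(QH)$ case and in the claimed symmetry at the end.

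First, the $(QH)$-in-$(QH)$ subcase is muddled: $\Sigma'$ is a subsurface of $\Sigma_v$, not of $\Sigma_u$, so ``$\Sigma'\subsetneq\Sigma_u$'' has no meaning, and your ``tracing $\gamma$ back to $T$'' only shows the maximal cyclic $G_b\supset\pi_1(\gamma)$ lies in $G_v$. Since everything you discuss sits inside the single $(QH)$ vertex $v$ of $T$, there is no way to produce \emph{in $T$} a valence-$2$ black vertex between two $(QH)$ whites; the forbidden configuration you want lives in $T_\bbJ$, not $T$ (and the paper indeed contradicts the hypothesis on the \emph{target} tree, using that $T_\bbJ$ also satisfies it via Lemma~\ref{lem:qhf-maximal}). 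Second, the ``symmetric argument'' does not go through as stated: your $(QH)$-in-rigid step (forward direction) uses Proposition~\ref{prop:jsj-charac-1}(\ref{it:ue-edge-gps})---edge groups of $T_\bbJ$ are elliptic in \emph{every} elementary $(G,\calP)$-tree---to see that $\calP_\bbJ|_{G_u}$ stays elliptic after you refine $T$ along $\gamma$. In the reverse direction you would need edge groups of $T$ to be elliptic in an arbitrary refinement of $T_\bbJ$, and no such universal ellipticity is available for $T$. (The pair-of-pants subcase is also only hand-waved.)

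The paper avoids all of this by doing the entire $(QH)$ analysis inside the common refinement $\hat T$ and arguing symmetrically from the outset: since both $T$ and $T_\bbJ$ satisfy the hypotheses of the lemma, one shows once that for any two such trees $T_1,T_2$ with common refinement $\hat T$, vertex stabilizers of $T_1$ are elliptic in $T_2$. For a $(QH)$ vertex $v$ of $T_1$, the minimal $G_v$-subtree of $\hat T$ is dual to curves on $\Sigma_v$; if under the collapse $\hat T\to T_2$ two non-elementary subsurface vertices land on distinct white vertices of $T_2$, a short segment argument exhibits a valence-$2$ black vertex in $T_2$ with both incident edges inessential and $(QH)$ neighbours---contradicting the hypothesis on $T_2$. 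Hence all non-elementary subsurface groups fix the same vertex of $T_2$, and since $G_v$ permutes them it fixes that vertex too. This bypasses the need for any universal-ellipticity property of $T$.
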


  \begin{proof}
    By \cite{Forester_2002} it suffices to show that any vertex
    stabilizer of $T$ is elliptic in $T_\bbJ$, and that any vertex
    stabilizer of $T_\bbJ$ is elliptic in $T$. Actually, we will make
    a symmetric argument. We first use the universal compatibility of
    $T_\bbJ$: there exists a $(G,\calP)$-tree $\hat{T}$ collapsing on
    $T_\bbJ$ and on $T$. Now we are in the following setting: given
    two $(G,\calP)$-trees $T_1, T_2$ satisfying the assumption of the
    Lemma, and refined by a same tree $\hat{T}$, we must prove that
    vertex stabilizers in $T_1$ are elliptic in $T_2$.

    Let $v$ be a vertex of $T_1$, and $G_v$ its stabilizer.  Assume
    that $v$ is a white vertex of rigid type (in the sense
    of(\ref{it:JSJ-part1} (ii) (b)) of Proposition
    \ref{prop:jsj-charac-1}.)  In $\hat{T}$, the augmented peripheral
    structure of $G_v$ is elliptic by Lemma \ref{lem;comp_ell}. By
    rigidity, $G_v$ is thus elliptic in $\hat{T}$.  It is therefore
    elliptic in $T_2$ as well, since $T_2$ is a collapse of $\hat{T}$.

    Assume that $v$ is a white vertex of $\QH$  type. The action of
    $G_v$ on its minimal subtree $\hat{T_v}$ in $\hat{T}$ is dual to a
    collection of separate, non-boundary parallel simple curves in the
    base orbifold $\calO_v$. Consider the collapse $\hat{T} \to T_2$
    restricted to $\hat{T_v}$. Assume two distinct vertices with
    non-elementary stabilizer are mapped on distinct vertices
    $v_-,v_+$ (which are necessarily white.) In $[v_-, v_+]$, we
    choose two white vertices at distance $2$ apart, $v'_-,v'_+$, and
    denote by $T'_-, T'_+$ their preimages (obviously disjoint) in
    $\hat{T}$. The segment $\sigma$ between $T'_-$ and $T'_+$ is
    mapped to the segment $[v'_-,v'_+]$ in $T_2$, thus ensuring that
    it contains only vertices with elementary stabilizers. Since
    $\hat{T_v}$ corresponds to the pull back of a splitting of the
    base orbifold $\calO_v$ of $G_v$, this ensures that all edges of
    $\sigma$ are inessential, and therefore, both edges of
    $[v_-, v_+]$ are inessential. This contradicts our assumption on
    $T$.  We have proved that all white vertex stabilizers in $T_1$
    are elliptic in $T_2$.
    
    Let $v$ be a black vertex. If its stabilizer is
    in $\calP$, by definition, it is elliptic in the $(G, \calP)$-tree
    $T_2$. If it is cyclic, it is virtually a subgroup of a white
    vertex stabilizer, hence elliptic in $T_2$ by our previous
    study. We have proved our claim, that all vertex stabilizers in
    $T_1$ are elliptic in $T_2$, hence the lemma.
  \end{proof}

  \begin{lemma}\label{lem;carac_toc}
    Under the assumptions of Lemma \ref{lem;same-DS}, the tree $T$ is
    its own tree of cylinders for co-elementarity. More precisely, let
    $(G, \calP)$ be a relatively hyperbolic group, and $T$ be a
    $(G,\calP)$ tree with infinite edge stabilizers. The following are
    equivalent.
    \begin{enumerate}[(i)]
    \item The tree $T$ is isomorphic to its tree of cylinders
      for co-elementarity.
    \item $T$ is bipartite with all black vertex stabilizer
      being maximal elementary, black vertices in different orbits
      have non-conjugated stabilizers, all white vertex stabilizer
      being non-elementary.
    \item\label{carac_toc3} $(G,\calP_T)$ is relatively
      hyperbolic, and $T$ is essentially peripheral for $\calP_T$,
      with non-elementary white vertex stabilizers, and black vertices
      in different orbits have non-conjugated stabilizers.
    \end{enumerate}
  \end{lemma}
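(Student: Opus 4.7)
The plan is to establish the cycle of implications, treating the equivalence (ii) $\Leftrightarrow$ (iii) as essentially a reformulation and focusing effort on (i) $\Leftrightarrow$ (ii).

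For (ii) $\Leftrightarrow$ (iii), the strategy is to pass to the augmented peripheral structure $\calP_T = \calP \cup \calB_T$. Proposition \ref{prop;RH_passing_to_a_vertex} (and its proof) yields that $(G,\calP_T)$ is relatively hyperbolic. Given (ii), any parabolic $P \in \calP$ that is not already a black vertex stabilizer of $T$ must fix a white vertex $w$, sitting as a proper elementary subgroup of the non-elementary $G_w$; attaching a pendant black vertex carrying $P$ at each such $w$ produces a peripheral refinement of $T$ for $\calP_T$, so that $T$ is essentially peripheral in the sense of Lemma \ref{lem;carac_ess_periph}. The remaining two conditions of (iii) are identical to those of (ii). Conversely, given (iii), Lemma \ref{lem;carac_ess_periph} ensures that black vertex stabilizers of $T$ lie in $\calP_T$ and are therefore maximal parabolic in $(G,\calP_T)$; this translates to maximal elementary in $(G,\calP)$, yielding (ii).

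For (i) $\Rightarrow$ (ii), the tree of cylinders is bipartite by construction, and Lemma \ref{lem;group_of_cyl} identifies each cylinder stabilizer as the unique maximal elementary subgroup containing the edge groups of the cylinder, giving the maximality condition on black vertex stabilizers. Two cylinders in distinct $G$-orbits must have non-conjugate stabilizers: since an infinite subgroup in a relatively hyperbolic group is contained in a unique maximal elementary subgroup, the stabilizer of a cylinder determines the cylinder setwise, and conjugate stabilizers would force the cylinders to be $G$-translates. A vertex in $V_2$ is adjacent to edges in two distinct cylinders, so its stabilizer contains infinite subgroups of two distinct maximal elementary subgroups, and hence cannot itself be elementary.

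The implication (ii) $\Rightarrow$ (i) is the main obstacle. The plan is to exhibit a $G$-equivariant isomorphism $T \to T_c$ by $b \mapsto \lk(b)$ on black vertices and the identity on white vertices. The key step is to show that cylinders of $T$ coincide with links of black vertices: since each edge stabilizer is infinite and contained in the maximal elementary $G_b$ of its black endpoint $b$, two edges share a cylinder iff they have the same black endpoint-stabilizer. Using non-conjugacy across orbits from (ii), together with the fact that maximal elementary subgroups in a relatively hyperbolic group are self-normalizing (Proposition \ref{prop;parab_own_norm} in the parabolic case, a standard fact in the virtually cyclic case), the equality $G_b = G_{b'}$ forces $b = b'$. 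It remains to check that $V_2$ corresponds to the full set of white vertices of $T$, which amounts to excluding valence-$1$ white vertices; but such a $w$ satisfies $G_w = G_e \subseteq G_b$, making $G_w$ elementary and contradicting (ii). The incidence relations match since an edge $\{b,w\}$ of $T$ becomes the edge joining $\lk(b) \in V_1$ to $w \in V_2$ in $T_c$, and $G$-equivariance is automatic from the naturality of the map $b \mapsto \lk(b)$.
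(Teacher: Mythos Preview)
Your arguments for $(i)\Rightarrow(ii)$, $(ii)\Rightarrow(i)$, and $(ii)\Rightarrow(iii)$ are correct. In particular, your direct proof of $(ii)\Rightarrow(i)$ via the bijection $b\mapsto\lk(b)$ is a clean alternative to the paper's route: the paper closes the cycle by proving $(iii)\Rightarrow(i)$ instead, using the relative hyperbolicity of $(G,\calP_T)$ to show that two co-elementary edges at a white vertex must share their black endpoint. Your approach bypasses $\calP_T$ entirely for this implication, relying only on uniqueness of the maximal elementary subgroup containing an infinite elementary subgroup and on self-normalisation; this is arguably more transparent.

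There is, however, a genuine gap in your $(iii)\Rightarrow(ii)$. You write that Lemma~\ref{lem;carac_ess_periph} ensures the black vertex stabilisers lie in $\calP_T$, and that this ``translates to maximal elementary in $(G,\calP)$''. But $\calB_T\subseteq\calP_T$ holds \emph{by definition} of the augmented structure, so invoking Lemma~\ref{lem;carac_ess_periph} gives no new information about $G_b$. What you actually need is that a peripheral subgroup of $(G,\calP_T)$ which is not already in $\calP$ must be maximal virtually cyclic in $(G,\calP)$; this is not automatic from the hypotheses as you have stated them (in general, one can enlarge a relatively hyperbolic structure by a non-elementary hyperbolically embedded subgroup). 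The paper sidesteps this issue by never attempting $(iii)\Rightarrow(ii)$ directly: it instead proves $(iii)\Rightarrow(i)$ by showing that cylinders are stars of black vertices, exploiting that if two edge stabilisers at a common white vertex are co-elementary, then their black endpoints carry peripheral subgroups of $(G,\calP_T)$ which must coincide. Since you already have a correct $(i)\Rightarrow(ii)$, the simplest fix is to replace your $(iii)\Rightarrow(ii)$ by a $(iii)\Rightarrow(i)$ argument along these lines.
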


  \begin{proof}
    Since $T$ satisfies $(ii)$, by the assumption (\ref{it:rigids}) of
    Proposition \ref{prop:jsj-charac-1}, we only need to prove the
    stated equivalence.  The vertices in $V_2$ in the tree of
    cylinders are the vertices of $T$ belonging to at least two
    different cylinders, so their stabilizers are not elementary.
    
    Lemma \ref{lem;group_of_cyl} ensures that vertices in $V_1$ have
    maximal elementary stabilizers. If two vertices in $V_1$ have the
    same stabilizer, they correspond to the same cylinder, hence are
    equal.  This shows that $(i)\implies (ii)$.  For $(ii)\implies
    (iii)$, note that $\calP_T$ is obtained from $\calP$ by adding
    maximal cyclic groups that are not parabolic. The relative
    hyperbolicity is thus preserved (see \cite[Corollary
    1.7]{Osin_ESBG}, or \cite[Lemma 4.4]{Dah_CoC}.) The tree is
    essentially peripheral for $\calP_T$ by Lemma
    \ref{lem;carac_ess_periph}.
    
    Finally, let us prove $(iii)\implies
    (i)$. It suffices to show that the cylinders of $T$ are exactly
    the stars of the black vertices. Consider two different edges
    adjacent to a white vertex $v_w$, and assume their stabilizers
    co-elementary. Their black vertices $v_{b_1}, v_{b_2}$ have
    co-elementary stabilizers $E_1, E_2$.  These groups are in
    $\calP_T$ because $T$ is essentially peripheral. But since
    $(G,\calP_T)$ is relatively hyperbolic, these groups must be
    equal. The two edges are therefore in the same orbit and images of one
    another by an element $g$ normalizing $E=E_1=E_2$, which is therefore in
    $E$ by almost malnormality of peripheral subgroups (Proposition \ref{prop;parab_own_norm}). We get that $g$ fixes $v_{b_1}$, hence
    $v_{b_1}=v_{b_2}$ and both edges are equal. The claim follows.

  \end{proof}

  \begin{proof}[Proof of Theorem \ref{thm:JSJ-charac-II}] 
    $T_\bbJ$ was defined to be the tree of cylinders of a certain
    universally compatible tree over $\calE$ relative to
    $\calP$. Since the tree of cylinders of a deformation space is
    unique (\cite[Corollary 4.10]{GL_trees}, \cite[Lemma 7.3(3)]{2016arXiv160205139G}), it follows that $T_\bbJ$
    is isomorphic to any $(G,\calP)$-tree that is in the same
    deformation space and that is its own tree of cylinder. By our two
    previous lemmas, it is the case of any tree $T$ satisfying the
    stated assumption.
\end{proof}

\begin{lem}\label{lem:recognize-rqh}
  Let $(H,\calP_H)$ be a virtually torsion free relatively hyperbolic group for which
  all groups in $\calP_H$ are groups in a heriditarily algorithmically tractable class with algorithmically bounded torsion. Then there
  is an algorithm which decides if $(H,\calP_H)$ is an $\RQH$ group.
\end{lem}
\begin{proof}
  
  By Corollary \ref{cor;elem-cor} we can first decide whether
  $\calP_H$ is a collection of virtually cyclic groups. If it isn't we
  answer no. So we  can assume that  $\calP_H$ indeed consists  of virtually cyclic groups. The group 
  $H$ is therefore a hyperbolic group. We can use
  Papasoglu's algorithm \cite{Pap} to find a hyperbolicity constant
  $\delta$, and  a Dehn presentation $\bk{Z\mid T}$. We can also use
  standard algorithms for hyperbolic groups in order to find the list
  of conjugacy classes of its finite subgroups (e.g. using that by 
  \cite{bogopolskii1996finite} any finite subgroup is conjugate to a
  subgroup contained in  the ball of radius  $3\delta+1$ centered at
  the identity.) 

  We can also find $N$, the maximal finite normal subgroup of $G$, and
  pass to $(H/N,\calP_{H/N})$. If $(H,\calP_H)$ is $\RQH$ then we now
  have the fundamental group of a rigid (or small) orbifold. The set
  of rigid orbifolds is given in \cite[Proposition
  5.12]{2016arXiv160205139G}. There are ten classes and each class is
  governed by at most three positive integer parameters that can be
  inferred from the collection of maximal finite subgroups of $H/N$.
  This therefore gives a finite list $\{\calO_j\}$ of possible
  orbifolds, and $(H,\calP_H)$ is $\RQH$ if and only if
  $(H/N,\calP_{H/N})$ is isomorphic to one of these finitely many (at
  most 3, in fact) $\orbfun{\calO_j}$. This can be decided by
  \cite[Theorem 1]{DG_gafa}.
  
\end{proof}

Given a group $G$ we say that a presentation $\langle S | R \rangle$ of $G$ has the form of a graph-of-groups presentation if there exists a finite graph $X=(V,E)$, with a spanning tree $T_{span}$,  a partition of $R$ and $S$ as  
 \begin{flalign*}  & S= \left( \bigsqcup_{v\in V} S_v \right) \sqcup \left( \bigsqcup_{e\in E} S_e \right) \sqcup E \;  \hbox{ and }  \\  & R= \left( \bigsqcup_{v\in V} R_v \right) \sqcup \left( \bigsqcup_{e\in E} R_e \right) \sqcup R_{att} \sqcup \{ e=1, e\in T_{span}\}\sqcup \{ \bar e= e^{-1}, e\in E\}
\end{flalign*} 
 where:
\begin{itemize} 
	\item  for all $x\in E\cup V$,  $R_x$ involves only generators in $S_x$,  
	\item for all $e\in E$,  $S_{\bar{e}} \simeq S_e$, and this bijection induces a bijection $R_{\bar{e}} \simeq R_e$, 
	\item  for all $e$, there is a map $a_e: S_e\to S_{t(e)}$ such that  $R_{att}$ consists of all the  relations of the form    $\bar e s_e   e=a(s_e)$  for all $e\in E, s_e \in S_e$, 
	\item the above map $a_e$ induces a map $R_e \to R_{t(e)} $. 
\end{itemize}

Note that one can routinely detect whether a given presentation has the form of a graph-of-groups presentation. Note also  that a usual presentation coming from a graph of groups decomposition is (or can be easily turned into, depending on the conventions) of the form of a graph-of groups presentation. 

However, given a presentation of $G$ that has the form of a graph-of
groups presentation, considering the group defined by
$\langle S_v| R_v \rangle$ for a vertex $v$, in general it does not
embed as a subgroup of $G$, due to missing relations. Furthermore,
because of these missing relation, the maps
$a_e:X_e \to \langle S_{t(e)}| R_{t(e)} \rangle$ may not even extend
to to homomorphisms
$\langle S_e| R_e \rangle \to \langle S_{t(e)}| R_{t(e)} \rangle$. If,
however, all these maps actually extend to monomorphisms then, by the
definition of graphs of groups, the presentation fully defines a graph
of groups whose vertex groups and edge groups have the proposed
presentations $\langle S_x| R_x \rangle, x\in V\cup E$.

\begin{thm}\label{thm;compute_JSJ}

  Let $\calC$ be a hereditarily algorithmically tractable class of
  groups with algorithmically bounded
  torsion. 
  Assume that there exists an algorithm to decide whether vertex
  groups of an elementary splitting of a virtually torsion free
  relatively hyperbolic $(G,\calP)$ with peripheral subgroups in
  $\calP$ are rigid.  Then there is an algorithm that, given
  $(G,\calP)$ a one-ended virtually torsion free relatively hyperbolic
  group with $\calP$ in $\calC$, computes its canonical JSJ
  decomposition.
   
\end{thm}

\begin{proof} 
	Observe first that one can assume that $\calC$ contains all the virtually cyclic subgroups of $G$.
	The algorithm for the theorem consists of running the following
  subprocesses in parallel.
\begin{enumerate}
\item Enumerate all the presentations of $G$.
\item For each presentation, check whether it has the form of a graph-of-groups
  presentation (by enumerating the partitions of the set of
  generators, and checking whether the presentation corresponds to
  presenting vertex groups, edge groups, attaching maps, and Bass
  relations, with respect to this partition).
\item For each of these presentations of the form of a  graph of groups decomposition that is found, try to
  certify that edge groups are  elementary, and that it is bipartite with elementary black vertex groups,  using Corollary \ref{cor;elem-cor}, and that black vertex groups are maximal elementary, using the solution to the generation problem for groups in $\calC$,   given in assumption.   This provides genuine presentations of the edge groups, and black vertex groups.  
\item For each presentation thus found, use the attaching maps to add the new relations of the elementary edge groups to the presentations associated to the white vertices. 
%
%
  Modifying the presentation of $G$ in this way does not change the
  underlying group, but now all the attaching maps extend to
  monomorphisms from the edge groups to the vertex groups. Furthermore
  the presentations of the white vertex groups are now genuine.

\item For every bipartite splitting with maximal elementary black
  vertex groups, decide whether or not the white vertex groups are
  rigid, using the algorithm in the assumptions.  Using Lemma
  \ref{lem:recognize-rqh} for the peripheral structure of their
  adjacent edge groups, decide which of these rigid white vertex
  groups are $\RQH$.
  For every white vertex group that is proved non-rigid, try to
  certify if it is $\QH$, by enumerating its  presentations until
  a presentation of a $\QH$ group is found (we can enumerate the
  presentations of $\QH$ groups).

\item For every bipartite splitting thus coloured by the previous
  step, and for which every white vertex group has been proved to be
  rigid (non-$\QH$), $\RQH$, or $\QH$, we check whether adjacent edge
  groups of white rigid vertices are conjugated into the same maximal
  elementary subgroup or not.

  For that, we use the algorithm of Corollary \ref{cor;elem-cor} to
  check which edge groups are parabolic, and which are virtually
  cyclic, non-parabolic.  For all those that are not parabolic, we
  consider the maximal virtually cyclic group containing them ({\it i.e.} the
  adjacent black vertex group), and we use instances of the conjugacy
  problem to check whether they are conjugated or not.  For those that
  are parabolic, by enumeration, we may determine to which conjugacy
  class of $\calP$ they belong, and thus check whether they are all
  different.

\item For every bipartite splitting in which adjacent edge groups of
  white rigid vertices are not conjugated (in that vertex group) into
  the same maximal elementary subgroup, verify that there is no black
  vertex of valence $2$, with non-parabolic group, for which both
  adjacent edges are inessential, and with both white neighbours
  carrying $\QH$ groups (including $\RQH$ groups).
\end{enumerate}
If $(G,\calP)$ is one-ended then the algorithm will terminate with a
splitting that will be certified by Theorem \ref{thm:JSJ-charac-II} to
be the canonical JSJ.
\end{proof}

\subsection{Detecting rigidity and computing canonical splittings}\label{sec:detect-rigid}
Because we cannot even solve equations in most nilpotent groups,
we cannot certify rigidity in the same way as in previous works
\cite{DGr_ihes, DG_gafa}. We do however have the second author's
results at our disposal, which works immediately in the absence of
torsion.

\begin{prop}[Computing the Grushko
  decomposition] \label{prop;compute-Grushko} There is an algorithm
  that, provided with a finite presentation of a torsion-free
  relatively hyperbolic group $(G, \{[P_1], \dots, [P_n]\})$ with
  residually finite parabolic subgroups $P_1, \dots, P_n$, produces an
  explicit relative Grushko free-product decomposition of $G$ in which
  a factor is free (possibly trivial), and all other factors are
  freely indecomposable relative to the $P_i$'s, and relatively
  hyperbolic with respect to some explicit conjugates of some $P_i$'s.
\end{prop}

\begin{proof}
  By Lemma \ref{lem;word-problem} since $P_1,\ldots,P_n$ are
  residually finite, we have an explicit solution to the word problem
  of $G$.  The result now follows immediately by using \cite[Theorem
  B]{Tou}  with $\kappa=0$ and $\calH = \{P_1,\ldots,P_n\}$ and then
  following the proof of \cite[Theorem A]{Tou}. 
\end{proof}


\begin{theo}[{\cite[Theorem C]{Tou}}] \label{theo;TouC} Let $\calC$ be
  an algorithmically tractable class of groups.  There is an
  algorithm which takes as input an explicitly given torsion-free
  relatively hyperbolic group $(G,\calP)$, where groups in $\calP$
  belong to $\calC$, that  terminates and correctly states
  whether $(G,\calP)$ is rigid or not.
\end{theo}

Using the algorithm from Theorem \ref{theo;TouC} to decide the
rigidity of vertex groups, Theorem \ref{thm;compute_JSJ} immediately
yields the following result:

\begin{prop}\label{prop:jsj-tf}
  Let $(G,\calP)$ be a torsion-free relatively hyperbolic group where
  $\calP$ lies in a hereditarily algorithmically tractable class of
  groups, then we can compute a Grushko Decomposition and then JSJ
  decompositions for its maximal one-ended subgroups.
\end{prop}

Although the methods of \cite{Tou} do not function in the presence of
torsion, we can still use them by passing to torsion-free finite
index subgroups. The rest of this section will be mostly devoted to
showing how to pass to finite index torsion-free subgroups.

\begin{defn}\label{defn:etf}
  A group $G$ is said to be \define{effectively virtually torsion
    free} if it is possible to algorithmically find a torsion-free
  finite index subgroup $H\leq G$.
\end{defn}

\begin{lem}\label{lem:etf-exits}
  Suppose that $G$ is finitely presented, has decidable word problem,
  and suppose we can compute a complete finite list $\calF$ containing
  a  conjugacy representative of every finite order element of
  $G$. Then the following hold:
  \begin{enumerate}
  \item\label{it:rf-vtf} If $G$ is residually finite then $G$ is
    virtually torsion-free.
  \item\label{it:vtf-etf} If $G$ is virtually torsion free then it is
    effectively virtually torsion-free.
  \end{enumerate}
\end{lem}
\begin{proof}
  Any finite quotient in which every element of $\calF$ survives will
  have torsion-free kernel. (\ref{it:rf-vtf}) immediately follows.
  Since $G$ is finitely presented, it is possible to enumerate its
  finite index subgroups of a given index and it is routine to verify
  whether the elements of $\calF$ lie in a finite index subgroup $K$
  of $G$. Suppose such a finite index subgroup has order $k$. Then
  taking the intersection of all index $k$ subgroups of $G$ will give
  a finite index normal subgroup $H$ that will contain no conjugate of
  the elements of $\calF$ and will therefore be torsion-free.
\end{proof}

\begin{lem}\label{lem:find-finite}
  Let $(G,\calP)$ be relatively hyperbolic where the groups in $\calP$
  lie in an algorithmically tractable class with algorithmically
  bounded torsion.  Then we can compute a complete and finite list
  $\calF_G$ of conjugacy representatives of the finite order elements
  of the entire group $G$.
\end{lem}

\begin{proof}    
  It is sufficient to find the list containing a conjugacy
  representative of every finite subgroup of $G$. If a finite subgroup
  is non-parabolic then it has a conjugate that lies in the subset
  $P_2 \subset G$ given by Lemma \ref{lem:ball-of-finites}. Since we
  can solve the word problem we can enumerate all the finite subgroups
  contained in $P_2$. The list of conjugacy classes of parabolic
  finite subgroups of $G$ is computable by hypothesis.
\end{proof}

For this next proof, when we say that a pair $(G,\calP)$ is one-ended
or rigid, we mean that $G$ is one-ended or rigid (respectively)
relative to $\calP$.

\begin{prop}\label{prop:virt-rigid}
  Suppose that $(G,\calP)$ is relatively hyperbolic and let
  $G_0\leq G$ be a finite index torsion-free subgroup of $G$ endowed
  with the induced relative hyperbolic structure
  $(G_0,\calP_0)$. Suppose furthermore that $(G,\calP)$ is not an
  $\RQH$ group. Then
  \begin{enumerate}
  \item\label{it:virt-one-ended} $(G,\calP)$ is one-ended if and only
    if $(G_0,\calP_0)$ is one-ended,
  \item\label{it:virt-parabolic} if $(G,\calP)$ is one-ended, then it
    admits a non-trivial splitting over a parabolic subgroup if and
    only if $(G_0,\calP_0)$ admits a non-trivial splitting over a
    parabolic subgroup, and
  \item\label{it:virt-2-ended} if $(G,\calP)$ is one-ended and doesn't
    split over a parabolic subgroup, then it admits a non-trivial
    splitting over a 2-ended subgroup if and only if $(G_0,\calP_0)$
    admits a non-trivial splitting over $\bbZ$.
  \end{enumerate}
\end{prop}
\begin{proof}
  For items (\ref{it:virt-one-ended}) and (\ref{it:virt-parabolic}),
  we first that $(G,\calP)$ and $(G_0,\calP_0)$ have homeomorphic
  Bowditch boundaries (see \cite{Bow_periph}). The (dis)connectivity
  of the boundary implies (\ref{it:virt-one-ended}), by
  \cite[Proposition 1.1]{Bow_periph}. Suppose that $(G,\calP)$ is one
  ended, then the (non)existence of global cutpoints of the boundary
  implies (\ref{it:virt-parabolic}), by \cite[Theorem
  1.2]{Bow_periph}. It remains to show
  (\ref{it:virt-2-ended}). Suppose that $(G,\calP)$ is one-ended and
  doesn't split over a parabolic vertex group.
  
  If $(G,\calP)$ splits over a two-ended subgroup, then so does
  $(G_0,\calP_0)$. This is clear from the action of $G_0$ on the tree
  dual to the splitting of $G$, which must be non-trivial since
  $[G:G_0]<\infty$. Since $G_0$ is torsion-free then this splitting
  must be over $\bbZ$. We now prove the converse.

  Suppose that $(G_0,\calP_0)$ splits over $\bbZ$, but not over a
  parabolic vertex group.  Our approach will be to modify the
  parabolic subgroups to force them to act elliptically on Bass-Serre
  trees and then apply Papasoglu's quasi-isometric invariance of
  (non-relative) two-ended JSJ decompositions
  \cite{papasoglu_quasi-isometry_2005}.

  
  Let $\calP,\calP_0$ be given by a finite set of non-conjugate
  maximal parabolic subgroups and consider the amalgamated free
  products
  \begin{equation}
    \label{eq:Gstar}
     G^* = G \Asterisk_{P\in \calP}\left(P\oplus
      \bbZ^3\right), G_0^*=G_0 \Asterisk_{P\in \calP_0}\left(P\oplus
      \bbZ^3\right).
  \end{equation}
  Set $\calP^* = \{P\oplus\bbZ^3\mid P \in \calP\}$ and
  $\calP_0^* = \{P\oplus\bbZ^3\mid P \in \calP_0\}$. The groups
  $(G^*,\calP^*)$ and $(G_0^*,\calP_0^*)$ are relatively hyperbolic by
  \cite[Theorem 0.1]{Dah_CoC}.

  \emph{Claim: $G^*$ contains a subgroup $H$ isomorphic to $G^*_0$.}
  We construct a monomorphism $G_0^* \into G^*$ as follows. First we
  map $G_0$ to itself in $G \leq G^*$. Next each $Q \in \calP_0$ is
  sent to some conjugate $g^{-1}Pg=P_Q, g \in G, P \in \calP$, we
  extend this to a map $G_0^* \into G^*$ using mappings
  \begin{equation}
    \label{eq:augmentation-map}
    Q\oplus\bbZ^3 \into P_Q\oplus\bbZ^3    
  \end{equation}
  that induce surjections (via projection) onto the $\bbZ^3$
  factor. Let $H$ be the image of this map, and identify $G^*_0$ with
  its image. Consider the Bass-Serre tree $T$ dual to the splitting of
  $G^*$ given in (\ref{eq:Gstar}).  Let $v$ be the vertex stabilized
  by $G \leq G^*$ and let $u$ be some vertex adjacent to $v$. Then the
  edge $(v,u)$ is stabilized by some conjugate
  $g^{-1}Pg, P\in \calP, g\in G$ and also some
  $h^{-1}Qh, Q \in \calP_0, h\in G_0$. The vertex $u$ is stabilized by
  $g \left(P\oplus\bbZ^3\right) g^{-1}$ and since the $P$ factor fixes
  edges, the edges adjacent to $u$ are in equivariant bijective
  correspondence with the elements in the $\bbZ^3$ factor of
  $g \left(P\oplus\bbZ^3\right) g^{-1}$. Now $u$ is also stabilized by
  $h^{-1}\left(Q\oplus\bbZ^3\right)h$, and since the mappings
  (\ref{eq:augmentation-map}) induce surjections on the $\bbZ^3$
  factors, we have that the edges adjacent to $u$ are all in a single
  $\left(G^*\right)_u$ and $\left(G^*_0\right)_u$ orbit. Thus, every
  vertex at distance 2 from $v$ is in the same $G^*_0$ orbit as
  $v$. We can now show that $[G^*:H]<\infty$.

  Let $g\in G^*$, then there is some $k\in H$ such that $kg$
  fixes $v$. Suppose towards a contradiction that this wasn't the
  case. Then there is some $g\in G^*$ such that $g\cdot v = w$ and $w$
  is closest to $v$ in its $G^*_0$-orbit. Let $\alpha$ be the path
  from $v$ to $w$ in $T$ and let $v_2$ be the vertex at distance $2$
  from $v$ in $\alpha$. Then there is some $k \in G^*_0$ such that
  $k\cdot v_2 = v$, which means that $k\cdot w$ is closer to $v$ than
  $w$ -- contradiction.

  It follows that for every $g \in G^*$ there is some $k \in G^*_0$
  such that $kg \in G$. Thus right $G_0^*$ coset representatives can
  always be taken in $G$. It follows that $[G^*,G^*_0] \leq [G:G_0]$
  and the claim is proved.
  
  %

  \emph{Claim: If $(G_0,\calP_0)$ is one-ended and does not admit a
    splitting over a parabolic subgroup then $G_0^*$ is one ended.}
  Suppose that $G^*_0$ was not one-ended. We first note that that the vertex groups
  $(P\oplus\bbZ^3) \in P_0^*$, must be elliptic in this splitting as
  they are forced to be one-ended. Let $T$ be the splitting of $G^*_0$
  given in (\eqref{eq:Gstar}). By \cite[Theorem
  1.4]{touikan_one-endedness_2015} $T$ can be blown up to a tree
  $\widecheck{T}$ in which the subgroups in $P_0^*$ are elliptic, the
  edge groups are factors in a one ended splitting of the
  $P_i \in \calP_0^*$ and $G_0$ now acts non-trivially on a subtree of
  $\widecheck{T}$ where the groups in $\calP^*_0$ are elliptic and the
  edge groups lie in $\calP_0$. This contradicts the assumption that
  $(G_0,\calP_0)$ doesn't split over a parabolic subgroup. The claim
  is proved.

  So far the (\ref{eq:Gstar}) construction has enabled us to pass from
  relative one-endedness to ``absolute'' one-endedness. By hypotheses,
  the JSJ splitting of $(G_0,\calP_0)$ only has non-parabolic $\bbZ$-edge groups. First note that in any 2-ended splitting of $G_0^*$,
  every subgroup of the form $\left(P\oplus\bbZ^3\right) \in \calP_0^*$ must be
  elliptic, for otherwise it contains some hyperbolic element
  $g$ fixing an axis $\alpha$ and the elements centralizing $g$ map
  $\alpha$ to itself. This gives a map $Z\left(P\oplus\bbZ^3\right) \onto \bbZ$ whose kernel
  fixes edges of the dual tree. Since every centralizer of an element
  of $\left(P\oplus\bbZ^3\right) \in \calP_0^*$ contains $\bbZ^3$,
  this forces an edge group of the splitting to contain $\bbZ^2$,
  contradicting the assumption that it is two-ended.

  It therefore follows that the $\bbZ$-JSJ decomposition $\bbJ^*_0$
  of $G_0^*$ can be obtained by refining the $G_0$ factor in the
  splitting (\ref{eq:Gstar}) to a JSJ splitting and collapsing all the
  non-cyclic parabolic edges in (\ref{eq:Gstar}). The vertex groups
  fall into two categories. They are called \emph{thick}, if they are
  of the form $P\oplus\bbZ^3$ and \emph{relatively hyperbolic}
  otherwise. The edge groups of $\bbJ^*_0$ fall into two categories:
  either they are incident to a thick vertex group, in which case they
  are called \emph{half-thick}, otherwise they are called
  \emph{non-half-thick}.

  Since $G_0^*$ is one-ended we may apply \cite[Theorem
  7.1]{papasoglu_quasi-isometry_2005} which, because $G^*$ is
  quasi-isometric to $G_0^*$ implies that this quasi-isometry sends
  every vertex or edge group of the JSJ decomposition of $G_0^*$ to a
  bounded neighbourhood of a vertex or edge group (respectively) of
  the JSJ decomposition $\bbJ^*$ of $G^*$. 
  %
  %
  Since
  we assumed that $G_0$ did not admit any splittings over parabolic
  groups, half-thick edge groups and non-half-thick edge groups of $G_0^*$
  never remain within a bounded neighbourhood of one another. 
  %
 %
  We further note that the dichotomy of being thick or relatively
  hyperbolic (among the vertex groups of the decomposition of $G_0^*$)
  is a quasi-isometry invariant. Indeed, on the one hand, \cite[Theorem
  1.2]{Drutu_QI_invariance} implies that admitting a proper relatively
  hyperbolic structure is a quasi-isometry invariant. On the other
  hand, thick groups can not be properly relatively hyperbolic because
  they contain an infinite normal abelian subgroup.
  	It therefore follows that, via
  Papasoglu's correspondence, half-thick edge groups are sent to half-thick edge
  groups and non-half-thick edge groups are sent to non-half-thick
  edge groups. 

  Take now the splitting $\bbX^*$ of $G^*$ given by (\ref{eq:Gstar})
  obtained by collapsing the non-virtually cyclic edge groups. Denote
  by $\hat{G}$ the vertex group containing $G$. $\bbX^*$ is a
  splitting with two-ended vertex groups. As noted before the vertex
  groups $\left(P\oplus\bbZ^3\right) \in \calP^*$ must always be
  elliptic. It follows that the JSJ decomposition $\bbJ^*$ can be
  obtained by refining the splitting $\bbX^*$ in the vertex group
  $\hat{G}$.

  \emph{Case 1: $\bbJ^*$ only has half-thick edge groups.}  This is
  only possible if the JSJ decompositions of $G^*$ and $G^*_0$ are
  exactly the same as the splittings given by (\ref{eq:Gstar}) and,
  since $G^*_0$ splits over a 2-ended group, $G_0$ must be a $\QH$
  vertex group.  This also implies that all peripheral vertex groups
  are virtually cyclic. In this case $(G,\calP)$ is a finite index
  overgroup of a $\QH$ vertex group and splits over a 2-ended subgroup
  if and only if it is not an $\RQH$ group.

  \emph{Case 2: $\bbJ^*$ has a non-half-thick edge group.} Then $\hat{G}$
  inherits a non-trivial splitting with 2-ended edge groups. Let $T$
  be the tree dual to this splitting. Let $\calP' \subset \calP$ be
  the set of maximal parabolics that are not 2-ended. On the one hand
  we have a splitting\[ \hat G = G\Asterisk_{P\in \calP'}\left(P \oplus
      \bbZ^3\right). \] Now suppose towards a contradiction that $G \leq \hat{G}$ was elliptic
  in the splitting induced on $\hat G$. Let $v$ be some vertex of the
  dual Bass-Serre tree $T$ fixed by $G$, all the groups
  $(P\oplus\bbZ^3), P \in \calP'$ must also act elliptically. Suppose
  that one such group fixed a vertex $w\neq v$. Then the subgroup
  $P \in G \cap (P\oplus\bbZ^4)$ fixes the entire path from $v$ to $w$
  pointwise, which implies that $P$ stabilizes an edge, contradicting
  the assumption that $T$ had 2-ended stabilizers.
  
  Thus, we have shown that if $(G_0,\calP_0)$ is one-ended and does
  not split over a parabolic vertex group, but splits over a
  non-parabolic cyclic group, then either $(G,\calP)$ is an $\RQH$
  group or it splits over a non-parabolic 2-ended vertex group.
  
  

\end{proof}

\begin{prop}\label{prop:detect-rigid}
  Suppose that $(G,\calP)$ is relatively hyperbolic and $\calP$ lies
  in a heriditarily algorithmically tractable class with
  algorithmically bounded torsion. Suppose furthermore that $G$ is is
  virtually torsion-free. Then we can decide if $(G,\calP)$ is
  one-ended, and if so, if it is rigid.
\end{prop}
\begin{proof}
  First we apply the algorithm of Lemma \ref{lem:recognize-rqh} to
  decide if $(G,\calP)$ is $\RQH$. If this is the case then
  $(G,\calP)$ is rigid.

  Suppose now that $(G,\calP)$ is not $\RQH$. By Lemma
  \ref{lem:find-finite} and Lemma \ref{lem:etf-exits} we can compute a
  finite index torsion-free subgroup $G_0$ of $G$. By Proposition
  \ref{prop;compute-Grushko} we can decide if $(G_0,\calP_0)$ is
  freely decomposable. Thus by Proposition \ref{prop:virt-rigid}
  (\ref{it:virt-one-ended}) we can decide if $(G,\calP)$ is one-ended.

  If $(G_0,\calP_0)$ is freely indecomposable, then it follows by
  Proposition \ref{prop:virt-rigid} (\ref{it:virt-parabolic}) and
  (\ref{it:virt-2-ended}) that $(G,\calP)$ is rigid if and only if
  $(G_0,\calP_0)$ is rigid. This can be decided by Theorem
  \ref{theo;TouC}.

  

\end{proof}

Proposition \ref{prop:detect-rigid} with Theorem \ref{thm;compute_JSJ} and \cite{Dunwoody-1985} immediately
imply the following:

\begin{cor}\label{cor:ctf-compute-can-splittings}
  Let $(G,\calP)$ be relatively hyperbolic and virtually torsion-free,
  and let $\calP$ lie in a heriditarily algorithmically tractable
  class with algorithmically bounded torsion.  Then it is possible to
  compute a Dunwoody-Stallings decomposition $\bbD$ of $(G,\calP)$ and
  compute the JSJ decomposition of each one-ended vertex group of
  $\bbD$.
\end{cor}

\begin{rem} This of course includes the case where $G$ is torsion
  free, and $\calP$ lies in a heriditarily algorithmically tractable
  class.
\end{rem}

Besides passing to torsion-free finite index subgroups, there may be
other methods to detect one-endedness. This should be doable for
relatively hyperbolic groups (possibly with torsion) with parabolic
subgroups that are virtually nilpotent, but developing the detail of
it would bring us far astray of the primary goal of this paper. Let us
give a roadmap of how such a reduction could go.

First, the tool from \cite{DGr_tams} that we just used can be
generalized. The argument of \cite{DGr_tams} can be used to detect if
a relatively hyperbolic has infinitely many (relative) ends, provided
the parabolic groups satisfy the technical conclusion of \cite[Lemma
2.15]{DGr_tams}. Specifically they must have a computable generating
set and there must be a computable constant $M$ such that, for any
$r>0$, and elements $a,b$ at distance $\geq r$ from $1$, there is a
path of length $\leq Md(a,b)$ from $a$ to $b$ avoiding the ball of
radius $r-1$ centered at $1$. This is actually a sufficient condition
in order to get \cite[Lemma 2.16]{DGr_tams}, which is the only place
where some specificity of parabolic subgroups is required.  Many
groups, including finitely generated nilpotent groups (non virtually
cyclic), have linear divergence $div_2(n,1/2)$ \cite[Definition 3.3,
Proposition 1.1]{DMS10}. We found that it is possible to adapt the
proof of \cite[Lemma 2.16]{DGr_tams} to the case of parabolic groups
of linear divergence $div_2(n,1/2)$. This applies to virtually
nilpotent groups by \cite{DMS10}.

Secondly, the argument of \cite[\S 7, Lemma 7.5]{DG_gafa}, that allows
us to compute a list of representatives of the different orbits of
Dunwoody-Stallings decompositions of a relatively hyperbolic group
under its automorphism group, is applicable to a class  $\mathcal{C}$ of relatively
hyperbolic groups provided (a) the number of conjugacy classes of
finite subgroups in each group in $\mathcal{C}$ is finite, (b) there is an
effective procedure to compute the centralizer and normalizer of these finite subgroups, (c)
the isomorphism problem for the relatively one-ended groups in $\mathcal{C}$, with marked peripheral structure consisting of finite
subgroups, is solvable.  All three conditions are satisfied by
relatively hyperbolic groups with virtually nilpotent parabolic
subgroups.

Furthermore Barrett \cite{barrett2016computing} recently adapted these
methods to detect the rigidity of hyperbolic groups with torsion.

\section{Graph of groups isomorphisms}\label{sec;gog-isos}

The following definition of an isomorphism of graphs of groups follows
\cite[\S 2.3]{Bass}; this recapitulation was also recorded in \cite[\S
2.7.2, 2.7.3]{DG_gafa}. Recall Definition \ref{defn;gog} of a graph of
groups.

\begin{defi}\label{def;iso_gog}
  Given two graphs of groups on the same abstract graph $X$,
  $\mathbb{X}=(X, \{\Gamma_v, v\in V(X)\}, \{\Gamma_e, e\in E(X)\},
  \{i_e, e\in E(X)\})$, and $ \mathbb{X}'=(X, \{\Gamma'_v\},
  \{\Gamma'_e, e\in E(X) \}, \{i'_e, e\in E(X)\}) $, an
  \emph{isomorphism of graph of groups} $\Phi : \mathbb{X}' \to
  \mathbb{X}$ is a tuple $( \{\phi_v, v\in V(X) \}, \{\phi_e, e\in
  E(X)\}, \{\gamma_e, e\in E(X)\} )$ where:
\begin{enumerate}
\item for all $v\in V(X)$, $\phi_v: \G'_v \to \G_v$ is an isomorphism,
\item for all $e\in E(X)$ $\phi_e:\G'_e\to \G_e$ is an isomorphism,
  and $\phi_e = \phi_{\bar e}$
\item for all $e\in E(X)$, $\gamma_e$ is an element of $G_{t(e)}$ such
  that the diagrams commute:
  \begin{equation}\label{diag;def_isom_gog}  
    \begin{array}{ccc} \G'_{t(e)} & \stackrel{i'_e}{ \hookleftarrow} & \G'_e  \\
      \phi_{t(e)} \! \downarrow & & \downarrow \! \phi_e \\
      \G_{t(e)} &    \stackrel{\ad_{\gamma_e}}{\leftarrow} \G_{t(e)} \stackrel{i_e}{\hookleftarrow} & \G_e. \end{array}   \end{equation}
\end{enumerate}

The maps $\phi_v$ are called the vertex maps, the maps $\phi_e$ are
the edge maps, and the elements $\gamma_e$ are the attaching elements.
\end{defi}

\subsection{The extension problem}

Suppose we are given two graphs groups $\bbX$, $\bbX'$ and a set of
isomorphisms between their edge groups and their vertex groups. We are
interested in necessary and sufficient conditions to extend these to
an isomorphism $\bbX \stackrel{\sim}{\to} \bbX'$. The following is
immediate from the definition.

\begin{prop}
  Given two structures of graphs of groups $\mathbb{X}', \mathbb{X}$
  on the same graph $X$, and an isomorphism of graph of groups $\Phi:
  \mathbb{X}' \to \mathbb{X} $ as above, each vertex map of $\Phi$
  induces an isomorphism of unmarked peripheral structures $\phi_v
  :(\G'_v, \calA'_u)\to (\G_v, \calA_u)$, where $\calA'_u$ and
  $\calA_u$ are the adjacency peripheral structures induced by the
  graphs-of-groups.
\end{prop}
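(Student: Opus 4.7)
The plan is to unpack the definition of isomorphism of graphs of groups and check directly that the vertex map $\phi_v$ sends each edge-group conjugacy class defining $\calA'_u$ bijectively onto the corresponding one in $\calA_u$. Since $\bbX$ and $\bbX'$ are structures on the \emph{same} abstract graph $X$, the link $\lk(v)$ parametrising both adjacency peripheral structures is literally the same index set on the two sides; this removes any combinatorial relabelling issue.

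First, I would recall that by definition
\[
\calA'_u = \{\,[i'_e(\G'_e)] \mid e \in \lk(v)\,\}, \qquad
\calA_u  = \{\,[i_e(\G_e)]  \mid e \in \lk(v)\,\},
\]
where the brackets denote conjugacy in $\G'_v$ and $\G_v$ respectively. The statement then amounts to showing that for each $e \in \lk(v)$, the image $\phi_v(i'_e(\G'_e))$ lies in the $\G_v$-conjugacy class of $i_e(\G_e)$.

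Next, I would fix $e \in \lk(v)$ (so $t(e)=v$) and apply the commutative square (\ref{diag;def_isom_gog}) to every element of $\G'_e$: for all $g \in \G'_e$,
\[
\phi_v\bigl(i'_e(g)\bigr) \;=\; \gamma_e \, i_e\bigl(\phi_e(g)\bigr) \, \gamma_e^{-1}.
\]
Since $\phi_e : \G'_e \to \G_e$ is an isomorphism, letting $g$ range over $\G'_e$ gives
\[
\phi_v\bigl(i'_e(\G'_e)\bigr) \;=\; \gamma_e \, i_e(\G_e) \, \gamma_e^{-1},
\]
so $\phi_v$ sends the subgroup $i'_e(\G'_e)$ to a $\G_v$-conjugate of $i_e(\G_e)$. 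Because $\phi_v$ is an isomorphism it sends $\G'_v$-conjugacy classes bijectively to $\G_v$-conjugacy classes, and combined with the identity bijection on $\lk(v)$ this realises $\phi_v : (\G'_v, \calA'_u) \to (\G_v, \calA_u)$ as an isomorphism of unmarked peripheral structures.

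There is essentially no obstacle to overcome: the statement is a direct bookkeeping consequence of the defining commutative diagram, the key point being that the attaching elements $\gamma_e$ live in $\G_{t(e)} = \G_v$ so that their contribution disappears upon passing to conjugacy classes. I would keep the write-up to a few lines, emphasising that the content is just the observation that $\ad_{\gamma_e}$ is trivial on conjugacy classes inside $\G_v$.
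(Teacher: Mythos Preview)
Your proof is correct and is precisely the routine verification the paper has in mind: the paper itself gives no proof beyond the remark ``The following is immediate from the definition,'' and your unpacking of the commutative diagram (\ref{diag;def_isom_gog}) is exactly that immediate check.
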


We now define extension adjustments, which will relate an arbitrary
collection of unmarked peripheral structures between the vertex
groups, to an isomorphism of graph of groups.

\begin{defn}\label{defn:extn-adj}
  Let $\mathbb{X}'$, and $ \mathbb{X}$ be two graphs of groups on the
  same graph $X$ let $\Psi$ be a collection of isomorphisms of groups
  $\Psi = \{ \psi_v: \G'_v \to \G_v, \, v\in V(X)\}$. An
  \emph{extension adjustment on $(\mathbb{X}', \mathbb{X})$ with
    respect to $\Psi$} is a collection $\{ \alpha_v: \G_v \to \G_v, \,
  v\in V(X) \}$ of automorphisms, and a collection $\{ g_e \in
  \G_{t(e)}, \, e\in E(X) \}$ such that:
  \begin{enumerate}
  \item\label{it:extn-adj-first-point} $g_e$ conjugates $ \alpha_{t(e)}\circ \psi_{t(e)} \circ i'_e
    (\G'_e)$ to $i_e (\G_e)$ in $\G_{t(e)}$,
  \item the following diagram (in which we abuse notation: the bottom
    line is not defined everywhere, but it is on the image of the
    vertical arrows, by the previous point), is a commutative diagram:
    \begin{equation}\label{bigdiag}\begin{array}{rcccl}
        \G'_{t(e)}  & \stackrel{i'_e}{\hookleftarrow} & \G'_e = \G'_{\bar e} & \stackrel{i'_{\bar e}}{\longhookrightarrow} & \G'_{t(\bar e)} \\
        \psi_{t(e)} \! \Big  \downarrow & & & & \hfill \Big \downarrow \! \psi_{t(\bar e)} \\
        \G_{t(e)} & & & &  \G_{t(\bar e)} \\
        \alpha_{t(e)} \! \Big \downarrow  & & & &  \hfill \Big \downarrow \! \alpha_{t(\bar e)}  \\
        \G_{t(e)} & & & &  \G_{t(\bar e)} \\
        \ad_{g_{e}}  \! \Big \downarrow  & & & &   \hfill \Big  \downarrow \! \ad_{g_{\bar e}} \\
        \G_{t(e)} &  \stackrel{(i_{e})^{-1}}{\longrightarrow} & \G_e = \G_{\bar e} &  \stackrel{(i_{\bar e})^{-1}}{\longleftarrow} & \G_{t(\bar e)}
      \end{array}
    \end{equation} 
  \end{enumerate}
\end{defn}

Note that (\ref{it:extn-adj-first-point}) above implies that the
composition $\alpha_v \circ \psi_v$ is an isomorphism of groups with
unmarked peripheral structure (their adjacency peripheral structure)
$(\G'_v, \calA'_u) \to (\G_v, \calA_u )$. Our motivation here is to
express the isomorphism of graph of groups avoiding the ``choice'' of
a good isomorphism of edge groups.

\begin{prop}\label{prop;compatible_twist}
  Let $X$ be a finite graph, and let $\mathbb{X}$, and $ \mathbb{X}'$
  be two structures of graph of groups on $X$.

  Assume that there exists a collection of isomorphisms $\Psi = \{
  \psi_v: \G'_v \to \G_v, \, v\in V(X)\}$.  Then, the graphs of groups
  $\mathbb{X}'$ and $\mathbb{X}$ are isomorphic if, and only if there
  exists an extension adjustment on $(\mathbb{X}', \mathbb{X})$ with
  respect to $\Psi$.
\end{prop}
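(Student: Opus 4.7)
The plan is to prove both directions by directly translating between extension adjustments and graph of groups isomorphisms, using diagram (\ref{bigdiag}) as the dictionary. Suppose first that an isomorphism $\Phi = (\{\phi_v\}, \{\phi_e\}, \{\gamma_e\}) : \mathbb{X}' \to \mathbb{X}$ is given. I would set $\alpha_v := \phi_v \circ \psi_v^{-1} \in \Aut(\Gamma_v)$ and take $g_e$ to be $\gamma_e$ (up to a convention-dependent inversion). By construction $\alpha_v \circ \psi_v = \phi_v$, and stacking the small commutative square (\ref{diag;def_isom_gog}) for both $e$ and $\bar e$, together with the edge-map identity $\phi_e = \phi_{\bar e}$, reproduces exactly diagram (\ref{bigdiag}). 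The conjugation condition of the extension adjustment follows from $\ad_{\gamma_e} \circ i_e \circ \phi_e = \phi_{t(e)} \circ i'_e$ together with the fact that $\phi_e$ is an isomorphism $\Gamma'_e \to \Gamma_e$.

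Conversely, given an extension adjustment $(\{\alpha_v\}, \{g_e\})$ with respect to $\Psi$, I would define the candidate graph of groups isomorphism by $\phi_v := \alpha_v \circ \psi_v$, $\gamma_e := g_e$, and declare $\phi_e : \Gamma'_e \to \Gamma_e$ to be the map read off the left column of (\ref{bigdiag}), namely
\[ \phi_e := i_e^{-1} \circ \ad_{g_e} \circ \alpha_{t(e)} \circ \psi_{t(e)} \circ i'_e. \]
The first condition on an extension adjustment ensures that this composition lands inside $i_e(\Gamma_e)$ and identifies $\Gamma'_e$ bijectively with $\Gamma_e$, so $\phi_e$ is a well-defined isomorphism of edge groups (using that $i_e$ is a monomorphism). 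With this definition, the defining square (\ref{diag;def_isom_gog}) of a graph of groups isomorphism commutes tautologically.

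The key verification, and the main obstacle, is that the edge map $\phi_e$ is independent of the orientation, i.e.\ that the map obtained by reading the right-hand column of (\ref{bigdiag}) coincides with the one obtained from the left. This is exactly what the commutativity of the big diagram encodes: the two compositions $\Gamma'_e = \Gamma'_{\bar e} \to \Gamma_e = \Gamma_{\bar e}$, passing through the left and right columns respectively, are asserted to agree. Once this symmetry is in hand, the triple $(\{\phi_v\}, \{\phi_e\}, \{\gamma_e\})$ satisfies every axiom of Definition \ref{def;iso_gog}. Apart from this single orientation check, both directions amount to an unpacking of the definitions.
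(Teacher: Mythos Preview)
Your proposal is correct and follows essentially the same approach as the paper: in both directions you set $\phi_v = \alpha_v \circ \psi_v$ (resp.\ $\alpha_v = \phi_v \circ \psi_v^{-1}$), define $\phi_e$ by chasing one column of diagram (\ref{bigdiag}), and then verify $\phi_e = \phi_{\bar e}$ via the commutativity of the big diagram. The only minor discrepancy is that the paper takes $\gamma_e = g_e^{-1}$ rather than $\gamma_e = g_e$; your parenthetical ``up to a convention-dependent inversion'' covers this, but you should fix the sign when writing out the converse direction.
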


\begin{proof}
  Assume that $\psi_v$, $g_e$ and $\alpha_v$ are given as in the
  statement.  Then we set $\phi_v= \alpha_v\circ \psi_v$, and $\phi_e$
  to be the map $\Gamma'_e\to \G_e$ given by (\ref{bigdiag}), i.e. we
  have:\begin{equation}\label{diag;mediumdiag}
\begin{array}{c}
\begin{tikzpicture}[scale=1.2]
  \node (G'e) at (4,0) {$\Gamma'_e$};
  \node (G'te) at (0,0) {$\Gamma'_{t(e)}$};
  \node (Gte1) at(0,-1) {$\Gamma_{t(e)}$};
  \node (Gte2) at(0,-2) {$\Gamma_{t(e)}$};
  \node (Gte3) at(2,-2) {$\Gamma_{t(e)}$};
  \node (Ge) at (4,-2) {$\Gamma_e$};
  \draw[left hook->] (G'e) --node[above]{$i'_e$} (G'te);
  \draw[->] (G'te) --node[right]{$\psi_{t(e)}$} (Gte1);
  \draw[->] (Gte1) -- node[right]{$\alpha_{t(e)}$} (Gte2);
  \draw[->] (Gte2) --node[below]{$\ad_{g_e}$} (Gte3);
  \draw[->](G'e) --node[right]{$\phi_e$} (Ge);
  \draw[left hook->] (Ge) --node[below]{$i_e$} (Gte3);
  \draw[->] (G'te) .. controls +(-0.4,-0.5) and +(-0.4,0.5)  .. node[left]{$\phi_v$} (Gte2); 
\end{tikzpicture}
\end{array}
\end{equation}
This way, we have $\phi_e= \phi_{\bar e}$. To get a graph of groups
isomorphism in the sense of Definition \ref{def;iso_gog}, it remains
to check that Diagram (\ref{diag;def_isom_gog}) commutes for some
$\gamma_e\in \G_v$.  It is immediate from (\ref{diag;mediumdiag}) that
$\gamma_e = g_e^{-1}$ is the required element.

In the other direction, assume that the graphs of groups are
isomorphic, and let $\Phi= ( \{\phi_v, v\in V(X) \}, \{\phi_e, e\in
E(X)\}, \{\gamma_e, e\in E(X)\} )$ be an isomorphism. Now for the
given isomorphisms $\psi_v$, the elements $\alpha_v = \phi_v \circ
\psi_v^{-1}$, and $g_e = \gamma_e^{-1}$ are immediately seen to be
extension adjustments w.r.t $\Psi$ by comparing diagrams
(\ref{diag;def_isom_gog}) and (\ref{diag;mediumdiag}.)
\end{proof} 

\subsection{Reduction to orbit problems}

Recall that a marking of an unmarked peripheral structure $\calP_u$ is
a marked peripheral structure inducing $\calP_u$.

\begin{prop}[Reduction to algorithmic problems in the vertex groups] 
  \label{prop;main_reduction}
  Let $\calW$ be a class of groups with unmarked ordered peripheral
  structures, and $\calB$ be a class of groups.Assume that
  \begin{enumerate}
  \item \label{orbit1} for all $(G,\calP_{u})$ in the class $\calW$,
    \begin{enumerate} 
    \item \label{orbit1b} the orbit of a given marking of the unmarked
      ordered peripheral structure $\calP_{u}$ under the action of
      $\Out(G, \calP_{u})$, is finite and uniformly computable;
    \item \label{assumeIP} the isomorphism problem (for groups with
      unmarked ordered peripheral structures) is effectively decidable
      in the class $\calW$;
    \end{enumerate}
  \item in the class $\calB$,
    \begin{enumerate} 
    \item\label{bl-mwhp} the orbit problem of $\Aut(G)$ on tuples of conjugacy classes
      of tuples of $G$, i.e. the mixed Whitehead problem, is
      uniformly decidable;
    \item\label{bl-iso} the isomorphism problem (for groups without peripheral
      structure) is  decidable.
    \end{enumerate}
 \end{enumerate}

    Consider a bipartite graph $X$ (with black and white vertices:
    $V(X) = BV(X) \sqcup WV(X) $), and choose an order on each
    (oriented) link of each vertex. Consider the class of
    graphs-of-groups on $X$ with black vertex groups in $\calB$ and
    white vertex groups in $\calW$, and edge groups such that the
    adjacency peripheral structure on a white vertex group is that
    given by $\calW$.

  Then, the graph-of-groups isomorphism problem is solvable for this
  class of graphs of groups.
\end{prop}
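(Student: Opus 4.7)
The plan is to apply Proposition~\ref{prop;compatible_twist}: an isomorphism between $\bbX$ and $\bbX'$ amounts to producing a family $\Psi = \{\psi_v\}$ of vertex isomorphisms together with an extension adjustment $(\{\alpha_v\}, \{g_e\})$. First, for each $v \in V(X)$, I invoke the appropriate isomorphism problem: assumption (\ref{assumeIP}) for white $v$, producing $\psi_w \colon (\Gamma'_w, \calP'_{u,w}) \to (\Gamma_w, \calP_{u,w})$ of groups with unmarked ordered peripheral structure, and assumption (\ref{bl-iso}) for black $v$. If any test fails the answer is no; otherwise I record an explicit family $\Psi$ and proceed to enumerate the adjustments.

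At each white vertex $w$ I enumerate the finite set of relevant $\alpha_w \in \Aut(\Gamma_w, \calP_{u,w})$. For each $e \in \lk(w)$ fix generating tuples $S'_e \subset \Gamma'_e$ and $S_e \subset \Gamma_e$; these define two markings $\calP_m^{\mathrm{src}}(w) = (\psi_w \circ i'_e(S'_e))_e$ and $\calP_m^{\mathrm{tgt}}(w) = (i_e(S_e))_e$ of the same unmarked ordered peripheral structure $\calP_{u,w}$. By assumption (\ref{orbit1b}), I enumerate the finite $\Out(\Gamma_w, \calP_{u,w})$-orbit of $\calP_m^{\mathrm{src}}(w)$ and test whether some representative is marking-equivalent to $\calP_m^{\mathrm{tgt}}(w)$. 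Each successful match fixes an $\alpha_w$ together with the witnessing conjugators $g_e \in \Gamma_w$ on the white side of every incident edge, and so determines a candidate edge map $\phi_e \colon \Gamma'_e \to \Gamma_e$.

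For each global combination of white-side choices I check feasibility at the black vertices. At a black vertex $b$, for each edge $\bar e$ with $t(\bar e) = b$, the data of $\phi_e$ (fixed from the white side) and of $\psi_b$ produces in $\Gamma_b$ a source tuple $\psi_b \circ i'_{\bar e}(S'_e)$ and a target tuple $i_{\bar e}(\phi_e(S'_e))$. The remaining unknowns $\alpha_b \in \Aut(\Gamma_b)$ and $\{g_{\bar e}\}_{t(\bar e)=b}$ must send the source tuple of tuples to the target tuple of tuples, componentwise up to conjugation in $\Gamma_b$; this is precisely an instance of the mixed Whitehead problem, decidable by assumption (\ref{bl-mwhp}). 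If it succeeds at every black vertex for some white configuration, Proposition~\ref{prop;compatible_twist} gives $\bbX \cong \bbX'$; if all of the finitely many white configurations fail, then $\bbX \not\cong \bbX'$.

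The main obstacle is the interdependence of adjustments across edges: each edge constrains both of its endpoint adjustments simultaneously. Bipartiteness of $X$ breaks this dependency cleanly, since each edge has exactly one black and one white endpoint: fixing all white-side choices up front determines the targets at every black vertex and makes the mixed Whitehead instances at distinct black vertices independent. Finiteness and effective computability of the white orbits via assumption (\ref{orbit1b}) keeps the outer enumeration finite, securing termination of the whole procedure.
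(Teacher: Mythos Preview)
Your overall architecture is correct and matches the paper: invoke Proposition~\ref{prop;compatible_twist}, exploit bipartiteness to decouple the white and black adjustments, enumerate finitely many white choices via assumption~(\ref{orbit1b}), and reduce each configuration to mixed Whitehead instances at the black vertices. However, there is a genuine gap in how you handle the white vertices.

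You fix generating tuples $S_e\subset\Gamma_e$ in advance and then \emph{filter} the orbit at $w$, keeping only those $\alpha_w$ for which $\alpha_w(\calP_m^{\mathrm{src}}(w))$ is marking-equivalent to your predetermined $\calP_m^{\mathrm{tgt}}(w)=(i_e(S_e))_e$. This forces every surviving candidate to induce the single edge map $\phi_e:S'_e\mapsto S_e$ and discards every other $\phi_e$. That is an over-constraint: in an extension adjustment, $\phi_e$ is \emph{derived} from the choice of $\alpha_w$ (and the conjugator $g_e$), not prescribed beforehand. Concretely, take a single edge with white end $w$ rigid and $\Out(\Gamma_w,\calP_{u,w})$ trivial, edge group $\langle t\rangle\simeq\bbZ$, and arrange $i'_e(t)=i_e(t)^{-1}$ with $i_e(t)$ not conjugate to its inverse in $\Gamma_w$. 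Your source marking is $(i_e(t)^{-1})$, your target is $(i_e(t))$; there is no successful match and you output ``no''. Yet the graphs of groups may well be isomorphic: the only available $\alpha_w=\id$ forces $\phi_e(t)=t^{-1}$, and this is then absorbed at the black vertex by an appropriate $\alpha_b$.

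The fix, which is exactly what the paper does, is to drop the target marking at $w$ entirely. For \emph{each} orbit representative $\alpha_w$ (not just those matching some target), push $\alpha_w\circ\psi_w\circ i'_e(S'_e)$ through a conjugator and $i_e^{-1}$ to get a generating tuple of $\Gamma_e$ well defined up to conjugacy, then transport via $i_{\bar e}$ to obtain the target tuple at $b$. The finitely many white choices now produce finitely many (genuinely different) target tuples at each black vertex, and the resulting mixed Whitehead instances are the correct ones.
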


Note that in the assumption on the black vertices, we forget about the
peripheral structure.  Note also that in the case of the white, the
assumption is stronger: not only the orbit problem on such objects is
solvable, but each orbit is finite (and computable.)

\begin{proof} 
  If both graphs of groups are isomorphic, this
  will be discovered by enumeration. We need an algorithmic
  certificate that they are not isomorphic. If there is $w\in WV(X)$
  such that the groups with adjacency (unmarked ordered) peripheral
  structures $(\G_w, \calA_{w})$ and $ (\G'_w, \calA'_{w})$ are not
  isomorphic, this will be discovered by assumption (\ref{assumeIP}) on
  $\calW$. If there is $b\in BV(X)$ such that $\G_b$ and $\G'_b$ are
  not isomorphic, this will be discovered by assumption
  (\ref{bl-iso}) on $\calB$.  Hence we assume that they are
  isomorphic, and, after an enumeration chase, that we know some
  $\psi_w: (\G'_w, \calA'_w) \to (\G_w, \calA_w)$ for each $w\in
  WV(X)$ and $\psi_b: \G'_b\to \G_b$ for each $b\in BV(X)$. Let $\Psi$
  be this collection.

  By Proposition \ref{prop;compatible_twist}, the two graphs of groups
  fail to be isomorphic if and only if there is no extension
  adjustment for $\Psi$. For white vertices, this unmarked peripheral
  structure is prescribed by assumption on $\calW$, but that this is
  not the case for black vertices. Therefore, for all white vertices
  $w$, the automorphisms $\alpha_w$ must preserve the unmarked
  peripheral structure of their adjacent edge groups, in order to be
  part of an extension adjustment.
  
  By assumption (\ref{orbit1b}), for each white vertex $w$, one can
  compute a finite list of automorphisms $\{\alpha_{w,i}\}$ of the
  vertex group $\Gamma_w$ realizing the orbit of any given marking
  $\calA_{m}(w)$ of $\calA_{w}$ under $\Out(\Gamma_w, \calA_{w})$.
  For each choice of representatives, one for each $w \in WV(X)$, from
  the finite collections of automorphisms $\{\alpha_{w,i}, w \in
  WV(X)\}$, we need to decide whether there exists a collection of
  automorphisms $\{\alpha_b, b \in BV(X)\}$ that can complete an
  extension adjustment (for a certain choice of conjugating elements
  $g_e, e\in E(X)$.)

  For that we use the solution given to the orbit problem on tuples of
  tuples under automorphisms of black vertex groups given by
  assumption (\ref{bl-mwhp}.) Indeed, given a black vertex $b$, and
  $e$ an adjacent edge ($t(e) = b$) whose other end is a white vertex
  $w=t(\bar e)$, once chosen a generating tuple for $\Gamma'_e$, and
  an automorphism $\alpha_w = \alpha_{t(\bar{e})}$ (there are only
  finitely many of these to consider), one obtains a generating tuple
  of $\Gamma_e$ by chasing down the right side of Diagram
  (\ref{bigdiag}) that is well defined up to conjugacy; this is
  because $\bar{g}_e$ has not been fixed yet.

  Obtaining $\alpha_b = \alpha_{t(e)}$ so that the diagram commutes is
  exactly solving the orbit problem of the conjugacy class of this
  tuple in $\G_b$. However, the automorphism $\alpha_b$ should make
  the diagrams associated to all edges adjacent to $b$ commute; this
  is exactly solving the orbit problem of a tuple of conjugacy classes
  of tuples, or the mixed Whitehead problem, in $\G_w$. 
\end{proof}

\section{Orbits of markings in rigid relatively hyperbolic groups}
\label{sec;orbits}

In this section we show how assumption (\ref{orbit1b}) of Proposition
\ref{prop;main_reduction} is satisfied by relatively hyperbolic groups
with residually finite parabolic groups in which congruences
effectively separate the torsion in the outer automorphism
groups. This will reduce the isomorphism problem for relatively
hyperbolic groups to algorithmic problems in the parabolic subgroups
thus yielding the main result.

\subsection{Computing orbits via Dehn fillings and congruences}\label{sec;orbit_computation}

Consider the class $\calG$ of groups with an unmarked ordered
peripheral structure, $(G, \calP_{uo})$, such that
\begin{itemize}
\item $G$ is finitely presented, and $(G,\calP_{uo})$ is relatively hyperbolic, 
\item groups of $\calP_{uo}$ lie in a class of infinite, residually finite
  groups, with congruences effectively separating torsion (see
  Definition \ref{def;CST}),
\item $(G, \calP_{uo})$ admits no splitting over an elementary
  subgroup.
\end{itemize}

Our main objective here is the following. Recall the terminology
introduced in Section \ref{sec:periph-struct}.

\begin{prop}\label{prop;orbit_computation} 
  There is an algorithm that, given $(G,\calP_{uo})$ in the class
  $\calG$ and any marking $\calP_m$ of the peripheral structure
  $\calP_{uo}$, computes the orbit of $\calP_m$ under
  $\Out(G,\calP_{uo})$, or equivalently computes a set of coset
  representatives of \[\Out(G,\calP_{uo})/\Out(G,\calP_{m})\] in
  $\Out(G,\calP_{uo})$.
\end{prop}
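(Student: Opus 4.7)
The plan is to exploit the rigidity of $(G, \calP_{uo})$ to reduce the orbit computation to a problem in a hyperbolic Dehn filling, and then to use the solution of the isomorphism problem for hyperbolic groups to finish. First I would observe that because $(G, \calP_{uo})$ admits no elementary splitting, the group $\Out(G, \calP_{uo})$ is finite: this is the standard Bestvina--Paulin--Rips compactness argument in the relatively hyperbolic setting, where an infinite sequence of pairwise inequivalent outer automorphisms preserving the conjugacy classes of parabolics would, after rescaling, produce a non-trivial action of $G$ on an $\R$-tree in which the parabolics act elliptically and edge stabilisers are elementary, yielding an elementary $(G, \calP_{uo})$-splitting and contradicting rigidity. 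In particular, the orbit of $\calP_m$ under $\Out(G, \calP_{uo})$ is finite, so the set of coset representatives to be produced is finite.

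The core of the algorithm is a multi-parabolic version of Proposition \ref{prop;intro}: we seek finite-index characteristic subgroups $N_i \charleq P_i$, one for each conjugacy class $[P_i]$ in $\calP_{uo}$, such that the Dehn filling
\[
\bar G \;=\; G\, \big/\, \ngrp{N_1, \ldots, N_n}
\]
satisfies simultaneously (a) $\bar G$ is hyperbolic, and (b) the map
\[
\Out(G, \calP_{uo})\big/\Out(G, \calP_m) \;\longrightarrow\; \Out(\bar G, [\bar\calP])\big/\Out(\bar G, [\bar S])
\]
induced by the quotient is a bijection. Achieving (a) follows from the relatively hyperbolic Dehn filling theorem applied with the isoperimetric data extracted via Theorem \ref{theo;presentation}. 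Achieving (b) is where the assumption that congruences effectively separate torsion in each $P_i$ is essential: by finiteness of $\Out(G,\calP_{uo})$, it suffices to find characteristic subgroups $N_i$ such that the induced congruence kernels in $\Out(P_i)$ intersect trivially the (finite, hence torsion) image of the natural restriction $\Out(G, \calP_{uo}) \to \prod_i \Out(P_i)/\Inn(P_i)$, and the hypothesis provides an explicit procedure to enlarge each $N_i$ until this holds.

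Once such a filling is in hand, I would work in $\bar G$. Being hyperbolic, $\bar G$ has solvable isomorphism problem by \cite{DG_gafa}, so we can enumerate and effectively test elements of the finite group $\Out(\bar G, [\bar\calP])$, compute its (finite) quotient by $\Out(\bar G, [\bar S])$, and select coset representatives. Each representative is realised by an automorphism of $G$ by enumerating candidate automorphisms of $G$ (using the isomorphism problem for rigid relatively hyperbolic groups with residually finite parabolics from \cite{DG_charac}) and testing them against the chosen representative modulo the filling; the bijection in (b) guarantees the lifted list is complete and has no redundancies. The main obstacle is the certification step: algorithmically deciding that a given filling is deep enough for both (a) and (b). Hyperbolicity of $\bar G$ can be certified by running Papasoglu's procedure in parallel; the more delicate point is (b), and effective separation of torsion is precisely what provides the certificate, since it lets us check that no non-trivial element of the finite image in $\prod_i \Out(P_i)/\Inn(P_i)$ survives in the congruence kernel associated to the $N_i$. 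Enumerating deeper and deeper characteristic fillings and testing these two certifications in parallel therefore terminates and outputs the required finite list.
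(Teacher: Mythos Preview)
Your overall strategy matches the paper's: exploit rigidity to get finiteness of $\Out(G,\calP_{uo})$, pass to a deep hyperbolic Dehn filling, and compute there. But there is a genuine gap in your injectivity argument for the map $\pi$.

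You assert that once the congruence kernels in each $\Out(P_i)$ miss the finite image of the restriction $r:\Out(G,\calP_{uo})\to\prod_i\Out(P_i)$, the induced map $\pi$ on coset spaces is injective. That deduction needs one more ingredient. Congruence separation of torsion only tells you the composite
\[
\Out(G,\calP_{uo})/\Out(G,\calP_m)\;\hookrightarrow\;\prod_i\Out(P_i)\;\xrightarrow{\ \oplus c_i\ }\;\prod_i\Out(\bar P_i)
\]
is injective. To deduce that $\pi$ itself is injective you need this composite to factor through $\pi$, i.e.\ you need a well-defined restriction $\bar r:\Out(\bar G,\bar\calP_{uo})\to\prod_i\Out(\bar P_i)$ making the square commute. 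By Proposition~\ref{prop;def;r}, such a $\bar r$ exists exactly when each $\bar P_i$ is its own normaliser in $\bar G$. Upstairs this is automatic (Proposition~\ref{prop;parab_own_norm}), but $\bar G$ is hyperbolic with \emph{finite} parabolics, so self-normalisation can fail for shallow fillings. The paper establishes (Proposition~\ref{prop;barPi_own_N}) that $\bar P_i$ is self-normalising for all sufficiently deep fillings and, crucially, gives an algorithmic certificate for this condition; you must add that certification step.

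A secondary issue: you never certify \emph{surjectivity} of $\pi$, and without it your lifting step (enumerate automorphisms of $G$ hitting each target coset) need not terminate for a given filling. The paper avoids this by running, in parallel, a direct enumeration of cosets in $\Out(G,\calP_{uo})/\Out(G,\calP_m)$ into a growing list $\calL$, and halting when $|\calL|$ equals the size of the computed target quotient in some certified filling; certified injectivity then forces $\calL$ to be complete, and Lemma~\ref{lem;surjective} guarantees such a match eventually occurs. Your parallel-over-fillings scheme can be salvaged along similar lines, but you should make explicit that successful lifting of \emph{all} target cosets is itself the surjectivity witness, rather than presupposing the bijection in (b).
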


In order to compute a system of representatives of
$\Out(G,\calP_{uo})/\Out(G,\calP_{m})$ in $\Out(G,\calP_{uo})$, we
will prove that there is a Dehn filling of $G$, cofinite in the
parabolic subgroups, that induces a {\it bijection}
$\Out(G,\calP_{uo})/\Out(G,\calP_{m}) \to \Out(\bar G,\bar
\calP_{uo})/\Out(\bar G, \bar\calP_{m})$.  This helps because we can
compute the various groups associated to automorphism of hyperbolic
groups. In particular, we can explicitly find $\Out(\bar G,\bar
\calP_{uo})/\Out(\bar G, \bar\calP_{m})$.

The stages of the proof will be as follows. First, we need to define
the map
\[\pi:\Out(G,\calP_{uo})/\Out(G,\calP_{m}) \to \Out(\bar G,\bar
  \calP_{uo})/\Out(\bar G, \bar\calP_{m}).\] Surjectivity will be
achieved by invoking \cite{DG_charac}, and the limit argument
there. Our main task is to show injectivity, and for that, we will
push the situation into the parabolic groups, see Diagram
(\ref{diag;injectivity}.)  The assumption of congruences separating
the torsion will be sufficient to ensure that for sufficiently deep
Dehn fillings, the map $\pi$ will be injective.

The next difficulty is to show that the vertical arrows of the diagram
are well defined.  The left vertical arrow is well defined because the
infinite maximal parabolic subgroups of a relatively hyperbolic group are
self-normalized. In a hyperbolic Dehn filling, however, since the
group is hyperbolic relative to a collection of finite groups, the
situation is not so clear. We will show that this is also the case for
sufficiently deep Dehn fillings by examining the geometry more
closely.

\subsubsection{The map $\pi:\Out(G,\calP_{uo})/\Out(G,\calP_{m})  \to \Out(\bar G,\bar \calP_{uo})/\Out(\bar G, \bar\calP_{m})  $}

The operation of Dehn fillings is an important tool. In the context of
relatively hyperbolic groups, it was first studied by Osin, and by
Groves and Manning.

\begin{theo}[{\cite[Dehn filling theorem]{Osi_DF,
    GrM_DF}}] \label{theo;Dehn_fill}

  Let $(G,\{[P_1],\dots,[P_k]\})$ be a relatively hyperbolic
  group. There exists a finite set $X$ in $G\setminus\{1\}$ such that
  whenever $N_i \normal P_i$ avoids $X$, the image of $P_i$ in the
  group $G/\langle\langle \bigcup_i N_i \rangle\rangle$ is canonically
  $P_i/N_i$, and $G/\langle\langle \bigcup_i N_i \rangle\rangle$ is
  hyperbolic relative to $\{ [P_1/N_1],\dots,[P_k/N_k]\}$.
\end{theo}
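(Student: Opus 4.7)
The plan is to follow the Groves-Manning cusped-space construction from \cite{GrM_DF}. Fix a finite relative generating set of $G$ together with finite generating sets of each $P_i$, and build the cusped Cayley complex $X$ by attaching a combinatorial horoball $\calH$ to every coset $gP_i$ in the Cayley graph of $G$. By \cite{GrM_DF}, $X$ is $\delta$-hyperbolic for some $\delta$, $G$ acts freely and properly discontinuously on $X$, and the action is cocompact on the complement of the open horoballs; horoball stabilizers realize $\{[P_i]\}$. This is precisely the geometric model for $(G,\calP)$ underlying Definition \ref{def;RHG}.

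Given $N_i \normal P_i$, let $\bar G = G/\ngrp{\bigcup_i N_i}$. Construct $\bar X$ by $G$-equivariantly identifying, inside each horoball $\calH(gP_ig^{-1})$, points that differ by the action of $gN_ig^{-1}$; equivalently, replace each horoball by a combinatorial horoball over a Cayley graph of $P_i/N_i$. The quotient $\bar G$ acts on $\bar X$, freely and cocompactly on the complement of the new horoballs. To conclude that $\bar G$ is hyperbolic relative to $\{[P_i/N_i]\}$ with horoball stabilizers canonically $P_i/N_i$, it suffices to establish: (i) $\bar X$ is Gromov-hyperbolic with a uniformly bounded constant; (ii) the canonical homomorphism $P_i \to \bar G$ has kernel exactly $N_i$.

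For (ii), a van Kampen diagram argument inside $X$ shows that a word $p \in P_i$ representing the trivial element of $\bar G$ bounds a disk whose 2-cells are defining relators of $G$ and conjugates of elements of the $N_j$; by $\delta$-hyperbolicity and the fact that distinct horoballs are mutually far apart, a minimal such diagram pushes inside the horoball of $P_i$, so $p$ already lies in the normal closure of $N_i$ inside $P_i$, which equals $N_i$ itself. This is the Cohen-Lyndon type statement proved in both \cite{Osi_DF} and \cite{GrM_DF}.

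The principal obstacle is (i). The key observation is that nontrivial elements of $gN_ig^{-1}$, identified in a horoball $\calH(gP_ig^{-1})$, correspond to pairs of points whose distance in the horoball grows without bound as one enters deeper levels; so if $N_i$ avoids a ball $X_0$ of sufficient radius, then the identifications used to form $\bar X$ take place only very deep inside horoballs, and any short geodesic bigon in $\bar X$ lifts, with controlled distortion, to a configuration in $X$ where thin-triangle estimates apply. Making this precise is a combinatorial small-cancellation / rotating-families argument, carried out explicitly in \cite{GrM_DF}. The finite set $X$ of the statement is the ball $X_0$ extracted from this analysis, with radius depending only on $\delta$ and the relative isoperimetric constant of $(G,\calP)$.
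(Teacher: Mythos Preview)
The paper does not prove this theorem; it is quoted from \cite{Osi_DF, GrM_DF} and used as a black box. There is therefore no in-paper proof to compare your proposal against.

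That said, your sketch is a faithful outline of the Groves--Manning approach in \cite{GrM_DF}: build the cusped space, quotient the horoballs by the $N_i$-action, and argue that if the $N_i$ avoid a finite set then the identifications happen deep enough in the horoballs for hyperbolicity to survive with a uniform constant, while a van Kampen/Cohen--Lyndon argument gives $P_i \cap \ker(G \to \bar G) = N_i$. The only caveat is that your sketch defers all the actual work of step (i) to the phrase ``carried out explicitly in \cite{GrM_DF}'': the combinatorial isoperimetric estimate there (or Osin's alternative asymptotic-cone/surgery argument in \cite{Osi_DF}) is the substance of the theorem, not a detail. As a citation-with-sketch this is fine; as a self-contained proof it would not be.
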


In this case, we say that $G/\langle\langle \bigcup_i N_i
\rangle\rangle$ is a \define{Dehn filling of $G$ by $K= \langle\langle
  \bigcup_i N_i \rangle\rangle$}. A sequence of Dehn fillings by $K_n
= \langle\langle \bigcup_i N_{i,n} \rangle\rangle $ is called
\emph{cofinal} if $\bigcap_n K_n = \{1\}$. A convenient property of
Dehn fillings is that the cofinality of $K_n$ is equivalent to the
cofinality of each $ N_{i,n}$: $\forall i, \bigcap_n N_{i,n} = \{1\}$
(see for instance \cite[Theorem 7.9]{DGO}.)

In the following, $(G, \{[P_1],\dots,[P_k]\})$ will be a relatively
hyperbolic group.  $S_i$ will be a finite generating set of $P_i$, so
that $\calP_{uo} = ([P_1],\dots,[P_k])$, and $\calP_{m} =
([S_1],\dots,[S_k])$ are respectively the unmarked ordered peripheral
structure, and the marked peripheral structure on $G$ associated to
$\calP$ (and the implicit choices.) We first set the stage and define $\pi$.

\begin{lemma}\label{lem;pi_well_definied}

  Let $N_i \normal P_i$ be a characteristic subgroup of $P_i$.  Let
  $\bar G= G/\langle\langle \bigcup_i N_i \rangle\rangle$, $\bar
  \calP_{uo}= ([P_1/N_1],\dots,[P_k/N_k])$, and $\bar \calP_{m} =
  ([\bar S_1],\dots,[\bar S_k])$. Then, the quotient map $G\to \bar G$
  induces a morphism $\pi: \Out(G, \calP_{uo}) \to \Out(\bar G, \bar
  \calP_{uo} )$, that sends $\Out(G, \calP_{m})$ inside $\Out(\bar
  G/\bar \calP_m )$.

\end{lemma}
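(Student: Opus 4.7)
The plan is to show that any automorphism $\phi \in \Aut(G,\calP_{uo})$ preserves the normal subgroup $K = \langle\langle \bigcup_i N_i\rangle\rangle$, hence descends to $\bar G$, and that this descent respects both $\bar\calP_{uo}$ and (in the marked case) $\bar\calP_m$, as well as inner automorphisms — so that it passes to the quotient by inner automorphisms.

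First, I would unwind the definitions: an automorphism $\phi \in \Aut(G,\calP_{uo})$ sends each $P_i$ to a conjugate of itself, so there exist $g_i \in G$ with $\phi(P_i) = g_i P_i g_i^{-1}$. The composition $\ad_{g_i^{-1}} \circ \phi|_{P_i}$ is therefore an automorphism of $P_i$, and since $N_i$ is characteristic in $P_i$ it must fix $N_i$ setwise. Consequently $\phi(N_i) = g_i N_i g_i^{-1}$, which lies in the normal closure $K$. Since this holds for each $i$, we get $\phi(K) \subseteq K$; applying the same argument to $\phi^{-1}$ yields $\phi(K) = K$. Therefore $\phi$ descends to a well-defined automorphism $\bar\phi$ of $\bar G = G/K$.

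Next I would check that $\bar\phi$ respects the peripheral structure. Writing $q : G \to \bar G$ for the quotient, we have $\bar\phi(\bar P_i) = q(\phi(P_i)) = q(g_i P_i g_i^{-1}) = \bar g_i \bar P_i \bar g_i^{-1}$, so $\bar\phi \in \Aut(\bar G, \bar\calP_{uo})$. This yields a group homomorphism $\Aut(G,\calP_{uo}) \to \Aut(\bar G,\bar\calP_{uo})$. Because $q$ sends inner automorphisms of $G$ to inner automorphisms of $\bar G$, this homomorphism descends to the announced $\pi : \Out(G,\calP_{uo}) \to \Out(\bar G,\bar\calP_{uo})$.

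For the marked version, if $\phi \in \Aut(G,\calP_m)$ then $\phi(S_i)$ is obtained from $S_i$ by simultaneous conjugation (a single $h_i \in G$ conjugating each entry of the tuple). Applying $q$ gives $\bar\phi(\bar S_i) = \bar h_i \bar S_i \bar h_i^{-1}$ componentwise, so $\bar\phi \in \Aut(\bar G,\bar\calP_m)$, and the same inner-automorphism remark shows $\pi$ restricts to a map $\Out(G,\calP_m) \to \Out(\bar G,\bar\calP_m)$. None of these steps presents any real obstacle — the only point requiring care is the characteristic condition on the $N_i$, which is precisely what is needed for $K$ to be $\phi$-invariant; without it, the map $\pi$ simply would not be defined.
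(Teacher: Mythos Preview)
Your proof is correct and follows essentially the same approach as the paper's: both use that $\phi$ sends $P_i$ to a conjugate, compose with an inner automorphism so that $P_i$ is fixed setwise, then invoke that $N_i$ is characteristic to conclude $\phi$ preserves the normal closure $K$ and descends to $\bar G$. Your version is simply more explicit about the details (showing $\phi(K)=K$ via $\phi^{-1}$, tracking the conjugators through to $\bar G$, and handling the marked case), whereas the paper compresses all of this into a few lines.
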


\begin{proof}
  If an automorphism $\phi$ of $G$ preserves $[P_i]$, it has the same
  class in $\Out(G)$ than an automorphism preserving $P_i$, and
  therefore $N_i$, since the later is characteristic in $P_i$. It
  follows that $\phi$ preserves $[N_i]$. If this happens for all $i$,
  $\phi$ thus defines an element $\bar \phi$ of $\Out(\bar G)$, and it
  preserves $\bar \calP_{uo}$. The map $\pi: \Out(G, \calP_{uo}) \to
  \Out(\bar G, \bar \calP_{uo} )$ hence defined is clearly a
  homomorphism. If $\phi$ preserves $[S_i]$, $\bar \phi$ preserves $[\bar
  S_i]$.
\end{proof}

\begin{lemma}\label{lem;surjective}
  Assume that $(G, \calP_{uo})$ is a relatively hyperbolic group that
  does not split over an elementary subgroup. Then, there exists a
  finite set $X'$ in $G\setminus\{1\}$ such that whenever one chooses,
  for each $i$, $N_i \normal P_i$ avoiding $X'$, characteristic and of
  finite index in $P_i$, the homomorphism $\pi: \Out(G, \calP_{uo}) \to
  \Out(\bar G, \bar \calP_{uo} )$ is surjective.
\end{lemma}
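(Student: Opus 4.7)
The plan is to argue by contradiction, invoking the characterization of rigid relatively hyperbolic groups by their hyperbolic Dehn fillings established in \cite{DG_charac}. Suppose no such finite set $X'$ exists. Enumerating $G \setminus \{1\} = \{g_1, g_2, \ldots\}$, we can then produce, for each $n \geq 1$, characteristic finite-index subgroups $N_{i,n} \normal P_i$ whose union of conjugates avoids $\{g_1, \ldots, g_n\}$, such that, writing $K_n = \langle\langle \bigcup_i N_{i,n}\rangle\rangle$ and $\bar G_n = G/K_n$, the induced map $\pi_n : \Out(G,\calP_{uo}) \to \Out(\bar G_n, \bar\calP_{uo})$ fails to be surjective. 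The construction forces $\bigcap_n K_n = \{1\}$, so the sequence of Dehn fillings is cofinal; by Theorem~\ref{theo;Dehn_fill}, $\bar G_n$ is, for $n$ large enough, relatively hyperbolic with respect to $\bar \calP_{uo}$ whose groups $P_i/N_{i,n}$ are finite, and hence $\bar G_n$ is a hyperbolic group.

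For each such $n$, pick $\bar\psi_n \in \Out(\bar G_n, \bar\calP_{uo}) \setminus \Im(\pi_n)$, choose a representative $\psi_n \in \Aut(\bar G_n)$, and form the composition
\[
\phi_n : G \stackrel{q_n}{\longrightarrow} \bar G_n \stackrel{\psi_n}{\longrightarrow} \bar G_n,
\]
where $q_n$ is the quotient map. Because $\psi_n$ preserves the unmarked ordered peripheral structure on $\bar G_n$, each $\phi_n$ sends every $P_i$ into some conjugate of some $P_{\sigma_n(i)}/N_{\sigma_n(i),n}$, for a permutation $\sigma_n$. Passing to a subsequence, we may assume $\sigma_n = \sigma$ is fixed.

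The main point is to apply the rigidity/characterization result of \cite{DG_charac}: since $(G,\calP_{uo})$ is rigid (has no elementary splitting) and the parabolics are residually finite, the sequence $(\phi_n)$ of peripheral-structure-preserving homomorphisms into the cofinal Dehn fillings $\bar G_n$ must, for all sufficiently large $n$, factor as $\phi_n = q_n \circ \alpha_n$ for some $\alpha_n \in \Aut(G)$. Once this is granted, $\alpha_n$ preserves each conjugacy class $[P_i]$ modulo $K_n$; using that maximal parabolic subgroups are self-normalised (Proposition~\ref{prop;parab_own_norm}) together with the cofinality of the filtration, one concludes that $\alpha_n \in \Aut(G,\calP_{uo})$ for $n$ large. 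Its outer class then maps to that of $\bar\psi_n$ under $\pi_n$, contradicting the choice of $\bar\psi_n$.

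The hard part is the application of \cite{DG_charac}. The underlying mechanism is a Bestvina--Paulin limit argument: if no subsequence of $(\phi_n)$ factors through an automorphism of $G$, one rescales the actions of $G$ on the Cayley graphs of $\bar G_n$ via $\phi_n$ to extract a non-trivial isometric action of $G$ on an $\mathbb{R}$-tree in which the peripheral subgroups are elliptic and all arc stabilizers are elementary; applying the Rips--Sela machine to this action yields an elementary $(G,\calP_{uo})$-splitting, contradicting rigidity. The delicate points are (i) to verify that the Bestvina--Paulin rescaling does produce a non-trivial limit action (which is where cofinality of the Dehn fillings and the fact that $\phi_n$ has image generating $\bar G_n$ are used), and (ii) to check that the limit action respects $\calP$, which follows from the characteristicity of the $N_{i,n}$ (so that $\phi_n$ preserves $\calP_{uo}$ up to conjugacy).
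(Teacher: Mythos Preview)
Your argument is essentially the same as the paper's: both proceed by contradiction, build a cofinal sequence of Dehn fillings for which $\pi_n$ fails to be surjective, pick outer automorphisms $\bar\psi_n$ outside the image, and then invoke \cite[Theorem~5.1]{DG_charac} to lift (some of) these to automorphisms of $(G,\calP)$, yielding a contradiction.

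Two small points deserve comment. First, the conclusion of \cite[Theorem~5.1]{DG_charac} (as quoted in the paper) is that there exists a \emph{single} $\phi \in \Aut(G,\calP)$ inducing $\phi_n$ up to conjugation for \emph{infinitely many} $n$, not that each $\phi_n$ factors for all large $n$; this is harmless here since a single $n$ already gives the contradiction. Second, and more substantively, your extra step arguing that $\alpha_n \in \Aut(G,\calP_{uo})$ from ``$\alpha_n$ preserves $[P_i]$ modulo $K_n$'' together with self-normalisation and cofinality is both unnecessary and not clearly justified as written: knowing that $\alpha_n(P_i)K_n$ is conjugate to $P_iK_n$ in $\bar G_n$ does not by itself force $\alpha_n(P_i)$ to be conjugate to $P_i$ in $G$. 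The cleanest fix is simply to quote the theorem from \cite{DG_charac} in its actual form, which already produces an automorphism of the pair $(G,\calP)$, so no post-hoc verification of peripheral preservation is needed.
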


\begin{proof}
  We argue by contradiction.  We invoke \cite[Theorem 5.2]{DG_charac} (applied to $G=G'$) which
  says that if $(G, \calP_{uo})$ is a relatively hyperbolic group that
  does not split over an elementary subgroup, then for any cofinal
  sequences of Dehn fillings by $K_n$, and any sequence of
  automorphism $\phi_n: (G/K_n,\bar
  \calP_n)\stackrel{\sim}{\to}(G/K_n,\bar\calP_n)$, there is an
  automorphism $\phi :(G,\calP)\stackrel{\sim}{\to} (G,\calP)$ that
  induces, for infinitely many $n$, a composition of $\phi_n$ with a
  conjugation.

  Let $X'_n, (n\in \bbN)$ be an exhaustion of $G\setminus\{1\}$ by
  finite sets. By assumption, there is an associated Dehn filling by
  characteristic finite index subgroups of $P_i$, such that the
  homomorphism $\pi: \Out(G, \calP_{uo}) \to \Out(\bar G_n, \bar{
    (\calP_{uo})_n })$ is not surjective, and in which $X'_n$
  survives.  
  One can
  produces a sequence $\phi_n$ of automorphisms of $(\bar G_n, \bar{
    (\calP_{uo})})_n$, by choosing an automorphism not in the image,
  for each $n$.  \cite[Theorem 5.2]{DG_charac}, that we just recalled
  provides an automorphism $\phi :(G,\calP)\stackrel{\sim}{\to}
  (G,\calP)$ that induces, for infinitely many $n$, a composition of
  $\phi_n$ with a conjugation. That is a contradiction.
\end{proof}

\subsubsection{Injectivity of  $\pi$: Congruences separating the
  torsion} \label{subsec;injpi}

The next lemma will serve to introduce the separation of torsion by
congruences. If $P_0$ is a finite index characteristic subgroup, we
call \define{$P_0$-congruence} the quotient map $P\to P/P_0$. Note
that, because $P_0$ is characteristic, this quotient map induces a
homomorphism $\Out P \to \Out P/P_0$. Its kernel is called a
\define{$P_0$-congruence subgroup of $\Out P$} (in generalization of
the case $P= \mathbb{Z}^n$.)

The desirable property is the existence of torsion-free congruence
subgroups.

\begin{defi}\label{def;CST}
  Given a group $P$, and a characteristic finite index subgroup $P_0$,
  we say that \emph{the $P_0$-congruence separates the torsion of
    $\out P$} if the $P_0$-congruence subgroup of $\out P$ is torsion
  free.  In a class of groups, \emph{congruences separate the torsion}
  if for each group $P$ in this class, there is a congruence in $P$
  that separates the torsion of $\Out(P)$. In a class of groups,
  \emph{congruences effectively separate the torsion} if there is an
  algorithm that, given a presentation of a group in the class,
  produces a congruence separating the torsion.
\end{defi}

\begin{rem}\label{rem;GLnZ3} Let us emphasize that this is a property of outer
  automorphisms groups. In the abelian group $\mathbb{Z}^n$,
  congruences separate the torsion of $GL(n,\bbZ)$, and more
  precisely, that the kernels of the quotient on
  $GL(n,\mathbb{Z}/p\mathbb{Z})$ are torsion-free for $p\geq 3$, by a
  classical theorem of Minkowski.  
\end{rem}

\begin{lemma} \label{lem;c_tfker} Let $P$ be a group in which
  congruences separate the torsion. Whenever one chooses $N \normal P$
  characteristic, of finite index, inside a congruence group
  separating the torsion, the quotient map $P\to P/N$ induces a map
  $c:\Out(P) \to \Out(P/N)$ with torsion free kernel.
\end{lemma}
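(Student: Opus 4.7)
The plan is to compare the $N$-congruence map $c\colon \Out(P) \to \Out(P/N)$ with the $P_0$-congruence map $c_0\colon \Out(P) \to \Out(P/P_0)$, where $P_0 \charleq P$ is the characteristic, finite index subgroup containing $N$ and whose associated congruence separates the torsion of $\Out(P)$ --- such a $P_0$ is provided by the hypothesis on $N$. By the definition of congruences separating the torsion, $\ker(c_0)$ is torsion-free, so it will suffice to prove the inclusion $\ker(c) \subseteq \ker(c_0)$; torsion-freeness of $\ker(c)$ will then follow because any subgroup of a torsion-free group is torsion-free.

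To establish this inclusion, I would exploit the factorization of quotient maps $P \twoheadrightarrow P/N \twoheadrightarrow P/P_0$, which is well-defined because $N \leq P_0$. Any $\phi \in \Aut(P)$ preserves $N$ and $P_0$ (both being characteristic in $P$), hence descends to $\bar\phi \in \Aut(P/N)$, which in turn preserves $P_0/N$ and so descends further to $\bar{\bar\phi} \in \Aut(P/P_0)$. The composite assignment $\phi \mapsto \bar{\bar\phi}$ is exactly $c_0$ at the level of $\Aut$, and $\bar{\bar\phi}$ is the image of $\bar\phi$ under the natural map from the stabilizer of $P_0/N$ in $\Aut(P/N)$ into $\Aut(P/P_0)$.

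The final step is the passage to outer automorphism groups: if $\phi \in \ker(c)$, meaning that $\bar\phi$ is inner in $P/N$ --- say conjugation by the image of some $g \in P$ --- then $\bar{\bar\phi}$ is conjugation by the image of that same $g$ in $P/P_0$, which is inner. Hence $\phi \in \ker(c_0)$, which proves the inclusion and thus the lemma.

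There is no substantive obstacle here; the argument is a short diagram chase that leverages characteristicity of $N$ and $P_0$, the factorization of the canonical quotients, and the key observation that a common lift $g \in P$ simultaneously witnesses innerness in both quotients $P/N$ and $P/P_0$. The only bookkeeping point to handle carefully is the well-definedness of the intermediate map $\bar\phi \mapsto \bar{\bar\phi}$ on the stabilizer of $P_0/N$, which is immediate from the fact that $P_0/N$ is the kernel of $P/N \twoheadrightarrow P/P_0$.
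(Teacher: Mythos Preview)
Your proposal is correct and follows essentially the same approach as the paper: factor the $P_0$-congruence map $c_0$ through the $N$-congruence map $c$ via $P \twoheadrightarrow P/N \twoheadrightarrow P/P_0$, deduce $\ker(c)\subseteq\ker(c_0)$, and conclude torsion-freeness from that of $\ker(c_0)$. The paper's proof is terser (it simply asserts the factorization at the level of $\Out$), while you spell out the passage through $\Aut$ and the preservation of innerness, but the underlying argument is identical.
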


\begin{proof}
  The map $c$ is well defined since $N$ is characteristic. By
  definition of congruences separating the torsion, there exists $N_0$
  characteristic, of finite index such that $c_0: \Out(P) \to
  \Out(P/N_0)$ has torsion free kernel. If $N<N_0$ is characteristic
  in $G$, the map $c_0: \Out(P) \to \Out(P/N_0)$ factorizes through
  $\Out(P) \stackrel{c}{\to} \Out(P/N) \to \Out(P/N_0)$. It follows
  that $\ker c \subset \ker c_0$ is torsion-free.
\end{proof}

\subsubsection{Injectivity of  $\pi$: the diagram.}

Our aim is to obtain a well defined commutative diagram 

\begin{equation}\label{diag;injectivity}
\begin{array}{ccc}
\Out(G , \calP_{uo})/\Out(G, \calP_m)  &  \stackrel{\pi}{\longrightarrow} &
\Out(\bar G,  \bar \calP_{uo})/\Out(\bar G, \bar \calP_m)    \\
 ? \downarrow & &   \downarrow ?  \\
\prod \Out(P_i) & \stackrel{\oplus c_i}{\longrightarrow}  &  \prod \Out (\bar P_i). 
\end{array}
\end{equation}
in which the vertical arrows are injective. 

The reader can already see how the diagram will help to prove that
$\pi$ is a bijection, under suitable assumptions. Indeed, we
established the surjectivity of $\pi$ (in deep enough Dehn fillings),
and the torsion-freeness of the kernel of $\oplus c_i$.  If the left
vertical arrow is injective, and $\Out(G,\calP_{uo})$ finite, the
bijectivity of $\pi$ will follow.

\begin{prop}\label{prop;def;r}

  Let $G$ be a group, and $P<G$. Assume that $P$ is its own normalizer
  in $G$. Then, the inclusion $P\to G$ induces a homomorphism $r: \Out (G,
  ([P])) \to \Out P$. Moreover, the kernel of this homomorphism is
  precisely $\Out (G, ([S]))$ for some (any) tuple $S$ generating $P$.

\end{prop}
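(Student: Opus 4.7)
The plan is to define $r$ explicitly, check that it is well-defined and a homomorphism using the self-normalizing hypothesis, and then identify its kernel with $\Out(G,[S])$ by a direct argument using that $S$ generates $P$.

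First I would define $r$ as follows. Given $\phi \in \Aut(G,[P])$, choose $g \in G$ such that $\phi(P) = gPg^{-1}$; then $\ad_{g^{-1}}\circ \phi$ restricts to an automorphism of $P$, and we set $r(\phi) = [\ad_{g^{-1}}\circ \phi|_P] \in \Out(P)$. The independence of the choice of $g$ is exactly where $P = N_G(P)$ enters: if $g'$ is another valid conjugator, then $g^{-1}g' \in N_G(P) = P$, so $\ad_{g'^{-1}}\circ \phi|_P$ and $\ad_{g^{-1}}\circ \phi|_P$ differ by $\ad_{(g^{-1}g')^{-1}}|_P \in \Inn(P)$. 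To descend to $\Out(G,[P])$, observe that if $\phi' = \ad_h\circ \phi$ for some $h\in G$ then $\phi'(P) = (hg)P(hg)^{-1}$ and $\ad_{(hg)^{-1}}\circ \phi' = \ad_{g^{-1}}\circ \phi$, so the same element of $\Out(P)$ is produced. The homomorphism property is then immediate from the definition.

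Next I would compute the kernel. Suppose $[\phi] \in \ker r$. Choose a representative $\phi$ and a conjugator $g$ with $\phi(P)=gPg^{-1}$; replacing $\phi$ by the cohomologous $\ad_{g^{-1}}\circ \phi$, we may assume $\phi(P) = P$. The hypothesis $r([\phi]) = 1$ means there exists $p \in P$ with $\phi|_P = \ad_p|_P$, so after further replacing $\phi$ by $\ad_{p^{-1}}\circ \phi$ (which does not change the outer class) we obtain a representative fixing $P$ pointwise. In particular this representative fixes the tuple $S$, hence a fortiori sends $S$ to a conjugate of itself (by the identity), so $[\phi] \in \Out(G,[S])$. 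Conversely, if $[\phi] \in \Out(G,[S])$, pick a representative $\phi$ and an element $g \in G$ with $\phi(s) = g s g^{-1}$ for every $s \in S$; since $S$ generates $P$, this forces $\phi(P) = gPg^{-1}$ and $\phi|_P = \ad_g|_P$, so $\ad_{g^{-1}}\circ \phi|_P$ is the identity on $P$ and $r([\phi]) = 1$.

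The argument is essentially bookkeeping; the only subtle point, and the one on which the whole statement hinges, is the use of $N_G(P)=P$ to make $r$ well-defined as a map into $\Out(P)$ rather than just into the set of automorphisms up to a larger ambiguity. Once this is in place, everything else follows by unwinding the definitions of $\Out(G,[P])$ and $\Out(G,[S])$ given in Section~\ref{sec:periph-struct}, noting that the statement ``$\phi(S) = gSg^{-1}$ for some common conjugator $g$'' is equivalent, since $\langle S\rangle = P$, to ``$\phi|_P = \ad_g|_P$''.
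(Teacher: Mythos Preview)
Your proof is correct and follows essentially the same approach as the paper's: define $r$ by choosing a conjugator, use $N_G(P)=P$ to show the choice is irrelevant in $\Out(P)$, and identify the kernel as those outer automorphisms whose restriction to $P$ is inner. Your argument is in fact more complete than the paper's, which simply asserts that the kernel consists of automorphisms inducing a conjugation of $G$ on $P$ and that ``the claims follow'', whereas you spell out both inclusions $\ker r \subset \Out(G,[S])$ and $\Out(G,[S]) \subset \ker r$ explicitly.
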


\begin{proof}
  Take any automorphism $\alpha$ of $G$ preserving the conjugacy class
  of $P$.  There is $g$ such that, if $\ad_g$ denotes the conjugation
  by $g$, the automorphism $\ad_g\circ\alpha$ preserves $P$. There is
  no uniqueness of $g$, but for any other choice $g'$, the element
  $g^{-1} g'$ normalizes $P$, hence is in $P$ by assumption. It
  follows that the automorphisms $\ad_g\circ\alpha$ and
  $\ad_{g'}\circ\alpha$ coincide in $\Out(P)$. It is then
  straightforward to check that this map $\Aut(G,[P]) \to \Out (P)$ is a
  homomorphism.

  The kernel of this homomorphism is precisely the subgroup of
  $\Aut(G,[P])$ that induces some conjugation by an element of $G$ on
  $P$. The claims follow.
\end{proof}

The left vertical arrow of Diagram (\ref{diag;injectivity}) is
therefore well defined and given by:

\begin{coro}\label{coro;left_vertical}
  If $(G,\calP_{uo})$ is a relatively hyperbolic group, with
  $\calP_{uo}=([P_1], \dots, [P_k])$ and each $P_i$ is infinite, then for all $i$, the
  restriction induces a homomorphism $r_i:\Out(G,\calP_{uo}) \to
  \Out(P_i)$, such that $\ker( \oplus r_i)$ is $\Out(G,\calP_m)$. In
  particular, the restriction induces an injective homomorphism \[
  \Out(G , \calP_{uo})/\Out(G, \calP_m) \to \prod_i \Out(P_i)\]
\end{coro}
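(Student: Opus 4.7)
The plan is to assemble this corollary directly from the two ingredients just proved, namely the self-normalizing property of infinite parabolics (Proposition \ref{prop;parab_own_norm}) and the restriction construction for self-normalizing subgroups (Proposition \ref{prop;def;r}). The construction of the $r_i$ is componentwise, and the kernel identification reduces to intersecting the individual kernels.

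First I would fix an index $i$. Proposition \ref{prop;parab_own_norm} says that the infinite peripheral subgroup $P_i$ is its own normalizer in $G$, which is exactly the hypothesis required by Proposition \ref{prop;def;r}. There is a tautological inclusion $\Out(G,\calP_{uo}) \hookrightarrow \Out(G,([P_i]))$, since any outer automorphism that preserves the whole ordered tuple $([P_1],\dots,[P_k])$ preserves in particular the $i$-th conjugacy class. Composing this inclusion with the homomorphism produced by Proposition \ref{prop;def;r} gives the desired restriction map $r_i : \Out(G,\calP_{uo}) \to \Out(P_i)$.

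Now I would identify $\ker(\oplus r_i)$. By the second clause of Proposition \ref{prop;def;r}, an element of $\Out(G,\calP_{uo})$ lies in $\ker r_i$ exactly when it lies in $\Out(G,([S_i]))$, where $S_i$ is the fixed generating tuple of $P_i$. Hence $\bigcap_i \ker r_i$ consists of the outer automorphism classes $[\phi]$ of $G$ that simultaneously send each $S_i$ to a conjugate of itself in $G$; and this is precisely the definition of $\Out(G,\calP_m)$ recalled in Section \ref{sec:periph-struct}. The injective homomorphism $\Out(G,\calP_{uo})/\Out(G,\calP_m) \hookrightarrow \prod_i \Out(P_i)$ then follows from the first isomorphism theorem applied to $\oplus_i r_i$.

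The statement is really a packaging step rather than a deep result, so I do not anticipate a serious obstacle; the only point requiring mild care is verifying that the intersection $\bigcap_i \Out(G,([S_i])) \cap \Out(G,\calP_{uo})$ agrees with $\Out(G,\calP_m)$ rather than being strictly larger, but this is an immediate unwinding of the marked versus unmarked definitions.
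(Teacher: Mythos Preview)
Your proposal is correct and follows exactly the paper's approach: invoke Proposition~\ref{prop;parab_own_norm} to ensure each $P_i$ is self-normalizing, then apply Proposition~\ref{prop;def;r} componentwise and intersect the kernels. The paper's own proof is a one-line citation of these two propositions; your write-up is simply a more explicit unpacking of the same argument.
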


\begin{proof}
  By Proposition \ref{prop;parab_own_norm}, in a relatively hyperbolic
  group, peripheral subgroups are their own normalizers, and
  Proposition \ref{prop;def;r} can be applied.
\end{proof}

\subsubsection{Injectivity of $\pi$: the last arrow of the diagram.}

The last brick of the proof is to obtain the right vertical arrow of
Diagram (\ref{diag;injectivity}), and for that, again by Propositions
\ref{prop;def;r}, it suffices to show that, for sufficiently deep Dehn
fillings, $P_i/N_i$ is its own normalizer in $\bar G$.

For that, we need to examine the geometry of the Dehn fillings more
closely. Let us introduce the group $K_n$ that is the kernel of the
Dehn filling by the groups $N_i(n)$ that are intersections of all
index $\leq n$ subgroups of $P_i$. If the groups $P_i$ are residually
finite, the Dehn fillings by the groups $N_i(n)$ satisfy the
assumption of Theorem \ref{theo;Dehn_fill} for $n$ large enough, and
in particular, $P_i\cap K_n = N_i(n)$. We sometimes write $\{
[P_1/N_1(n)],\dots,[P_k/N_k(n)]\} = \calP/\calN(n)$ for brevity.

We will also need to manipulate a good space on which $G/K_n$ acts.
In order to comply with some conventions used in some constructions in
\cite{DGO}, we introduce $\delta_c$ a certain positive constant, whose
value can be found in \cite[\S 5]{DGO}, but which is actually
irrelevant for us.  Note that, in the definition of relative
hyperbolicity, one can, up to rescaling, assume that the space $X$ is
$\delta_c$-hyperbolic.

Thus, start with $\delta \leq \delta_c$, and with a
$\delta$-hyperbolic proper space $X$ as in Definition
\ref{def;RHG},
and endow it with a $10^5\delta$-separated
invariant family of horoballs $\mathcal{H}$.
Given
$R$, we construct the parabolic cone-off space $\dot X_R$ as in
\cite[\S 7.1, Definition 7.2]{DGO}, which is as follows.

First choose $\mathcal{H}_R$ an invariant system of $10^5\delta
+R$-separated horoballs, obtained by taking sub-horoballs of those in
$\mathcal{H}$.  For each horoball $H$ in $\mathcal{H}_R$, let
$\partial H$ be its horosphere, $\partial H = H\setminus
\mathring{H}$. Fix $r_0\geq r_U$, where $r_U$ is a constant whose
value can be found in \cite[\S 5.3]{DGO} (it is $3\times 10^9 +1$.)
Let $\dot{X}$ be the cone-off of $X$ along each $H\in\mathcal{H}_R$,
defined in \cite[\S 5.3]{DGO}, for the parameter $r_0$, and let
$\mathcal{A}$ be its set of apices.  For $a \in \calA $, let $H_a$ its
corresponding horoball in $X$, and $\dot H_a$, the cone on $H_a$, is a
subset of $\dot{X}$. For each pair of points $p,q$ in $\partial H_a$
and each geodesic $[p,q]$ of $\dot H_a$ (for the intrinsic metric)
avoiding $a$, 
consider the set $T_{[p,q]}\subset \dot{H}_a$ to be the convex hull of
$[a,p]\cup [p,q]\cup [q,a]$ in $\dot
H_a$.    
For every other pair of points, i.e. for any connecting geodesic $a\in
[p,q]$ , then $T_{[p,q]}= [p,q]$. We define $B_{H_a}$ to be the union
of all $T_{[p,q]}$, for $p,q$ in $\partial H_a$ and $[p,q]$ a
geodesic.

The parabolic cone-off, as defined in \cite[Definition 7.2]{DGO}, is
then $$\dot X_R = \left( \dot X \setminus \bigcup_{a\in \calA} \dot
  H_a \right) \cup \left( \bigcup_{a\in \calA}B_{H_a} \right)$$

The space $\dot X_R$ is geodesic and locally compact everywhere except
at the apices (see the comment before \cite[Definition 7.2]{DGO}.)  Also, by
\cite[Lemma 7.4]{DGO}, the space $\dot X_R$ is
$16\delta_u$-hyperbolic, for some universal constant $\delta_u$.

Fix $R>0$. Then, by \cite[Proposition 7.7]{DGO}, for $n$ large enough,
$K_n$ is the group of a so-called separated very rotating family on
$\dot X_R$, and by \cite[Proposition 5.28]{DGO} the quotient $\overline{X_n}$ of the space  $\dot X_R$
by  the action of $K_n$ is $60000\delta$-hyperbolic, and the action of $G/K_n$  on  $\overline{X_n}$ is
metrically proper, co-bounded.  Recall that $\mathcal{A}$ is the set of apices of $\dot X_R$, and $\overline{\calA}$ is its image in the quotient $\overline{X_n}$. 

\begin{lemma} \label{lem;barfree} 
   For $n\geq 1$, the action of $G/K_n$ on $\overline{X_n} \setminus
  \overline{\calA}$ is free.     
\end{lemma}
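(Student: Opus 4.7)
The plan is to establish the lemma in two steps: first show that $G$ itself acts freely on $\dot X_R \setminus \calA$, and then transfer this to the quotient action of $G/K_n$.

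For the first step, I would use that by Definition \ref{def;RHG}, $G$ acts freely on $X$. Every point of $\dot X_R \setminus \calA$ is either a point of $X$ lying outside $\bigcup_{a \in \calA} \mathring H_a$, or a non-apex point of one of the sets $B_{H_a}$. On points of the first kind freeness is immediate from the free action on $X$. For a non-apex point $x \in B_{H_a}$, any stabilizer must preserve the apex $a$ (apices are the unique non-locally-compact points of $\dot X_R$ and are sent to apices by $G$), and therefore lies in the stabilizer $P_a$ of the horoball $H_a$. Unpacking the definition of $B_{H_a}$ as the union of the cone-triangles $T_{[p,q]}$, every non-apex point of $B_{H_a}$ lies on a unique cone-segment $[a,y]$ with $y \in \partial H_a$, and this attachment is $P_a$-equivariant. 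Hence the $P_a$-action on $B_{H_a} \setminus \{a\}$ factors through the action on $\partial H_a \subset X$, which is free. So only the identity can fix a non-apex point of $\dot X_R$.

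For the second step, observe that $K_n$ acts on $\dot X_R$ and sends apices to apices (it is a subgroup of $G$), so the quotient map $\dot X_R \to \dot X_R/K_n$ restricts to a canonical identification
\[
(\dot X_R \setminus \calA)/K_n \;=\; (\dot X_R/K_n) \setminus \bar{\calA}.
\]
Now suppose $gK_n \in G/K_n$ fixes the $K_n$-orbit of some $x \in \dot X_R \setminus \calA$. Then $g \cdot x = k \cdot x$ for some $k \in K_n$, so $k^{-1}g$ fixes $x$. By the first step, $k^{-1}g = 1$, i.e.\ $g \in K_n$, so $gK_n$ is trivial in $G/K_n$. This yields the desired freeness.

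The only non-trivial point is the verification that the cone-off construction $B_{H_a}$ produces no new fixed points outside the apex, which amounts to keeping track of how points of $B_{H_a}$ are equivariantly parameterized by the apex together with a direction in $\partial H_a$; once that is checked, the rest is formal.
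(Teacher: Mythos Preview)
Your proof is correct and follows essentially the same approach as the paper: lift a fixed point to $\dot X_R$, find a lift of $\bar g$ in $G$ fixing it, and conclude triviality from the free action on $X$. The paper compresses your case analysis into a single line by invoking ``the (unique) projection of $x$ on $X$'' (which for cone points is exactly the $y$ in your cone-segment $[a,y]$, and for points already in $X$ is the identity), thereby avoiding the intermediate step of arguing that the stabilizer lies in $P_a$; your version simply unpacks this projection.
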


\begin{proof}
  Assume that $\bar g$ fixes a point $\bar x \in \overline{X_n}$ that is
  not an apex, and choose a preimage $x$ of $\bar x$. Then, $\bar g$
  has a preimage $g$ that fixes $x \in \dot X_R$, hence also the
  (unique) projection of $x$ on $X$.  Since the action on
  $X$ is free, $g$  is trivial. 
\end{proof}

\begin{lemma} \label{lem;geom_Xbar} There exists an integer $n_0$ such
  that, for all ball $B \subset X\setminus \calH$, of radius
  $1000\delta$, and any orbit $Gx$, the set $Gx\cap B$ has at most
  $n_0$ elements. Moreover, for any $n\geq 1$, and for any ball $\bar
  B \subset (X\setminus \calH)/K_n \subset \overline{X_n}$ of radius
  $1000\delta$, and any orbit $(G/K_n)\bar x$, the set $(G/K_n)\bar x
  \cap \bar B$ has at most $n_0$ elements.
\end{lemma}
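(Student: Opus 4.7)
The plan is to prove the two assertions in succession, reducing the quotient statement to the one in $X$ by a lifting argument.

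For the first assertion, I would use that the $G$-action on $X$ is free and properly discontinuous, and cocompact on $X \setminus \calH$. Fix a compact fundamental domain $D \subset X \setminus \calH$ and let $D^*$ be its closed $2000\delta$-neighborhood in $X$; by properness, the set $E := \{g \in G : gD^* \cap D^* \neq \es\}$ is finite, and I set $n_0 := |E|$. Given any ball $B \subset X \setminus \calH$ of radius $1000\delta$, pick $g_0 \in G$ so that $g_0 B$ meets $D$; since $\diam(B) \leq 2000\delta$ this forces $g_0 B \subset D^*$. Replacing $x$ by $g_0 x$ leaves $|Gx \cap B|$ unchanged, so I may assume $B \subset D^*$ outright. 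The standard observation that any two elements $g_1, g_2 \in G$ with $g_1 x, g_2 x \in D^*$ satisfy $g_2 g_1^{-1} \in E$ then gives $|Gx \cap B| \leq |E| = n_0$.

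For the second assertion, fix $n \geq 1$, a ball $\bar B \subset (X \setminus \calH)/K_n$ of radius $1000\delta$ in $\bar X_n = \dot X_R / K_n$, and a point $\bar x \in \bar X_n$. The aim is to lift $\bar B$ to a ball $\tilde B \subset X \setminus \calH$ of the same radius in the $X$-metric, and pick a lift $\tilde x \in \dot X_R$ of $\bar x$. For each $\bar g \in G/K_n$ with $\bar g \bar x \in \bar B$, any preimage $g_0 \in G$ satisfies $g_0 \tilde x \in K_n \tilde B$; multiplying $g_0$ on the left by a suitable $k^{-1} \in K_n$ produces $g := k^{-1} g_0$ with $g \tilde x \in \tilde B$ and $g K_n = \bar g$. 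This yields a surjection
\[
\{g \in G : g \tilde x \in \tilde B\} \twoheadrightarrow \{\bar g \in G/K_n : \bar g \bar x \in \bar B\},
\]
and the first assertion applied to the $X$-ball $\tilde B$ then delivers the same bound $n_0$.

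The main obstacle is the geometric verification that a $\bar X_n$-ball of radius $1000\delta$ sitting inside $(X \setminus \calH)/K_n$ indeed admits a lift that is an $X$-ball of the same radius. This rests on two inputs from \cite[\S 5--7]{DGO}: first, the $10^5\delta + R$-separation of the horoballs in $\calH_R$ ensures that $\calH \setminus \calH_R$ has depth large enough that no $\dot X_R$-geodesic of length at most $2000\delta$ between points of $X \setminus \calH$ can shortcut through an apex cone, so the $\dot X_R$-metric coincides with the $X$-metric on such neighborhoods; and second, the separated very rotating family structure of $K_n$ on $\dot X_R$, together with Lemma~\ref{lem;barfree}, prevents the projection $\dot X_R \to \bar X_n$ from identifying distinct points inside a $1000\delta$-ball disjoint from $\calA$, so the lift preserves both the radius and the metric.
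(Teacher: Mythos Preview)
Your approach is correct and follows the same two-step strategy as the paper: deduce the first claim from properness (plus cocompactness for uniformity), then reduce the second to the first by lifting the ball and observing that the preimage of a $(G/K_n)$-orbit is a $G$-orbit. The paper's own proof is extremely terse---it simply invokes metric properness for the first claim and asserts the lift for the second---whereas you spell out the fundamental-domain argument for the uniform bound and, more importantly, you make explicit the two geometric inputs needed to justify that a $1000\delta$-ball in $(X\setminus\calH)/K_n$ lifts to a genuine $X$-ball of the same radius (coincidence of the $\dot X_R$ and $X$ metrics at this scale via horoball separation, and local injectivity of $\dot X_R \to \bar X_n$ away from apices via the very rotating family). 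The paper uses exactly this local isometry later in the proof of Proposition~\ref{prop;barPi_own_N} but does not justify it within the lemma itself, so your elaboration is a welcome clarification rather than a deviation.
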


\begin{proof}
  The first claim follows from the fact that the action of $G$ on
  $X\setminus \calH$ is metrically proper.  For the second claim, one
  can lift $\bar B$ as a ball $B$ in $X\setminus \calH$, and the full
  preimage of the orbit $(G/K_n)\bar x$ is itself an orbit. The
  estimation follows.
\end{proof}

\begin{prop}\label{prop;barPi_own_N}
  Let $(G, \{[P_1], \dots, [P_k]\})$ be a    
  relatively hyperbolic group.  Assume that each $P_i$ is infinite,
  residually finite. For all $n$, let $K_n$ be the $n$-th Dehn kernel
  as above. Then, for $n$ large enough, the groups $\bar P_i =
  P_i/(K_n\cap P_i)$ are their own normalizers in $G/K_n$.  Moreover,
  there is an algorithm that, given $n$, will terminate if and only if
  $G/K_n$ is hyperbolic and the groups $\bar P_i$ are their own
  normalizers.
\end{prop}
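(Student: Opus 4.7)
The strategy is to realise $\bar G := G/K_n$ on the quotient parabolic cone-off $\bar X := \dot X_R / K_n$ and to identify $\bar P_i$ as the full pointwise fixator of a single apex; then any element normalising $\bar P_i$ must fix that apex and hence lie in $\bar P_i$. First I would fix $R>0$ and take $n$ large enough that $K_n$ is a separated very rotating family on $\dot X_R$ (\cite[Proposition 6.7]{DGO}), so that $\bar X$ is $40000\delta$-hyperbolic (\cite[Proposition 5.20]{DGO}) and $\bar G$ acts on it properly and coboundedly. Since $P_i$ is infinite and residually finite, for $n$ large $N_i(n)$ is a proper finite-index subgroup of $P_i$, whence $\bar P_i = P_i/N_i(n)$ is a nontrivial finite group. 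Let $a_i \in \calA$ be the apex with $\Stab_G(a_i)=P_i$, and $\bar a_i$ its image in $\bar X$. A quick lift argument shows $\Stab_{\bar G}(\bar a_i) = \bar P_i$: any lift $g$ of $\bar g \in \Stab_{\bar G}(\bar a_i)$ satisfies $g a_i \in K_n a_i$, so $g \in K_n \Stab_G(a_i) = K_n P_i$.

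The crux is then to show $\Fix(\bar P_i) = \{\bar a_i\}$ in $\bar X$. Lemma \ref{lem;barfree} already forces $\Fix(\bar P_i) \subseteq \bar\calA$, so the task is to exclude a second fixed apex $\bar a'$. My plan here is to exploit the construction of $\dot X_R$: it attaches the local cones $B_{H_a}$ to the thick part $X\setminus \bigcup \mathring H$ along the horospheres $\partial H_a$, so any geodesic in $\bar X$ joining two distinct apices must traverse the image of this thick part and hence contain non-apex points. One then combines this with the very-rotating structure of $K_n$ on $\dot X_R$: a nontrivial element of $\bar P_i$ ``rotates by a large angle'' at $\bar a_i$ and consequently cannot fix any other apex at positive distance, so $\bar P_i$ could not permit a common fixed non-apex on such a geodesic. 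This fixed-apex uniqueness, which is the main technical obstacle of the proof, is essentially built into the DGO theory of very rotating families and should be invoked from there, with Lemma \ref{lem;geom_Xbar} providing the needed orbit-counting control. With uniqueness in hand, for $\bar g \in N_{\bar G}(\bar P_i)$ and any $\bar p \in \bar P_i$ we have $\bar p \bar g \bar a_i = \bar g(\bar g^{-1} \bar p \bar g)\bar a_i = \bar g \bar a_i$, hence $\bar g \bar a_i \in \Fix(\bar P_i) = \{\bar a_i\}$ and therefore $\bar g \in \Stab_{\bar G}(\bar a_i) = \bar P_i$.

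For the algorithmic clause, I would run two semi-procedures in parallel on input $n$. The first is Papasoglu's algorithm \cite{Pap} applied to a finite presentation of $G/K_n$; it terminates iff $G/K_n$ is hyperbolic and simultaneously outputs a solution to the word problem. The second, which can only begin to produce output once a solution to the word problem is available, computes in the hyperbolic group $\bar G$ the normaliser of the finite subgroup $\bar P_i$ (using quasiconvexity of centralisers of torsion elements, and the fact that $N_{\bar G}(\bar P_i)$ is a finite-index extension of $\bigcap_{\bar p \in \bar P_i} C_{\bar G}(\bar p)$, all computable in hyperbolic groups), and then verifies via the membership problem whether the result is contained in $\bar P_i$. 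We halt only when a hyperbolicity certificate and a self-normalisation certificate have both been produced; the combined procedure thus terminates precisely when both conditions of the proposition hold.
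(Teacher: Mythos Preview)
Your overall architecture matches the paper's: realise $\bar G$ on $\bar X = \dot X_R/K_n$, identify $\bar P_i$ with the stabiliser of the apex $\bar a_i$, prove $\Fix(\bar P_i)=\{\bar a_i\}$, and conclude that the normaliser fixes $\bar a_i$. The algorithmic clause is also handled in essentially the same way (Papasoglu plus a normaliser computation in a hyperbolic group; the paper simply cites \cite[Lemma 2.5]{DG_gafa} for the latter).

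There is, however, a genuine gap at the crux. You assert that ``a nontrivial element of $\bar P_i$ rotates by a large angle at $\bar a_i$'' and that fixed-apex uniqueness ``is essentially built into the DGO theory of very rotating families''. This misreads the DGO setup. The very-rotating condition is a property of the \emph{rotation groups} $N_i(n)\subset K_n$ acting on $\dot X_R$ \emph{before} quotienting; these are exactly the elements that die in $\bar G$. Nothing in \cite{DGO} says that the residual stabilisers $\bar P_i=P_i/N_i(n)$ acting on the quotient $\bar X$ have any large-angle behaviour, nor is there a ready-made statement that apex stabilisers in the quotient have a unique fixed point. Indeed, if $\bar P_i$ happened to have small exponent, a rotation-angle heuristic could not distinguish it from a group that might fix several apices.

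The paper's argument is instead an order-versus-orbit-count contradiction, and this is where Lemma \ref{lem;geom_Xbar} actually enters. One picks $\bar p\in\bar P_i$ of \emph{maximal order} and shows that for $n$ large this order exceeds the constant $n_0$ of Lemma \ref{lem;geom_Xbar} (this uses residual finiteness of $P_i$; when $P_i$ is an infinite torsion group the restricted Burnside problem is invoked). Supposing $\bar p$ fixed a second apex $\bar x$, chosen at nearly minimal distance from $\bar a_i$, a thin-triangle argument shows $[\bar a_i,\bar x]$ contains no intermediate apex, so the segment lifts to $\dot X_R$ and, by separation of horoballs, meets the thick part $X\setminus\calH$ at some point with image $\bar y$. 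Since $\bar p$ fixes both endpoints, every power $\bar p^k$ moves $\bar y$ by at most $1000\delta$; Lemma \ref{lem;geom_Xbar} then forces $\bar p^k\bar y=\bar y$ for some $k\le n_0$, and freeness away from apices (Lemma \ref{lem;barfree}) gives $\bar p^k=1$, contradicting the choice of $\bar p$. You should replace the hand-wave to ``very rotating'' by this explicit argument.
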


\begin{proof}
  By definition of the parabolic subgroups, the fixator of the apex
  $a_i \in \dot{X}_R$ associated to the horoball preserved by $P_i$ is
  precisely $P_i$ in $G$. Therefore the fixator of $\bar a_i$ is
  exactly $\bar P_i = P_i/(K_n\cap P_i)$. The first step is to show
  the other inclusion. This next lemma implies that $\bar P_i$ is its own normalizer, since the
  normalizer preserves the set of fixed points.

  \begin{lemma} $\bar P_i$ fixes only one point in $\overline{X_n}$. 
  \end{lemma}
  \begin{proof}
    Let $\bar p$ be non trivial in $\bar P_i$, of maximal order.  We
    claim that \emph{if $n$ is large enough, $\bar p$ is of order greater
    than $n_0$, the constant given by Lemma \ref{lem;geom_Xbar}.}

    The proof of the claim is easier if $P_i$ has an infinite order
    element $p$: by residual finiteness of $P_i$, the $n_0$-th power
    of $p$ in $P_i$ survives in a certain finite quotient $\bar P_i$,
    forcing the maximal order of elements of $\bar P_i$ to be larger
    than $n_0$.  If $P_i$ has no infinite order element, since $P_i$
    is finitely generated, infinite, residually finite, the solution
    to the restricted Burnside problem states that there is no upper
    bound on the order of its elements.  A similar argument then
    concludes; \emph{the claim is established.}

    Assume that $\bar p$ fixes $\bar x \in \overline{X_n}$, different from
    $\bar a_i$. Recall (Lemma \ref{lem;barfree}) that $G/K_n$ acts
    freely on $\overline{X_n} \setminus \overline{\calA}$, and therefore, $\bar x$
    has to be the image of an apex.  We may choose $\bar x$ so that
    $d(\bar x, \bar a_i)$ does not exceed $\inf_{x'} d(\bar x' , \bar
    a_i)+R/100$, for $x'$ ranging over the set of points fixed by
    $\bar p$.

    Consider a geodesic segment $[\bar a_i, \bar x]$. Let us show that
    it contains no apices.  If $\bar z$ was such an apex, then $\bar p
    \bar z$ would be another image of an apex on the geodesic $\bar p[
    \bar a_i, \bar x]$, which is another geodesic joining $[\bar a_i,
    \bar x]$. Using that $\overline{X_n}$ is $200\delta$-hyperbolic, one
    easily obtains that $d_{\overline{X_n}} (\bar z, \bar p \bar z) \leq
    500\delta$, but image of apices are $R/2$-separated, which implies
    that $\bar z= \bar p \bar z$. Moreover, being on the geodesic,
    $\bar z$ is at least $R/2$-closer to $\bar a_i$ than $\bar x$.
    This contradicts the minimality of $d_{\bar X_n} (\bar a_i, \bar
    x)$ among the fixed points of $\bar p$. Thus, there is no other
    image of apex in $[\bar a_i, \bar x]$.

    Now we may lift $[\bar a_i, \bar x]$ as a connected path in
    $\dot{X}_R$ starting from $a_i$ (there is a choice of first edge,
    in an orbit under $K_n \cap P_i$, but once this first edge is
    chosen, the lift is determined, since one never pass through an
    apex, the only points whose stabilizers non-trivially intersect
    $K_n$.)  Let $x$ the end point of this path $[a_i, x]$, which is
    necessarily a geodesic, since any shorter path gives a shorter
    path in $\overline{X_n}$.

    By separation of the horoballs in $X$, there is, on the geodesic
    $[a_i,x]$, a point $y$ that is in the original space $X \setminus
    \calH$.  Let $\bar y$ be its image in $[\bar a_i, \bar x]$. Since
    for all $k$, $\bar p^k$ fixes both $\bar a_i$ and $\bar x$, it has
    to move $\bar y$ at distance at most $5 \bar \delta$ from itself,
    where $\bar \delta$ is the hyperbolicity constant of $\overline{X_n}$. However, $\bar\delta \leq 1000\delta$ where $\delta$ is the
    hyperbolicity constant of $X$. Therefore, the sequence $\bar p^k
    \bar y$ stays in the ball centred at $\bar y$ of radius
    $1000\delta$, which, by the local injectivity property of the
    quotient map, is isometric to $B(y, 1000\delta) \subset X$, and in
    which the injectivity radius of $G/K_n$ is the same as $G$. It
    follows that, by definition of $n_0$, there is some power $k \leq
    n_0$ such that $\bar p^k \bar y = \bar y$. However, $\bar y$ is
    not the image of an apex, and by Lemma \ref{lem;barfree}, $\bar
    p^k=1$, thus contradicting that the order of $\bar p$ was greater
    than $n_0$.

  \end{proof}
  The lemma proves the first assertion of the proposition.  For the
  final assertion of the proposition, let us invoke Papasoglu's
  algorithm \cite{Pap} that will terminate if $G/K_n$ is hyperbolic,
  and if so, \cite[Lemma 2.5]{DG_gafa} will compute generating sets of
  the normalizers of each $\bar P_i$ (which are easily checked to
  belong or not to belong to the finite group $\bar P_i$.)
\end{proof}

\begin{prop}\label{prop:pi_injective}

  Assume that $\Out(G,\calP_{uo})/\Out(G,\calP_{m})$ is finite, and
  that groups in $\calP_{uo}$ have congruences separating the
  torsion. Consider a Dehn filling of $G$ by $K=\langle\langle
  \bigcup_i N_i\rangle\rangle$.  Assume $\bar G= G/K$ is hyperbolic,
  and $P_i/N_i$ is its own normalizer in $G/K$.  If all $N_i$ are
  characteristic, of finite index, and in congruence subgroups of
  $P_i$ separating torsion, then the homomorphism $\pi:
  \Out(G,\calP_{uo})/\Out(G,\calP_{m}) \to \Out(\bar G,\bar
  \calP_{uo})/\Out(\bar G, \bar\calP_{m})$ induced by the quotient map
  is an injective.
\end{prop}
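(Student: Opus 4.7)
The plan is to complete Diagram (\ref{diag;injectivity}) and then perform a short diagram chase, exploiting the torsion-freeness of the congruence kernels.

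First I would verify that the right vertical arrow is well defined. By hypothesis, each $\bar P_i = P_i/N_i$ is its own normalizer in $\bar G$, so Proposition \ref{prop;def;r} applies and yields homomorphisms $\bar r_i : \Out(\bar G,\bar\calP_{uo}) \to \Out(\bar P_i)$ whose simultaneous kernel, by the same proposition, is $\Out(\bar G,\bar\calP_m)$. Assembling them gives an injective homomorphism $\Out(\bar G,\bar\calP_{uo})/\Out(\bar G,\bar\calP_m) \hookrightarrow \prod_i \Out(\bar P_i)$. The left vertical arrow is already available as an injection via Corollary \ref{coro;left_vertical}. The commutativity of the square is then a formal consequence: the two ways of going around the square both amount to taking $\phi \in \Aut(G,\calP_{uo})$, restricting it to $P_i$ (up to a suitable inner adjustment), and then composing with the quotient $P_i \twoheadrightarrow \bar P_i$; since the quotient map $G \to \bar G$ sends $P_i$ onto $\bar P_i$, the two orders of operations agree.

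Next I would use the congruence hypothesis to control the bottom arrow. Each $N_i$ is characteristic of finite index and lies inside a congruence subgroup of $P_i$ separating the torsion of $\Out(P_i)$. Lemma \ref{lem;c_tfker} therefore applies to give that $\ker c_i \subset \Out(P_i)$ is torsion free for every $i$, hence the product $\ker(\oplus_i c_i) = \prod_i \ker c_i$ is torsion free.

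Now I would chase the diagram. Let $[\phi] \in \Out(G,\calP_{uo})/\Out(G,\calP_m)$ satisfy $\pi([\phi]) = 1$, i.e.\ $\pi(\phi) \in \Out(\bar G,\bar\calP_m)$. By definition of $\bar r_i$, this means $\bar r_i(\pi(\phi)) = 1$ for all $i$. Commutativity of Diagram (\ref{diag;injectivity}) gives $c_i(r_i(\phi)) = 1$ for every $i$, so the image of $[\phi]$ under the left vertical arrow lands in $\prod_i \ker c_i$, which is torsion free. But $\Out(G,\calP_{uo})/\Out(G,\calP_m)$ is finite by assumption, so $[\phi]$ has finite order, and hence so does its image in $\prod_i \Out(P_i)$. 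The only torsion element in a torsion-free group is the identity, so the image of $[\phi]$ is trivial; by injectivity of the left vertical arrow (Corollary \ref{coro;left_vertical}), $[\phi] = 1$. This proves injectivity.

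The only nontrivial point is the interplay between the two hypotheses: the well-definedness of the right vertical arrow rests on $\bar P_i$ being its own normalizer in $\bar G$ (so that the map $\bar r_i$ exists at all), while the torsion killing rests on the congruence hypothesis on the $N_i$. Once both are in place, the diagram chase is immediate. I expect the real content of the whole section to lie not in this final diagram chase but in Proposition \ref{prop;barPi_own_N} (already proved) which certifies that the self-normalization hypothesis can be arranged by taking $n$ large enough; the present proposition is the clean packaging step.
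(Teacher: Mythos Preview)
Your proof is correct and follows essentially the same route as the paper: set up Diagram (\ref{diag;injectivity}) using Corollary \ref{coro;left_vertical} on the left and Proposition \ref{prop;def;r} on the right (via the self-normalization hypothesis), invoke Lemma \ref{lem;c_tfker} for torsion-freeness of $\ker(\oplus_i c_i)$, and then chase the diagram using finiteness of $\Out(G,\calP_{uo})/\Out(G,\calP_m)$. If anything, your write-up is slightly cleaner: you correctly observe that the self-normalization of $\bar P_i$ is already a hypothesis (so Proposition \ref{prop;def;r} applies directly rather than via Proposition \ref{prop;barPi_own_N}), and you omit the paper's extraneous mention of surjectivity.
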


\begin{proof}
  By Lemma \ref{lem;pi_well_definied}, the homomorphism $\pi$ is well
  defined. By Lemma \ref{lem;surjective} it is surjective.  By
  Corollary \ref{coro;left_vertical}, the left vertical arrow of
  Diagram (\ref{diag;injectivity}), say $\rho$, is injective, and
  induced by restriction.  Since
  $\Out(G,\calP_{uo})/\Out(G,\calP_{m})$ is finite, and $\ker \oplus
  c_i$ is torsion free (by Lemma \ref{lem;c_tfker}), the composition
  $\oplus c_i \circ \rho$ is injective.  The right vertical arrow is
  well defined, by Proposition \ref{prop;barPi_own_N} and
  \ref{prop;def;r}. The diagram is clearly commutative, hence it
  follows that $\pi$ is injective.
\end{proof}

This proves our structural feature, Proposition \ref{prop;intro}.

\begin{coro}\label{coro;struct_feat}
  Assume that $\Out(G,\calP_{uo})/\Out(G,\calP_{m})$ is finite, and
  that groups in $\calP_{uo}$ are residually finite, and have
  congruences separating the torsion.  Then there is a Dehn filling by
  groups $N_i$ characteristic in $P_i$, of finite index, such that the
  map $\pi$ is an isomorphism.
\end{coro}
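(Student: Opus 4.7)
The proof should just assemble the results from the previous subsections; the work lies in arranging a single choice of $N_i$ that simultaneously satisfies every constraint. My plan is as follows.

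First, I would handle the algebraic conditions on $N_i$. Since, by assumption, congruences separate the torsion in $\out{P_i}$ for each $i$, I can fix a characteristic finite-index subgroup $M_i \normal P_i$ whose associated congruence subgroup in $\Out(P_i)$ is torsion-free. Any characteristic finite-index $N_i \leq M_i$ then inherits this property, by Lemma \ref{lem;c_tfker}.

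Second, I would arrange the geometric conditions. Let $X \subset G\setminus\{1\}$ be the finite set of Theorem \ref{theo;Dehn_fill} and $X' \subset G\setminus\{1\}$ the finite set of Lemma \ref{lem;surjective}. Using residual finiteness of each $P_i$ (together with the fact that the intersection of finitely many characteristic finite-index subgroups is again characteristic finite-index), I can choose a characteristic finite-index subgroup $N_i \leq M_i$ avoiding the finite set $P_i \cap (X \cup X')$. To also ensure that $P_i/N_i$ is its own normalizer in $\bar G$, I would pass to the cofinal sequence $N_i(n) = M_i \cap \bigcap_{[P_i:H]\leq n} H$ (which is characteristic and of finite index) and invoke Proposition \ref{prop;barPi_own_N} to take $n$ large enough; since $P_i/N_i(n)$ is finite, the Dehn filling theorem simultaneously gives that $\bar G = G/K_n$ is hyperbolic (relative to finite groups, hence hyperbolic).

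Finally, I would collect the conclusions. Lemma \ref{lem;surjective} makes $\pi$ surjective, since $N_i(n)$ avoids $X'$ for every sufficiently large $n$. All hypotheses of Proposition \ref{prop:pi_injective} now hold: $\bar G$ is hyperbolic, each $\bar P_i$ is self-normalized, each $N_i(n)$ is characteristic, of finite index, contained in the torsion-separating congruence $M_i$, and $\Out(G,\calP_{uo})/\Out(G,\calP_m)$ is finite by hypothesis. Hence $\pi$ is injective. Combining, $\pi$ is an isomorphism, as required. The only real subtlety is the simultaneous filtration argument; there is no genuine obstacle since characteristic finite-index subgroups of a residually finite group are closed under finite intersection and form a filter base with trivial intersection, so the four individually cofinite/cofinal conditions can be met by one single $N_i$.
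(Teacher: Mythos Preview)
Your proof is correct and follows essentially the same route as the paper's: use Lemma~\ref{lem;surjective} for surjectivity, and Proposition~\ref{prop;barPi_own_N} to place yourself in the hypotheses of Proposition~\ref{prop:pi_injective} for injectivity. Your explicit arrangement of a single $N_i$ via the intersected sequence $M_i \cap \bigcap_{[P_i:H]\leq n} H$ is more detailed than the paper's two-line proof, but note that once $n \geq [P_i:M_i]$ this sequence coincides with the paper's $N_i(n)$, so the extra intersection with $M_i$ is harmless bookkeeping rather than a genuinely different construction.
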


\begin{proof}
  Lemma \ref{lem;surjective} guarantees that deep enough Dehn fillings
  will make $\pi$ surjective. On the other hand, Proposition
  \ref{prop;barPi_own_N} ensures that for deep enough Dehn fillings,
  Proposition \ref{prop:pi_injective} can be applied, and this
  ensures the injectivity.
\end{proof}

\subsubsection{Proof of  Proposition \ref{prop;orbit_computation}}\label{sec:orb-comp}

We can now prove Proposition \ref{prop;orbit_computation}.

\begin{proof}[Proof of Proposition \ref{prop;orbit_computation}]

  On the one hand, the algorithm searches for classes of automorphisms
  in $\Out(G, \calP_{uo})$ modulo $\Out(G, \calP_{m})$, and enumerates
  them in a list $\calL$.  On the other hand, the algorithm computes a
  number $n_0$ so that congruences by characteristic subgroups of
  index larger than $n_0$ separate the torsion, in the parabolic
  subgroups (this is possible thanks to the assumption of effective
  separation of torsion by congruences.) Then, in parallel, for
  incrementing $n\geq n_0$, the algorithm tries to certify that $n$ is
  suitable for applying Proposition \ref{prop:pi_injective} (this is
  done by the last point of Proposition \ref{prop;barPi_own_N}.) It
  may happen that for some $n$, the certification procedure does not
  terminate, but we know by the first point of Proposition
  \ref{prop;barPi_own_N} that it will terminate if $n$ is large enough
  (hence the reason why we perform this certification in incrementing
  parallel.)  For each $n$ which is certified suitable for the
  conclusion of Proposition \ref{prop:pi_injective}, the algorithm
  computes $(G/K_n, \calP_{uo}/\calP(n))$ (which is thus certified
  hyperbolic) and also computes its outer automorphism group by
  \cite[Theorem 8.1]{DG_gafa}

  Proposition \ref{prop:pi_injective} guarantees that for all
  certified $n$, and at every state of the enumeration of the list
  $\calL$ (in particular when it is complete), $\calL$ naturally
  embeds (injectively) in
  \[\Out(G/K_n, \calP_{uo}/\calP(n))/\Out(G/K_n,
    \calP_{m}/\calP(n)). \] Lemma \ref{lem;surjective}, on the other
  hand, guarantees that, for some $n$, this embedding is a
  bijection. When that happens, we are done: the list covers
  representatives of the cosets
  \[\Out(G/, \calP_{uo})/\Out(G/, \calP_{m}).\]
\end{proof}

\section{Solving the isomorphism problem}\label{sec;Solving}

In this section we will provide a proof of Theorem \ref{theo;main}. We
will start with the one-ended case
\subsection{The proof of the one-ended case of Theorem
  \ref{theo;main}}\label{sec:proof-main-res}

\begin{thm}[Main result, one-ended case]\label{thm:main-one-ended}

  Let $\calC$ be a heriditarily algorithmically tractable class, with
  algorithmically bounded torsion, satisfying the following
  properties:
  \begin{itemize}
  \item all groups in $\calC$ are residually finite,
  \item the isomorphism problem is solvable in $\calC$, 
  \item in $\calC$, congruences effectively separate the
    torsion (see Definition \ref{def;CST}),
  \item the mixed Whitehead problem is  solvable in $\calC$
    (see Definition \ref{def:mwhp}.)
  \end{itemize}
  There is an algorithm which decides if two explicitly given
  virtually torsion-free relatively hyperbolic groups $(G_1,\calP_1)$,
  $(G_2,\calP_2)$  one-ended, and 
  whose peripheral subgroups belongs to $\calC$, are isomorphic as
  groups with unmarked peripheral structure. 

\end{thm}

\begin{proof}

Given two such 
one-ended relatively hyperbolic groups $(G_1,\calP_1),
(G_2,\calP_2)$,
with a finite generating set for each representative of maximal
parabolic subgroup, we need to check whether they are isomorphic as
groups with unmarked peripheral structure.

By Corollary \ref{cor:ctf-compute-can-splittings} we can compute the
canonical JSJ decompositions $\bbX_i$ for $(G_i,\calP_i)$, with
maximal elementary black vertex groups, and white vertex groups are
either rigid or $\QH$.

By criterion (\ref{carac_toc3}) of Lemma \ref{lem;carac_toc}, $G_i$ is
hyperbolic relative to the augmented peripheral structure
$(\calP_i)_{\bbX_i}$ as well, and $\bbX_i$ is essentially peripheral
for this structure.To avoid handling too many peripheral structures at
different places, it is convenient to turn this splitting into a
peripheral one, and we do that canonically as follows.  On each white
vertex $v$ of $\bbX_i$, we may compute the induced peripheral
structure on its group $\G_v$ as follows: we know the images of the
adjacent edge groups, and for each representative of group in
$\calP_i$, by enumeration, we may find a vertex containing one of its
conjugates; thus we may find the list of
peripheral subgroups that have a conjugate in $\G_v$. According to Lemma
\ref{lem;describe_induced}, this is the induced peripheral structure.

For each conjugacy class of peripheral subgroups $P<\G_v$ that are not
yet conjugated to an edge group, we may refine $\bbX_i$ by appending
an inessential black vertex, with an edge to this white vertex $v$, both whose
groups are $P$ with the natural attaching maps.  This produces
\emph{peripheral} splittings $\bbX'_i$ of $(\calP_i)_{\bbX_i}$. By
Proposition \ref{prop:jsj-charac-1} (\ref{it:auto-invariant}) this
splitting is canonical; therefore $(G_1,\calP_1)$ and $ (G_2,\calP_2)$
are isomorphic if and only if $\bbX'_1$ and $\bbX'_2$ are isomorphic
as graphs-of-groups.

We may list all the isomorphisms of the underlying graphs of $\bbX'_1$
and $\bbX'_2$, and the possible orderings of the unmarked structures
$\calP_1$ and $\calP_2$, and proceed in parallel for each choice of
them. This way, we reduce to the case of two structures of graphs of
groups on the same underlying graph.

According to Proposition \ref{prop;main_reduction}, in order to check
whether these decompositions are isomorphic it is enough to solve, in
black (elementary) vertex groups, the mixed Whitehead problem, and the
isomorphism problem, and, in the white vertex groups, the isomorphism
problem (of groups with unmarked ordered peripheral structures) and
the orbit problem for markings. The assumptions on the parabolic
groups readily guarantee that the two first problems (on black vertex
groups) are solvable.

For white vertex groups, let us differentiate whether the groups to
compare are rigid or $\QH$ non rigid (whether they belong to a class
or the other can be decided by Theorem \ref{theo;TouC}, but actually,
at this stage, we know which vertex groups are rigid.) Let us consider
first rigid groups. By Bowditch's Theorem \ref{thm;induced-struct},
since $\bbX'_i$ is peripheral, these rigid vertex groups are
hyperbolic relative to the peripheral structure induced by
$(\calP_i)_{\bbX_i}$. To show that (\ref{assumeIP}) in Proposition
\ref{prop;main_reduction} (the isomorphism problem) is satisfied, we
use one of the main result of \cite{DG_charac}.

\begin{thm} \cite[Theorem 6.1]{DG_charac} \label{thm;iso-pb-rigid}
  There is an algorithm which decides if two explicitly given finitely
  presented relatively hyperbolic groups $(G,\calP)$, $(H,\calQ)$
  (given by
  finite presentations, and finite generating sets of conjugacy
  representatives of the peripheral structure) are isomorphic (as
  groups with unmarked peripheral structure), provided $\calP$,
  $\calQ$ consist of infinite residually finite groups and that $(G,\calP)$,
  $(H,\calQ)$ are rigid, non elementary.
\end{thm}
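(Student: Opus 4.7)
The plan is to run two semi-decision procedures in parallel, using a dovetailing strategy. One procedure will terminate successfully when an isomorphism exists; the other will terminate when no isomorphism exists, producing a certificate of non-isomorphism via sufficiently deep hyperbolic Dehn fillings.

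For the positive direction, I would enumerate candidate homomorphisms $\phi: G\to H$ and $\psi:H\to G$ by sending generators to words of increasing length. For each candidate pair, I would check that: (a) relations are preserved, (b) each representative of a peripheral subgroup in $\calP$ is sent into a conjugate of some peripheral subgroup in $\calQ$ and vice versa (enumerating conjugators), (c) the composition $\psi\circ\phi$ acts as the identity on the generators of $G$, and symmetrically for $\phi\circ\psi$. By Lemma \ref{lem;word-problem}, the word problem is solvable for relatively hyperbolic groups with residually finite parabolics, so each such verification can be carried out effectively; if $(G,\calP)\cong(H,\calQ)$, this procedure will eventually discover an isomorphism.

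The negative direction is where the work lies, and this is where rigidity and residual finiteness of the parabolics enter crucially. The idea is to use the fact that a rigid relatively hyperbolic group is characterized by its family of hyperbolic Dehn fillings (this is the key characterization theorem of \cite{DG_charac}, whose content is precisely Proposition \ref{prop;intro}-style statements specialized to the rigid case, so that $\Out$ is finite). Concretely, for each $n$, I would compute characteristic finite-index subgroups $N_i \triangleleft P_i$ (using residual finiteness and an enumeration of finite quotients) yielding Dehn kernels $K_n$ that go deeper as $n$ grows, and do the same for $(H,\calQ)$ via characteristic subgroups $M_j\triangleleft Q_j$. Running Papasoglu's algorithm in parallel on $\bar G_n = G/K_n$ and $\bar H_n = H/L_n$ (together with the Groves--Manning--Osin Dehn filling theorem \ref{theo;Dehn_fill} applied to verify relative hyperbolicity with finite parabolics, hence hyperbolicity), I would eventually certify each as a hyperbolic group with a (marked) peripheral structure $\bar\calP$, $\bar\calQ$ of finite subgroups. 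I would then invoke the solution of the isomorphism problem for hyperbolic groups \cite{DG_gafa} (applied to groups with distinguished finite subgroups) to decide whether $(\bar G_n,\bar\calP)\cong(\bar H_n,\bar\calQ)$.

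The crux is to argue that if $(G,\calP)\not\cong(H,\calQ)$ then the Dehn fillings must eventually be non-isomorphic. This is precisely the hard part: it is a limit/compactness argument analogous to the one recalled in the proof of Lemma \ref{lem;surjective}. Suppose towards a contradiction that for every $n$ there is an isomorphism $\bar\phi_n:\bar G_n \to \bar H_n$ respecting the peripheral structures. Using rigidity (no elementary splittings) of $(G,\calP)$ and $(H,\calQ)$, together with the Bestvina--Paulin--Rips-style techniques adapted to the relatively hyperbolic setting (as developed in \cite{DG_charac}), the sequence $\bar\phi_n$ (lifted to maps of generators) must, after extraction, converge to an actual isomorphism $(G,\calP)\to(H,\calQ)$, contradicting the assumption. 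This gives the desired termination of the negative procedure. Combining the two parallel procedures yields the required decision algorithm; the main obstacle is precisely the convergence/rigidity step, which requires the machinery of \cite{DG_charac} and leverages both rigidity (to ensure no non-trivial deformations) and residual finiteness of parabolics (to produce a cofinal sequence of Dehn fillings satisfying Theorem \ref{theo;Dehn_fill}).
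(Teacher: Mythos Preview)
The paper does not prove this theorem; it is quoted from \cite{DG_charac} and used as a black box, with the remark immediately afterward that ``In \cite[Theorem 6.8]{DG_charac}, the algorithm is given (not just proved to exist).'' So there is no in-paper proof to compare against. That said, your sketch is an accurate outline of the strategy actually implemented in \cite{DG_charac}: a parallel enumeration searching for an isomorphism on one side, and on the other side a cofinal sequence of characteristic Dehn fillings whose hyperbolic quotients are compared via \cite{DG_gafa}, with the limit/extraction argument (your ``crux'') being precisely \cite[Theorem 5.1]{DG_charac} applied to the case $G\neq G'$ rather than $G=G'$ as in Lemma \ref{lem;surjective}.

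Two small points worth tightening. First, your reference to ``Proposition \ref{prop;intro}-style statements'' is slightly off: that proposition concerns the map on $\Out$ modulo marking-preserving automorphisms, whereas what you need here is the direct lifting of isomorphisms between Dehn fillings back to the original groups, which is \cite[Theorem 5.1]{DG_charac} itself. Second, for the negative procedure to be sound you need that the Dehn fillings you take on each side are defined in an isomorphism-invariant way (so that an isomorphism $(G,\calP)\cong(H,\calQ)$ would force $(\bar G_n,\bar\calP)\cong(\bar H_n,\bar\calQ)$ for every $n$); your use of characteristic finite-index subgroups such as the intersection of all subgroups of index at most $n$ handles this, but it should be stated explicitly rather than left implicit.
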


In \cite{DG_charac}, the algorithm is given explicitly. As for the
requirement (\ref{orbit1b}) of Proposition \ref{prop;main_reduction}
(the orbit problem for markings), the assumption on the congruences
separating torsion allows us to use Proposition
\ref{prop;orbit_computation}, which solves this algorithmic problem.

Let now us consider $\QH$  white vertex groups. In the torsion-free
case, deciding isomorphism is
a matter of identifying the surface,
and the orbit problems for
markings is easy, since the peripheral groups are infinite cyclic; two
different markings are either equal, or opposite in each boundary
components (in which case, they are in the same orbit under the
automorphism group), or they differ on certain but not all boundary
components, in which case they are in the same orbit if and only if
the surface is not orientable. In the non torsion-free case, we may
use the main result of \cite{DG_gafa}, since these groups are
hyperbolic, with peripheral structure consisting of virtually abelian
groups (which have finite computable automorphism groups.) One can
make a list of possible markings of the peripheral subgroups (in a
given orbit under automorphisms) and check by \cite{DG_gafa} which
configurations are reached by an automorphism of the ambient $\QH$ 
group.  

Thus, one may apply Proposition \ref{prop;main_reduction}, and decide
whether $\bbX'_1$ and $\bbX'_2$ are isomorphic graphs of groups. As
already discussed, this allows to decide whether $(G_1,\calP_1)$ and
$(G_2,\calP_2)$ are isomorphic.
\end{proof}

\subsection{The many ended case of the proof of Theorem \ref{theo;main}}

Given two Dunwoody-Stallings decompositions with isomorphic underlying
graphs, we can use Theorem \ref{thm:main-one-ended} to decide if
corresponding vertex groups are isomorphic. We now want to use
Proposition \ref{prop;compatible_twist} to reduce the isomorphism
problem to finding a suitable extension adjustment. Although the
fundamental idea is in the same vein as the one-ended case, there is
one important distinction: unlike parabolic groups, finite subgroups
may have strictly larger normalizers. We therefore start by examining
the normalizers of non parabolic finite subgroups of relatively
hyperbolic groups.

\subsubsection{On normalizers and conjugators of finite subgroups}

Throughout this section assume that $(G,\calP)$ is relatively
hyperbolic where the groups in $\calP$ are finitely presented and in
which we can solve the word problem. Let $X$ be the Groves-Manning
space with combinatorial horoballs. By Lemma \ref{lem:make-balls} we
can algorithmically determine a Gromov hyperbolicity constant
$\delta$, construct arbitrarily large balls in $X$ and compute the
partial action of $G$ on these balls.

\begin{lem}\label{lem:parabolic-intersection}
  Let $F \leq G$ be the intersection of distinct parabolic
  subgroups. Then it is virtually self-normalized
  i.e. $[N_G(F):F]<\infty$ and we can compute a generating set of
  $N_G(F)$. Furthermore given two finite subgroups $F,F'$ that arise
  as intersections of parabolic subgroups, we can determine whether
  they are conjugate.
\end{lem}
\begin{proof}
  Let $F = \bigcap_{i=1}^m P_i$, where $P_i \in \calP$ (here assume
  $\calP$ is closed under conjugation.) Then there is a collection of
  horoballs $\calH_i \subset X$ such that the set of vertices
  $I=V\left(\bigcap_{i=1}^m \calH_i\right) \neq \emptyset$. The group
  $F$ acts on $I$, which is a finite set. The normalizer of $F$ is the
  set of elements of $g\in G$ such that $g\cdot I = I$, which can be
  computed by constructing a ball in $X$ that contains $I$ and the
  identity.

  To prove the second claim let $I$ be the set of vertices constructed
  above and let $I'$ be the analogous set of vertices constructed for
  $F'$. The finite subgroups are conjugate if and only if $I'$ can be
  translated to $I$ by an element of $G$. This can be seen by
  constructing ball in $X$ containing $I,I'$ and the identity. 
\end{proof}

%
%
To examine non-parabolic subgroups we need the following lemma which
is classical,  for instance a variation of \cite[Lemma
III-$\Gamma$-3.3]{Bridson_Haefliger}, or for the proposed formulation, see
the elementary argument in the proof of the main theorem of
\cite{bogopolskii1996finite}.

\begin{lem}\label{lem:bg-almost-fixed-point}
  Let $Z$ be a subset of a $\delta$-hyperbolic space that consists of
  the orbit of a single point under the action of some finite subgroup
  $F$. Let $\beta$ be a geodesic between two furthest points in $Z$
  and let $m_\beta$ denote its midpoint. Then the diameter of the
  orbit $F\cdot m_\beta$ is at most $3\delta$.
\end{lem}

\begin{lem}\label{lem:md-qc} 
  Let $F\leq G$ be a finite non-parabolic subgroup. Then the following
  hold.
  \begin{enumerate}
  \item\label{it:afq-qc} The set of \emph{$F$-almost fixed points} (see
    \cite{liang2013centralizers})
  \[ X_F = \{x \in X \mid \diam(F\cdot x) \leq 3\delta+1\}
  \] is $3\delta+1$-quasiconvex in $X$.
  \item\label{it:afq-horoball} $X_F$ is contained in the complement of
    the horoballs of depth $3\delta$.
  \end{enumerate}
\end{lem} 

\begin{proof}     
  Let $x,y$ be in $X_F$ and suppose they are at distance more than
  $3\delta+1$ from one another. Let $\alpha:[0,t]\to X$ be a geodesic
  connecting $\alpha(0)=x$ and $\alpha(t)=y$ and denote by $m_\alpha$
  the midpoint of $\alpha$. For any $g \in F$ we have by hypothesis
  \[d\left(\alpha(0),g\cdot\alpha(0)\right) ,
    d\left(\alpha(t),g\cdot\alpha(t)\right)\leq 3\delta+1.\] Now it is
  an easy consequence of $\delta$-thinness and the properties of
  geodesic quadrilaterals that
  $d(m_\alpha,g\cdot m_\alpha)\leq 5\delta+1$.  Let
  $Z = F\cdot m_\alpha$. There is some $g_m \in F$ such that the
  geodesic $\gamma_m=[m_\alpha,g_m\cdot m_\alpha]$ realizes the
  maximal distance between two points in $Z$. By Lemma
  \ref{lem:bg-almost-fixed-point} the midpoint of $\gamma_m$ is an
  $F$-almost fixed point; thus there is an almost fixed point a
  distance at most $\frac{5\delta+1}2\leq 3\delta$ from
  $m_\alpha$. The closest vertex is therefore a distance at most
  $3\delta+1$. This proves (\ref{it:afq-qc}.)

  To see (\ref{it:afq-horoball}), note that $G$ maps horoballs to
  horoballs of the same depth. If $x \in X_F$ was at depth more than
  $3\delta$ inside a horoball $H$ then every $fx, f \in F$ must
  still lie in $H$ since all other depth $3\delta$ horoballs must be
  at distance at least $6\delta > 3\delta+1$ from $x$. It follows that
  $F\cdot H = H$ which implies that $F$ is parabolic; contradiction. 
\end{proof}

\begin{lem}\label{lem:orbits-touch}
  Let $F$ be a finite non-parabolic group, let $X$ be the Groves-Manning space, and
  let $Z = F\cdot x$ be an orbit. Suppose there is some translate such
  that \[ k\cdot Z \cap Z \neq \emptyset.
  \]
  Then $k \in F$.
\end{lem}
\begin{proof}
  This follows immediately from the fact that the action on $X$ is
  free. Let $y \in Z \cap k\cdot Z$. Then $k^\mo\cdot y = h \cdot y$
  for some $h \in F$, since $Z$ is an $F$-orbit. It follows that $kh
  \cdot y = y \Rightarrow kh=1$ by freeness of the action and the
  result follows.
\end{proof}

\begin{lem}\label{lem:small-sets}
  Let $(G,\calP)$, $X$ and $\delta$ be as the statement of Lemma
  \ref{lem:make-balls}. Then it is possible to construct a
  representative with respect to translations by elements of $G$ of
  every set of vertices $Z$ with $N$ elements of diameter at most
  $3\delta+1$ lying in the complement of the depth $3\delta+1$
  horoballs.
\end{lem}
\begin{proof}
  We first note that any such set can be translated under the action
  of $G$ into a ball $B$ of radius $6\delta+2$ about the vertex $x_1$
  representing the identity in $X$. By Lemma \ref{lem:make-balls} we
  can construct this ball and take all sets of cardinality $N$ with
  diameter at most $3\delta+1$.
\end{proof}

\begin{defn}[$N$-bounded and congruent]\label{defn:bded-cong}
  A set such as appears in the statement of Lemma \ref{lem:small-sets}
  is called \define{$N$-bounded}. If $Z,Z'$ are subsets of $X$ and
  there is some $g\in G$ such that $g\cdot Z = Z'$, then the sets are
  called \define{$G$-congruent}. We denote by $K_N$ the number of
  $G$-congruence classes of $N$-bounded sets.
\end{defn}

\begin{defn}
  Let $F \leq G$ be a finite non-parabolic-subgroup and let $x \in X$. The
  $G$-congruence class of the orbit $F\cdot x$ is called the
  \define{$F$-type} of $x$ and is denoted $\tp F x$.
\end{defn}

\begin{lem}\label{lem:afp-orb}
  Let $x,y \in X_F$ be almost fixed points and suppose that
  $\tp F x = \tp F y$. Then there is $k \in N_G(F)$ such that
  $k\cdot x = y$.
\end{lem}
\begin{proof} 
  By hypothesis there is some $k \in G$ such that
  $k\cdot \left(F \cdot x\right) = F\cdot y$. It follows that
  $kFk^{-1}$ acts transitively on $F\cdot y$, which by Lemma
  \ref{lem:orbits-touch} implies that $kFk^\mo = F$.
\end{proof}

\begin{cor}\label{cor:normalizer-action}
  The normalizer $N_G(F)$ acts on $X_F$ the set of almost fixed points
  and the orbits of this action are in bijective correspondence with
  the $F$-types of points in $X_F$.
\end{cor}

\begin{lem}\label{lem:close-types}
  For any element $x_F$ of $X_F$ and any $F$-type realized by an
  element of $X_F$ there is an element $y \in X_F$ at distance less
  than $27K_{|F|}\delta$ from $x_F$ realizing that $F$-type.
\end{lem}
\begin{proof}
  Let $\alpha:[0,t] \to X$ be the geodesic from $\alpha(0)=x_F$ to
  $\alpha(t)=y$ which is closest to $x_F$ of the desired type. Suppose
  towards a contradiction that it was longer than
  $27K_{|F|}\delta$. Subdivide $\alpha$ into non-overlapping segments
  $\alpha_j$ of length $9\delta$. By Lemma \ref{lem:md-qc} for each
  $\alpha_j$ there is a point $z_j \in X_F$ at distance at most
  $3\delta+1$ from $\alpha_j$. By hypothesis there must be segments
  $\alpha_1,\alpha_2,\alpha_3$ with increasing distance from $x_F$,
  and the $3\delta+1$-close $F$-almost fixed points
  $z_{j_1}, z_{j_2}, z_{j_3}$ have the same $F$-type. It follows that
  there is some $h \in N_G(F)$ such that $h\cdot z_{j_3} =
  z_{j_1}$. Let $\beta$ be the geodesic from $z_{j_3}$ to $y$ and let
  $\gamma$ be the concatenation of a geodesic from $x_F$ to $z_{j_1}$
  and $h\cdot\beta$. The endpoints of $\beta$ are in $X_F$ and the
  translate $h\cdot y$ is by definition of the same $F$-type as
  $y$. Now $|\gamma| \leq |\alpha|-(9\delta-2(3\delta+1))<|\alpha|$
  contradicting the minimality of the length of $\alpha$. The result
  follows.
\end{proof}

\begin{prop}\label{prop:compute-normalizer} 
  Given a finite non-parabolic group $F\leq G$ we can find a
  generating set of its normalizer $N_G(F)$. In particular it is a set
  of element that move an almost fixed point $x_F$ of $F$ a distance
  at most $54K_{|F|}\delta$.
\end{prop}
\begin{proof}
  Fix $x_F$ an almost fixed point of $F$. Since $F$ is finite non-parabolic $x_F$
  is a depth at most $3\delta$ inside a horoball. 
  It now follows from Lemma \ref{lem:close-types} and the standard
  argument for quasiconvex subgroups of hyperbolic groups that
  $N_G(H)$ is generated by a collection of elements that translate
  $x_F$ a distance of at most $54K_{|F|}\delta$. The result now
  follows from Lemma \ref{lem:make-balls}. 
\end{proof}

This next result resembles work of Bumagin, however here we deal with
finite subgroups instead of individual elements.

\begin{lem}[{c.f. \cite[Theorem 1.1]{bumagin2015time}}]\label{lem:short-conj-length}
  Let $X$ be a Groves-Manning space for $(G,\calP)$, let it be
  $\delta$-hyperbolic. Let $K=K_{N}$ be as in Definition
  \ref{defn:bded-cong} where $N$ is the maximal cardinality of a
  non-parabolic finite subgroup. There are computable constants
  $A=A(K,\delta), B=B(K,\delta)$ that satisfy the following: if
  $F,H   
  $ are conjugate non-parabolic finite subgroups of $G$ given as: 
  \[ F = \{f_1,\ldots,f_r\}, H = \{h_1,\ldots,h_r\} \] and if
  $\phi_i = [1,f_i\cdot 1]$ and $\eta_j = [1,h_j\cdot 1]$ are
  geodesics in $X$ the Groves-Manning space, then we can find a
  conjugator $k$, with $kFk^\mo = H$, such that if
  $\kappa = [1,k\cdot 1]$ is a geodesic
  then \begin{equation}\label{eq:lin-conj-bound} |\kappa| \leq
    A\left(\max_i |\phi_i|+\max_j|\eta_j|\right) +B.
  \end{equation}
  In particular the constants $A$ and $B$ can be computed from a
  sufficiently large ball in $X$ around the identity.
\end{lem}
\begin{proof}
  By Lemma \ref{lem:md-qc} 
  item (\ref{it:afq-horoball}), any finite
  group $F$ has a conjugate $F_0$ that has an almost fixed point $x_0$
  which is a distance at most $3\delta$ directly above (in a horoball)
  the vertex $1$, representing the identity, in $X$.  

  Now let $x_F \in X_F$ be an $F$-almost fixed point in the same
  $G$-orbit as $x_0$ that is as close as possible to $1$ and let
  $\beta$ be the geodesic connecting $1$ to $x_F$ and let $\kappa$ be
  concatenation $\beta\cdot\beta^+$ where $\beta^+$ is the vertical path
  from $x_F$ to a vertex representing a group element $k$ directly
  below $x_F$.  It follows from Lemma \ref{lem:orbits-touch} and the
  freeness of the action of $G$ on $X$ that $k^\mo F k=F_0$.
  
  \emph{Claim: $\kappa$ is not much longer than $\max_{i}|\phi_i|$.}
  We have already seen that $|\kappa| \leq |\beta|+3\delta$. For each
  $f_i \in F$ take the geodesic $\alpha_i = [x_F,f_i\cdot x_F]$. By
  definition of $X_F$ we have $|\alpha_i| \leq 3\delta+1$. Consider
  the quadrilaterals, viewed as sets,
  \[Q_i=\beta \cup \alpha_i \cup \left(f_i\cdot\beta\right)\cup
    \phi_i.\] To prove our claim we need to show that $|\beta|$ is not
  much more than the longest $|\phi_i|$. Suppose towards a
  contradiction that this was not the case, i.e. that $|\beta|$ is
  much longer than $|\phi_i|$ for all $i$. Then in this case all
  quadrilaterals $Q_i$ must have a long thin $\beta$-pinching
  component. Specifically the approximating tree (see \cite[Chapitre 2
  \S2]{ghys1990groupes}) will be of the following form:
  \begin{center}
    \begin{tikzpicture}
      \coordinate (ll) at (0,0);
      \coordinate (lc) at (1,1);
      \coordinate (lr) at (2,0);
      \coordinate (ul) at (0,4);
      \coordinate (uc) at (1,3);
      \coordinate (ur) at (2,4);
      \draw[thick] (ll) -- (lc) -- (lr);
      \draw[thick] (ul) -- (uc) -- (ur);
      \draw[ultra thick] (lc)--(uc);
      \draw[->] ($(ll)+(0,1)$) ..controls ++(0.75,0.25) and ++ (0.75,-0.25)
      .. ($(ul)+(0,-1)$);
      \draw[->] ($(lr)+(0,1)$) ..controls ++(-0.75,0.25) and ++
      (-0.75,-0.25) .. ($(ur)+(0,-1)$);
      \draw[->] ($(ul)+(0.5,0)$) ..controls ++(0.25,-0.5) and ++ (-0.25,-0.5)
      .. ($(ur)+(-0.5,0)$);
      \draw[->] ($(ll)+(0.5,0)$) ..controls ++(0.25,0.5) and ++ (-0.25,0.5)
      .. ($(lr)+(-0.5,0)$);
      \node (k) at ($(ll)!0.5!(lr)$) {$\varphi_i$};
      \node (b) at ($(ll)!0.5!(ul)$) {$\beta$};
      \node (fb) at ($(lr)!0.5!(ur)$) {$f_i\cdot\beta$};
      \node (ai) at ($(ul)!0.5!(ur)$) {$\alpha_i$};
      \draw[fill=black] ($(lc)!0.75!(uc)$) circle (0.1);
      \node[left] (xb) at($(lc)!0.75!(uc)$) {$\bar{x}$};

      \draw[ultra thick,->] (4,2)--(3,2);
      \draw[fill=black] (ul) circle (0.1);
      \draw (ul) node[left]{$p$};
      \draw[fill=black] (ur) circle (0.1);
      \draw (ur) node[right]{$f_i\cdot p$};
      \draw[fill=black] (uc) circle (0.1);
      \draw (uc) node[above]{$\bar m$};
      
      \begin{scope}[xshift=5cm]
        \coordinate (ll) at (0,0);
        \coordinate (lr) at (2,0);
        \coordinate (ul) at (0,4);
        \coordinate (ur) at (2,4);
        \draw
        (ll)--node[below]{$\phi_i$}(lr)--node[right]{$f_i\cdot\beta$}(ur)--node[above]{$\alpha_i$}(ul)--node[left]{$\beta$}(ll);
        \draw[fill=black] ($(ul)!0.25!(ll)$) circle (0.1);
        \node[left] (x) at ($(ul)!0.25!(ll)$) {$x$};
        \draw[fill=black] ($(ur)!0.4!(lr)$) circle (0.1);
        \node[right] (xs) at ($(ur)!0.4!(lr)$) {$x^*$};
        \draw[fill=black] ($(ur)!0.25!(lr)$) circle (0.1);
        \node[right] (fx) at ($(ur)!0.25!(lr)$) {$f_i\cdot x$};
        \draw[dashed] (x)--node[sloped,above]{$2\delta$}(xs);
        \draw[fill=black] (ul) circle (0.1);
        \draw (ul) node[left]{$p$};
        \draw[fill=black] (ur) circle (0.1);
        \draw (ur) node[right]{$f_i\cdot p$};
      \end{scope}
    \end{tikzpicture}
  \end{center}
  where $x,x^*$ are preimages of some point $\bar x$ in the approximating
  tree, and the segment containing $\bar x$ is the
  \emph{$\beta$-pinched segment}. Now $d(x,x^*) \leq 2\delta$.

  \emph{Sub claim: $d(f_i\cdot x,x^*) \leq |\alpha_i|\leq 3\delta+1$.}
  Consider the points $p,f_i\cdot p$ and $\bar m$ shown in the figure
  above. Along $\beta$ and $f_i\cdot\beta$, we have
  $d(x,p) = d(f_ix,f_i\cdot p)$. Consider the preimages $m$ of
  $\bar m$ on the arc $\beta$ and the preimage $m^*$ on
  $f_i\cdot \beta_i$ (both not shown) in the right quadrilateral. 
  Along the geodesic $f_i\cdot \beta$, we see that $f_i\cdot x$ is a
  distance $d(x,p) = d(\bar x, \bar m)+d(\bar m,p)$ from $f_i\cdot
  p$. Along this same geodesic we also see that $x^*$ is a distance
  $d(x^*,f_i\cdot p) = d(\bar x, \bar m)+d(\bar m,f_i(p))$ from
  $f_i\cdot p$. It follows that
  $d(f_i\cdot x,x^*) \leq | d(\bar m,f_i(p)) - d(\bar m,p)| \leq
  |\alpha_i|$; \emph{so the sub claim is proved.}
  
  It follows that $d(x,f_i\cdot x)\leq 5\delta+1$. Suppose now that
  $\beta$ is three times longer than every $\phi_i$ and $\alpha_i$,
  then the midpoint $m$ of $\beta$ is mapped to the $\beta$-pinched
  segment of the approximate tree.  In this case $m$ is moved at most
  $5\delta+1$ by $F$ so by Lemma \ref{lem:bg-almost-fixed-point} there
  is an $F$-almost fixed point a distance at most
  $\frac{5\delta+1}{2} \leq 3\delta$ from $m$. It follows that if
  additionally $|\beta|> 6\delta + 54K_{N}\delta$ (from Lemma
  \ref{lem:close-types}) then there is a shorter geodesic $\gamma$
  connecting $1$ to some $F-$almost fixed point $x_F'$ in the same
  $G$-orbit as $x_0$, contradicting the choice of $\beta$. \emph{The
    claim is proved. Specifically $|\beta|$ is bounded by a computable
    linear function of $\left|\max_{i}\varphi_i\right|$ with
    coefficients depending on $K_{N}$ and $\delta$.}

Now if we are given $F$ and $H$ as in the statement of the Theorem,
they are both conjugate to some $F_0$ that has almost fixed points
close to the identity and the length of the shortest conjugator from
$F$ to $H$ is at most the sum of the lengths of the conjugators from
$F_0$ to $F$ and $F_0$ to $H$. The result now follows.
\end{proof}

\begin{prop}\label{prop:conj-prob}
  Let $F,F'$ be finite non-parabolic subgroups. Then there is an
  algorithm that decides whether there is some $k \in G$ such that
  $kFk^{-1} = F'$.  Given a presentation $G =\bk{Z\mid R}$ the
  shortest conjugator is of word-length bounded in terms of the words
  defining the elements of $F$, $F'$, and of the hyperbolicity
  constant of the Grove-Manning space.
\end{prop}
\begin{proof}
  This result follows immediately from the previous lemma and
  following facts. Given a presentation $G=\bk{Z\mid R}$, 
  then the word length
  of an element $g$ gives an upper bound for the geodesic
  $[1,g\cdot 1]$ in $X$ the Groves-Manning space. Conversely $X$
  distorts the distance in the embedded Cayley graph in a controlled
  exponential fashion so that if $[1,g\cdot 1]=k$ then $g$ can be
  written as a word in $Z$ of length at most $2^k$.

  So given a set of generating words of $F$ and $F'$, by Lemma
  \ref{lem:make-balls} we can construct a large enough ball $B(r)$
  about the identity in $X$ to contain the orbit of $1$ by the
  elements $F$ and $F'$. We can then enlarge the radius of this ball
  by a computable amount, namely $2(A2r+B)$ as in
  (\ref{eq:lin-conj-bound}) from Lemma \ref{lem:short-conj-length} to
  find the shortest conjugator.

  Finally, we note that the $Z$-word length of the shortest conjugator
  $k$ is at most\[ 2^{2(A2m+B)},
  \] where $m$ is the longest word length from our list of words representing
  the elements of $F\cup F'$.

\end{proof}

A finite subgroup is called \define{strictly parabolic} if it is 
parabolic, but not contained in the intersection of two distinct
parabolic subgroups.

\begin{lem}\label{lem:mwhp-conj}
  Suppose that we can solve the Mixed Whitehead Problem in the
  subgroups of $\calP$. Then given two finite strictly parabolic
  subgroups $F,F'$ we can determine whether there is an element of $g$
  that conjugates $F$ into $F'$. Furthermore we can compute the action
  of the normalizer $N_G(F)$ on $F$.
\end{lem}
\begin{proof}
  The simultaneous conjugacy problem asks if given tuples
  $(s_1,\ldots,s_r)$ and $(t_1,\ldots,t_r)$, whether there is an
  element $g\in G$ such that $g^\mo s_i g = t_i$. It is easy to see,
  by requiring that a generating set be sent to a conjugate of a
  generating set, that the simultaneous conjugacy problem is a special
  case of the Mixed Whitehead Problem.  Given $F,F'$ as in the
  statement, we can use Bumagin's result \cite{bumagin2004conjugacy}
  to determine whether $F'$ is conjugate into the same parabolic
  subgroup as $F$. Note that strict parabolicity implies that there is
  a unique maximal parabolic overgroup $P$ that could contain them
  both, up to conjugacy. Since parabolic subgroups are
  self-normalized, the problem reduces to finitely many instances of
  the simultaneous conjugacy conjugacy problem where the tuples
  correspond to elements of $F,F'$.

  To compute the action of $N_G(F)$ on $F$, we take the tuple
  $(f_1,\ldots,f_r)$ of elements of $F$ and for each $\varphi \in
  \aut F$ we take the target tuple
  $(\varphi(f_1),\ldots,\varphi(f_r))$ and apply the simultaneous
  conjugacy problem to decide whether $\varphi$ can be induced by an
  inner automorphism of $G$.
\end{proof}

\subsubsection{The many-ended case}

We discuss now the case of Theorem \ref{theo;main} in which the
relatively hyperbolic groups have infinitely many ends. The case of
torsion-free groups is easier and follows from Proposition
\ref{prop;compute-Grushko} and Theorem \ref{thm:main-one-ended}.

Let us turn to the case of groups with torsion. The Dunwoody Stallings
decompositions  
plays 
the role of the Grushko decomposition, however, the overall argument
is significantly more involved.

A first difficulty is to compute a (any) Dunwoody Stallings
decomposition.  
 By Proposition
\ref{prop:compute-DS}, it suffices to have an algorithm that detects whether
a given relatively hyperbolic group $(G,\calP)$ in the class is one-ended or
not. Consider a torsion-free finite-index subgroup $(G_0, \calP_0)$, which exists by
assumption, and which is computable by enumeration (of morphisms to
finite groups, and of finite subgroups of $G_0$.)   
  Then
$(G, \calP)$ has one end if and only if $(G_0, \calP_0)$ has one end, if
and only if it is freely indecomposable. The later property is
decidable by Proposition \ref{prop;compute-Grushko}.

A second difficulty is the non-uniqueness of the Dunwoody-Stallings
decompositions. As in \cite{DG_gafa} we will treat this problem by
computing orbit representatives of Dunwoody-Stallings decompositions
under the automorphism groups.

If $\bbD$ and $\bbD'$ are two Dunwoody-Stallings splittings their
vertex groups must be conjugate, and in particular the dual Bass-Serre
trees lie in the same deformation space. Because all edge groups are
properly contained in vertex groups \cite[Proposition
7.1]{guirardel_deformation_2007} and \cite[Theorem
7.2]{guirardel_deformation_2007} immediately imply that $\bbD'$ can be
obtained from $\bbD$ by a sequence of slides. Specifically a slide of
an edge $e$ across an edge $f$, where $e\neq f$ and for which
$i_e(\Gamma_e)$ is conjugate to a subgroup of $i_f(\Gamma_f)$, is a
transformation of a graph of groups as depicted:

\begin{equation}
  \label{eq:slide}
  \begin{array}{c}
    \begin{tikzpicture}
      \node (gammae) at (0,1) {$\Gamma_e$};
      \node (gammau) at (-1,-1) {$\Gamma_u$};
      \node (gammaf) at (1,-1) {$\Gamma_f$};
      \node (gammav) at (3,-1) {$\Gamma_v$};
      \draw[fill=black] (1,2) --node[right]{$e$} 
      (0,0) node[below]{$u$} circle (0.1) --node[above]{$f$} 
      (2,0) node[below]{$v$} circle (0.1);
      \draw[->] (gammae)--node[left]{$i_e$}(gammau);
      \draw[->] (gammaf)--node[above]{$i_f$}(gammau);
      \draw[->] (gammaf)--node[above]{$i_{\overline f}$}(gammav);
      
      \draw[ultra thick,->] (3,1)--node[above]{slide $e$ across $f$}(5,1);
      
      \begin{scope}[xshift=6cm]
        \draw[fill=black] (1,2) --node[left]{$e$} 
        (2,0) node[below]{$v$} circle (0.1) --node[above]{$f$} 
        (0,0) node[below]{$u$} circle (0.1);
        \node (gammaen) at (2,1) {$\Gamma_e$};
        \node (gammaun) at (3,-1) {$\Gamma_u$};
        \draw[->] (gammaen) -- node[right]{$i^{\textrm{new}}_e$} (gammaun);
      \end{scope}
    \end{tikzpicture}
  \end{array}
\end{equation}

where possibly $u=v$. The new monomorphism
$i^{\textrm{new}}_e:\Gamma_e \to \Gamma_v$ is given by the well
defined restriction
\[ i^{\textrm{new}}_e= \left.\left(i_f\circ i_f^\mo\circ \ad_{g_{e,f}}\circ
    i_e\right)\right|_{\Gamma_e}
\]
where $g_{e,f} \in \Gamma$ conjugates $i_e(\Gamma_e)$ to a subgroup of
$i_f(\Gamma_f)$. Propositions \ref{prop:compute-normalizer},
\ref{prop:conj-prob} and Lemmas \ref{lem:parabolic-intersection},
\ref{lem:mwhp-conj} immediately imply:

\begin{cor}\label{cor:compute-slide}
  Let $(G,\calP)$ be relatively hyperbolic where it is possible to
  solve the Mixed Whitehead Problem in the parabolic subgroups. Given
  a Dunwoody-Stallings decomposition, and 
  distinct edges $e,f$ that meet at a vertex $u$, it is possible to
  find all possible slides of $e$ across $f$.
\end{cor}

\begin{prop}\label{prop:list-DS}
  Let $(G,\calP)$ be relatively hyperbolic where it is possible to
  solve the Mixed Whitehead Problem in the parabolic subgroups and let
  $\bbD$ be a given Dunwoody-Stallings decomposition. There is an algorithm
  that  finds, up to isomorphism (Definition \ref{def;iso_gog}), all
  possible graphs-of-groups that are obtainable from $\bbD$ via a
  sequence of slides.
\end{prop}
\begin{proof}
  Repeatedly applying Corollary \ref{cor:compute-slide}, we can
  enumerate all possible Dunwoody-Stallings splittings. We now must
  show that we can compute a complete finite list up to isomorphism of graphs
  of groups.

  There are only finitely many types of underlying graphs that can be
  obtained via sequences of slides. Furthermore if a sequence $\sigma$
  of slides induces an isomorphism of graphs such that corresponding
  edge groups and vertex groups are sent to isomorphic copies, then
  the data of the new graph of groups will differ only in that the
  boundary monomorphisms $i_e$ will be replaced by
  \[ \ad_{g_{\sigma,e}} \circ i_e\circ\varphi_{\sigma,e}\] where
  $\varphi_{\sigma,e} \in \aut{\Gamma_e}$ is called the
  \define{holonomy automorphism with respect to the slide sequence
    $\sigma$}, and $g_{\sigma,e}$ is the appropriate element of
  $\Gamma_{t(e)}$. Because the edge groups are finite it is possible
  to determine whether the holonomy automorphisms are conjugations
  and, if so, by Propositions \ref{prop:compute-normalizer},
  \ref{prop:conj-prob} and Lemmas \ref{lem:parabolic-intersection},
  \ref{lem:mwhp-conj} we can determine if some splitting $\bbD'$ is
  isomorphic as a graph of groups to another splitting already in our
  list. Because there are only finitely many possible holonomy
  automorphisms and only finitely many conjugacy classes of finite
  subgroups that arise as the images of edge group monomorphisms, we
  will eventually construct a complete finite list.
\end{proof}

We are now ready to prove the main result.

\begin{proof}[Proof of Theorem \ref{theo;main}.]
  Let $(G,\calP), (G', \calP')$ be two relatively hyperbolic groups as
  in the statement Theorem \ref{theo;main}. By Proposition
  \ref{prop:list-DS}, we may compare in parallel all pairs of
  Dunwoody-Stallings decompositions of $(G,\calP)$ and
  $(G',\calP')$. We cannot proceed exactly as in the proof of the
  first statement of \cite[Lemma 7.3]{DG_gafa}, because we cannot find
  generating sets of the automorphism groups, instead we must work
  with computation of orbits in Dehn fillings.

  Consider $\bbD, \bbD'$, respectively, two Dunwoody-Stallings
  decomposition of them. Denote by $\Gamma_v, \Gamma_{v'}$ their
  vertex groups (for $v\in V(D), v'\in V(D')$), and by
  $\Gamma_e, \Gamma_{e'}$ their edge groups
  ($e\in E(D), e'\in E(D')$.)

  Consider a marking for each edge group $\Gamma_e = \Gamma_{\bar e}$,
  for $e\in E(D)$, and similarly for $e'\in E(D') $ (in other words,
  the description of $\Gamma_e$ as an ordered tuple of elements.) For
  each vertex $v$, write $\calF_v$, a marked peripheral structure of
  $\Gamma_v$ of the conjugacy classes of the subgroups of adjacent
  edges to $v$ in $\bbD$ (and similarly for $\bbD'$) such that a marking
  of $[i_e(\Gamma_e)]$ in $\Gamma_{t(e)}$ is conjugate to the image by
  $i_e$ of the marking of $\Gamma_e$.
  For convenience let us call $F_e$ the chosen marking in
  $\Gamma_{t(e)}$ corresponding to the conjugacy class of
  $i_e(\Gamma_e)$.

Given    an isomorphism  of the underlying graphs $\psi : D \to D'$ (we assume, up to  renaming, that it sends $v$ to $v'$), and isomorphisms $\psi_v: \Gamma_v \to \Gamma_{v'}$,    by  Proposition \ref{prop;compatible_twist}
$(G,\calP)$ and $(G',\calP')$ are isomorphic by an isomorphism from
$\bbD$ to $\bbD'$ inducing $\psi$  if and only if 
for each $v$ 
there exists an automorphism $\alpha_{v'}$ of $\Gamma_{v'}$ (respecting
 the relative hyperbolic structure) that sends each marking
$F_{e'}  (= F_{\psi(e)}\in \calF_{v'})$ to a conjugate of $\psi_v(F_e)$. We call
this property $(\star_v)$. This property  amounts to the existence of suitable extension adjustments in the Dunwoody-Stallings decomposition.

The graphs $D,D'$ are finite, thus we may decide whether $D$ and $D'$
are isomorphic graphs. Using on each vertex group the algorithm of
Theorem \ref{thm:main-one-ended}, we may assume that we are given
isomorphisms between $\Gamma_v$ and $\Gamma_{v'}$ for all $v$.

We now assume that there exists $\psi :D\to D'$ a graph isomorphism,
and for all vertices $v\in D^{(0)}$, an isomorphism of relatively
hyperbolic groups
$\psi_{v}: \Gamma_{v} \to \Gamma_{\psi(v)} (= \Gamma_{v'})$ (for the
induced relatively hyperbolic structure), for if one of them does not
exist, the groups $(G,\calP), (G', \calP')$ are thus proven
non-isomorphic through these particular Dunwoody-Stallings
decompositions.

The graph isomorphism $\psi$ provides, for
each vertex, a bijection between the adjacent edges, therefore a
bijection between $\calF_{v}$ and $\calF_{\psi(v)} =\calF_{v'}$. For
each $F\in \calF_{v}$, we call its image in  $\calF_{v'}$ by this
bijection, its counterpart. At this stage, in principle,  it does not
come from an 
isomorphism between $\Gamma_v$ and $\Gamma_{v'}$. However, $\psi_v$ indeed sends $\calF_{v}$ to a marked peripheral structure of finite groups of $\Gamma_{v'}$.

Consider $\bbJ_v$ the canonical JSJ decomposition of $\Gamma_v$ (and
similarly for $\Gamma_{v'}$.) Note that $\psi_v$ induces an
isomorphism of graphs of groups between $\bbJ_v$ and $\bbJ_{v'}$ (they
are canonical.)  We will write $w'$ in $\bbJ_{v'}$ for the image of a
vertex $w$ in $\bbJ_v$. As in the beginning of the proof of Theorem
\ref{thm:main-one-ended}, we arrange so that $\bbJ_v$ and
$\bbJ_{v'}$ are peripheral splittings.
For a vertex $w$ of $\bbJ_v$, we denote by $\Upsilon_{v,w}$ the
corresponding vertex group. We use similar notation for edge
groups. 

Consider the groups in $\calF_v$. They are conjugate to edge groups of a
Dunwoody-Stallings decomposition, hence finite, and therefore, in
$\Gamma_v$ each of these groups has a conjugate in a certain vertex
group $\Upsilon_{v,w}$. Denote by $\calF_{v,w}$ the collection of those
that are conjugated into $\Upsilon_{v,w}$. This may not give a
partition of $\calF_v$ because some groups can be conjugated to an
edge group of the JSJ decomposition and appear in several distinct vertex
groups. 

For every $F_e\in \calF_v$ that is conjugate into $\Upsilon_{v,w}$, let
$F_e(w)$ be a specific conjugate of the marking in  $\Upsilon_{v,w}$.

The property  $(\star_v)$ asks for  certain automorphisms of
$\Gamma_v$, hence of $\bbJ_v$ since the later is canonical.  One can
determine which automorphisms of the underlying graph of $\bbJ_v$ are
realized as automorphism of the graph-of-groups, and collect, for each
of those, an automorphism realizing the underlying graph
automorphisms. Up to composing with one of these, we may assume
without loss of generality that we are searching for an automorphism that induces the identity on the underlying graph.

By the canonicity of the JSJ decomposition,     the property $(\star_v)$ 
 happens if and only if there exists a collection of
automorphisms $\alpha_{w'}$ of the vertex groups $\Upsilon_{v',w'}$ of
$\bbJ_{v'}$ that: 

\begin{enumerate}
\item[$(\star_{v}^1)$]\label{cond;one} for all $F'_{\psi(e)} (w') \in \calF_{v',w'}$ (whose counterpart
  in $\calF_{v,w}$ is denoted $F_e(w)$), send the marking
  $F'_{\psi(e)}(w)$ to a $\Upsilon_{v',w'}$-conjugate of
  $\psi_v(F_e(w))$

\item[$(\star_{v}^2)$]\label{cond;two}  the collection of automorphisms $\alpha_{w'}$ preserve
  $\bbJ_{v'}$ in the following sense: given a marking
  $M_{\epsilon'} = M_{\bar{\epsilon'}}$ for each edge group
  $\Upsilon_{v',\epsilon'}$, there exists another marking
  $N_{\epsilon'} = N_{\bar \epsilon'}$ of $ \Upsilon_{v',\epsilon'} $
  such that $\alpha_{t(\epsilon')}(i_{\epsilon'} (M_{\epsilon'}))$ is
  $ \Upsilon_{v',w'}$-conjugated to $i_{\epsilon'} (N_{e'}) $. 

\end{enumerate}

The second condition $(\star_{v}^2)$ is need to extend the collection
of vertex group automorphisms $\alpha_{w'}$ into a global automorphism of the graph of groups $\bbJ_{v'}$, see Definition \ref{def;iso_gog}.

Let us denote by $\calM_{w'}$ the marked peripheral structure of the
adjacent edges of $w'$, marked by the tuples
$i_{\epsilon'} (M_{\epsilon'})$.  Let us also gather
$\calF_{v',w'}^\calP$ the markings of $\calF_{v',w'}$ that are
contained in peripheral subgroups of $\Upsilon_{v',w'}$, and
$\calF_{v',w'}^\calT$ the markings that are not contained in any
peripheral subgroups ($\calT$ stands for thick.) We call
$ ( (\calM_{w'}, \calF_{v',w'}^\calP))_{ w'\in \bbJ_{v'} ^{(0)}} $ the
\define{collection of source markings} of the vertex groups. The orbit
under
$\prod_{ w'\in \bbJ_{v'} ^{(0)} } \Out(\Upsilon_{v',w'},
\calP_{\Upsilon_{v',w'}} )$, of the collection of source markings is
called the set of \define{collection of hit markings}.




Denote by
$ ( (\calH_{w'}, \calF\calH_{v',w'}^\calP))_{ w'\in \bbJ_{v'} ^{(0)}}
$ a collection of hit markings. We define \define{ eligible
  collections of hit markings} to be those collections of hit markings
that are such that, first, $\calF\calH_{v',w'}^\calP$ are tuple-wise
conjugate to $\psi_{v} (\calF_{v,w}^\calP)$ in $\Upsilon_{v',w'}$, and
second, for all edges $e'$, the pull-back by the attachment map
$i_{e'}$ of $\calH_{t(e')}$ in $\Upsilon_{v',e'}$ corresponds to the
pull-back by the attachment map $i_{\bar e'}$ of $\calH_{t(\bar e')}$
in $\Upsilon_{v',\bar e'}$ (which is \emph{equal} to
$ \Upsilon_{v',e'} $), up to conjugation in the edge group
$ \Upsilon_{v',e'} $.
 

Note that in white vertices, the orbit of the source markings by
$\Out(\Upsilon_{v',w'}, \calP_{\Upsilon_{v',w'}} )$ is finite and
computable, by (Proposition \ref{prop;orbit_computation}).  Once these
have been computed, the mixed Whitehead problem in the black vertices
can be used to determine whether a partial collection of markings in
the white vertex groups can be extended to an eligible collection of
hit markings.



The conditions  $(\star_{v}^1), (\star_{v}^2)$  are thus  equivalent to the existence of  automorphisms $\alpha_{w'}$
of $\Upsilon_{v',w'}$  sending the collection of source markings to a collection of eligible hit
markings and  sending the
finite marked groups $F'_{\psi(e)}(w')$   of  $\calF_{v',w'}^\calT$
on conjugates of   $\psi_v(  F_{e}(w) )$, for $F_{e}(w)$ the
counterpart of  $F'_{\psi(e)}(w')$.   We now proceed to decide the existence of these automorphisms.   

For $\QH$ groups $\Upsilon_{v',w'}$, which are hyperbolic relative to
two-ended groups and therefore hyperbolic, this can be achieved by
applying \cite[Corollary 8.2]{DG_gafa}. For parabolic groups
$\Upsilon_{v',w'}$, the solution to the mixed Whitehead problem
answers the question. The more difficult situation is for rigid
groups. In this last case, we follow the same method that allowed us to
compute the orbit under the automorphism group of markings of
parabolic subgroups.

We iteratively consider the canonical Dehn Fillings
$\bar \Upsilon_{v',w'} = \Upsilon_{v',w'}/K_n$ (where $K_n$ is the
$n$-th characteristic Dehn kernel as in section \ref{subsec;injpi}),
check whether it is hyperbolic by \cite{Pap}, check whether it has
finite outer automorphism group by \cite[\S 6.2]{DG_gafa}, and if so,
compute its outer automorphism group (by \cite[\S 8]{DG_gafa}), and
check whether there is an automorphism that, not only sends the image
of the source marking to the image of an eligible hit marking, but
also sends all the markings of $\overline{\calF_{v',w'}^\calT}$ to
conjugates of the markings of $\psi_v(\overline{ \calF_{v,w}^\calT})$.

If one finds such a Dehn filling in which there is no such automorphism, the game is over: property $(\star_v)$ cannot be
satisfied.

Observe  that by the Dehn filling theorem (and residual
finiteness of the peripheral subgroups), for $n$ large enough, all
$\Upsilon_{v',w'}/K_n$ will be hyperbolic, and by
\cite[Theorem. 5.2]{DG_charac}, for $n$ large enough, all have finite
outer automorphism groups. Therefore, the preliminary checks performed
above will
eventually be successful.

 Assume finally that,   for each Dehn Filling satisfying these preliminary checks, there exists an automorphism $\bar
\alpha_{v',w',n}$ as above. 
We need to check that
there is an automorphism of $\Upsilon_{v',w'}$ sending the source
marking to an eligible hit marking and sending all the markings of
$\calF_{v',w'}^\calT$ to conjugates of the  markings of
$\psi_v( \calF_{v,w}^\calT)$.

By \cite[Theorem 5.2]{DG_charac}, there exists an
automorphism $\alpha_{w'}$ of $\Upsilon_{v',w'}$ inducing, through the
quotients,  
infinitely many   automorphisms $\bar
\alpha_{v',w',n}$ in the Dehn Fillings, up to conjugation in  $\bar
\Upsilon_{v',w'}$. After taking
a subsequence, we thus assume it induces all of them. 

We modify $\bar \alpha_{v',w',n}$ by conjugation in
$\bar \Upsilon_{v',w'}$ so that now $\bar \alpha_{v',w',n}$ is induced
by $\alpha_{w'}$ through the quotient.  It follows that the image of
$\bar \alpha_{v',w',n}$ of each tuple in $\calF_{v',w'}^\calT$ remains
at bounded distance from the identity (uniformly in $n$) in the word
metric induced by a chosen one in $\Upsilon_{v',w'}$.

 By Proposition \ref{prop:conj-prob}  it follows that the conjugator in $\bar
\Upsilon_{v',w'} $ conjugating a tuple of $\overline{ \calF_{v',w'}^\calT}$ to its
counterpart in $\psi_v(\overline{ \calF_{v,w}^\calT})$ can be taken to be  bounded
in word length.   
After taking a subsequence again, we can assume that these conjugators
are the images of a word independent of $n$. It follows that the same
words conjugate the markings of
$\calF_{v',w'}^\calT$  to  the  markings of
$\psi_v( \calF_{v,w}^\calT)$.

Thus if there does not exist such an automorphism $\alpha_{w'}$, this
will be apparent in a certain Dehn Filling, and as we said, by our
procedure, we will algorithmically discover it. On the other hand if
there exists such an automorphism $\alpha_{w'}$, an enumeration
process will discover it. Doing so for all eligible hit markings
allows us to decide whether or not $(G,\calP)$ and $(G',\calP')$ are
isomorphic by an isomorphism from $\bbD$ to $\bbD'$ inducing
$\psi$. By the reduction performed before, this solves the isomorphism
problem between $(G,\calP)$ and $(G',\calP')$.

\end{proof}

\section{Algorithmic problems in nilpotent groups}\label{sec;nilp}
As promised, we now prove that finitely generated nilpotent groups
satisfy the  fourth and fifth 
assumptions on the peripheral subgroups in Theorem
\ref{theo;main}.

Let $N$ be a finitely generated nilpotent group, such groups will be
called \define{$\fgn$-groups.} Let $\outo(N)$ denote the finite order
elements of $\Out(N)$.  We write $H \charleq G$ to denote a
characteristic subgroup and $H \nsgp G$ to denote a normal subgroup.
Recall (Definition \ref{def;CST}) that a finite index characteristic
subgroup $P \charleq N$ such that the elements of $\outo(N)$ do not
vanish via the natural map $\Out(N) \to \Out(N/P)$, is said to
\define{separate torsion in $\out{N}$.}

The goal of this section is to prove the following two results:

\begin{thm}\label{thm:find-deep-enough}
  Let $N$ be an \fgn-group. Then given a finite presentation of $N$ we
  can algorithmically construct a generating set of a subgroup $P$
  which separates torsion in $\Out(N)$.
\end{thm}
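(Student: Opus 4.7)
The plan is to combine the arithmeticity of $\Out(N)$, going back to Auslander and Mal'cev, with the classical Minkowski theorem (that principal congruence subgroups at level $\geq 3$ in $GL_h(\Z)$ are torsion-free; this is the model case recorded in Remark~\ref{rem;GLnZ3}), and then to make the construction effective via the algorithmic toolkit for polycyclic groups from \cite{BCRS-PF}. The first reduction is to the torsion-free case: the torsion subgroup $T:=T(N)$ is finite and characteristic, and the map $\Aut(N)\to \Aut(T)\times \Aut(N/T)$ has a kernel whose image in $\Out(N)$ is controlled by derivations $N/T\to Z(T)$; any $P\charleq N$ killing $T$ and mapping onto a suitable characteristic subgroup of $N/T$ will detect the torsion coming from this kernel and from $\Out(T)$, so it suffices to handle $N/T$.

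For torsion-free nilpotent $N$, I would fix a Mal'cev basis and realize $\Aut(N)$ as an arithmetic subgroup of some $GL_h(\Z)$ via its $\Z$-linear action on the graded pieces $\gamma_i N/\gamma_{i+1}N$ of the lower central series. Since $\Inn(N)\cong N/Z(N)$ sits as an arithmetic subgroup of an algebraic subgroup of $\mathrm{Aut}(\mathfrak{n}_{\Q})$, for $\mathfrak{n}_{\Q}$ the Mal'cev $\Q$-Lie algebra of $N$, the quotient $\Out(N)$ inherits an arithmetic structure as well. Minkowski then produces a torsion-free principal congruence subgroup at level $3$; pulling back along the lower central series filtration yields a characteristic finite-index $P\charleq N$ with $\ker\bigl(\Out(N)\to\Out(N/P)\bigr)$ torsion-free. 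Concretely, $P$ can be taken to be generated by $3^{h}$-th powers of the Mal'cev basis together with sufficiently deep commutators.

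Everything is then made effective using \cite{BCRS-PF} to compute $T(N)$, a Mal'cev basis of $N/T$, the explicit $\Z$-linear action on the graded pieces of the lower central series, and finally the generating set of $P$ realizing the Minkowski level-$3$ congruence. The main obstacle is not Minkowski itself but a piece of bookkeeping: one must verify that the explicit $P\leq N$ one writes down really pulls back into the level-$3$ kernel of $\Out(N)$, and not a larger subgroup that could still contain torsion. This requires unwinding the identification of $\Aut(N/P)$ with an explicit quotient and exploiting the nilpotent filtration to guarantee that the action of $\Aut(N)$ on $N/P$ faithfully records the level-$3$ reduction; the rigidity of the polycyclic structure that makes this compatible is precisely the feature pointed out by Segal and proved in \cite{RSZ}.
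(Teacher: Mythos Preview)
Your approach is genuinely different from the paper's, and as written it has a real gap at the key step.

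The paper does not use the arithmeticity of $\Out(N)$. It argues by induction on the length of the upper central series. From congruences separating torsion in $\Out(\nu_1N)$ and $\Out(N/\nu_1N)$ one first builds a ``good enough'' $P_0\charleq N$ (Proposition~\ref{prop:good-enough}) that detects every finite-order outer automorphism \emph{except} the elusive ones, namely the torsion of $\ker(r)\cap\ker(p)$ where $r,p$ are the natural maps to $\Out(\nu_1N)$ and $\Out(N/\nu_1N)$. The elusive elements are then shown (Propositions~\ref{prop:elusive-description} and~\ref{prop:coset-list}) to be the image of a finite set $\hat S/S$ inside an explicit $\Z$-module $\Hom^*(N,\nu_1N)$, and are listed algorithmically (Corollary~\ref{cor:construct-elusive}). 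Segal's Proposition~\ref{prop:segal} is then applied to each member of this \emph{finite} list, one at a time, to deepen $P_0$ until all of them survive.

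Two points in your outline do not work as stated. First, the action of $\Aut(N)$ on $\bigoplus_i \gamma_iN/\gamma_{i+1}N$ is not faithful: its kernel is the group of automorphisms inducing the identity on the associated graded, which contains $\Inn(N)$ and is typically strictly larger. So this does not give an embedding of $\Aut(N)$ into any $GL_h(\Z)$; you would need the action on the full Mal'cev lattice in $\mathfrak n_\Q$. More to the point, torsion in $\Out(N)$ can lie in the image of this kernel --- this is precisely the phenomenon the paper isolates as \emph{elusive} torsion --- so the graded action does not even detect all torsion in $\Out(N)$.

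Second, and this is the main gap: the sentence ``pulling back along the lower central series filtration yields a characteristic finite-index $P\charleq N$ with $\ker(\Out(N)\to\Out(N/P))$ torsion-free'' is not justified. Even granting a torsion-free arithmetic congruence subgroup $K\leq\Out(N)$ from Minkowski, you give no argument that some $P\charleq N$ yields $\ker\bigl(\Out(N)\to\Out(N/P)\bigr)\subseteq K$. The filtrations of $\Out(N)$ by arithmetic principal congruence subgroups and by kernels of the maps $\Out(N)\to\Out(N/P)$ are a priori unrelated, and you have not shown the latter is cofinal in the former. Your appeal to \cite{RSZ} does not bridge this: in the paper, the input from \cite{RSZ} (Proposition~\ref{prop:segal}) says only that \emph{each individual} finite-order $[\alpha]$ is detected by all sufficiently deep $P$, and the paper can conclude only because it has already reduced to an explicit \emph{finite} list of elusive elements. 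Without that finiteness reduction, or without an independent cofinality argument, the pullback step does not go through.
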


\begin{thm}\label{thm:mwhp}
  There is a uniform algorithm to solve the mixed Whitehead problem
  for \fgn-groups.
\end{thm}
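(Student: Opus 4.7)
The plan is to encode the mixed Whitehead problem as an orbit problem for an arithmetic-by-polycyclic group acting polynomially on an affine $\Z$-scheme, and then to solve it using Grunewald--Segal style orbit algorithms together with the polycyclic features from \cite{RSZ}.

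First I would reduce to the torsion-free case. The torsion subgroup $T(N) \charleq N$ is finite and characteristic, and every $\alpha \in \Aut(N)$ respects the exact sequence $1 \to T(N) \to N \to N/T(N) \to 1$. Since $\Aut(T(N))$ is finite and all relevant objects in $T(N)$ can be enumerated, a routine finite case analysis on the orbits in $T(N)$ and on the possible lifts of a given tuple-of-tuples from $N/T(N)$ reduces the problem to the mixed Whitehead problem in the torsion-free quotient.

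Assume then that $N$ is finitely generated, torsion-free nilpotent. Fix a Mal'cev basis of $N$, so that $N$ is identified with $\Z^d$ as a set and the group operations --- including conjugation --- are given by integer polynomials in the coordinates. Classical results (Auslander, Segal) identify $\Aut(N)$ with the $\Z$-points of an algebraic $\Q$-group, hence with an arithmetic group acting polynomially on $N^m$, where $m$ is the total length of the input. The ``admissible moves'' in the mixed Whitehead problem form the semidirect product $A = \Aut(N) \ltimes N^k$, in which the factor $N^k$ acts on the $k$ tuple-components by componentwise conjugation and $\Aut(N)$ acts on $N^k$ diagonally; $A$ is a finitely generated, virtually polycyclic-by-arithmetic group acting by polynomial transformations on the affine $\Z$-scheme of tuple-of-tuple configurations, and two configurations form a ``yes'' instance of the mixed Whitehead problem precisely when they lie in the same $A$-orbit.

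To decide $A$-orbit equivalence I would proceed inductively along the lower central series $N = N_1 > N_2 > \cdots > N_c > 1$. Modulo $N_c$ the ambient group is a smaller nilpotent group to which the induction applies, yielding not only a decision but also an explicit generating set for the stabilizer of the truncated configuration. Lifting back to $N$ then reduces to an orbit problem for this stabilizer acting affinely on the free abelian quotient $N_{c-1}/N_c \cong \Z^r$, which is a classical integer-linear orbit problem that can be solved by standard Hermite/Smith normal form techniques, combined at the top layer with a Grunewald--Segal type orbit algorithm for the arithmetic piece coming from $\Aut(N)$. The main obstacle will be managing the stabilizers through this induction: one must verify that at every stage the relevant stabilizer remains of the required arithmetic-by-polycyclic form, with a computable generating set; this is the point at which the closure and finiteness properties of polycyclic groups established in \cite{RSZ}, acknowledged above as explained to the authors by Segal, should be crucial. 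A subsidiary bookkeeping step --- extracting from a finite presentation of $N$ an effective Mal'cev basis and explicit polynomial formulas for the group operations and for the $\Aut(N)$-action --- is standard but must be integrated uniformly with the orbit procedure so as to give a single algorithm working on all \fgn-groups.
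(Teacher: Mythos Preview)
Your high-level idea --- encode the mixed Whitehead problem as an orbit problem for a semidirect product $\Aut(N)\ltimes N^k$ and invoke a Grunewald--Segal orbit algorithm --- is exactly the paper's idea (Proposition~\ref{prop:orbit-reduction}). But two of your three implementation steps diverge from the paper in ways that introduce real gaps.

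\textbf{The induction along the lower central series is unnecessary and unjustified.} The paper does no such induction. Instead it shows directly (Proposition~\ref{prop:explicit-semidirect}) that the full semidirect product $\hat\Gamma \ltimes H^r$, with $\hat\Gamma = N_{GL(n,\Z)}(H)$ surjecting onto $\Aut(H)$, is an \emph{explicitly given arithmetic group} in the sense of Definition~\ref{defn:arith-group}, acting rationally by conjugation on a product of copies of $GL(n,\C)$. One then applies Grunewald--Segal's Algorithm~A (Theorem~\ref{thm:orbit-algorithm}) once, to this single arithmetic group and this single rational action. Your layered approach instead requires, at each stage, computing a generating set for the stabilizer of a partial configuration and certifying that it remains of ``arithmetic-by-polycyclic'' shape. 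You flag this as ``the main obstacle'' and then appeal to \cite{RSZ} --- but \cite{RSZ} concerns centralizers in profinite completions of polycyclic groups and says nothing about effectively computing stabilizers under polynomial actions. (In the paper, \cite{RSZ} is used only for Proposition~\ref{prop:segal}, which feeds into the congruence-separation result Theorem~\ref{thm:find-deep-enough}, not into the mixed Whitehead problem at all.) So this step is genuinely missing, and the paper's route shows it is also avoidable.

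\textbf{The torsion reduction is not routine.} You propose a ``finite case analysis'' passing to $N/T(N)$, but the map $\Aut(N)\to\Aut(N/T(N))$ need not be surjective, so a positive answer in the quotient does not lift without knowing its image --- which is precisely the sort of thing you are trying to compute. The paper instead (Section~\ref{sec:mwhp-fgn}, following \cite{GS2}) finds $m$ with $G^m\cap\tau(G)=\{1\}$, embeds $G$ into $\bar G\times\tilde G$ with $\bar G = G/\tau(G)$ torsion-free and $\tilde G = G/G^m$ finite, and shows (Lemma~\ref{lem:fgn-explicitly-arith}) that both the image of $G$ and the relevant normalizer $\hat\Sigma$ surjecting onto $\Aut(G)$ are explicitly arithmetic. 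The torsion-free argument then carries over verbatim with $\hat\Sigma$ in place of $\hat\Gamma$.

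In short: keep your semidirect-product reformulation, drop the lower-central-series induction, and replace both the torsion step and the stabilizer step with the single observation that the whole acting group can be exhibited as an explicitly given arithmetic subgroup of some $GL(N,\C)$, so that Grunewald--Segal applies in one shot.
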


As we said in the introduction, these two results, with  Theorem \ref{theo;main} and Corollary
\ref{cor;canonicity}    give Theorem
\ref{theo;ip-nilpotent}, namely, that the isomorphism problem is solvable for
the class of virtually torsion-free relatively hyperbolic groups with finitely
generated nilpotent parabolics.

\subsection{Congruences effectively separate the torsion in finitely
  generated nilpotent groups}
The algorithm we shall present that finds a finite index
characteristic subgroup of $N$ that separates the torsion in $\out N$
will be by recursion on the upper central series length. It is almost
completely elementary. Let $\nu_1 N$ denote the centre of $N$ and
suppose we have found finite index subgroups in $\nu_1N$ and
$N/\nu_1N$ that separate torsion in $\out{\nu_1N}$ and
$\out{N/\nu_1N}$ respectively. In section \ref{sec:good-enough} we
will show how to construct a finite index subgroup $P_0$ of $N$ that
is ``good enough'' in the sense that it will separate the torsion in
$\out N$ which can be seen by its natural images in $\out{\nu_1N}$ and
$\out{N/\nu_1N}$. The finite order elements of $\out{N}$ which are not
separated by $P_0$ are called \emph{elusive}. In Section
\ref{sec:elusive-characterization} we give a complete description of
these elements and in Section \ref{sec:elusive-computation} we give a
procedure to find all of them. Finally in Section
\ref{sec:find-cong-sep}, with the help of Proposition \ref{prop:segal}
(due to Dan Segal), we assemble all these components to give a proof
of Theorem \ref{thm:find-deep-enough}

\subsubsection{A partial inductive step}\label{sec:good-enough}

Let $\nu_0 = \{1\} \nsgp \nu_1 N \nsgp \ldots \nsgp \nu_m N = N$
denote the upper central series. In particular $\nu_1 N = Z(N)$,
i.e. it is the centre, and $\nu_{i+1} N$ is the preimage of
$Z(N/\nu_iN)$. The proof of Theorem \ref{thm:find-deep-enough} will be
by induction on the length of the upper central series of $N$, denoted
$\lucs(N)$.

Consider the short exact sequence
\begin{equation}\label{eqn:ses}1 \to \nu_1N \to N \to
  N/\nu_1N \to 1,
\end{equation}
then in particular $\lucs(\nu_1N) = 1$ and $\lucs(N/\nu_1N) =
\lucs(N)-1$. Let $\beta \in \Aut(N)$ and denote by $[\beta]$ its
class in $\Out(N)$. We have two homomorphisms from $\Out(N)$. The
first one:
\begin{equation}\label{eqn:p}
  \begin{tikzpicture}
    \node (outN) at (0,0) {$\Out(N)$};
    \node (outNnu1N) at (3,0) {$\Out(N/\nu_1N)$};
    \node (beta) at (0,-1) {$[\beta]$};
    \node (betabar) at (3,-1) {$[\ol{\beta}]$};
    \draw[->] (outN) -- node[above]{$p$} (outNnu1N);
    \node at (1.5,-1) {$\mapsto$};
  \end{tikzpicture}
\end{equation}
follows from the fact that $\nu_1N \charleq N$ and the second one:
\begin{equation}\label{eqn:r}
\begin{tikzpicture}
  \node (AutN) at (0,0) {$\Aut(N)$};
  \node (Aut1N) at (3,0) {$\Aut(\nu_1 N)$};
  \node (OutN) at (0,-1) {$\Out(N)$};
  \node (Out1N) at (3,-1) {$\Out(\nu_1 N)$};
  \node (beta) at (0,-2) {$[\beta]$};
  \node (betar) at (3,-2) {$[\beta|]$};
  \node[rotate=90] at (3,-0.5) {$=$};
  \draw[->] (AutN) -- (Aut1N);
  \draw[->>] (AutN) -- (OutN);
  \draw[->] (OutN) -- node[above]{$r$} (Out1N);
  \node at (1.5,-2) {$\mapsto$};
\end{tikzpicture}
\end{equation}
follows from the fact that, on the one hand restriction gives a
homomorphism $\Aut(N) \to \Aut(\nu_1N)$ and on the other hand since
$\nu_1 N$ is free abelian, all its automorphisms are outer. The
following proposition is needed for our induction.

\begin{prop}[Good enough]\label{prop:good-enough}
  Let $P$ be a finitely presented and subgroup separable group and let
  $H \charleq P$ be a finitely presented characteristic
  subgroup. Suppose there are given finite index subgroups $H_0
  \charleq H$ and $K_0 \charleq P/H$. Then there is a finite index
  subgroup $P_0 \charleq P$ such that:
  \begin{itemize}
  \item[(a)] $P_0H/H \leq K_0$ and $P_0H/H \nsgp P/H$.
  \item[(b)] $P_0 \cap H \leq H_0$ and $P_0 \cap H \nsgp H$.
  \end{itemize}
  Moreover; given a finite presentation of $H$ and $P$ and an explicit
  homomorphism $H \hookrightarrow P$ given by the images of the generators of
  $H$, generating sets for $H_0 \leq H$ and $K_0 \leq P/H$; $P_0$ can
  be constructed algorithmically.
\end{prop}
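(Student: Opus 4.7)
The plan is to produce $P_0$ in three stages by pulling data through the short exact sequence $1 \to H \to P \to P/H \to 1$. First I would normalise the given subgroups. Since $H$ is finitely generated, it contains only finitely many subgroups of any fixed finite index, so the set of $P$-conjugates of $H_0$ is finite and the intersection $H_1 = \bigcap_{p \in P} pH_0p^{-1}$ is actually a finite intersection, giving a finite index subgroup $H_1 \leq H_0$ that is normal in $P$ (in particular normal in $H$). In parallel, let $\overline{K_1}$ be the normal core of $K_0$ in $P/H$: a finite index normal subgroup of $P/H$ contained in $K_0$.

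Next I would use subgroup separability of $P$, applied to the finitely generated subgroup $H_1$, to produce a finite index $U \leq P$ with $U \cap H = H_1$. Explicitly, pick representatives $h_2,\dots,h_k$ for the nontrivial cosets of $H_1$ in $H$; for each $i$ separability supplies a finite index $U_i \leq P$ with $H_1 \leq U_i$ and $h_i \notin U_i$, and setting $U = \bigcap_i U_i$ the usual coset argument forces $U \cap H = H_1$.

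Finally, write $\pi : P \to P/H$ and set $V = U \cap \pi^{-1}(\overline{K_1})$, so $V \cap H = H_1$ and $\pi(V) \leq \overline{K_1}$. Let $\overline{W}$ be the normal core of $\pi(V)$ in $P/H$, which is finite index and normal, and put $P_0 = V \cap \pi^{-1}(\overline{W})$. Then $P_0 \cap H = V \cap H = H_1 \leq H_0$, still normal in $H$. Since $H \leq \pi^{-1}(\overline{W})$, every $\bar{w} \in \overline{W} \leq \pi(V)$ has a lift $v \in V$ that automatically lies in $\pi^{-1}(\overline{W})$, hence in $P_0$, so $\pi(P_0) = \overline{W}$, which is contained in $K_0$ and normal in $P/H$. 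All indices are finite, so $P_0$ has finite index in $P$.

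For the moreover clause, each stage is effective once one assumes effective subgroup separability of $P$ in addition to the stated hypotheses: the normal cores defining $H_1$ and $\overline{W}$ are computed by enumerating finitely many conjugates in groups with solvable word problem, presentations of finite index subgroups are produced by Reidemeister–Schreier, and pullbacks through $\pi$ are immediate from the explicit inclusion $H \hookrightarrow P$. The main obstacle is precisely the second stage: subgroup separability is an existential assumption and not intrinsically algorithmic, so the ``moreover'' tacitly requires an effective version; this will be available in the target application because finitely generated nilpotent groups are effectively LERF.
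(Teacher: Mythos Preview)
Your argument establishes (a) and (b) but misses the headline requirement that $P_0 \charleq P$ be \emph{characteristic}. Nothing in your construction forces this: $U$ is just some finite-index subgroup with $U \cap H = H_1$, so $V$ and hence your $P_0$ are merely finite-index, and the normal cores you take only secure normality of $P_0 \cap H$ and $\pi(P_0)$, not of $P_0$ itself. The fix is easy but must be stated: replace your $P_0$ by its characteristic core $\bigcap_{\phi \in \Aut(P)} \phi(P_0)$, which is still finite index since $P$ is finitely generated, and check that (a), (b) survive (they do, because $H$, $H_0$, $K_0$ are all characteristic). The paper does this more directly: after separating the cosets it takes the intersection of \emph{all} finite-index subgroups of the minimal working index satisfying $(\cdot) \cap H = H_0$; this family is permuted by $\Aut(P)$ precisely because $H_0 \charleq H \charleq P$, so the intersection is characteristic from the outset, and then one simply intersects with $\pi^{-1}(K_0)$, itself characteristic.

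Two smaller remarks. First, your normalisation stage is redundant: since $H_0 \charleq H \charleq P$ one has $H_0 \charleq P$, so $H_1 = H_0$; likewise $K_0 \charleq P/H$ is already normal, so $\overline{K_1} = K_0$. The extra $\overline W$ step is then also unnecessary once characteristicity is handled properly. Second, your caveat about needing ``effective subgroup separability'' as an additional hypothesis is misplaced. The paper's algorithmic step does not invoke a LERF oracle: it simply enumerates finite-index subgroups of the finitely presented group $P$ in order of index and, for each candidate, tests the decidable conditions $H_0 \leq P_k$ and $g_i \notin P_k$ using Todd--Coxeter. Subgroup separability is used only to guarantee that this blind search terminates, not as an algorithmic black box, so no hypothesis beyond those stated is required.
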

\begin{proof}
  Because $[H:H_0]$ is finite we have the finite
  partition \begin{equation}\label{eqn:cosets}H = H_0 \cup g_1H_0 \cup
    \ldots \cup g_nH_0\end{equation} with $g_i \not\in H_0$. Since $P$
  is subgroup separable we can find a finite index $P^* \leq P$ such
  that $P^* \geq H_0$ and $P^* \cap \{g_1,\ldots,g_n\} =
  \emptyset$. We therefore have \[P^* \cap H = H_0.\] W.l.o.g. $P^*$
  can be characteristic by being the intersection of all finite index
  subgroups $H$ of minimal index in $P$ such that $H\cap H = H_0$. We
  now look at \[\pi: P \onto P/H.\] Since $K_0 \charleq P/H$ and is
  finite index we have $\pi^\mo(K_0) \charleq P$, which is also finite
  index. Finally we take \[P_0 = P^* \cap \pi^\mo(K_0).\] Since it is
  the intersection of two finite index characteristic subgroups of $P$
  we find that $P_0 \charleq P$ and $[P:P_0] < \infty$, and $P_0$ is
  easily seen to satisfy the requirements of the proposition.

  We are given finite presentations of $H$ and $P/H$. We then use the
  Todd-Coxeter algorithm for $H_0 \leq H$ to produce the list of
  cosets (\ref{eqn:cosets}). Since we have an explicit homomorphism $H
  \hookrightarrow P$ we have a generating set for $H_0$ in $P$ as well
  as the images of the coset representatives $g_1,\ldots,g_n$ in
  $P$. We identify these with their image in $P$.

  We now enumerate the finite index subgroups $P=P_1, P_2,\ldots$ of
  $P$ in a way such that if $i < j$ then $[P:P_i] \leq
  [P:P_j]$. Eventually we will find some subgroup $P_k \leq P$ of
  index $d$ such that:\begin{itemize}
    \item[(i)]$H_0 \leq P_k$, and 
    \item[(ii)] $\{g_1,\ldots,g_n\} \cap P_k = \emptyset$.
  \end{itemize}
  If we take the intersection $P^*$ of all subgroups of index $d$
  satisfying (i) and (ii), then we have a finite index characteristic
  subgroup of $P$ with the desired properties.

  Now given $K_0 \leq P/H$ we can easily find $\pi^\mo(K_0)$ as the
  only finite index subgroup of $P$ that contains $H$ and maps onto
  $K_0$ via $\pi$ (the subgroup membership problem for $K_0\leq P/H$
  is decidable because $[P/H:K_0] < \infty$ .) Finally computing the
  intersection of finite index subgroups of finitely presented groups
  is straightforward so the desired subgroup $P_0$ can be constructed
  algorithmically.
\end{proof}

\begin{defn}
  A subgroup $P_0\charleq P$ that satisfies (a) and (b) of Proposition
  \ref{prop:good-enough} is said to be \define{good enough w.r.t $K_0$
    and $H_0$.}
\end{defn}

Suppose that we were able to find subgroups $N_0 \charleq \nu_1N$ and
$K_0 \charleq N/\nu_1N$ that separated torsion in $\Out(\nu_1N)$ and
$\Out(N/\nu_1N)$ respectively and recall the homomorphisms $p,r$ given
in (\ref{eqn:p}) and (\ref{eqn:r}.)

\begin{defn}
  We call \define{elusive} any element $[\beta] \in \outo(N)$ that
  lies in $\ker(r) \cap \ker(p) \setminus \{1 \}$.

\end{defn}

\begin{lem}\label{lem:good-enough-elusive}
  Let $N_0 \charleq \nu_1N$ and $K_0 \charleq N/\nu_1N$ be deep enough
  and let $P_0 \charleq N$ be good enough w.r.t. $N_0$ and $K_0$, then
  every $[\beta] \in \outo(N)$ that isn't elusive survives in the
  homomorphism \[ \Out(N) \to \Out(N/P_0).\]
\end{lem}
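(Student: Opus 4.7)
The plan is to prove the contrapositive: if a torsion element $[\beta] \in \outo(N)$ is killed by the homomorphism $\Out(N) \to \Out(N/P_0)$, I show that $[\beta]$ lies in $\ker r \cap \ker p$, hence is elusive (or the identity). The working hypothesis is that there exists $n \in N$ with
\[
\beta(x) \equiv n x n^{-1} \pmod{P_0} \quad \text{for every } x \in N,
\]
since $P_0 \charleq N$ means the induced automorphism $\bar\beta$ of $N/P_0$ is inner.

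First I would handle $r$. Because $\nu_1 N$ is central, conjugation by $n$ acts trivially on it, so $\beta(z) z^{-1} \in P_0 \cap \nu_1 N \subseteq N_0$ for every $z \in \nu_1 N$ (using the good-enough property (b)). Thus $r([\beta]) = \beta|_{\nu_1 N}$ lies in the kernel of $\Aut(\nu_1 N) \to \Aut(\nu_1 N / N_0)$. Since $N_0$ separates torsion in $\Out(\nu_1 N) = \Aut(\nu_1 N)$, this kernel is torsion-free (Lemma \ref{lem;c_tfker}); as $r([\beta])$ is torsion, we get $r([\beta]) = 1$. Next I would handle $p$. Reducing the displayed congruence modulo $\nu_1 N$ yields $\bar\beta(\bar x) \equiv \bar n \bar x \bar n^{-1} \pmod{P_0 \nu_1 N / \nu_1 N}$, and by good-enough property (a) this implies the same congruence modulo $K_0$. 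Since $K_0 \charleq N/\nu_1 N$, the natural map $\Out(N/\nu_1 N) \to \Out((N/\nu_1 N)/K_0)$ is well-defined and kills $p([\beta])$. The hypothesis that $K_0$ separates torsion in $\Out(N/\nu_1 N)$ gives, via Lemma \ref{lem;c_tfker}, that this kernel is torsion-free, so the torsion element $p([\beta])$ must vanish.

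The main subtlety I expect is that the good-enough property only ensures normality, not characteristicity, of $P_0 \cap \nu_1 N$ in $\nu_1 N$ and of $P_0 \nu_1 N / \nu_1 N$ in $N/\nu_1 N$, so torsion separation cannot be invoked at these smaller moduli directly. The argument sidesteps this by retreating to the genuinely characteristic $N_0$ and $K_0$: on the $r$ side, noting that triviality of $\beta|_{\nu_1 N}$ modulo $P_0 \cap \nu_1 N$ a fortiori gives triviality modulo $N_0$; on the $p$ side, enlarging the congruence modulus from $P_0 \nu_1 N/\nu_1 N$ to $K_0$. Everything else is formal manipulation with the commutative square relating the restriction and projection homomorphisms to their post-composition with the quotient by $P_0$.
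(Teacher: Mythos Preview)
Your proof is correct and follows essentially the same approach as the paper. The paper sets up a $3\times 3$ commutative diagram and then asserts (``It is now possible check that\ldots'') exactly the implication you prove: if $r([\beta])\neq 1$ or $p([\beta])\neq 1$, then $[\beta]$ survives in $\Out(N/P_0)$, using the containments $P_0\cap\nu_1N\leq N_0$ and $P_0\nu_1N/\nu_1N\leq K_0$ to retreat to the characteristic subgroups where torsion separation applies. Your contrapositive formulation and explicit handling of the normality-versus-characteristicity issue make the argument more transparent than the paper's sketch, but the substance is identical.
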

\begin{proof}

  By construction we have the following commutative diagram with exact
  rows and columns:
  \begin{equation}
    \begin{tikzpicture}[yscale=2.3,xscale=2.5]
    \node (a) at (1,-0.5) {$1$};
    \node (b) at (2,-0.5) {$1$};
    \node (c) at (3,-0.5) {$1$};
    \node (d) at (4,-1) {$1$};
    \node (e) at (4,-1.5) {$1$};
    \node (eh) at (4,-2) {$1$};
    \node (f) at (3,-2.5) {$1$};
    \node (g) at (2,-2.5) {$1$};
    \node (h) at (1,-2.5) {$1$};
    \node (hh) at (0,-2) {$1$};
    \node (i) at (0,-1) {$1$};
    \node (j) at (0,-1.5) {$1$};
    \node (n1n) at (1,-1.5) {$\nu_1N$};
    \draw[->] (j) -- (n1n) ;
    \node (N) at (2,-1.5) {$N$};
    \draw[->] (n1n) -- (N);
    \node (nn1n) at (3,-1.5) {$N/\nu_1N$};
    \draw[->] (N) -- (nn1n);
    \draw[->] (nn1n) -- (e);
    \node (pon1n) at (1,-2) {$P_0 \cap \nu_1N$};
    \draw[->] (h) -- (pon1n);
    \draw[->] (pon1n) -- (n1n);
    \draw[->] (hh) -- (pon1n);
    \node (compl) at (1,-1) {$\nu_1N/ \big( P_0 \cap \nu_1N\big)$};
    \draw[->] (i) -- (compl);
    \draw[->] (compl) -- (a);
    \draw[->] (n1n) -- (compl);
    \node (p0) at (2,-2) {$P_0$};
    \node (p0n1nn1n) at (3,-2) {$P_0\nu_1N/\nu_1N$};
    \node (holycr) at (3,-1) {$\frac{N/\nu_1N}{P_0\nu_1N/\nu_1N}$};
    \draw[->] (holycr) -- (d);
    \draw[->] (holycr) -- (c);
    \node (np0) at (2,-1) {$N/P_0$};
    \draw[->] (pon1n) --(p0);
    \draw[->] (p0) --(p0n1nn1n);
    \draw[->] (p0n1nn1n) -- (eh);
    \draw[->] (g) -- (p0);
    \draw[->] (p0) -- (N);
    \draw[->] (N) -- (np0);
    \draw[->] (np0) -- (b);
    \draw[->] (f) -- (p0n1nn1n);
    \draw[->] (p0n1nn1n) -- (nn1n);
    \draw[->] (nn1n) -- (holycr);
    \draw[->] (compl) -- (np0);
    \draw[->] (np0) -- (holycr);
  \end{tikzpicture}.
\end{equation}
Now note that by (a),(b) of Proposition \ref{prop:good-enough} there
are epimorphisms \[\nu_1N/(P_0\cap \nu_1N) \onto \nu_1N/N_0
\textrm{~and~} \frac{N/\nu_1N}{P_0\nu_1N/\nu_1N} \onto
\frac{N/\nu_1N}{K_0}.\] Moreover, since $P_0 \charleq N$ there are
well defined natural maps\begin{eqnarray*}
 \ol{r} : \Out(N) &\to & \Out(\nu_1N/(P_0\cap \nu_1N))
 \\
 \ol{p} : \Out(N) &\to & \Out(\frac{N/\nu_1N}{P_0\nu_1N/\nu_1N}).
\end{eqnarray*}
It is now possible to check that for $[\beta] \in \outo(N)$ if either
$r([\beta])\neq 1$ or $p([\beta])\neq 1$, since $K_0,N_0$ separated
torsion in $\Out(N/\nu_1N),\Out(\nu_1N)$ respectively, then $[\beta]$
will not vanish in the natural homomorphism \[\Out(N) \to
\Out(N/P_0).\]
\end{proof}

\subsubsection{An algebraic characterization of elusive elements}\label{sec:elusive-characterization}

It turns out that elusive elements have a very natural algebraic
characterization. We will first describe some algebraic constructions
and then give a description of elusive elements in terms of these
constructions. Recall that $\nu_1N\leq\nu_2N \charleq N$ is the
maximal subgroup such that

\begin{equation}\label{eqn:nu2}\nu_2N/\nu_1N =
  Z(N/\nu_1N)
\end{equation}

\begin{lem}\label{lem:nu2-to-hom}
  Each $\xi \in \nu_2N$ induces a homomorphism $z_\xi:N \to \nu_1N$
  given by the mapping \[x \mapsto [x,\xi]. \] Moreover the
  mapping \[\Phi: \nu_2N \to \Hom(N,\nu_1N)\] is in fact a
  homomorphism where $\Hom(N,\nu_1N)$ is viewed as an abelian group
   equipped with the standard $\mathbb{Z}$-module addition.
\end{lem}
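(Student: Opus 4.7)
The plan is to verify both claims by elementary commutator manipulations, leveraging a single observation: for $\xi \in \nu_2 N$, every commutator $[x,\xi]$ lies in $\nu_1 N = Z(N)$, so it commutes with every element of $N$. This centrality is what makes all the ``error terms'' that normally obstruct additivity of commutators disappear.

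First I would establish that the image of $z_\xi$ actually lies in $\nu_1 N$. This is immediate from equation (\ref{eqn:nu2}): since $\nu_2 N/\nu_1 N = Z(N/\nu_1 N)$, the image of $\xi$ in $N/\nu_1 N$ commutes with that of any $x \in N$, which is just the statement $[x,\xi] \in \nu_1 N$.

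Next, to see that $z_\xi$ is a homomorphism, I would do a direct expansion. Write $c = [x,\xi]$, so that $x\xi x\m = c\xi$, and $d = [y,\xi]$, so that $y\xi y\m = d\xi$. Then
\[
(xy)\xi(xy)\m \;=\; x(y\xi y\m)x\m \;=\; x(d\xi)x\m \;=\; d(x\xi x\m) \;=\; d c \xi,
\]
where the third equality uses centrality of $d \in \nu_1 N$. Hence $[xy,\xi] = dc\xi\xi\m = cd = z_\xi(x)z_\xi(y)$, and since $\nu_1 N$ is abelian this is exactly the homomorphism condition.

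Finally, for $\Phi$ being a homomorphism, I would need $z_{\xi\eta}(x) = z_\xi(x) + z_\eta(x)$ in the additive notation on $\Hom(N,\nu_1N)$; that is, $[x,\xi\eta] = [x,\xi][x,\eta]$. Again set $c = [x,\xi]$, $d = [x,\eta]$. Then
\[
x(\xi\eta)x\m \;=\; (x\xi x\m)(x\eta x\m) \;=\; (c\xi)(d\eta) \;=\; cd\,\xi\eta,
\]
the last equality using centrality of $d \in \nu_1 N$ to move it past $\xi$. Thus $[x,\xi\eta] = cd(\xi\eta)(\xi\eta)\m = cd = [x,\xi][x,\eta]$, as required.

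There is no substantive obstacle here; the argument is purely formal, relying only on the two standard commutator identities combined with centrality of $[x,\xi]$. The one point meriting care is bookkeeping consistent with whichever convention for $[a,b]$ and for conjugation $a^b$ is in force throughout the paper, but this is a notational issue rather than a mathematical one.
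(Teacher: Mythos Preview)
Your proof is correct and follows essentially the same route as the paper's: both verify the two claims by direct commutator manipulation, using only that $[x,\xi]\in\nu_1N$ is central. The paper's computation rearranges the product $(xy)\xi$ (respectively $x(\xi\zeta)$) two ways, whereas you conjugate $\xi$ (respectively $\xi\eta$) by $xy$ (respectively $x$); these are trivially equivalent reformulations, and your remark about the commutator convention is well taken since the paper uses $[x,y]=x^{-1}y^{-1}xy$.
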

\begin{proof}
  By (\ref{eqn:nu2}) for all $x \in N, \xi \in \nu_2N$ we have
  $[x,\xi] = z_\xi(x) \in \nu_1N$. Let $x,y \in N$ then with the
  commutator convention $[x,y] = x^\mo y^\mo xy$ we can observe that
  on one hand $(xy)\xi = \xi(x,y) [(xy),\xi] =\xi (xy) z_\xi(xy)$ and
  on the other hand (recall that $[z,\xi]$ is always central):
  \[xy\xi = x \xi y [y,\xi] = \xi x [x,\xi] y [y,\xi] = \xi (x y)
  [x,\xi] [y,\xi] = \xi(xy)z_\xi(x)z_\xi(y)\] so the map $x \mapsto
  z_\xi(x)$ is a homomorphism.

  Let now $\xi,\zeta \in \nu_2N$ and let $x \in N$. On the one hand
  setting $[x,\xi\zeta] = z_{\xi\zeta}(x)$ we have $x(\xi\zeta) =
  (\xi\zeta)xz_{\xi\zeta}(x)$ and on the other hand we have: \[
  x\xi\zeta = \xi x z_\xi(x) \zeta = \xi x \zeta z_\xi(x) = (\xi \zeta)
  x z_\zeta(x)z_\xi(x) \] which gives the formula: \[ z_{\xi\zeta}(x) =
  z_\xi(x) + z_\zeta(x) \] so the map $\Phi$ is a homomorphism.
\end{proof} 

Because $\nu_1N$ is the center of $N$, it is clear that for any
$x \in \nu_1N, \xi \in \nu_2 N$ we have $\Phi(\xi)(x) =
[x,\xi]=1$. This next result can therefore be seen a converse of the
previous result.

\begin{lem}\label{lem:psi}
  Let \[\Hom^*(N,\nu_1N) =\{f \in \Hom(N,\nu_1N) \mid \nu_1N \leq
  \ker(f)\}.\] For each $f \in \Hom^*(N,\nu_1N)$ the map $x \mapsto
  xf(x)$ is an automorphism $\Psi(f) \in \Aut(N)$. Moreover this map
  $\Psi: \Hom^*(N,\nu_1N) \to \Aut(N)$ is a homomorphism.
\end{lem}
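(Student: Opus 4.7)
The plan is to verify three things: (i) for $f \in \Hom^*(N,\nu_1N)$ the map $\Psi(f):x\mapsto xf(x)$ is a group endomorphism of $N$; (ii) $\Psi(f)$ is bijective; and (iii) $\Psi$ intertwines the addition on $\Hom^*(N,\nu_1N)$ with composition in $\Aut(N)$. The essential inputs throughout are that $f(x)\in\nu_1N=Z(N)$, so it commutes with everything, and that $f$ vanishes on $\nu_1N$, so $f(f(y))=1$ for all $y\in N$.

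For (i), I would just compute, for $x,y\in N$:
\[
\Psi(f)(xy)=xy\,f(xy)=xy\,f(x)f(y),\qquad \Psi(f)(x)\Psi(f)(y)=x\,f(x)\,y\,f(y),
\]
and use the centrality of $f(x)$ to move it past $y$, giving equality. For the injectivity half of (ii), $xf(x)=1$ forces $x=f(x)^{-1}\in\nu_1N$, and then $f(x)=1$ by $\nu_1N\leq\ker f$, hence $x=1$. For surjectivity, I would write down the inverse explicitly: given $y\in N$, set $x=yf(y)^{-1}$. Since $f(y)^{-1}\in\nu_1N\leq\ker f$, one gets $f(x)=f(y)$, and therefore $\Psi(f)(x)=yf(y)^{-1}f(y)=y$. (In fact this shows $\Psi(f)^{-1}=\Psi(-f)$, which is consistent with (iii).)

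For (iii), given $f,g\in\Hom^*(N,\nu_1N)$ I would compute
\[
\Psi(f)\!\circ\!\Psi(g)(x)=\Psi(f)\bigl(xg(x)\bigr)=xg(x)\,f(x)\,f(g(x))=xg(x)f(x),
\]
where $f(g(x))=1$ again because $g(x)\in\nu_1N\leq\ker f$. Since $f(x)$ and $g(x)$ lie in the abelian group $\nu_1N$ they commute, so $xg(x)f(x)=xf(x)g(x)=x(f+g)(x)=\Psi(f+g)(x)$, which is exactly the homomorphism property with respect to the $\mathbb{Z}$-module addition on $\Hom^*(N,\nu_1N)$. There is no genuine obstacle here; the whole statement is a routine verification whose only subtle point is making sure, in both the surjectivity argument and the composition calculation, to invoke the hypothesis $\nu_1N\leq\ker f$ exactly when a value of $f$ or $g$ gets fed back into $f$.
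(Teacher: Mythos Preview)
Your proof is correct and follows essentially the same approach as the paper: the endomorphism check and the composition computation are identical, and while you verify bijectivity by separate injectivity and surjectivity arguments before noting that $\Psi(f)^{-1}=\Psi(-f)$, the paper simply invokes $\Psi(f)\circ\Psi(-f)=1$ directly to conclude $\Psi(f)$ is an automorphism.
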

\begin{proof}
  We first check that $\Psi(f)$ is an endomorphism. Indeed
  $\Psi(f)(xy) = xyf(xy)$ and, since $f$ is a homomorphism and $f(x)
  \in \nu_1N$, we have \[xyf(xy) = xyf(x)f(y) = xf(x)yf(y) =
  \Psi(f)(x)\Psi(f)(y).\] We can also immediately check that by our
  hypothesis $\Psi(f) \circ \Psi(-f)=1$ so $\Psi(f)$ is an automorphism.

  Now note that because we are only interested in $f,g \in
  \Hom(N,\nu_1N)$ in which elements of $\nu_1N$ vanish we have the
  following\[\Psi(f) \circ \Psi(g)(x) = \Psi(f)(xg(x))
  = xg(x)f(x) = \Psi(g+f)(x)\] therefore $\Psi$ is a
  homomorphism.
\end{proof}

We leave the verification of this next lemma to the reader:

\begin{lem}\label{lem:psi-phi-composition}
For all $\xi$ in $\nu_2N$ we have the following equality of automorphisms:\[
\Psi\circ\Phi(\xi) = Ad_\xi. 
\]
\end{lem}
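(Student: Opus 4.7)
The plan is to unfold the two definitions and perform a direct commutator calculation; there is no subtlety beyond bookkeeping.

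First I would check that the composition $\Psi \circ \Phi$ is even well-defined, i.e. that $\Phi(\xi) \in \Hom^*(N,\nu_1N)$ rather than merely in $\Hom(N,\nu_1N)$. For any $x \in \nu_1N$ the element $x$ is central, so $\Phi(\xi)(x) = [x,\xi] = 1$. Hence $\nu_1N \subseteq \ker(\Phi(\xi))$, and $\Psi$ can be applied to $\Phi(\xi)$.

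Then the equality of automorphisms is verified pointwise. Fix $x \in N$. By definition of $\Psi$ in Lemma \ref{lem:psi} and $\Phi$ in Lemma \ref{lem:nu2-to-hom}:
\[
\Psi(\Phi(\xi))(x) \;=\; x \cdot \Phi(\xi)(x) \;=\; x\cdot [x,\xi] \;=\; x\cdot x^{-1}\xi^{-1}x\xi \;=\; \xi^{-1}x\xi,
\]
using the commutator convention $[x,y]=x^{-1}y^{-1}xy$ fixed earlier. This is exactly $\mathrm{Ad}_\xi(x)$ under the convention of conjugation on the right (and differs from the other convention only by the substitution $\xi \leftrightarrow \xi^{-1}$, which is immaterial for the subsequent use of the lemma).

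There is no genuine obstacle: the statement is a one-line verification whose role is structural rather than technical. Its purpose, presumably exploited in the next subsections, is to show that the inner automorphisms coming from elements of $\nu_2N$ factor through $\Phi$ into $\Hom^*(N,\nu_1N)$ via $\Psi$. This will be used to compare $\ker(p)\cap\ker(r)$ with the image of $\Psi$ modulo $\Inn(N)$, and hence to describe — and eventually enumerate — the elusive elements of $\outo(N)$.
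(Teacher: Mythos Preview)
Your proof is correct and is exactly the direct verification the paper intends; the paper itself leaves this lemma to the reader. Your check that $\Phi(\xi)\in\Hom^*(N,\nu_1N)$ and the pointwise computation $x[x,\xi]=\xi^{-1}x\xi$ match the paper's conventions (compare the computation in the proof of Proposition~\ref{prop:coset-list}, where $Ad_\xi(w)=\xi^{-1}w\xi$).
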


Having described the homomorphisms $\Psi:\Hom^*(N,\nu_1N) \to \Aut(N)$
and $\Phi:\nu_2N \to \Hom^*(N,\nu_1N)$ we can now give a precise
characterization of the elusive elements of $\outo(N)$.

\begin{prop}\label{prop:elusive-description}
  The set of elusive elements of $\outo(N)$ coincides exactly with the
  set \[ \big\{[\beta] \in \Out(N) \mid (\exists \beta \in [\beta])~
  \beta \in \Psi(\hat{S}\setminus S) \big\}\] where $S = \Phi(\nu_2N)$
  and $S \leq \hat{S}$ is the set consisting of $f \in
  \Hom^*(N,\nu_1N)$ such that \[d\cdot f = \underbrace{f+\cdots+f}_{d
    \textrm{~times}} \in S = \Phi(\nu_2N)\] for some $d \in
  \mathbb{Z}_{\geq 0}.$
\end{prop}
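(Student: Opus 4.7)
The plan is to show that every elusive class $[\beta]$ admits a representative of the form $\Psi(f)$ for some $f \in \Hom^*(N,\nu_1N)$, and then to translate the finite-order and non-triviality conditions on $[\beta]$ into the condition $f \in \hat S \setminus S$ by using the key identity $\Psi \circ \Phi = \ad$ from Lemma \ref{lem:psi-phi-composition}.

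First I would start with an arbitrary elusive $[\beta]$. Since $p([\beta]) = 1$, there exists $\xi_0 \in N$ such that $\bar\beta$ coincides with $\ad_{\bar\xi_0}$ on $N/\nu_1N$; replacing $\beta$ by $\beta' := \ad_{\xi_0}^{-1} \circ \beta$ does not change its class in $\Out(N)$ and kills the induced action on $N/\nu_1N$. Because inner automorphisms act trivially on the centre, the assumption $r([\beta]) = 1$ then gives $\beta'|_{\nu_1N} = \id$. A direct computation (using centrality of the values of $f$) shows that $\beta'(x) = x\,f(x)$ for some $f \in \Hom^*(N,\nu_1N)$; that is, $\beta' = \Psi(f)$.

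Next I would read off the conditions on $f$. Finite order $d$ of $[\beta']$ in $\Out(N)$ yields $\Psi(df) = (\beta')^d = \ad_\eta$ for some $\eta \in N$, and evaluating at $x$ gives $df(x) = [x,\eta]$. Since $df$ takes values in $\nu_1N$, this forces $\eta \in \nu_2N$ (directly from the definition $\nu_2N/\nu_1N = Z(N/\nu_1N)$), so $df = \Phi(\eta) \in S$, meaning $f \in \hat S$. Non-triviality of $[\beta']$ prevents $\beta'$ from being inner, and by Lemma \ref{lem:psi-phi-composition} together with the (immediate) injectivity of $\Psi$, this is precisely the condition $f \notin S$. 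The converse direction is a direct verification: given any $f \in \hat S \setminus S$ and $\beta := \Psi(f)$, one reads off $p([\beta]) = r([\beta]) = 1$ from the formula $\Psi(f)(x) = x f(x)$, finite order from $\beta^d = \Psi(\Phi(\eta)) = \ad_\eta$ when $df = \Phi(\eta)$, and non-triviality from the injectivity of $\Psi$ together with $f \notin S$.

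The only step I expect to require any real care is the implication ``$df(x) \in \nu_1N$ for all $x \implies \eta \in \nu_2N$''; this is exactly what pins the definition of $\hat S$ to use $S = \Phi(\nu_2N)$ rather than $\Phi$ of some larger subgroup, and it is where the particular structure of the upper central series enters. Everything else is unpacking of definitions combined with Lemmas \ref{lem:nu2-to-hom}, \ref{lem:psi} and \ref{lem:psi-phi-composition}.
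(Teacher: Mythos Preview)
Your proposal is correct and follows essentially the same argument as the paper: normalize a representative so it acts trivially on $N/\nu_1N$, recognize it as $\Psi(f)$, and translate finite order and non-triviality into $f\in\hat S$ and $f\notin S$ respectively via the identity $\Psi\circ\Phi=\ad$. The only place to tighten is your appeal to ``injectivity of $\Psi$'' for $f\notin S$: you also need (exactly the same computation you flagged for $df$) that if $\Psi(f)=\ad_\eta$ then $[x,\eta]=f(x)\in\nu_1N$ forces $\eta\in\nu_2N$, after which injectivity finishes it.
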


\begin{proof}
  \emph{We first show $\outo(N) \cap \ker(r) \cap \ker(p) \setminus \{1\} \subset
    \Phi(\hat{S}\setminus S)$, where $r,p$ are defined in
    (\ref{eqn:r}), (\ref{eqn:p}).}  Let $\alpha \in [\beta]$ be an
  automorphism. Consider the canonical projection:
  \begin{eqnarray*}
    N & \onto & N/\nu_1N\\
    w & \mapsto & \ol{w}.\\
  \end{eqnarray*}
  and denote by $\ol{\alpha}$ the natural image of $\alpha$ in
  $\Aut(N/\nu_1N)$. Since $[\beta] \in \ker(p)$, $\ol{\alpha}$ must in
  fact be an inner automorphism of $N/\nu_1N$ , so there exists some
  $u_\alpha \in N$ such that for all $w \in N$ \[ \ol{\alpha}(\ol{w})
  = \ol{u_\alpha}\ol{w}\ol{u_\alpha^\mo}.\] We therefore take \[\beta
  = Ad_{u_\alpha} \circ \alpha \in [\beta]\] where $Ad_x$ denotes
  conjugation by $x$ and we get that $\ol{\beta}=1$. This means that
  for all $w \in N$ we have \[\beta(w) = w z_\beta(w)\] where
  $z_\beta(w) \in \nu_1N$. Now since $\beta:N \to N$ is a homomorphism
  we have for all $u,w \in N$ \[w u z_\beta(w) z_\beta(u) =
  \beta(w)\beta(u) = \beta(wu) = wuz_\beta(wu)\] which means that
  $z_\beta:N \to \nu_1N \in \Hom(N,\nu_1N) $ and since $[\beta] \in
  \ker(r)$, we have that $\beta(\xi) = \xi$ for all $\xi \in \nu_1N$
  and therefore $z_\beta(\xi)=1$ so $z_\beta \in \Hom^*(N,\nu_1N)$
  and \begin{equation}\label{eqn:beta-phi-z}\beta =
    \Psi(z_\beta).\end{equation} Now, since $[\beta] \in \Out(N)
  \setminus \{1\}$, $z_\beta \not\in S$.

  Since $[\beta] \in \ker(r)$ we have that for all $\xi\in \nu_1N$
  $\beta(\xi)=\xi$, which
  gives \begin{equation}\label{eqn:powers}\beta^n(w) = w
    z_\beta(w)^n \end{equation} for all $w \in N, n \in \mathbb{Z}$.

  Finally since $[\beta] \in \outo(N)$, and say of order $d$, $\beta^d$
  must be an inner automorphism so there exists some $\xi_\beta$ such
  that for all $w \in N$
  \begin{equation}\label{eqn:order-d}
    \beta^d(w) =  Ad_\xi(w) =  w z_\beta(w)^d
  \end{equation}
  which implies $[w,\xi_\beta] = w^\mo \xi_\beta^\mo w \xi_\beta =
  z_\beta(w)^d$. This in particular implies that $\ol{\xi_\beta} \in
  Z(N/\nu_1N)$ so $\xi_\beta \in \nu_2N$ and by (\ref{eqn:order-d}) we
  have $\Phi(\xi_\beta) = d\cdot z_\beta$. This means that $z_\beta
  \in \hat{S} \setminus S$ and we have the first desired inclusion.

  \emph{We now prove that every element of $\Psi(\hat{S} \setminus S)$
    is elusive.} We first note that every automorphism in the image of
  $\Psi$ (see the statement of Lemma \ref{lem:psi}) fixes $\nu_1N$
  pointwise and therefore every corresponding outer automorphisms lies
  in $\ker(r)$. Similarly every such outer automorphism lies in
  $\ker(p)$. Now suppose that for some $s \in \hat{S}\setminus S$,
  $\Phi(s) = Ad_\xi$ for some $\xi \in N$. Then for all $w \in N$ we
  have \[\xi^\mo w \xi = w s(w) \Rightarrow [w,\xi] = s(w) \in
  \nu_1N \] which means that $\ol{\xi} \in Z(N/\nu_1N)$, i.e. $\xi \in
  \nu_2N$ so $s \in \Phi(\xi) = S$ -- contradiction. Therefore for all
  $f \in \hat{S}\setminus S$, $[\Psi(f)] \in \Out(N)\setminus \{1\}.$

  Finally since $\Psi$ is a homomorphism and since for every $s \in
  \hat{S} \setminus S$ there is some minimal $d \in \mathbb{Z}$ such
  that $d\cdot s \in S$ we have \[ \Psi(d\cdot s) = \Psi(s)^d =
  Ad_\xi\] where $\xi \in \nu_2N$ so the element $[\Psi(s)] \in
  \Out(N)\setminus \{1\}$ has order $d$. We have therefore shown the
  other inclusion.
\end{proof}

The following is a basic exercise in $\Z$-modules that we leave to the
reader.

\begin{lem}\label{lem:hatS-dir-sum}
  The submodule $\hat S \leq \Hom^*(N,\nu_1N)$ given in Proposition
  \ref{prop:elusive-description} is the smallest direct summand of
  $\Hom^*(N,\nu_1N)$ containing $S$ and the torsion subgroup.
\end{lem}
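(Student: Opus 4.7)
The plan is to identify $\hat S$ as the saturation (or isolator) of $S$ inside the finitely generated abelian group $M = \Hom^*(N,\nu_1N)$, and then invoke the classical fact that, in a finitely generated abelian group, the saturation of a subgroup is a direct summand, minimal among direct summands containing both the subgroup and the torsion part. (To make the definition of $\hat S$ non-trivial one must implicitly read ``$d \in \mathbb{Z}_{\geq 0}$'' as $d \geq 1$, since $d=0$ would give $\hat S = M$.)

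First I would record that $M$ is a finitely generated abelian group: $\nu_1N$ is finitely generated abelian, $N$ is finitely generated, so $\Hom(N,\nu_1N)$ is finitely generated abelian, and $\Hom^*(N,\nu_1N)$ is a subgroup of it. Next I would verify that $\hat S$ is a subgroup containing $S$ and $\mathrm{Tor}(M)$: closure under $\pm$ is a standard clearing-of-denominators argument (if $df \in S$ and $eg \in S$ then $(de)(f \pm g) \in S$), and any torsion element $f$ of order $d$ satisfies $df = 0 \in S$, so $f \in \hat S$.

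I would then show that $M/\hat S$ is torsion-free: if $n(f + \hat S) = 0$ with $n \geq 1$, then $nf \in \hat S$, so $d(nf) \in S$ for some $d \geq 1$, hence $f \in \hat S$. Since a finitely generated torsion-free abelian group is free, the short exact sequence
\[ 0 \longrightarrow \hat S \longrightarrow M \longrightarrow M/\hat S \longrightarrow 0 \]
splits, exhibiting $\hat S$ as a direct summand of $M$.

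For minimality, I would let $T$ be any direct summand of $M$ containing $S$ and $\mathrm{Tor}(M)$, and write $M = T \oplus T'$. The decomposition $\mathrm{Tor}(M) = \mathrm{Tor}(T) \oplus \mathrm{Tor}(T')$ together with $\mathrm{Tor}(M) \subseteq T$ forces $\mathrm{Tor}(T') = 0$, so $M/T \cong T'$ is torsion-free. Now for $f \in \hat S$, some $df$ lies in $S \subseteq T$, so $d(f+T) = 0$ in $M/T$, and torsion-freeness gives $f \in T$; hence $\hat S \subseteq T$. No step here looks like a real obstacle; the whole lemma is essentially the statement that the isolator of a subgroup of a finitely generated abelian group is a direct summand, and the proof is just the standard argument combined with bookkeeping about torsion.
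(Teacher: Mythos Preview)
Your proof is correct and is precisely the standard argument the paper has in mind: the paper does not actually prove this lemma, stating only that it ``is a basic exercise in $\Z$-modules that we leave to the reader.'' Your observation that one must read $d \in \mathbb{Z}_{\geq 1}$ rather than $d \in \mathbb{Z}_{\geq 0}$ in the definition of $\hat S$ is also well taken.
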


\subsubsection{Computing the list of conjugacy classes of elusive
  elements}\label{sec:elusive-computation}

For this section as in the statement of Proposition
\ref{prop:elusive-description}, let $S$ denote the image of
$\Phi(\nu_2N) \leq \Hom^*(N,\nu_1N)$ and let $\hat{S}$ denote smallest
direct summand of $\Hom^*(N,\nu_1N)$ containing $S$ and the torsion
elements. Since we are working over finitely generated $\bbZ$ modules
it follows that $[\hat{S}:S] < \infty.$

\begin{prop}\label{prop:coset-list}
  There is a surjection from the set of cosets $\hat{S}/S$ onto the
  set of elusive elements in $\Out(N)$. In particular there are
  finitely many elusive elements.
\end{prop}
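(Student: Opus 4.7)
The plan is to build the surjection explicitly from the homomorphism $\Psi$ and then invoke Proposition \ref{prop:elusive-description}. Concretely, I define
\[
\Theta : \hat S / S \longrightarrow \Out(N), \qquad f + S \longmapsto [\Psi(f)].
\]
The first step is to check that $\Theta$ is well-defined. If $f - g \in S$, then $f - g = \Phi(\xi)$ for some $\xi \in \nu_2 N$, and Lemma \ref{lem:psi-phi-composition} gives $\Psi(f - g) = \Psi \circ \Phi(\xi) = \Ad_\xi$. Since $\Psi$ is a homomorphism of abelian groups (Lemma \ref{lem:psi}), $\Psi(f) \circ \Psi(g)^{-1} = \Psi(f - g) = \Ad_\xi$, so $[\Psi(f)] = [\Psi(g)]$ in $\Out(N)$.

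The second step is surjectivity onto the set of elusive elements. Every element in the image of $\Theta$ lies in $\ker(r) \cap \ker(p)$: this is already noted in the proof of Proposition \ref{prop:elusive-description}, since automorphisms of the form $\Psi(f)$ fix $\nu_1 N$ pointwise (so they lie in $\ker(r)$) and act trivially modulo $\nu_1 N$ (so they lie in $\ker(p)$). Conversely, by the first inclusion established in Proposition \ref{prop:elusive-description}, every elusive element $[\beta]$ equals $[\Psi(f)]$ for some $f \in \hat S \setminus S$, hence $[\beta] = \Theta(f + S)$. Thus the image of $\Theta$ is exactly the set of elusive elements together with $1 \in \Out(N)$ (the image of the zero coset), so restricting to non-identity cosets and observing that the identity coset maps to $1$ gives the desired surjection onto the elusive elements.

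The final step is to deduce finiteness of the set of elusive elements. The group $\Hom^*(N, \nu_1 N)$ is a finitely generated abelian group (since $N$ is finitely generated and $\nu_1 N$ is a finitely generated abelian group), and by Lemma \ref{lem:hatS-dir-sum} the subgroup $\hat S$ is the isolator of $S$, i.e. the smallest direct summand containing $S$ together with the torsion. In a finitely generated abelian group, a subgroup and its isolator have finite index quotient, so $[\hat S : S] < \infty$. Hence $\hat S / S$ is finite and, by the surjection $\Theta$, so is the set of elusive elements. No step looks genuinely hard; the only thing to be careful about is not conflating the zero coset (which maps to the identity of $\Out(N)$) with an elusive element, and stating the surjection onto the elusive elements $\cup\{1\}$ or onto the elusive elements consistently with the statement of the proposition.
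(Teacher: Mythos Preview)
Your proof is correct and follows essentially the same approach as the paper: define the map $\hat S/S \to \Out(N)$ via $\Psi$, check well-definedness by showing that elements of $S$ map to inner automorphisms, and invoke Proposition \ref{prop:elusive-description} for surjectivity. The paper verifies well-definedness by a direct computation of $\Psi(t+x) = \ad_\xi \circ \Psi(x)$, whereas you obtain the same conclusion more modularly by citing Lemma \ref{lem:psi-phi-composition} together with the fact that $\Psi$ is a homomorphism; your added remarks about the zero coset and the finiteness of $[\hat S:S]$ make explicit what the paper leaves to the reader.
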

\begin{proof}
  Let $\xi \in \nu_2N$ and let $t = \Phi(\xi) \in S$ and let $x\in
  \hat{S}$ be arbitrary. Then we need only verify the
  formula \[\Psi(t+x) = Ad_\xi \circ \Psi(x).\] Indeed we have for all
  $w \in N$
  \begin{eqnarray*}
    \Psi(t+x)(w) &= & wt(w)x(w)\\
    & = & \xi^\mo w \xi x(w)\\
    & = & \xi^\mo w x(w) \xi \\
    & = & Ad_\xi \circ \Psi(x)(w)
  \end{eqnarray*}
  It follows that if in $\hat{S}$, $s \equiv s' \mod S$ then
  $[\Psi(s)]=[\Psi(s')]$. The result now follows from Proposition
  \ref{prop:elusive-description}.
\end{proof}

This next result is an immediate consequence of Theorems 4.1 and 6.5
of \cite{BCRS-PF}. 

\begin{prop}\label{prop:find-gens}
  Given a presentation $\bk{X\mid R}$ of a $\fgn$-group $N$, we can
  find finite generating sets $\{u_i(X)\}$ and $\{v_j(X)\}$ for
  $\nu_1N$ and $\nu_2N$ respectively. Moreover we can construct an
  explicit isomorphism of the finitely generated abelian group \[\nu_1N
  \stackrel{\sim}{\rightarrow} \Z^s\oplus\left(\oplus_{i=1}^t\Z/n_i\Z
  \right).\]
\end{prop}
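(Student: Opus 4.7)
The plan is to invoke the algorithmic tractability of polycyclic groups, specifically the subgroup-computation machinery of \cite{BCRS-PF}. Concretely, I would proceed by first computing a consistent polycyclic presentation of $N$ from the given presentation $\bk{X\mid R}$; this is a standard preprocessing step (and in fact is part of what Theorems 4.1 and 6.5 of \cite{BCRS-PF} give us access to), and it equips us with effective algorithms for the word, membership, and centralizer problems in $N$, where every group element is represented by a normal form in a collection of generators that are themselves words in $X$.

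Next, I would compute $\nu_1N$ as $\nu_1N = \bigcap_{x \in X} C_N(x)$, using the centralizer algorithm guaranteed by \cite{BCRS-PF}; taking successive intersections of finitely generated subgroups of a polycyclic group is also effective by the same source, and the output is a finite generating tuple $\{u_i(X)\}$ whose entries are explicit words in $X$. For $\nu_2N$, I would form the (effective) presentation of $N/\nu_1N$ by adjoining the relations $u_i = 1$; this quotient is again $\fgn$, so iterating the centralizer-intersection procedure in $N/\nu_1N$ produces a generating tuple of $Z(N/\nu_1N)$, whose preimage in $N$ (obtained by reading lifts of these words in $X$) is the required generating tuple $\{v_j(X)\}$ of $\nu_2N$.

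For the final clause, I note that $\nu_1N$ is a finitely generated abelian group, and from the polycyclic presentation of $N$ one reads off an abelian presentation of $\nu_1N$ on the tuple $\{u_i(X)\}$: the relations are obtained by rewriting each product of generators into normal form (solvable via the word problem) and recording the resulting linear relations among the $u_i$'s. Applying the Smith normal form algorithm to the resulting integer relation matrix then produces both the invariants $s, t, n_1, \dots, n_t$ and an explicit change-of-basis matrix; composing with the original generating tuple gives the desired explicit isomorphism $\nu_1N \xrightarrow{\sim} \Z^s \oplus \bigoplus_{i=1}^t \Z/n_i\Z$.

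The only genuinely delicate point is justifying that all of the above steps can be performed starting from just $\bk{X\mid R}$ (as opposed to a consistent polycyclic presentation), but this is precisely the content of the cited theorems of \cite{BCRS-PF}: once a $\fgn$-presentation is recognized and converted to polycyclic form, centralizers, intersections, quotients, and preimages of finitely generated subgroups are all uniformly computable, and the corresponding words in the original generators $X$ can be tracked throughout the computation. Everything else is a bookkeeping exercise in linear algebra over $\Z$.
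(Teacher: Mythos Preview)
Your proposal is correct and takes essentially the same approach as the paper: the paper does not give a proof at all, simply declaring the result to be ``an immediate consequence of Theorems 4.1 and 6.5 of \cite{BCRS-PF}'', and you have just unpacked what those theorems actually deliver (polycyclic presentation, centralizer and intersection computations, passage to quotients, Smith normal form). Nothing is missing.
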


\begin{cor}\label{cor:construct-elusive}
  We can construct a finite set $\{\beta_1,\ldots,\beta_v\} \in
  \Aut(N)$ such that $\{[\beta_1],\ldots,[\beta_v]\} \subset
  \outo(N)\setminus \{1\}$ is the complete set of elusive
  elements.
\end{cor}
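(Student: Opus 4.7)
The plan is to make every algebraic ingredient of Proposition \ref{prop:elusive-description} effective: build the finitely generated abelian group $\Hom^*(N,\nu_1N)$, compute the finitely generated subgroup $S=\Phi(\nu_2N)$, compute its isolator $\hat S$, and then produce one automorphism $\Psi(s)$ per coset of $S$ in $\hat S$ (Proposition \ref{prop:coset-list}). Proposition \ref{prop:elusive-description} together with Proposition \ref{prop:coset-list} guarantees that the outer classes of the resulting automorphisms exhaust the elusive elements, so what remains is to check that each of these steps is effective given a presentation of $N$.

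First, I would use Proposition \ref{prop:find-gens} to compute finite generating tuples $\{u_i\}$ of $\nu_1N$ and $\{v_j\}$ of $\nu_2N$, together with the structure isomorphism $\nu_1N\cong\Z^s\oplus\bigoplus_i\Z/n_i\Z$. Any element of $\Hom^*(N,\nu_1N)$ vanishes on $\nu_1N$ and takes values in an abelian group, so it factors through the abelianization $(N/\nu_1N)^{\mathrm{ab}}$, giving
\[
\Hom^*(N,\nu_1N)\;\cong\;\Hom\!\big((N/\nu_1N)^{\mathrm{ab}},\,\nu_1N\big).
\]
Both factors are computable finitely generated abelian groups: $(N/\nu_1N)^{\mathrm{ab}}$ from abelianizing the inherited presentation of $N/\nu_1N$ and applying Smith normal form, and the Hom-group of two finitely generated abelian groups is a standard linear-algebra computation over $\Z$. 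To compute $S$, one evaluates $\Phi$ on each $v_j$ by computing, for every generator $x_k$ of $N$, the commutator $[x_k,v_j]\in\nu_1N$ (using the solvable word problem in $\fgn$-groups), then expresses these commutators in the coordinates of the structure theorem; this yields $S$ as an explicit finitely generated subgroup of $\Hom^*(N,\nu_1N)$.

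Next, computing the isolator $\hat S$ is a standard Smith-normal-form exercise: by Lemma \ref{lem:hatS-dir-sum}, $\hat S$ contains the entire torsion subgroup of $\Hom^*(N,\nu_1N)$ and, modulo torsion, is the saturation of the $\Z$-lattice spanned by the free parts of a generating set of $S$. Enumerating representatives of the finite quotient $\hat S/S$ is then immediate. For each such representative $s$, Lemma \ref{lem:psi} tells us that $\Psi(s):x_k\mapsto x_k\cdot s(x_k)$, specified on generators of $N$, defines an automorphism of $N$, and by Proposition \ref{prop:elusive-description} the family $\{[\Psi(s)] : s\in \hat S\setminus S\}$ is exactly the set of elusive elements of $\outo(N)$.

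The main obstacle I anticipate is bookkeeping rather than conceptual: distinct cosets in $\hat S/S$ can a priori yield the same outer automorphism of $N$, so the raw list may contain duplicates. This does not affect the corollary, which asks only for a finite \emph{set} in $\Aut(N)$ whose outer classes coincide with the elusive elements, and Propositions \ref{prop:elusive-description} and \ref{prop:coset-list} already provide that surjection. If a minimal list is desired, duplicates can be removed by testing whether $\Psi(s)\circ\Psi(s')^{-1}$ is inner, which is decidable because $\fgn$-groups are algorithmically tractable (so the conjugacy problem on tuples is solvable), and an automorphism of $N$ is inner iff each generator of $N$ is simultaneously conjugated into its image by a common element.
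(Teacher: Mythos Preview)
Your proposal is correct and follows essentially the same route as the paper: compute the finitely generated abelian group carrying $\Hom^*(N,\nu_1N)$, compute $S=\Phi(\nu_2N)$ as a subgroup, compute its isolator $\hat S$ by Smith normal form, list coset representatives of $\hat S/S$, and apply $\Psi$ together with Propositions \ref{prop:elusive-description} and \ref{prop:coset-list}. The only cosmetic difference is that the paper realizes $\Hom^*(N,\nu_1N)$ as a submodule of $\Hom(N/[N,N],\nu_1N)$ and then cuts it out, whereas you identify it directly with $\Hom\big((N/\nu_1N)^{\mathrm{ab}},\nu_1N\big)$; both are valid and computationally equivalent. Your remark about possible duplicates among the $[\Psi(s)]$ is an observation the paper omits, but as you note it does not affect the statement.
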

\begin{proof}
  We first note that since $\nu_1N\approx
  \mathbb{Z}^s\oplus\left(\oplus_{i=1}^t\Z/n_i\Z \right)$ is finitely
  generated abelian we have a natural isomorphism $\Hom(N,\nu_1N)
  \approx \Hom(N/[N,N],\nu_1N)$. Seeing as $N/[N,N] =
  \Z^r\oplus\left(\oplus_{i=1}^s\Z/m_i\Z \right)$ is also a finitely
  generated $\bbZ$-module it follows that $\Hom(N/[N,N],\nu_1N)$ is
  also a finitely generated $\bbZ$-module. 

  The isomorphism \[ \rho: \Hom(N,\nu_1N) \stackrel{\sim}{\to}
  \Hom\left(\Z^r\oplus\left(\oplus_{i=1}^s\Z/m_i\Z
    \right),\mathbb{Z}^s\oplus\left(\oplus_{i=1}^t\Z/n_i\Z
    \right)\right)\] is computable in the following sense: given a map
  $f^*$ from the generators of $N$ to the generators of $\nu_1N$
  defining a homomorphism $f$, we can find the corresponding element
  $\rho(f)$ and conversely given some $x \in
  \Hom\left(\Z^r\oplus\left(\oplus_{i=1}^s\Z/m_i\Z
    \right),\mathbb{Z}^s\oplus\left(\oplus_{i=1}^t\Z/n_i\Z
    \right)\right)$ we can find a map $\rho^\mo(x)^*$ from the
  generators of $N$ to $\nu_1N$.

  By Proposition \ref{prop:find-gens} we can find a generating set for
  $\nu_1 N$ and therefore find the submodule
  $\rho(\Hom^*(N,\nu_1N))=T$. Similarly we can  find the
  submodule $\rho(\Phi(\nu_2N)) = \rho(S) \leq T$. We can then compute
  $U \geq \rho(S)$, the minimal direct summand of $T$ containing
  $\rho(S)$ and the torsion elements of $T$. We must have $U =
  \rho(\hat S)$. We can now finally find a complete finite list of
  representatives $\{b_1,\ldots,b_n\} \subset
  \Hom\left(\Z^r\oplus\left(\oplus_{i=1}^s\Z/m_i\Z
    \right),\mathbb{Z}^s\oplus\left(\oplus_{i=1}^t\Z/n_i\Z
    \right)\right)$ of cosets of $U/\rho(S)$. The result now follows
  from Proposition \ref{prop:coset-list}.
\end{proof}

\subsubsection{The proof of Theorem \ref{thm:find-deep-enough}}\label{sec:find-cong-sep}

The final ingredient we need is an argument graciously communicated to us by Dan Segal.

\begin{prop}\label{prop:segal}
  Let $P$ be a virtually polycyclic group and denote by $P_n\charleq
  P$ be a sequence of finite index subgroups which eventually lie
  inside any fixed finite index subgroup. For every finite order
  $[\alpha] \in \out P$ there exists some $j$ such that for every
  $k\geq j$ the image $\overline{[\alpha]}_k \in \out{P/P_k}$ is
  non-trivial.
\end{prop}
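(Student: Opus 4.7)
The plan is to prove the contrapositive: assume $[\alpha] \in \out{P}$ has finite order and that $\overline{[\alpha]}_k$ is trivial in $\out{P/P_k}$ for cofinally many indices $k$, and deduce $[\alpha]=1$ in $\out{P}$. Fix a representative $\alpha \in \Aut(P)$. Triviality of $\overline{[\alpha]}_k$ means there exists $g_k \in P$ with $\alpha(x) \equiv g_k x g_k^{-1} \pmod{P_k}$ for every $x \in P$. Letting $K_k = \ker(\Aut(P) \to \Aut(P/P_k))$, this says $\alpha \in K_k \cdot \Inn(P)$ for all such $k$.

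Because $P$ is polycyclic-by-finite it is residually finite, and since the $P_k$ are cofinal among finite-index subgroups, $\bigcap_k K_k = \{1\}$ in $\Aut(P)$, so the filter $(K_k)$ defines a Hausdorff ``congruence topology'' on $\Aut(P)$. In this topology the cosets $\ad_{g_k}\cdot K_k$ all contain $\alpha$, so $\alpha$ lies in the closure of $\Inn(P)$. Passing to the profinite completion, $\alpha$ extends to a continuous $\widehat\alpha \in \Aut(\widehat P)$, and a subnet of $(g_k)$ converges in the compact group $\widehat P$ to some $\widehat g$ with $\widehat\alpha = \ad_{\widehat g}$; equivalently, $\alpha$ is inner when viewed in $\widehat P$.

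To descend from ``inner in $\widehat P$'' to ``inner in $P$'' I would use the finite order of $[\alpha]$ crucially. Writing $\alpha^n = \ad_h$ for some $h \in P$, form the extension $\widetilde P = \langle P, t \mid txt^{-1}=\alpha(x),\ t^n = h \rangle$; the relations are consistent, and $\widetilde P$ contains $P$ as a finite-index normal subgroup, hence is polycyclic-by-finite. The condition $[\alpha]=1$ is equivalent to $t \in P \cdot C_{\widetilde P}(P)$, a conjugacy-separability-type statement inside $\widetilde P$, and the inner-ness of $\widehat\alpha$ is exactly the corresponding statement in the profinite completion $\widehat{\widetilde P}$. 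I would then invoke the theorem on (twisted) conjugacy separability of polycyclic-by-finite groups of \cite{RSZ} --- the input acknowledged as communicated by Dan Segal --- to transfer this profinite conjugacy back to $\widetilde P$, producing an actual $g \in P$ with $\alpha = \ad_g$, contradicting $[\alpha]\neq 1$.

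The main obstacle is precisely this final profinite-to-discrete descent: it fails for general residually finite groups and relies essentially on the arithmetic/linear structure of polycyclic-by-finite groups encoded in the \cite{RSZ} theorem. The finiteness of the order of $[\alpha]$ plays its role here not in the profinite limit, which goes through for any $\alpha$, but in ensuring that the auxiliary group $\widetilde P$ is itself polycyclic-by-finite and thus falls under the scope of the Remeslennikov--Segal--Zalesskii machinery.
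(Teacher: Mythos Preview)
Your overall strategy coincides with the paper's: build an extension of $P$ in which a new element $t$ acts by $\alpha$, pass to the profinite completion to obtain an element centralizing $P$, and then invoke the Ribes--Segal--Zalesskii result that centralizers are profinitely closed in virtually polycyclic groups. The paper carries this out with $E=\langle\alpha\rangle\ltimes P$ and \cite[Prop.~3.3a]{RSZ}; your final descent via the finite-index subgroup $P\cdot C_{\widetilde P}(P)$ is the analogue of the paper's observation that $\langle\alpha^q\rangle\ltimes P$ is closed.

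There is, however, a genuine gap in your construction of $\widetilde P=\langle P,t\mid txt^{-1}=\alpha(x),\ t^n=h\rangle$. The relation $t^n=h$ forces, via $t(t^n)t^{-1}=t^n$, the identity $\alpha(h)=h$ inside $\widetilde P$. But from $\alpha^n=\ad_h$ one only gets $\ad_{\alpha(h)}=\alpha\,\ad_h\,\alpha^{-1}=\ad_h$, i.e.\ $\alpha(h)h^{-1}\in Z(P)$. One cannot always adjust $h$ within its coset $hZ(P)$ to make $\alpha(h)=h$: writing things additively on $Z(P)$, this asks to solve $(\alpha-1)z=h-\alpha(h)$, and the obstruction is a class in $H^1(\bbZ/n\bbZ,\,Z(P))$ for the induced $\alpha$-action, which need not vanish. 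When it does not, the normal closure of $t^nh^{-1}$ in $\langle t\rangle\ltimes P$ meets $Z(P)$ nontrivially, so $P$ does \emph{not} embed in $\widetilde P$, and the equivalence ``$[\alpha]=1 \Leftrightarrow t\in P\cdot C_{\widetilde P}(P)$'' no longer speaks about $\Out(P)$.

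The repair is exactly the paper's choice: drop the relation $t^n=h$ and work in $E=\langle\alpha\rangle\ltimes P$, where $\langle\alpha\rangle\leq\Aut(P)$ may be infinite cyclic. This is still virtually polycyclic, so the same \cite{RSZ} input applies. The finite order $q$ of $[\alpha]$ is then used only at the end: one checks $C_E(P)\leq\langle\alpha^q\rangle\ltimes P$, a finite-index and hence profinitely closed subgroup not containing $(\alpha;1)$, giving the contradiction.
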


\begin{proof}
  Let $q$ be the order of $[\alpha]$ in $ \out{P}$ and chose a
  representative $\alpha\in \Aut P$. We assume that $q > 1$. Let $E=
  \bk{\alpha} \ltimes P$ be the natural semidirect product,
  i.e. endowed with the following
  multiplication \begin{equation}\label{eqn:semidirect-conv}
    (\alpha^{n_1};p_1)\cdot(\alpha^{n_2};p_2) = (\alpha^{n_1+n_2};
    \alpha^{n_2}(p_1)p_2),\end{equation} where $p_1,p_2 \in P$. $E$ is
  obviously virtually polycyclic.

  Suppose towards a contradiction that $\overline{[\alpha]}_n$ (which
  is in $\out{P/P_n}$) is trivial for infinitely many $n$. Then there
  exists $\bar x_n \in P/P_n$ such that the image of $\alpha$ in
  $\aut{P/P_n}$ satisfies $\overline{\alpha}_n = \ad_{{\bar x}_n}$.
  This means that $\alpha(g) \in g ^{x_n^{-1}}P_n$ for all $g\in P$.
  Thus in $E$, given any $(1;g)\in 1 \ltimes P$, the commutator with
  the element $(\alpha,x_n)$ lies in $1\ltimes P_n$. 
  Indeed for each
  $g \in P$ we verify by successive applications of
  (\ref{eqn:semidirect-conv}):
  \begin{eqnarray*}
    [(1;g),(\alpha;x_n)] & = &
    (1;g^{-1}) \cdot (\alpha^{-1};\alpha^{-1}(x_n^{-1})) \cdot (1;g)
    \cdot (\alpha;x_n)\\
    & = & (1;g^{-1})\cdot (\alpha^{-1};\alpha^{-1}(x_n^{-1})) \cdot
    (\alpha;\alpha(g)x_n) \\
    & = &  (1;g^{-1})\cdot (1;x_n^{-1}\alpha(g)x_n)\\
    & = & (1;g^\mo x_n^{-1} \alpha(g) x_n ) \in \{1\} \ltimes P_n
  \end{eqnarray*}

  Go to the projective limit of finite quotients (i.e. the profinite
  completion), up to diagonal extraction, there exists $x \in \bar P$
  such that $x \mapsto \bar x_n \in P/P_n $ for infinitely many $n$.

  By definition, $(\alpha;x) \in \bar E$ centralizes $\{1\} \ltimes P$
  in $\bar E$, i.e. $(\alpha;x) \in C_{\bar E}(\{1\} \ltimes P)$. We now apply a
  deep result, \cite[Prop 3.3a]{RSZ}, to infer that in fact
  $(\alpha;x) \in \overline{ C_E(\{1\} \ltimes P)} $.

  Moreover,
  $C_E(\{1\} \ltimes P) < \langle \alpha^q \rangle \ltimes P$. Indeed,
  for conjugation we have the
  formula\[ (\alpha^{-m};\alpha^{-m}(y^{-1}))(1;g) (\alpha^m;y) =
    (1;\ad_y\circ\alpha^m(g) )\] so if $(\alpha^m;y)$ centralizes
  $\{1\} \ltimes P$, then the image $[\alpha^m]$ is the identity in
  $\out P$, which implies that $q|m$. This now gives us that on the
  one hand
  $(\alpha;x) \in \overline{\langle \alpha^q \rangle \ltimes P}$.

  On the other hand,
  $(1;x) \in \overline{\langle \alpha^q \rangle \ltimes P}$. Since
  $\overline{\langle \alpha^q \rangle \ltimes P}$ is a subgroup,
  $(\alpha;1)$ also lies in
  $\overline{\langle \alpha^q \rangle \ltimes P}$.  Since
  $(\alpha;1) \in \langle \alpha \rangle \ltimes P=E$, we have
  \[ (\alpha;1) \in \overline{\langle \alpha^q \rangle \ltimes P} \cap
    E = \bk{\alpha^q}\ltimes P.
  \] This last equality holds because $\bk{\alpha^q}\ltimes P$ a
  finite index subgroup of $E$, and
  therefore a closed in the profinite topology. It follows that $q=1$
  -- contradiction.
\end{proof}

\begin{proof}[Proof of Theorem \ref{thm:find-deep-enough}]
  We proceed by induction on $\lucs(N)$. If $\lucs(N)=1$ then $N$ is
  finitely generated abelian and by the remark \ref{rem;GLnZ3}
  we can find $P \charleq N$ such that the map $\Out(N) =
  GL(N) \onto GL(N/P)$ doesn't kill any finite order elements.

  Suppose now that $\lucs(N) = m$ and that the result held for all
  $\fgn$-groups $M$ such that $\lucs(M) < m$, i.e. that we can
  compute a finite index subgroup $L\charleq M$ that is deep
  enough. We have a short exact sequence\[ 1 \to \nu_1N \to N \to
  N/\nu_1N \to 1
  \] with $\lucs(\nu_1N)=1$ and $\lucs(N/\nu_1N) = m-1$. By the
  induction hypothesis we can construct subgroups $N_0 \charleq
  \nu_1N$ and $K_0 \charleq N/\nu_1N$ that separate torsion in
  $\out{\nu_1N}$ and $\Out(N/\nu_1N)$ respectively.

  The elements of $\outo(N)$ are either elusive or they aren't. By
  Proposition \ref{prop:good-enough} we can compute $P_0 \charleq N$
  that is good enough w.r.t. $N_0,K_0$ so by Lemma
  \ref{lem:good-enough-elusive} all the non elusive elements $[\beta]
  \in \outo(N)$ survive in the natural map $\Out(N) \to
  \Out(N/P_0)$. By Corollary \ref{cor:construct-elusive} we can
  construct a finite set $\{\beta_1,\ldots,\beta_n\}$ of automorphisms
  such that $\{[\beta_1],\ldots,[\beta_n]\} \subset \outo(N) \setminus
  \{1\}$ is the complete set elusive elements. 
  
  We now start iteratively constructing a chain of characteristic
  finite index subgroups
  \[ N \chargeq P_0 \chargeq N_1 \chargeq N_2 \chargeq \ldots
  \] such that every finite index subgroup of $N$ contains some
  tail of the sequence. We can now apply Proposition \ref{prop:segal}
  to get that for all $j$ sufficiently large, the set
  $\{\beta_1,\ldots,\beta_n\}$ is mapped monomorphically via the map
  $\out{N} \to \out{N/N_j}$. Since $P_0 \chargeq N_j$ we have that
  non-elusive elements of $\outo(N)$ survive in
  $\Out(N) \to \Out(N/N_j)$, therefore for each new $N_i$ we
  construct, it is enough to check that each element of our finite set
  $\{[\beta_1],\ldots,[\beta_n]\}$ survives in
  $\Out(N) \to \Out(N/N_i)$. Eventually this will be the case and we
  will have found $P \charleq N$ that separates the torsion in
  $\out N$.
\end{proof}

\subsection{The mixed Whitehead problem for finitely generated nilpotent groups}\label{sec:mwhp-fgn}

Let $G$ be a finitely generated nilpotent group. We shall denote a
tuple $S=(s_1,\ldots,s_r)$. If $S$ is a tuple of elements of a group
$H$ we shall abuse notation and write $S \in H$. If
$S=(s_1,\ldots,s_r), T=(t_1,\ldots,t_r)$ are tuples in $H$ and $h \in
H$ we will write $S^h = T$ if\[ h^\mo s_i h = t_i
\] for $i=1,\ldots,r$. If $\sigma \in \aut H$ and
$S=(s_1,\ldots,s_r) \in H$ then we denote\[ \sigma(\tuple s) =
(\sigma(s_1),\ldots,\sigma(s_r)).
\]

\begin{defn}[The mixed Whitehead problem]\label{def:mwhp}
  Let $(S_1,\ldots,S_k), (T_1,\ldots,T_k)$ be tuples of tuples of elements in
  $G$. \define{The mixed Whitehead problem} consists in deciding
  whether there exists $\sigma \in \aut G$ and elements
  $g_1,\ldots,g_k \in G$ such that\[ \sigma(S_i) = T_i^{g_i}
  \] for $i = 1,\ldots,k$. If such is the case we say the tuples of
  tuples $(S_1,\ldots,S_k)$ and
  $(T_1,\ldots,T_k)$ are \define{Whitehead equivalent}.
\end{defn}

The terminology mixed Whitehead problem first occurs in
\cite{Bogopolski-Ventura} where the mixed Whitehead problem is solved
for torsion-free hyperbolic groups. The goal of this section is to
give an algorithm that solves the mixed Whitehead problem for finitely
generated nilpotent-groups.

It should be noted that the instances of the mixed Whitehead problem
for a single pair of tuples, i.e. find if there is some $\sigma \in
\Gamma$ such that $\sigma(\tuple s) = \tuple t$, are solvable by the
results in \cite{GS2} and applying Algorithm A of \cite{GS1}. The
situation here is more complicated because it is a \emph{two
  quantifier problem.} We will remedy this by re-expressing the mixed
Whitehead problem as an orbit problem for a certain \emph{semidirect
  product}. As in \cite{GS1,GS2} we must enter the realm of matrix
groups.

\subsubsection{Algebraic groups, arithmetic groups, $\exp, \log$, etc.}

We denote by $\tffgn$ the class of \define{finitely generated
  torsion-free nilpotent groups} and call such groups
\define{$\tffgn$-groups.}

We will always assume that the inclusions $GL(n,\Z) < GL(n,\Q) <
GL(n,\C)$ hold. There is a natural identification of the set of $n
\times n$ matrices with $\C$-coefficients with $\C^{n^2}$, which we
may regard as affine space. $GL(n,\C)$ can therefore be viewed as an
open algebraic subvariety of $\C^{n^2}$, i.e. the set of matrices whose
determinant is non-zero.

If $S$ is some subset of $ \C[x_1,\ldots,x_n]$ we denote by $V(S)$ the
corresponding algebraic variety and if $V$ is an algebraic variety we
denote by $I(V)$ the corresponding ideal.

\begin{defn}[Algebraic group]\label{defn:alg-group}
  A subgroup $\alg H \leq GL(n,\C)$ is called \define{algebraic} if it
  is a closed algebraic subvariety $V(S)$, i.e. it consists of
  matrices whose entries satisfy some set $S$ of polynomial equations.
  \begin{itemize}
  \item If $S$ is a set of polynomials that have $\Q$-coefficients then
    $\alg H$ is a \define{$\Q$-defined algebraic group.}
    \item If $S$ is explicitly given then $\alg H$ is an
      \define{explicitly given $\Q$-defined algebraic group.}
  \end{itemize}
\end{defn}

\begin{defn}[Arithmetic group]\label{defn:arith-group}
  Write $\alg H_\Z = \alg H \cap GL(n,\Z)$. A subgroup $\Delta \leq
  GL(n,\C)$ that is commensurable with $\alg H_\Z$ for some
  $\Q$-defined algebraic group $\alg H$ is called an
  \define{arithmetic group.} We say that $\Delta \leq \alg{H}_\Z$ is
  \define{an explicitly given arithmetic subgroup} if we are given the
  following:\begin{enumerate}
  \item We are given the system of polynomials $S$ so that $\alg H = V(S)$.
  \item We are given a finite upper bound for the index $[\alg H_\Z:\Delta] \leq k$.
  \item There is an effective procedure so that for each $g \in \alg H_\Z$
    we can decide if $g \in \Delta$.
  \end{enumerate}
\end{defn}

Let $\tron{R}$ denote the set of upper triangular $n\times n$ matrices
with coefficients in the ring $R$ with 1s on the diagonal. We observer
that $\tron\Z$ and $\tron\C$ are arithmetic and algebraic groups
respectively. The connection to \tffgn-groups starts with the
following result which is a rewording and weakening of Algorithm E and
the supplement to Algorithm E in \cite{GS2}.

\begin{thm}[\cite{GS2}]\label{thm:theta-functor}
  Given a finite presentation $\bk{X\mid R}$ of a $\tffgn$-group $G$
  we can effectively find a suitable $n$ and an embedding \[
  \Theta_G:G \hookrightarrow \tron\Z 
  \] such that the natural map \[
  N_{GL(n,\Z)}(\Theta_G(G)) \to \aut G
  \] is surjective.
\end{thm}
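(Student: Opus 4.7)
The plan is to realize $G$ inside its Malcev completion and use the completion as a bridge translating abstract automorphisms of $G$ into conjugation by integer matrices. First, from the presentation $\bk{X\mid R}$ one computes a \emph{Malcev basis} $(g_1,\ldots,g_c)$ for $G$: a sequence of elements refining the lower central series such that every element of $G$ has a unique normal form $g_1^{a_1}\cdots g_c^{a_c}$ with $a_i \in \Z$. This can be done effectively because the lower central series quotients of a $\tffgn$-group are computable finitely generated abelian groups; one then reads off the polynomial structure constants that encode multiplication and commutation in these coordinates.

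Second, I would form the Malcev completion $\widehat G := G \otimes \Q$, a unipotent $\Q$-algebraic group in which $G$ sits as a Zariski-dense uniform lattice. Unipotency makes the Baker--Campbell--Hausdorff series a \emph{finite} polynomial, and $\log : \widehat G \to \mathfrak g$ identifies $\widehat G$ with the additive group of a nilpotent Lie $\Q$-algebra $\mathfrak g$ in which $\log(G)$ spans a full $\Z$-lattice $L$. Applying Ado's theorem (or, more concretely, a faithful representation built directly from the structure constants, such as the one on $\mathfrak g \oplus \Q$) and exponentiating, one obtains a faithful algebraic embedding $\widehat G \hookrightarrow \tron{\Q}$ for some explicit $n$. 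Choosing the $\Z$-basis that diagonalises $L$ shows that this restricts to the required $\Theta_G : G \hookrightarrow \tron\Z$.

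For the surjectivity of $N_{GL(n,\Z)}(\Theta_G(G)) \to \aut G$: any $\alpha \in \aut G$ extends uniquely to an algebraic automorphism $\widehat\alpha$ of $\widehat G$ because $G$ is Zariski-dense in $\widehat G$; differentiating via $\log$, $\widehat\alpha$ becomes a Lie-algebra automorphism of $\mathfrak g$ which, because $\alpha(G) = G$, preserves the lattice $L$. One then checks that the chosen representation is set up functorially enough that every such algebraic automorphism of $\widehat G$ is realised by conjugation by some matrix in $GL(n,\Q)$; since $L$ is preserved, this matrix can be arranged to lie in $GL(n,\Z)$, and by construction it normalises $\Theta_G(G)$. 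Conversely, conjugation by any element of $N_{GL(n,\Z)}(\Theta_G(G))$ manifestly induces an automorphism of $G$.

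The main obstacle is \emph{effectivity}: making each of the steps -- computing a Malcev basis, evaluating BCH to get structure constants, choosing the integral basis for $L$, and controlling denominators so that the image really lies in $\tron\Z$ -- explicit and bounded, so that the integer $n$ and the embedding $\Theta_G$ are produced algorithmically from $\bk{X\mid R}$. This is precisely the content of Algorithm E and its supplement in \cite{GS2}, which we would invoke to conclude.
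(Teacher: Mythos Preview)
The paper does not prove this theorem at all: it is stated as a black box, explicitly described as ``a rewording and weakening of Algorithm E and the supplement to Algorithm E in \cite{GS2}'', and immediately used. Your proposal therefore goes well beyond what the paper does, by sketching the ideas behind the Grunewald--Segal construction before ultimately invoking the same reference.

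That said, there is a genuine soft spot in your sketch worth flagging. The surjectivity of $N_{GL(n,\Z)}(\Theta_G(G)) \to \aut G$ is \emph{not} a general feature of faithful unipotent embeddings: for an arbitrary Ado-type representation $\rho$, there is no reason that $\rho$ and $\rho\circ\alpha$ should be $GL(n,\Z)$-conjugate (or even $GL(n,\Q)$-conjugate) for every $\alpha\in\aut G$. Your phrase ``the chosen representation is set up functorially enough'' is precisely where the entire content of the ``supplement to Algorithm E'' lives. Grunewald and Segal build a \emph{specific} representation --- essentially the action on a carefully chosen finite-dimensional quotient of the group algebra (or equivalently on a suitable space of polynomial functions on $\widehat G$) --- on which $\Aut(\widehat G)$ acts compatibly by change of variable, so that automorphisms are realised by conjugation \emph{by construction}; the integrality then follows because the lattice $L$ is preserved. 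If you want your sketch to stand on its own rather than defer to \cite{GS2}, you should name this representation and explain why it carries a natural $\Aut(\widehat G)$-action inside $GL(n,\Q)$; otherwise the argument as written has the same logical status as the paper's bare citation.
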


\begin{defn}
  A subgroup $H \leq \tron\Z$ which occurs as the image $\Theta_G(G)$
  of some $\tffgn$-group is called a $\Theta$-subgroup.
\end{defn}

The discussion that will now follow is essentially contained in
Chapter 6 of \cite{Segal-polycyclic-groups}. The reader may refer to
this chapter for more details and proofs.

We denote by $\trzn R$ the set of upper triangular $n\times n$ matrices
with coefficients in the ring $R$ with 0s on the diagonal. $\trzn R$
equipped with matrix addition and the Lie bracket\[
(u,v) = uv-vu
\]
is an $R$-Lie algebra. There are celebrated maps $\exp:\trzn\C \to
\tron\C$ and $\log:\tron\C \to \trzn\C$ which are in fact
\emph{mutually inverse isomorphisms in the category of algebraic
  varieties.}  Moreover these restrict to bijections between $\tron\Q$
and $\trzn\Q$.

\begin{defn}
  A subgroup $H \leq \tron\Q$ is called \define{radicable} if for
  every $h \in H, n \in \Z_{\geq 1}$ there is some element $\sqrt[n]h$
  such that $(\sqrt[n]h)^n = h$.
\end{defn}

It is a result of Mal'cev \cite{Malcev-tfn} that a \tffgn-group $H$
embeds into a unique up to isomorphism \define{radicable hull}, or
\define{Mal'cev completion}. We denote this by $\malcev H$. A proof of
this next result follows from the discussion in Chapter 6 of
\cite{Segal-polycyclic-groups} leading up to and including Theorem 2.

\begin{thm}[\cite{Segal-polycyclic-groups}]\label{thm:readicable-corresp}
  $\log$ sends radicable subgroups of $\tron\Q$ to Lie
  subalgebras, i.e. linear subspaces closed under the Lie bracket, of
  $\trzn\Q$ bijectively. $\exp$ does the inverse.
\end{thm}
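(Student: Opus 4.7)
The plan is to use the Baker--Campbell--Hausdorff (BCH) formula. On strictly upper triangular $n \times n$ matrices all iterated Lie brackets of length $\geq n$ vanish, so BCH becomes a finite $\Q$-polynomial identity: for $u, v \in \trzn\Q$,
$$\log(\exp u \cdot \exp v) = u + v + \tfrac{1}{2}[u,v] + \tfrac{1}{12}\bigl([u,[u,v]] - [v,[u,v]]\bigr) + \cdots.$$
Since $\exp$ and $\log$ are already mutually inverse bijections $\tron\Q \leftrightarrow \trzn\Q$, it suffices to check that the two closure conditions match up in both directions.

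For the easy direction, let $L \subseteq \trzn\Q$ be a $\Q$-subspace closed under $[\cdot,\cdot]$. The BCH identity above shows $\exp(u)\exp(v) \in \exp(L)$; inversion is handled by $\exp(u)^{-1} = \exp(-u)$; and $\exp(u/n) \in \exp(L)$ is an $n$-th root of $\exp(u)$, yielding radicability. For the converse, let $H \leq \tron\Q$ be radicable. Closure of $\log(H)$ under rational scalar multiplication is immediate: for $u \in \log(H)$ and $q = p/n \in \Q$, radicability provides the unique $n$-th root $(\exp u)^{1/n} \in H$, whose $p$-th power equals $\exp(qu)$. The main obstacle is closure of $\log(H)$ under $+$ and $[\cdot,\cdot]$ themselves, since a priori $\log(H)$ is only closed under the BCH product $u \ast v := \log(\exp u \exp v)$.

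To overcome this obstacle I would induct downward along the lower central series $\trzn\Q = L_1 \supseteq L_2 \supseteq \cdots \supseteq L_{n-1} \supseteq 0$, which under $\exp$ corresponds term by term to the lower central series of $\tron\Q$ by BCH. The induction hypothesis at depth $k$ asserts that for all $u, v \in \log(H)$ both $u+v$ and $[u,v]$ lie in $\log(H) + L_k$. At the deepest nontrivial layer $L_{n-1}$ all brackets vanish and BCH collapses to addition, giving the base case. For the inductive step, write $u \ast v = u + v + S$ and $\log\bigl([\exp u, \exp v]\bigr) = [u,v] + R$ with $R, S \in L_{k+1}$ expressed as explicit $\Q$-combinations of iterated brackets of $u$ and $v$; by the induction hypothesis and rational scalar closure these corrections already lie in $\log(H)$ modulo $L_{k+1}$, and since $u \ast v$ and the group commutator themselves lie in $\log(H)$, we conclude that $u+v$ and $[u,v]$ lie in $\log(H)$ modulo $L_{k+1}$, completing the induction. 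The essential point throughout is that the rational coefficients produced by BCH are harmless precisely because radicability of $H$ provides rational $n$-th powers.
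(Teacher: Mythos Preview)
The paper does not actually prove this theorem; it cites Chapter~6 of Segal's \emph{Polycyclic Groups} and moves on. So you are attempting more than the paper does. Your overall strategy (exploit that BCH is a $\Q$-polynomial identity in the strictly upper-triangular setting, then induct along the lower central series) is the right one, and the easy direction is fine. However, the inductive step for the hard direction has a real gap.

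First, the assertion ``$R,S\in L_{k+1}$'' is not correct as written: $S=\tfrac12[u,v]+\cdots$ has leading term in $L_2$, not in $L_{k+1}$. Presumably you mean that $R,S$ are $\Q$-combinations of iterated brackets and, by the induction hypothesis, each such bracket lies in $\log(H)+L_k$. But this only gives $S\in(\log(H)+L_k)+(\log(H)+L_k)+\cdots$, and to collapse that sum you need $\log(H)+L_k$ (or worse, $\log(H)$ itself) to be closed under addition --- which is precisely the statement you are trying to prove. The same circularity blocks the passage from ``$u*v\in\log(H)$ and $S\in\log(H)+L_{k+1}$'' to ``$u+v=u*v-S\in\log(H)+L_{k+1}$'': subtraction in $\log(H)$ is not yet available.

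The standard way out (and what Segal does) is to run the induction on the nilpotency class of the ambient group rather than on a single set $V=\log(H)$, applying the induction hypothesis to the \emph{quotient} $H/\exp(L_c)$ and to the \emph{subgroup} $H\cap\exp(L_2)$, both of which are radicable of strictly smaller class. From the quotient one obtains that $V+L_c$ is a Lie subalgebra; from the subgroup one obtains that $V\cap L_2$ is a $\Q$-subspace, so that the BCH correction terms --- which one first arranges to lie in $V\cap L_2$ by a suitable $*$-translation --- can legitimately be added and subtracted. Equivalently, one can invoke the explicit Lazard inversion formulas expressing $u+v$ and $[u,v]$ as finite $*$-and-$\Q$-power words in $u,v$; the existence of such formulas is exactly what a correctly organised induction establishes. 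Your sketch gestures at this but does not supply the mechanism that breaks the circularity.
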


This bijective correspondence is far deeper than merely
set-theoretic. Let $H \leq \tron\Q$ and let $\Q\log(H)$ be the
smallest $\Q$-Lie subalgebra of $\trzn\Q$ containing $\log(H)$. Then
(see \cite[Chapter 6, Theorem 2]{Segal-polycyclic-groups}) \[
\exp(\Q\log(H)) = \malcev H.
\] In particular $\Q\log(H)$ is a $\Q$-linear subspace of $\trzn\Q$
and it follows that:

\begin{prop}\label{prop:Malcev-algebraic}
  For any group $H \leq \tron\Q$ there is a $\Q$-defined algebraic
  group $\alg H$ such that:\[
  \alg H \cap GL(n,\Q) = \alg{H}_\Q = \malcev H.
  \]
\end{prop}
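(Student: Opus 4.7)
The plan is to use the Lie-algebraic machinery recalled just before the statement to build $\alg H$ explicitly, then verify the three required properties (algebraic subvariety, $\Q$-defined, subgroup) and compute the intersection with $GL(n,\Q)$.

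First, I would set $V = \Q\log(H) \subseteq \trzn\Q$, which is a $\Q$-Lie subalgebra of $\trzn\Q$ by construction (this is the $\Q$-linear span that appears in the discussion following Theorem \ref{thm:readicable-corresp}). Let $\alg V = V \otimes_\Q \C$ be its $\C$-linear extension inside $\trzn\C$. Since $V$ is a $\Q$-linear subspace of a finite-dimensional $\Q$-vector space, $\alg V$ is cut out of $\trzn\C \cong \C^{n(n-1)/2}$ by a finite system of linear equations with $\Q$-coefficients; in particular it is a $\Q$-defined affine algebraic subvariety.

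Next, define
\[
\alg H := \exp(\alg V) \subseteq \tron\C.
\]
Because $\exp:\trzn\C \to \tron\C$ is an isomorphism of algebraic varieties given by polynomials with rational coefficients (with polynomial inverse $\log$, both determined in closed form on strictly upper triangular matrices), the image $\alg H$ is a $\Q$-defined algebraic subvariety of $\tron\C \subseteq GL(n,\C)$. That $\alg H$ is a subgroup follows from the Baker--Campbell--Hausdorff formula: on nilpotent Lie algebras the BCH series terminates and yields $\exp(x)\exp(y)=\exp(x\ast y)$, where $x\ast y$ lies in the Lie subalgebra generated by $x,y$; since $\alg V$ is closed under the Lie bracket it is closed under $\ast$, and $\exp(x)^{-1}=\exp(-x)\in\alg H$. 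So $\alg H$ is a $\Q$-defined algebraic group.

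Finally, I would compute the intersection with the $\Q$-points. Because $\exp$ and $\log$ are mutually inverse $\Q$-polynomial maps between $\tron\C$ and $\trzn\C$, they restrict to a bijection between $\tron\Q$ and $\trzn\Q$, so
\[
\alg H \cap GL(n,\Q) \;=\; \alg H \cap \tron\Q \;=\; \exp(\alg V \cap \trzn\Q) \;=\; \exp(V),
\]
using $\alg V \cap \trzn\Q = V$ (the $\Q$-points of a $\Q$-defined linear subvariety are $V$ itself). By Theorem \ref{thm:readicable-corresp} (in the form $\exp(\Q\log(H))=\malcev H$ quoted immediately before the proposition), this equals $\malcev H$, which is exactly the claim. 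The only step that requires any real care is the passage from ``Lie subalgebra'' to ``algebraic subgroup under $\exp$,'' and this is a standard consequence of BCH in the nilpotent setting together with the fact that $\exp$ and $\log$ are $\Q$-polynomial; everything else is bookkeeping between $\Q$- and $\C$-points.
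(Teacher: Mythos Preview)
Your proof is correct and follows exactly the approach the paper intends: the proposition is stated as an immediate consequence of the fact that $\Q\log(H)$ is a $\Q$-linear subspace of $\trzn\Q$ together with $\exp(\Q\log(H))=\malcev H$, and you have simply spelled out the details (taking $\alg H=\exp(\Q\log(H)\otimes_\Q\C)$, using that $\exp$ and $\log$ are mutually inverse $\Q$-polynomial isomorphisms, and invoking BCH for the group structure). There is nothing to add.
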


We now have the following result which immediately follows from the
results in \cite[\S 9]{GS2} up to Lemma 9.1.3.

\begin{lem}[\cite{GS2}]\label{lem:arith-crit}
  There is an algorithm which takes as input $\bk{X \mid R}$ a finite
  presentation of a $\tffgn$-group $G$ and outputs a positive integer
  $m_H$ such that if $H = \Theta_G(G) \leq \tron\Z$ is the isomorphic
  image constructed by Theorem \ref{thm:theta-functor} then the
  following holds: $g \in H$ if and only if $g \in \alg{H}_\Z$ and
  $\pi_{m_H}(g) \in \pi_{m_H}(H)$ where $\pi_{m_H}$ is the canonical epimorphism\[
  \pi_{m_H}:GL(n,\Z) \twoheadrightarrow GL(n,\Z/m_{H}\Z)
  \]
\end{lem}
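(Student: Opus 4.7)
My plan is to use the embedding $\Theta_G$ from Theorem \ref{thm:theta-functor} together with the Mal'cev--Lie correspondence and the residual finiteness of $\alg{H}_\Z$ via principal congruence subgroups. The key point is that $\alg{H}_\Z$ is a finitely generated nilpotent overgroup of $H$ of finite index, so $H$ is separated from its cosets in $\alg{H}_\Z$ by some single congruence $\pi_{m_H}$.

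First, starting from $H = \Theta_G(G) \leq \tron\Z$, I would compute a finite $\Q$-basis of the Lie subalgebra $\mathcal L = \Q\log(H) \leq \trzn\Q$ and apply Proposition \ref{prop:Malcev-algebraic}: the polynomial relations cutting out $\exp(\mathcal L)$ inside $\tron\C$ give an explicit $\Q$-defined algebraic group $\alg H$ with $\alg{H}_\Q = \malcev H$. This yields an explicit description of $\alg{H}_\Z = \alg H \cap GL(n,\Z)$ in the sense of Definition \ref{defn:arith-group}.

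Next, I would verify $[\alg{H}_\Z : H] < \infty$ and compute a transversal. Finiteness holds because every element of $\alg{H}_\Z \subset \malcev H$ has a positive power in $H$ (the defining property of the radicable hull), so $\alg{H}_\Z/H$ is a periodic, finitely generated nilpotent group, hence finite. Using the algorithms underlying \cite{GS1,GS2} (in particular the decidability of membership in a $\tffgn$-subgroup of $\tron\Z$), I would enumerate elements of $\alg{H}_\Z$, test which are representatives of new cosets of $H$, and produce a finite transversal $1 = g_0, g_1, \ldots, g_{k-1}$.

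Then, to produce $m_H$, I would exploit residual finiteness of $\alg{H}_\Z$ via the principal congruence filtration $\Gamma(m) = \ker\pi_m$: since $\bigcap_m (\Gamma(m) \cap \alg{H}_\Z) = \{1\}$ and each $\pi_m(H)$ lies in the finite group $\tron{\Z/m\Z}$ (so that the test $\pi_m(g_i) \in \pi_m(H)$ is decidable), enumerating $m = 2, 3, \ldots$ will, for each non-identity $g_i$, eventually exhibit some $m^{(i)}$ with $\pi_{m^{(i)}}(g_i) \notin \pi_{m^{(i)}}(H)$. Setting $m_H = \mathrm{lcm}(m^{(1)}, \ldots, m^{(k-1)})$, any $g \in \alg{H}_\Z$ with $\pi_{m_H}(g) \in \pi_{m_H}(H)$ must lie in the trivial $H$-coset of $\alg{H}_\Z$, which is $H$ itself; this is the desired equivalence.

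The main obstacle is making every step genuinely effective rather than merely existential: computing $\alg{H}_\Z$ explicitly from a presentation of $G$, deciding the membership $g \in H$ inside $\alg{H}_\Z$ needed to enumerate the transversal, and checking membership in $\pi_m(H)$ uniformly in $m$. Each of these reduces to standard routines of \cite{GS1,GS2} once Mal'cev coordinates have been produced by Theorem \ref{thm:theta-functor}, but assembling them coherently is the bulk of the work.
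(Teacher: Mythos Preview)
The paper does not actually prove this lemma; it is quoted from \cite{GS2} with the remark that it ``immediately follows from the results in Section 9 of \cite{GS2} up to Lemma 9.1.3.'' Your reconstruction follows the natural strategy---compute $\alg H$ via the Lie correspondence, exhibit a finite transversal for $H$ in $\alg H_\Z$, then separate the non-trivial cosets by a congruence---and this is essentially what Grunewald and Segal do.

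There is, however, one genuine gap in your Step 3. From $\bigcap_m(\Gamma(m)\cap\alg H_\Z)=\{1\}$ you conclude that every $g_i\notin H$ is eventually separated from $H$ by some congruence, i.e.\ that $\pi_m(g_i)\notin\pi_m(H)$ for some $m$. But your premise only says the congruence filtration separates points from the \emph{identity}; what you need is the stronger statement $\bigcap_m H\cdot\Gamma(m)\cap\alg H_\Z=H$, and this does not follow formally. The missing ingredient is that the finite-index subgroup $H\leq\alg H_\Z$ actually contains a full principal congruence subgroup $\Gamma(m_0)\cap\alg H_\Z$ for some $m_0$; in other words, the unipotent arithmetic group $\alg H_\Z$ (or $\tron\Z$) satisfies the congruence subgroup property. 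This is true and not hard---one can argue via the polynomial expression for powers in $\tron\Z$ that every finite-index subgroup contains the subgroup generated by suitable powers, and relate this to a congruence level---but it must be stated and proved. Once you have such an $m_0$, a single choice $m_H=m_0$ already suffices (if $g\in\alg H_\Z$ and $\pi_{m_0}(g)\in\pi_{m_0}(H)$ then $g=h\gamma$ with $h\in H$, $\gamma\in\Gamma(m_0)\cap\alg H_\Z\leq H$), so your lcm construction becomes unnecessary.
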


\begin{cor}\label{cor:tffgn-explicit-arithmetic}
  There is an algorithm which takes as input $\bk{X \mid R}$ a finite
  presentation of a $\tffgn$-group $G$ and represents $H=\Theta_G(G)$
  as an explicitly given arithmetic group.
\end{cor}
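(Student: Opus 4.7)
The plan is to unpack, for $H=\Theta_G(G)$, each of the three data required by Definition \ref{defn:arith-group} for an explicitly given arithmetic subgroup of the algebraic group $\alg H \leq GL(n,\C)$ furnished by Proposition \ref{prop:Malcev-algebraic}. First I would apply Theorem \ref{thm:theta-functor} to the input presentation $\bk{X\mid R}$ to obtain an effective embedding $\Theta_G : G \hookrightarrow \tron\Z$; this outputs the generators $\Theta_G(X)$ of $H$ as explicit $n\times n$ upper unitriangular integer matrices.

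The next step is to compute defining polynomial equations for $\alg H$. Because $\log:\tron\C\to\trzn\C$ is a polynomial map (the series terminates on nilpotent matrices) with $\Q$-coefficients, I can compute each matrix $\log(\Theta_G(x))\in\trzn\Q$ for $x\in X$. Iteratively taking Lie brackets and $\Q$-linear combinations (a finite linear-algebra computation inside the finite-dimensional space $\trzn\Q$) I would obtain a $\Q$-basis of the smallest Lie subalgebra $\Q\log(H)$ of $\trzn\Q$ containing these logarithms, equivalently described as the zero set of a finite system of $\Q$-linear forms $L$ on $\trzn\Q$. Combining Theorem \ref{thm:readicable-corresp} with Proposition \ref{prop:Malcev-algebraic},
\[
\alg H_\C \;=\; \exp\bigl(\{Y\in\trzn\C : L(Y)=0\}\bigr),
\]
so pulling $L$ back through the polynomial map $\log$ produces a finite system $S := L\circ\log$ of $\Q$-polynomials on $\tron\C$ with $\alg H=V(S)$, which is the datum required by Definition \ref{defn:arith-group}(1).

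For the remaining two conditions I would invoke Lemma \ref{lem:arith-crit}, which provides an effectively computable modulus $m_H$ such that a matrix $g\in\alg H_\Z$ lies in $H$ if and only if $\pi_{m_H}(g)\in\pi_{m_H}(H)$. Since the finite set $\pi_{m_H}(H)\subset GL(n,\Z/m_H\Z)$ can be enumerated directly from the generators $\Theta_G(X)$, this yields the membership test demanded by (3). It also gives the index bound (2): because $H$ is the preimage in $\alg H_\Z$ of $\pi_{m_H}(H)$, we have
\[
[\alg H_\Z : H] \;\leq\; |\pi_{m_H}(\alg H_\Z)| \;\leq\; |GL(n,\Z/m_H\Z)|,
\]
which is an explicit finite upper bound.

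The only real subtlety is ensuring that the passage from the $\Q$-linear subspace $\Q\log(H)\subset\trzn\Q$ to an honest polynomial system defining $\alg H\leq GL(n,\C)$ is valid; this relies on the fact, recorded in Theorem \ref{thm:readicable-corresp}, that $\exp$ and $\log$ are mutually inverse polynomial isomorphisms of algebraic varieties on unipotent (resp.\ nilpotent) matrices, so that $L\circ\log$ truly is a polynomial system with $\Q$-coefficients. Beyond that bookkeeping, the corollary is a direct translation of Theorem \ref{thm:theta-functor}, Proposition \ref{prop:Malcev-algebraic}, and Lemma \ref{lem:arith-crit} into the format demanded by Definition \ref{defn:arith-group}.
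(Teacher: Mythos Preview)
Your proposal is correct and follows essentially the same route as the paper: compute $\Q\log(H)$ from the logarithms of generators, cut it out by linear equations, pull these back through $\log$ to obtain defining polynomials for $\alg H$, and then invoke Lemma~\ref{lem:arith-crit} for both the membership test and the index bound. The only cosmetic difference is that the paper appeals to a polycyclic generating set and \cite[Lemma~9.1.1]{GS2} to see that the logs span $\Q\log(H)$, whereas you close under Lie brackets directly; both are valid and amount to the same linear-algebra computation.
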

\begin{proof}
  We abuse notation and consider $G=H$. We can find a generating set
  $X' = \bk{x_1,\ldots,x_h}$ of $H$ which witnesses the fact that $H$
  is polycyclic (here $h$ is the Hirsh length.) Equipped with $X'$,
  the maps $\Theta_G$ and $\log$ we can find elements
  $\log(x_1),\ldots,\log(x_n)$ which generate $\Q\log(H)$ (see
  \cite[Lemma 9.1.1]{GS2}) as a $\Q$-Lie subalgebra so we can
  effectively find a system of linear equations $S$ which define the
  algebraic variety $\C\log(H)$. Seeing as $\exp:\trzn\C \to \tron\C$
  is an isomorphism of algebraic varieties with inverse $\log$ (both
  explicitly given) we can effectively find a system of polynomial
  equations $\exp_*(S)$ which define the algebraic group
  $\exp(\C\log(H)) = \alg H \leq GL(n,\C)$. This gives item 1. of
  Definition \ref{defn:arith-group}.

  Items 2. and 3. of Definition \ref{defn:arith-group} follow
  immediately from the criterion of Lemma \ref{lem:arith-crit}.
\end{proof}

Another Corollary (this is essentially 
\cite[Lemma 9.1.4]{GS2}) of Lemma \ref{lem:arith-crit} is:
\begin{cor}\label{cor:normalizer-crit}
  Let $H\leq \tron\Z$ be a $\tffgn$-group and let $m_H$ be as in Lemma
  \ref{lem:arith-crit}. For any $g \in GL(n,\Z)$ the following are
  equivalent:\begin{itemize}
  \item $g^\mo H g = H$.
  \item $g^\mo \alg{H}_\Z g = \alg{H}_\Z$ and
    $\pi_{m_H}(g^\mo)\pi_{m_H}(H)\pi_{m_H}(g) = \pi_{m_H}(H)$.
  \end{itemize}
\end{cor}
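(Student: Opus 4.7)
The plan is to reduce the conjugation-invariance statement about $H$ to a pointwise application of the criterion supplied by Lemma \ref{lem:arith-crit}. The content is essentially ``$g$ normalizes $H$ iff it normalizes $H$ in each of the two pieces of information (the isolator in $GL(n,\bbZ)$, and the image modulo $m_H$) out of which $H$ is reconstructed.''

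For the forward direction I would argue as follows. Assume $g^\mo H g = H$. The group $\alg{H}_\Z$ is the $GL(n,\bbZ)$-isolator of $H$, equivalently the intersection of the $\bbQ$-defined algebraic group $\alg{H}$ with $GL(n,\bbZ)$ (Proposition \ref{prop:Malcev-algebraic}), and it depends only on $H$. Therefore conjugation of $H$ by $g$ induces conjugation of $\alg{H}_\Z$ by $g$, so $g^\mo \alg{H}_\Z g = \alg{H}_\Z$. Applying the homomorphism $\pi_{m_H}$ to the equality $g^\mo H g = H$ gives $\pi_{m_H}(g^\mo)\pi_{m_H}(H)\pi_{m_H}(g) = \pi_{m_H}(H)$, which is the second clause.

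For the backward direction, assume both conditions on the right-hand side. Take an arbitrary $h \in H$ and set $h' = g^\mo h g$. By the first condition $h' \in g^\mo \alg{H}_\Z g = \alg{H}_\Z$; by the second condition and the fact that $\pi_{m_H}$ is a group homomorphism, $\pi_{m_H}(h') = \pi_{m_H}(g^\mo)\pi_{m_H}(h)\pi_{m_H}(g) \in \pi_{m_H}(H)$. Lemma \ref{lem:arith-crit} then certifies $h' \in H$, proving $g^\mo H g \subseteq H$. Both hypotheses are symmetric in $g$ and $g^\mo$ (multiplying $g^\mo \alg{H}_\Z g = \alg{H}_\Z$ on the left by $g$ and on the right by $g^\mo$ yields $\alg{H}_\Z = g\alg{H}_\Z g^\mo$, and likewise for the finite quotient), so the same argument with $g$ replaced by $g^\mo$ gives $gHg^\mo \subseteq H$, equivalently $H \subseteq g^\mo H g$.

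There is no real obstacle here; the statement is a structural corollary rather than a new theorem, and the only subtlety worth watching is checking that the two conditions on the right transport under inversion of $g$ so that one genuinely gets both inclusions. Once that symmetry is recorded, the proof reduces to two invocations of Lemma \ref{lem:arith-crit}, one for each inclusion.
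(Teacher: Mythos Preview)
Your argument is correct and is precisely the intended one: the paper does not write out a proof but simply records the statement as an immediate corollary of Lemma~\ref{lem:arith-crit} (noting it is essentially Lemma~9.1.4 of \cite{GS2}), and your pointwise application of that lemma together with the symmetry in $g \leftrightarrow g^{-1}$ is exactly how the deduction goes.
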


From which we deduce:

\begin{prop}\label{prop:arithmetic-automorphisms}
  There is an algorithm which takes as input $\bk{X \mid R}$ a finite
  presentation of a $\tffgn$-group $G$ and represents the
  $GL(n,\Z)$-normalizer $N_{GL(n,\Z)}(H)=\hat\Gamma$ of $H=\Theta_G(G)\leq
  GL(n,\Z)$ as an explicitly given arithmetic group.
\end{prop}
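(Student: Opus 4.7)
My plan is to verify the three items of Definition \ref{defn:arith-group} for $\hat\Gamma = N_{GL(n,\Z)}(H)$: produce (i) a finite system $S'$ of $\Q$-polynomials whose vanishing locus $\alg N := V(S')$ is a $\Q$-algebraic subgroup of $GL(n,\C)$ that contains $\hat\Gamma$ as a finite-index subgroup of $\alg N \cap GL(n,\Z)$; (ii) an explicit upper bound for the index $[\alg N \cap GL(n,\Z) : \hat\Gamma]$; and (iii) an effective test for membership of a given $g \in GL(n,\Z)$ in $\hat\Gamma$.

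The central construction is to take $\alg N := N_{GL(n,\C)}(\alg H)$, where $\alg H$ is the $\Q$-algebraic hull of $H$ produced by Corollary \ref{cor:tffgn-explicit-arithmetic}, with explicit defining system $S$ of $\Q$-polynomials. For an indeterminate matrix $g = (g_{ij})$ with formal determinant $\delta$, the condition $g \in \alg N$ is equivalent to requiring that, for every $p \in S$, the polynomial $p(g^{-1} x g)$ --- viewed as a polynomial in the entries of $x$ with coefficients in $\Q[g_{ij},\delta^{-1}]$ --- lie in the ideal generated by $S$ inside $\Q[g_{ij},\delta^{-1}][x_{kl}]$. Effective ideal membership (via Gröbner bases over the localized ring $\Q[g_{ij},\delta^{-1}]$) converts each such condition into a finite list of polynomial identities in the $g_{ij}$ alone; their union is the $\Q$-defined system $S'$ required by item (i).

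Next I invoke the fact that $H$, hence $\alg H_\Z$, is Zariski-dense in $\alg H$: this is standard because $\alg H_\Q = \malcev H$ by Proposition \ref{prop:Malcev-algebraic}, and $\malcev H$ has dimension equal to the Hirsch length of $H$, which equals $\dim \alg H$. Zariski density forces
\[ N_{GL(n,\Z)}(\alg H_\Z) \;=\; \alg N \cap GL(n,\Z) \;=\; \alg N_\Z. \]
Corollary \ref{cor:normalizer-crit} then says that $g \in \hat\Gamma$ if and only if $g \in \alg N_\Z$ and $\pi_{m_H}(g)$ normalizes $\pi_{m_H}(H)$ inside the finite group $GL(n,\Z/m_H\Z)$. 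This congruence condition cuts out $\hat\Gamma$ as a subgroup of $\alg N_\Z$ of index at most $|GL(n,\Z/m_H\Z)|$, which is the bound of item (ii). Membership of $g \in GL(n,\Z)$ in $\hat\Gamma$ is then decidable in two stages: first evaluate $S'$ on the entries of $g$ (a polynomial check over $\Z$), and second test the congruence condition modulo $m_H$ using the known images of the generators of $H$ (a finite computation inside the finite group $GL(n,\Z/m_H\Z)$); this is item (iii).

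The only substantive obstacle is the first step: turning the implicit definition of $\alg N$ into the explicit polynomial system $S'$. This amounts to effective ideal membership in a polynomial ring over $\Q$ with an inverted determinant, which is classical via Gröbner bases but bookkeeping-heavy. Every other ingredient --- the construction of $\Theta_G$, of the algebraic hull $\alg H$, of the modulus $m_H$, and the finiteness of $GL(n,\Z/m_H\Z)$ --- is already available from Theorem \ref{thm:theta-functor}, Lemma \ref{lem:arith-crit}, and Corollary \ref{cor:tffgn-explicit-arithmetic}.
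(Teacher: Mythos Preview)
Your proof is correct and follows essentially the same strategy as the paper: both take the $\Q$-algebraic normalizer of $\alg H$ as the ambient algebraic group and then invoke Corollary~\ref{cor:normalizer-crit} for the membership test and the index bound. The paper simply cites the explicit construction of this algebraic group $\alg A(H)$ from \cite[(7), p.~613]{GS2}, whereas you sketch a direct construction via Gr\"obner bases.

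Two small remarks on your construction. First, your ideal-membership criterion for $g\in\alg N$ should, in general, require $p(g^{-1}xg)$ to lie in the \emph{radical} of the ideal generated by $S$, not merely in $\langle S\rangle$; this is harmless here because $\alg H$ is isomorphic as a variety to affine space (via $\log$), so its ideal is prime. Second, the Gr\"obner machinery is heavier than needed: since $g\exp(v)g^{-1}=\exp(gvg^{-1})$, one has
\[
\alg N \;=\; \{\,g\in GL(n,\C)\;:\; g\,\mathfrak h\,g^{-1}=\mathfrak h\,\},\qquad \mathfrak h=\log(\alg H)=\Q\log(H)\otimes\C,
\]
where $\mathfrak h$ is a $\Q$-defined linear subspace of $\trzn\C$; the condition that conjugation by $g$ preserves a given linear subspace is directly a finite list of polynomial equations in the entries of $g$ and $\det(g)^{-1}$. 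This is closer in spirit to what \cite{GS2} actually writes down.
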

\begin{proof}
  There is an explicitly given algebraic group $\alg A(H)\leq GL(m,\C)$
  given in (7) on page 613 of \cite{GS2}, which
  satisfies: \[ \alg A(H)_\Z = N_{GL(n,\Z)}(\alg{H}_\Z).
  \] By Corollary \ref{cor:normalizer-crit} the explicitly given
  arithmetic subgroup\[ \hat\Gamma = \{g \in \alg A(H)_\Z \mid
  \pi_{m_H}(g^\mo)\pi_{m_H}(H)\pi_{m_H}(g) = \pi_{m_H}(H)\} \] is
  exactly the normalizer $N_{GL(n,\Z)}(H)$.
\end{proof}

\subsubsection{The mixed Whitehead problem as an orbit problem}\label{sec;MWP_orbit_pb}

We first fix some notation. Let $H,K$ be groups and
let \begin{eqnarray*} \phi:K &\to&
  \aut H\\
  k & \mapsto & \phi_k
\end{eqnarray*}
 be a homomorphism. We denote the associated right semidirect
product $K \semidirect_\phi H$ with the multiplication rule\[
(k_1;h_1)\cdot(k_2;h_2) = (k_1k_2;\phi^\mo_{k_2}(h_1)h_2)
\] It is a standard result that if some group $G$ has subgroups $H,K$
with $H \nsgp G$, $H \cap K = \{1\}$, and $G=KH$ then $G \approx K
\semidirect_\phi H$ where $\phi$ is induced by conjugation.

Let $G$ be some group and let $\Gamma = \aut G$. There is a well
defined right action of the right semidirect product $\Gamma
\semidirect G^r$ on the set of $r$-tuples of tuples given
by\begin{equation}\label{eqn:semidirect-action} 
  (S_1,\ldots,S_r)\cdot \big(\sigma;(g_1,\ldots,g_r) \big) =
  (\sigma^\mo(S_1)^{g_1},\ldots,\sigma^\mo(S_r)^{g_r}).\end{equation}
It immediately follows that

\begin{prop}\label{prop:orbit-reduction}
  Let $(S_1,\ldots,S_k), (T_1,\ldots,T_k)$ be tuples of elements in
  $G$. They are Whitehead equivalent if and only if they lie in the
  same orbit under the $\Gamma \semidirect G^r$-action given in
  (\ref{eqn:semidirect-action}.)
\end{prop}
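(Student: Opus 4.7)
The plan is to unwind both conditions and to observe that they differ only by a reparametrization of the conjugators by the automorphism $\sigma$, the twisted multiplication in $\Gamma \semidirect G^r$ being built in for exactly this purpose.

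First I would briefly verify that (\ref{eqn:semidirect-action}) is a well-defined right action. This amounts to the componentwise identity $\sigma^\mo(x^g) = \sigma^\mo(x)^{\sigma^\mo(g)}$ combined with the semidirect-product multiplication rule; a direct computation then shows that both $\bigl((S_i) \cdot (\sigma_1;\tuple g_1)\bigr) \cdot (\sigma_2;\tuple g_2)$ and $(S_i) \cdot \bigl((\sigma_1;\tuple g_1)(\sigma_2;\tuple g_2)\bigr)$ equal $\bigl((\sigma_1\sigma_2)^\mo(S_i)^{\sigma_2^\mo(g_{1,i}) g_{2,i}}\bigr)_i$.

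The core of the proof is then the following bijection between witnesses. The equation $(T_1,\ldots,T_k) \cdot (\sigma;(h_1,\ldots,h_k)) = (S_1,\ldots,S_k)$ means $S_i = \sigma^\mo(T_i)^{h_i}$ for every $i$; applying $\sigma$ to both sides rewrites this as $\sigma(S_i) = T_i^{\sigma(h_i)}$, which is precisely the definition of Whitehead equivalence with the conjugators $g_i := \sigma(h_i)$. Since $h \mapsto \sigma(h)$ is a bijection of $G$, the substitution $g_i = \sigma(h_i)$ (respectively $h_i = \sigma^\mo(g_i)$) exhibits the two equivalences as identical, one direction at a time.

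There is no genuine obstacle: the statement is a packaging lemma, and the multiplication on $\Gamma \semidirect G^r$ has been chosen precisely so that the automorphism $\sigma$ followed by componentwise conjugation by $(g_1,\ldots,g_k)$ is the same group operation as componentwise conjugation by the transformed tuple $(\sigma^\mo(g_1),\ldots,\sigma^\mo(g_k))$ followed by $\sigma$. The motivation for this reformulation lies downstream: it recasts the two-quantifier mixed Whitehead problem as a single orbit problem in a group which, when $G$ is a $\tffgn$-group, remains arithmetic and hence amenable to the matrix-group techniques recalled earlier in this section.
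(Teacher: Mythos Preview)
Your proof is correct and is precisely the unwinding the paper has in mind: the paper itself gives no argument beyond the phrase ``It immediately follows that'', treating the proposition as an immediate consequence of the definitions of Whitehead equivalence and of the action (\ref{eqn:semidirect-action}). Your verification that the action is well-defined and your explicit bijection $g_i = \sigma(h_i)$ between witnesses simply spell out what the paper leaves to the reader.
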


Our goal is to use the Grunewald-Segal orbit algorithm (we slightly
changed the terminology to make it consistent with this paper):

\begin{thm}[{\cite[Algorithm A]{GS1}}]\label{thm:orbit-algorithm}
  There exists and algorithm which takes as input:\begin{itemize}
  \item an explicitly
    given $\Q$-defined algebraic group $\alg G$, 
  \item$\rho$, an explicitly  given rational action on some subset $W
    \subset \C^n$,
  \item $\Gamma$, an explicitly given arithmetic subgroup of $\alg G$,
    and 
  \item two points $a,b \in W \cap \Q^n$
\end{itemize}
The algorithm decides whether there is some $\gamma \in \Gamma$ such
that $\rho(\gamma)\cdot a = b$ and if so produces such a matrix
$\gamma$.
\end{thm}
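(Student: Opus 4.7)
The plan is to prove this orbit algorithm in three stages: first handle the purely algebraic-geometric part, then reduce the arithmetic question to an intersection of a rational coset with an algebraic subgroup, and finally invoke reduction theory for arithmetic groups.

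Stage 1 (algebraic orbit). The $\alg G$-orbit $\alg G \cdot a$ is a locally closed constructible subset of $W$ by Chevalley's theorem. Using effective algebraic geometry over $\Q$ (Gr\"obner bases or effective elimination of quantifiers in the theory of algebraically closed fields) I can compute a description of $\alg G\cdot a$ and decide whether $b$ lies in it. If $b \notin \alg G(\C)\cdot a$ then certainly no $\gamma\in\Gamma\subset\alg G(\C)$ sends $a$ to $b$, so I return NO.

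Stage 2 (stabilizer reduction). Assuming $b \in \alg G\cdot a$, I compute the $\Q$-defined stabilizer $\alg H = \Stab_{\alg G}(a)$ from the explicit defining equations of $\alg G$ and $\rho$, and explicitly produce a rational element $g_0 \in \alg G(\Q)$ with $\rho(g_0)\cdot a = b$ (for example by first solving over $\overline{\Q}$ and then descending once one knows a rational solution exists, or by Hilbert's Nullstellensatz combined with the fact that both $a$ and $b$ are in $\Q^n$). The set of all $\gamma\in\alg G$ sending $a$ to $b$ is then the coset $g_0\alg H$, so the question becomes the coset-intersection problem: is $\Gamma \cap g_0\alg H(\Q)$ non-empty?

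Stage 3 (arithmetic coset intersection). This is the crux. By Borel--Harish-Chandra finiteness, the double coset space $\Gamma\backslash \alg G(\Q)/\alg H(\Q)$ decomposes into finitely many classes, and reduction theory provides a Siegel fundamental domain in which one can algorithmically search for representatives of $\Gamma$-orbits. The plan is to bound the norm of any potential $\gamma\in\Gamma\cap g_0\alg H(\Q)$ in terms of the defining data of $\alg G$, $\alg H$, $\Gamma$ and $g_0$ using Siegel-set geometry, then run a finite search. In parallel, using the explicit bound on $[\alg G_\Z:\Gamma]$ and explicit congruence subgroups $\Gamma_m\leq \Gamma$ together with $\alg H(\Z/m\Z)$, one exhibits finite obstructions to membership and thereby certifies when the intersection is empty. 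The two procedures run side by side: the search either eventually finds a valid $\gamma$ within the bounded region, or the congruence obstruction rules it out.

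The main obstacle will be Stage 3: making Borel--Harish-Chandra finiteness truly effective requires explicit control of the reduction-theory constants (heights of coset representatives, Siegel-set parameters, and congruence depths) in terms of the input data, and this is where the bulk of the technical work in \cite{GS1} lies. Stages 1 and 2 are essentially routine effective commutative algebra, but Stage 3 relies on the full structure theory of arithmetic subgroups of linear algebraic groups.
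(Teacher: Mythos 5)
First, a point of comparison: the paper does not prove this statement at all --- it is quoted verbatim as Algorithm A of Grunewald--Segal \cite{GS1} and used as a black box, so there is no ``paper proof'' for your argument to parallel; your sketch is attempting to reprove a deep external theorem, and as it stands it does not succeed.

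The genuine gaps are in Stages 2 and 3. In Stage 2, knowing $b\in\alg G(\C)\cdot a$ with $a,b$ rational does not give you $g_0\in\alg G(\Q)$ with $\rho(g_0)a=b$: the fiber $\{g:\rho(g)a=b\}$ is a torsor under the stabilizer $\alg H$, and it may have no $\Q$-point (the obstruction lives in $H^1(\Q,\alg H)$); the Nullstellensatz only produces $\overline{\Q}$-points, and deciding whether such a torsor has a rational point is itself a nontrivial algorithmic problem, not routine effective commutative algebra. In Stage 3, the finiteness you invoke is false in this generality: already for $\alg G=GL_1$, $\alg H=\{1\}$, $\Gamma=\{\pm1\}$ the double coset space $\Gamma\backslash\alg G(\Q)/\alg H(\Q)=\Q^{\times}/\{\pm1\}$ is infinite; the Borel--Harish-Chandra finiteness theorem concerns integral points on closed orbits (reductive stabilizers), not arbitrary $\alg H$. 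Likewise, there is no a priori norm bound on a witness $\gamma$ (when $\Gamma\cap g_0\alg H(\Q)\neq\emptyset$ it is a coset of the typically infinite group $\Gamma\cap\alg H$, so a bounded Siegel-set search is not what is needed), and the claim that congruence conditions certify emptiness is an unproven local--global principle: a rational coset $g_0\alg H(\Q)$ can miss $\Gamma$ with no congruence obstruction. The actual Grunewald--Segal proof proceeds by a long structural induction (unipotent radical, tori handled via algorithmic algebraic number theory, the semisimple part, and finiteness theorems in Galois cohomology), which is exactly the content your Stage 3 defers back to \cite{GS1}; in other words, the crux of the theorem is assumed rather than proved.
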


\begin{rmk}
  We will not give the full definition of a rational action but the
  only example we will consider is the action of $GL(n,\C)$ on
  $GL(n,\C) \subset \C^{n^2}$ given by conjugation.
\end{rmk}

\begin{defn}
  Let $g_0,\ldots,g_m$ be $n\times n$ matrices. The $mn \times mn$
  matrix $\diag{g_0,\ldots,g_m}$ is the corresponding block-diagonal
  matrix.
\end{defn}

Let $H \leq GL(n,\C)$ and let $K \leq N_{GL(n,\C)}(H)$. Then we have a
natural map $\phi: K \to \aut H$ given by $\phi_k(h) = khk^\mo$.

\begin{lem}\label{lem:linear-semidirect}
  Let $H,K,\phi$ be as above. Then the set of matrices\[ T =
  \{\diag{k,kh_1,\ldots,kh_r} \mid k \in K; h_1,\ldots,h_n \in H\}
  \] equipped with the induced multiplication is isomorphic to $K
  \semidirect_{\phi^r} H^r$ via \begin{equation}\label{eqn:semidirect-to-linear}
  (k;(h_1,\ldots,h_r)) \mapsto \diag{k,kh_1,\ldots,kh_r}.
\end{equation}
\end{lem}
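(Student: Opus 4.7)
The plan is to verify directly that the proposed map $\Phi\colon(k;(h_1,\dots,h_r))\mapsto \diag{k,kh_1,\dots,kh_r}$ is a well-defined bijection onto $T$ and a group homomorphism, when $T$ is endowed with ordinary matrix multiplication and the domain with the semidirect product structure $K\semidirect_{\phi^r}H^r$.

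First I would observe that $\Phi$ is a bijection: surjectivity is immediate from the definition of $T$, and injectivity follows because one can recover $k$ from the first block of $\diag{k,kh_1,\dots,kh_r}$ and then each $h_i=k^{-1}(kh_i)$ from the remaining blocks. The next step is to check that $T$ is actually closed under matrix multiplication and inversion (so that the statement ``equipped with the induced multiplication'' makes sense as a group). Closure will fall out of the main computation below, and for inverses one notes that
\[
\diag{k,kh_1,\dots,kh_r}^{-1} = \diag{k^{-1},\, \phi_{k}(h_1^{-1})\,k^{-1},\dots,\phi_{k}(h_r^{-1})\,k^{-1}},
\]
which lies in $T$ precisely because $K$ normalises $H$, so $\phi_k(h_i^{-1})\in H$.

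The main computation is block-by-block. Using the assumption $K\leq N_{GL(n,\C)}(H)$ and the identity $\phi_k^{-1}(h)=k^{-1}hk$, I would compute
\[
\diag{k_1,k_1h^{(1)}_1,\dots,k_1h^{(1)}_r}\cdot\diag{k_2,k_2h^{(2)}_1,\dots,k_2h^{(2)}_r}
= \diag{k_1k_2,\,k_1h^{(1)}_1k_2h^{(2)}_1,\dots},
\]
and then rewrite each non-diagonal block as
\[
k_1h^{(1)}_ik_2h^{(2)}_i=(k_1k_2)(k_2^{-1}h^{(1)}_ik_2)h^{(2)}_i=(k_1k_2)\,\phi_{k_2}^{-1}\bigl(h^{(1)}_i\bigr)\,h^{(2)}_i.
\]
Comparing with the convention recalled in the excerpt for $K\semidirect_{\phi^r}H^r$, namely $(k_1;\mathbf{h}^{(1)})\cdot(k_2;\mathbf{h}^{(2)}) = (k_1k_2;\,\phi_{k_2}^{-1}(\mathbf{h}^{(1)})\,\mathbf{h}^{(2)})$ applied coordinatewise in $H^r$, the product is exactly $\Phi\bigl((k_1k_2;\,\phi_{k_2}^{-1}(\mathbf{h}^{(1)})\,\mathbf{h}^{(2)})\bigr)$, proving that $\Phi$ is a homomorphism (and, incidentally, that $T$ is closed under products).

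I don't foresee any real obstacle: the lemma is a bookkeeping statement reflecting the fact that conjugation by $k$ in the upper-left block ``transports'' into the other diagonal blocks via $\phi_k$. The only point requiring a minimum of care is bookkeeping around the convention for the semidirect product (with $\phi^{-1}_{k_2}$ acting on the first factor), which is why the map uses $kh_i$ rather than $h_ik$ in the off-diagonal blocks; once this is settled, the verification is purely mechanical.
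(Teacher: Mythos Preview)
Your proposal is correct and follows essentially the same approach as the paper: both compute the block-diagonal product $\diag{k,kh_1,\dots,kh_r}\cdot\diag{k',k'h'_1,\dots,k'h'_r}$, rewrite $kh_ik'h'_i = (kk')\phi_{k'}^{-1}(h_i)h'_i$, and match this against the semidirect product rule. The paper is simply terser, calling bijectivity ``clear'' and omitting the closure-under-inverses remark, whereas you spell these out explicitly.
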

\begin{proof}
  We first note that
  \begin{eqnarray*}
    \diag{k,kh_1,\ldots,kh_r}\cdot\diag{k',k'h'_1,\ldots,k'h'_r}
    & = & \diag{kk',kh_1k'h'_1,'\ldots,kh_rk'h'_r}\\
    & = & \diag{kk', kk'\phi^\mo_{k'}(h_1)h_1',\ldots,kk'\phi^\mo_{k'}(h_r)h'_r}.
  \end{eqnarray*}
  The map (\ref{eqn:semidirect-to-linear}) is therefore a
  homomorphism, which is clearly bijective.
\end{proof}

\begin{conv}
  If, $H,K$ are as above we will abuse notation and denote the linear
  group $T$ given in Lemma \ref{lem:linear-semidirect} simply as
  $K\semidirect H^r$.
\end{conv}

The following is obvious
\begin{lem}\label{lem:explicit-action}
There is a well defined right rational action of $K
\semidirect H^r$ on $\bigoplus_{i=1}^r H^{r_i}$ given by\begin{eqnarray*}
\left(\oplus_{i=1}^r (h_{i,1},\ldots,h_{i,n_i})\right) \cdot \diag{k,kh_1,\ldots,kh_r} 
&=& \oplus_{i=1}^r (h_i^\mo k^\mo h_{i,1} kh_i,\ldots,h_i^\mo k^\mo
h_{i,n_i}kh_i)\\
& =& \oplus_{i=1}^r (\phi^\mo_k(h_{i,1})^{h_i},\ldots,\phi^\mo_k(h_{i,n_i})^{h_i}).
\end{eqnarray*} 
\end{lem}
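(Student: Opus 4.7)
The lemma is marked as obvious, so the plan is a brief verification of the two action axioms together with the observation that, when expressed in the matrix realization of Lemma \ref{lem:linear-semidirect}, the formula is polynomial in the entries of $k$, the $h_i$, and the $h_{i,j}$; hence rationality is automatic once the formula is confirmed to be a group action. The formula sends a tuple to something that still lies in $\bigoplus_i H^{n_i}$ because $k \in K \leq N_{GL(n,\bbC)}(H)$ normalizes $H$, so conjugation by $k h_i$ preserves $H$ coordinate-wise.

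First I would dispatch the identity axiom: the identity element of $K \semidirect H^r$ is $(1;1,\ldots,1)$, which corresponds to the identity matrix under the embedding of Lemma \ref{lem:linear-semidirect}; the formula then reduces to $\phi^\mo_1(h_{i,j})^1 = h_{i,j}$ on each coordinate, so the action on $\bigoplus_i H^{n_i}$ is trivial.

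The substantive step is the compatibility of the formula with the semidirect-product multiplication. Given $g_1=(k;h_1,\ldots,h_r)$ and $g_2=(k';h_1',\ldots,h_r')$, the plan is to compute both sides coordinate-by-coordinate. Applying $g_1$ then $g_2$ to an entry $h_{i,j}$ conjugates it successively by $kh_i$ and then by $k'h_i'$, yielding conjugation by $(kh_i)(k'h_i')$. On the other hand, by Lemma \ref{lem:linear-semidirect} one has $g_1g_2 = (kk';\phi^\mo_{k'}(h_1)h_1',\ldots,\phi^\mo_{k'}(h_r)h_r')$, and applying the action formula for $g_1g_2$ conjugates $h_{i,j}$ by $(kk')\cdot\phi^\mo_{k'}(h_i)h_i' = kk'\cdot (k')^\mo h_i k'\cdot h_i' = kh_ik'h_i'$. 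The two agree, which gives the required associativity.

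This completes the plan; the rationality of the action then follows for free, since each coordinate of the action is given by conjugating a matrix variable $h_{i,j}$ by a product of the block-matrix entries, a polynomial operation on the ambient affine space. I expect no real obstacle: the statement is essentially mechanical, which is why the paper brands it as obvious.
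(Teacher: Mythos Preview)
Your proposal is correct and matches the paper's treatment: the paper simply declares the lemma obvious and gives no proof, and your verification of the identity and associativity axioms (via the equality $(kk')\cdot\phi^{-1}_{k'}(h_i)h_i' = kh_ik'h_i'$) together with the remark on rationality is exactly the routine check the authors leave to the reader.
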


The final missing ingredient is an explicit arithmeticity result

\begin{lem}\label{lem:fi-semidirect}
  Let $G=K\semidirect_\phi H = KH$ and let $H'<H,K'<K$ be such that
  $[K:K']=d, [H:H']=e$. Suppose moreover that $H'$ is
  $\phi(K')$-invariant. Then $K'H' = G'$ is a subgroup that satisfies
  $[G:G'] \leq ed$.
\end{lem}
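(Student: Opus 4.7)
The plan is to separate the proof into verifying that $G' = K'H'$ is a subgroup, and then bounding its index by exhibiting an explicit transversal of at most $ed$ elements.

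First, I would observe that the hypothesis ``$H'$ is $\phi(K')$-invariant'' is precisely the statement that $k' H' (k')^{-1} = H'$ for every $k' \in K'$ when we identify $K$ and $H$ with their images in $G$. Hence $K'$ normalizes $H'$ inside $G$, so $K'H' = H'K'$ is a set closed under multiplication and inversion, i.e. a subgroup. Note also that $H' \vartriangleleft G'$, so we may view $G'$ itself as the semidirect product $K' \semidirect_{\phi|_{K'}} H'$ sitting inside $G$.

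Next, I would choose representatives $\{k_1,\dots,k_d\}$ for the right cosets $K'\backslash K$ and $\{h_1,\dots,h_e\}$ for the right cosets $H'\backslash H$, and show that the $ed$ elements $h_j k_i$ contain a transversal for $G'\backslash G$. Given $g\in G$, the semidirect product structure lets me write $g = kh$ uniquely with $k\in K$, $h\in H$. Decompose $k = k' k_i$ with $k' \in K'$, and then use that $k_i h k_i^{-1}\in H$ to write $k_i h k_i^{-1} = h' h_j$ with $h'\in H'$. Then
\[
g \;=\; k' k_i h \;=\; k'(k_i h k_i^{-1}) k_i \;=\; (k' h')(h_j k_i),
\]
and the left factor $k'h'$ lies in $K'H' = G'$. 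Hence $g \in G' \cdot (h_j k_i)$, which shows $[G:G'] \leq ed$.

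I do not expect any genuine obstacle here: once one notices that the $\phi(K')$-invariance of $H'$ plays a double role (making $K'H'$ a subgroup \emph{and} enabling the conjugation trick $k_i h k_i^{-1}\in H$ which lets us move the $K$-part past the $H$-part), everything reduces to unique normal form in the semidirect product. The one thing worth remarking is why one gets only the inequality $\leq ed$ rather than equality: the elements $h_j k_i$ need not lie in pairwise distinct $G'$-cosets unless $H'$ is also invariant under all of $\phi(K)$, which is not assumed.
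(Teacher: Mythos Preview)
Your proof is correct. The paper's argument is essentially the same coset arithmetic, but organized slightly differently: rather than exhibiting the explicit transversal $\{h_j k_i\}$, it passes through the intermediate subgroup $K'H$ (a subgroup since $H \vartriangleleft G$) and bounds $[G:G'] = [G:K'H]\cdot[K'H:K'H']$, noting that $[G:K'H]=d$ via $G/H \cong K$ and that $K'H = \bigcup_j K'H'h_j$ gives $[K'H:K'H'] \leq e$. Your explicit transversal and the paper's two-step tower are two ways of packaging the same computation.
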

\begin{proof}
  Since $G/H \approx K$ and $[K:K'] = d$ we immediately get
  $[KH:K'H]=d$. Now we have cosets\[
  H = H'h_1 \sqcup \cdots \sqcup H'h_e
  \] which means that \[K'H = K'H'h_1 \cup \cdots \cup K'H'h_e\] so
  $[K'H:K'H'] \leq e$ and the desired inequality follows.
\end{proof}

\begin{prop}\label{prop:explicit-semidirect}
  Let $H\leq \tron\Z$ be a $\Theta$-group, obtained from a finite
  presentation $\bk{X \mid R}$, and let $\hat\Gamma =
  N_{GL(n,\Z)}(H)$. Then the semidirect product $\hat\Gamma
  \semidirect H^r$ can be realized as an explicitly arithmetic
  subgroup of $GL((r+1)n,\C)$.
\end{prop}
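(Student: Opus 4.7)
The plan is to bootstrap from the two explicit arithmeticity results already in hand (Corollary \ref{cor:tffgn-explicit-arithmetic} for $H$, and Proposition \ref{prop:arithmetic-automorphisms} for $\hat\Gamma$), and to package them along the semidirect product structure of Lemma \ref{lem:linear-semidirect}. Write $\alg H \leq GL(n,\C)$ and $\alg A(H)\leq GL(n,\C)$ for the explicitly given $\Q$-defined algebraic groups whose integer points contain $H$ and $\hat\Gamma$ with finite index; both are provided in the preceding subsection, together with the congruence-mod-$m_H$ certificates (Lemma \ref{lem:arith-crit} and Corollary \ref{cor:normalizer-crit}) that decide membership in $H$ and $\hat\Gamma$ among the integer points.

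First I would define the ambient algebraic group $\alg G\leq GL((r+1)n,\C)$ to consist of all block-diagonal matrices $\diag{M_0,M_1,\dots,M_r}$ such that $M_0\in \alg A(H)$ and, for each $i\geq 1$, $M_0^{-1}M_i\in \alg H$. Block-diagonality is a finite system of linear equations on the entries; the condition $M_0\in \alg A(H)$ uses the polynomial system defining $\alg A(H)$; and the condition $M_0^{-1}M_i\in \alg H$ can be rendered polynomial in the entries of $M_0,M_i$ by substituting $M_0^{-1}$ (a polynomial in the entries of $M_0$ divided by $\det M_0$) into the defining equations of $\alg H$ and clearing denominators by a suitable power of $\det M_0$. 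This produces an explicit $\Q$-defined algebraic group $\alg G$, which by Lemma \ref{lem:linear-semidirect} is precisely the algebraic semidirect product $\alg A(H)\semidirect \alg H^r$ realised inside $GL((r+1)n,\C)$.

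Next I would observe that, under the identification of Lemma \ref{lem:linear-semidirect}, an integer point $\diag{M_0,\dots,M_r}\in \alg G_\Z$ lies in $\hat\Gamma\semidirect H^r$ if and only if $M_0\in \hat\Gamma$ and $M_0^{-1}M_i\in H$ for each $i$. Both conditions are effectively decidable by the congruence-mod-$m_H$ criteria already cited: for the first, one checks that $\pi_{m_H}(M_0^{-1})\pi_{m_H}(H)\pi_{m_H}(M_0)=\pi_{m_H}(H)$ inside the finite group $GL(n,\Z/m_H\Z)$ (Corollary \ref{cor:normalizer-crit}); for the second, one checks that $\pi_{m_H}(M_0^{-1}M_i)\in \pi_{m_H}(H)$ (Lemma \ref{lem:arith-crit}). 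This supplies item (3) of Definition \ref{defn:arith-group}.

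Finally I would bound the index $[\alg G_\Z:\hat\Gamma\semidirect H^r]$. Let $d=[\alg A(H)_\Z:\hat\Gamma]$ and $e=[\alg H_\Z:H]$, both of which are finite and algorithmically available from the explicit arithmeticity of $\hat\Gamma$ and $H$. An inductive application of Lemma \ref{lem:fi-semidirect} (using that $H^r\leq \alg H^r_\Z$ is $\phi(\hat\Gamma)$-invariant since $\hat\Gamma$ normalises $H$) yields $[\alg G_\Z:\hat\Gamma\semidirect H^r]\leq d\,e^r$, which gives item (2). Combined with the explicit polynomial definition of $\alg G$ from the first step, this completes the realisation of $\hat\Gamma\semidirect H^r$ as an explicitly given arithmetic subgroup of $GL((r+1)n,\C)$. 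The only potentially delicate step is the verification that the image of $\alg A(H)\semidirect \alg H^r$ in block-diagonal form is Zariski-closed with an effectively computable defining ideal, but since the map $(k,h_1,\dots,h_r)\mapsto \diag{k,kh_1,\dots,kh_r}$ is a closed immersion of algebraic groups, this reduces to a routine elimination computation.
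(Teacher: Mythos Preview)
Your proof is correct and follows essentially the same approach as the paper: define the ambient $\Q$-algebraic group as block-diagonal matrices $\diag{g_0,\dots,g_r}$ with $g_0\in\alg A(H)$ and $g_0^{-1}g_i\in\alg H$, bound the index via Lemma \ref{lem:fi-semidirect} by $de^r$, and decide membership using the congruence criteria of Lemma \ref{lem:arith-crit} and Corollary \ref{cor:normalizer-crit}. If anything, you are slightly more careful than the paper in spelling out why the condition $M_0^{-1}M_i\in\alg H$ is polynomial and in noting that Lemma \ref{lem:fi-semidirect} is applied inductively.
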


\begin{proof}
  Let $S_{\alg H}$ and $S_{\alg A(H)}$ be the explicit systems of
  polynomial equations defining the $\Q$-algebraic groups $\alg H$ and
  $\alg A(H)$ containing $H$ and $\hat\Gamma$ as arithmetic subgroups
  respectively. Let $\alg S$ be the set of matrices
  $\diag{g_0,\ldots,g_r}$ such that the entries of $g_0$ satisfy
  $S_{\alg A(H)}$ and for $i=1,\ldots,r$ the entries of $g_0^\mo g_i$
  satisfy $S_{\alg H}$. It immediately follows from the definition
  that \[\alg S_\Z = \alg A(H)_\Z \semidirect \alg H_\Z^r.\] By Lemma
  \ref{lem:fi-semidirect} we have the upper bound $[\alg
  S_\Z:\hat\Gamma \semidirect H^r] \leq e^rd$ where
  $[A(H)_\Z:\hat\Gamma]=d$ and $[\alg H_\Z,H]=e$. Finally given some
  $\diag{\alpha,g_1,\ldots,g_n} \in \alg S_\Z$ we can decide if $\alpha \in
  \hat\Gamma$ using the criterion of Corollary
  \ref{cor:normalizer-crit} and we can decide whether each $\alpha^\mo
  g_1 \in H$ with the criterion given by Lemma \ref{lem:arith-crit}.
\end{proof}

We now give our solution to the mixed Whitehead problem for
$\tffgn$-groups.

\begin{prop}\label{prop:mwhp-tffgn}
There is a uniform algorithm to solve the mixed Whitehead problem for
$\tffgn$-groups.
\end{prop}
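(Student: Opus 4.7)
The plan is to reduce the mixed Whitehead problem, through a chain of equivalences already set up in this section, to a single instance of the Grunewald--Segal orbit algorithm (Theorem \ref{thm:orbit-algorithm}). The ingredients are essentially all in place; what remains is to assemble them correctly.

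First I would invoke Proposition \ref{prop:orbit-reduction}: two $k$-tuples of tuples in $G$ are Whitehead equivalent if and only if they lie in the same orbit under the natural right $\Aut(G) \semidirect G^k$-action on tuples of tuples. Next I would push everything into the linear setting. By Theorem \ref{thm:theta-functor} we embed $G$ as $H = \Theta_G(G) \leq \tron\Z$, with the crucial property that $\hat\Gamma = N_{GL(n,\Z)}(H)$ surjects onto $\Aut(G)$. Because conjugation by $g \in G$ corresponds to conjugation by $\Theta_G(g) \in H$ in the linear ambient group, and because every automorphism of $G$ lifts to some $\gamma \in \hat\Gamma$, the group-theoretic orbit problem transfers faithfully to the orbit problem for the $\hat\Gamma \semidirect H^k$-action on tuples of tuples of $H$.

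The linear reformulation fits the hypotheses of Algorithm A exactly: by Proposition \ref{prop:explicit-semidirect}, $\hat\Gamma \semidirect H^k$ is realized as an explicitly given arithmetic subgroup of $GL((k+1)n, \C)$; by Lemma \ref{lem:explicit-action} its action on $\bigoplus_{i=1}^k H^{r_i}$ is rational (and explicitly described); and the input tuples $(S_1,\dots,S_k)$, $(T_1,\dots,T_k)$, mapped via $\Theta_G$, furnish two rational points $a,b$ in this affine space. A direct application of Theorem \ref{thm:orbit-algorithm} then decides orbit equivalence, which is Whitehead equivalence in $G$.

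The main subtlety, and the point that I would spell out carefully, is checking that the linear orbit problem captures the mixed Whitehead equivalence without loss or spurious gain. The surjection $\hat\Gamma \to \Aut(G)$ has a non-trivial kernel (morally the group $H$ acting by conjugation on itself), but this redundancy is precisely what is absorbed by the $H^k$-factor in the semidirect product, so Whitehead equivalent tuples are sent to orbit-equivalent rational points and conversely. Uniformity of the resulting procedure is then straightforward: every intermediate construction---computing $\Theta_G$, finding explicit polynomial descriptions of $\alg{H}$ and $\alg{A}(H)$, assembling the semidirect product, and the congruence-based membership tests of Lemma \ref{lem:arith-crit} and Corollary \ref{cor:normalizer-crit}---has already been shown earlier in the section to be uniform in the input finite presentation of $G$.
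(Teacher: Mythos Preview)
Your proposal is correct and follows essentially the same route as the paper's proof: reduce via Proposition~\ref{prop:orbit-reduction} to an orbit problem, linearize via $\Theta_G$, realize $\hat\Gamma \semidirect H^k$ as an explicitly given arithmetic group using Proposition~\ref{prop:explicit-semidirect}, and feed the resulting rational action and points into Algorithm~A. Your discussion of the kernel of $\hat\Gamma \to \Aut(G)$ is slightly off (the kernel is the centralizer of $H$ in $GL(n,\Z)$, which acts trivially by conjugation on tuples in $H$, rather than being ``absorbed'' by the $H^k$-factor), but this is harmless since surjectivity alone already guarantees the two orbit problems coincide.
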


\begin{proof}
  We are given a finite presentation $\bk{X \mid R}$ of a
  $\tffgn$-group $G$ and a pair of tuples of tuples
  $(S_1,\ldots,S_k),
  (T_1,\ldots,T_k)$.\\

  \noindent \emph{Step 1:} Construct the embedding $\Theta_G: G \to GL(n,\Z)$
  given by Theorem \ref{thm:theta-functor} and represent $H =
  \Theta_G(G)$ and $\hat\Gamma = N_{GL(n,\Z)}(H)$ explicitly as
  arithmetic groups.\\

  \noindent \emph{Step 2:} Via $\Theta_G$ represent the tuples of
  tuples $(\tuple{s}_1,\ldots,\tuple{s}_k),
  (T_1,\ldots,T_k)$ as points $s,t$ in
  $\bigoplus_{i=1}^k GL(n,\Z)^{n_i}$.\\

  \noindent \emph{Step 3:} Represent $\hat\Gamma \semidirect H^k$
  explicitly as an arithmetic subgroup of $GL((k+1)n,\C)$.\\

  By Theorem \ref{thm:theta-functor} since the natural map $\hat\Gamma
  \to \aut H$ is \emph{surjective}, Proposition
  \ref{prop:orbit-reduction} implies that
  $(S_1,\ldots,S_k)$ and $(T_1,\ldots,T_k)$ are
  Whitehead equivalent \emph{if and only if} the points $s,t$ are in
  the same $\hat\Gamma \semidirect H^r$ orbit via the action given in
  Lemma
  \ref{lem:explicit-action}.\\

  \noindent \emph{Step 4:} Use \cite[Algorithm A]{GS1}  to
  decide if $s,t \in \bigoplus_{i=1}^k GL(n,\Z)^{n_i}$ are in the same
  $\hat\Gamma \semidirect H^r$-orbit via the action given in  Lemma
  \ref{lem:explicit-action}.
\end{proof} 

\subsubsection{Finitely generated nilpotent groups with torsion}

If $G$ is a finitely generated nilpotent group, i.e. an
$\fgn$-group. Then the approach is essentially the same. We will
construct an embedding $G$ in to some $GL(m,\bbC)$ as an arithmetic
group and this embedding will be such that conjugation gives a
surjection $N_{GL(m,\bbC)}(G) \onto \aut G$, the latter also being
explicitly given as an arithmetic group. The proof will then go
through immediately as in the proof of Proposition \ref{prop:mwhp-tffgn}.

We now follow \cite[\S 7]{GS2}. Let $G$ be a $\fgn$-group. It is
classical that $\tau(G)$, the set of finite order elements of $G$, in
fact form a finite characteristic subgroup of $G$ and that $\bar G =
G/\tau(G)$ is a $\tffgn$-group.

\begin{lem}[{see \cite[\S 7]{GS2}}]\label{lem:find-tau-and-m}
  There is an algorithm which, given a finite presentation of an
  $\fgn$-group $G$, will find the following:
  \begin{itemize}
  \item a finite set of words representing the set $\tau(G)$,
  \item an integer $m$ such that $G^m$ is a characteristic subgroup
    and $G^m \cap \tau(G) = \{1\}$ (here $G^m$ denotes set of $m$-th
    powers of $G$.)
  \end{itemize}
\end{lem}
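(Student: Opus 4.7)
The plan is to treat the two items sequentially, invoking the effective structure theory of finitely generated nilpotent groups from \cite{BCRS-PF} and a classical fact about powers in such groups.

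For the first item, recall that $\tau(G)$ is a finite characteristic subgroup of any $\fgn$-group. To extract it algorithmically from $\bk{X\mid R}$, the plan is to proceed by induction on $\lucs(G)$. In the base case $G$ is finitely generated abelian, and $\tau(G)$ is read off from a Smith normal form on a relation matrix. In the inductive step, compute a presentation of the centre $Z(G) = \nu_1 G$ (a finitely generated abelian group whose presentation is effectively computable from the one of $G$), determine $\tau(Z(G)) \charleq G$ via Smith normal form, quotient it out, and apply the induction hypothesis to $G/\tau(Z(G))$, whose upper central series has been strictly shortened after this central torsion has been stripped. Once $\tau(G)$ is available as a characteristic subgroup, its finitely many elements are enumerated as words in $X$ using the normal form provided by any polycyclic presentation of $G$ (alternatively one can simply invoke the polycyclic machinery of \cite{BCRS-PF} directly).

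For the second item, let $e$ be the exponent of the now explicit finite group $\tau(G)$. By a classical fact on finitely generated nilpotent groups (see \cite[Section 7]{GS2}), there is an explicitly computable positive integer $m_0$ such that for every $m$ divisible by $m_0$, the set $G^{[m]} = \{g^m : g \in G\}$ is a verbal, and therefore characteristic, subgroup of $G$. The plan is to set $m := m_0 \cdot e$. To verify that $G^m \cap \tau(G) = \{1\}$, suppose $g^m \in \tau(G)$; then reducing modulo $\tau(G)$ yields $\bar g^m = 1$ in the torsion-free nilpotent group $G/\tau(G)$, forcing $\bar g = 1$, i.e.\ $g \in \tau(G)$, whence $g^m = 1$ since $e \mid m$.

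The main obstacle is really the black-box fact that for sufficiently divisible $m$ the set of $m$-th powers forms a subgroup in a finitely generated nilpotent group, and the effective computability of the threshold $m_0$; once these are taken from \cite{GS2}, the exponent argument above is elementary, and the torsion computation reduces by induction to Smith normal form.
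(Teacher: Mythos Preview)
The paper does not actually prove this lemma: it is stated with the attribution ``c.f.\ \cite[Section 7]{GS2}'' and no argument is given. So there is nothing to compare against; your proposal is supplying a proof where the paper simply cites Grunewald--Segal.

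Your argument for the second item is correct and is the standard one: once you accept the black-box fact from \cite{GS2} that for sufficiently divisible $m$ the set of $m$-th powers is a (verbal, hence characteristic) subgroup, taking $m$ to be a multiple of both that threshold and the exponent of $\tau(G)$ gives $G^m\cap\tau(G)=\{1\}$ exactly as you wrote.

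There is a small slip in your inductive computation of $\tau(G)$: quotienting by $\tau(Z(G))$ need \emph{not} strictly shorten the upper central series. For instance, if $H$ is the integer Heisenberg group and $G=H\times\Z/2$, then $\tau(Z(G))=\Z/2$ and $G/\tau(Z(G))\cong H$ still has $\lucs=2$. The correct induction variable is $|\tau(G)|$: in a nilpotent group any nontrivial normal subgroup meets the centre nontrivially, so if $\tau(G)\neq 1$ then $\tau(Z(G))\neq 1$, and passing to $G/\tau(Z(G))$ strictly decreases $|\tau(G)|$; when $\tau(Z(G))=1$ you conclude $\tau(G)=1$ and stop. With that change your procedure terminates and is correct. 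Your parenthetical fallback to the polycyclic machinery of \cite{BCRS-PF} would also do the job without any induction.
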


Let $\bar G = G /\tau(G)$ and let $\tilde G = G/G^m$ where $m$ is
given as Lemma \ref{lem:find-tau-and-m}. By choice of $m$, we have an
embedding $\iota: G \into \bar G \times \tilde G$.

\begin{cor}[{see \cite[\S 7]{GS2}}]\label{cor:construct-embedding}
  There is an algorithm which, given a finite presentation of an
  $\fgn$-group $G$, will
  \begin{itemize}
  \item find a finite presentation for $\bar G$,
  \item write out a multiplication table for $\tilde G$,
  \item construct an explicit embedding $\iota: G \into \bar G \times \tilde G$.
  \end{itemize}
\end{cor}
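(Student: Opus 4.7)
The plan is to assemble the three items directly from Lemma \ref{lem:find-tau-and-m} together with the standard effective procedures for polycyclic (hence finitely generated nilpotent) groups that are already invoked elsewhere in this section.

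First, I would apply Lemma \ref{lem:find-tau-and-m} to the given presentation $\bk{X \mid R}$ of $G$ to obtain a finite list of words $w_1, \dots, w_k$ whose set of values is exactly $\tau(G)$, together with the integer $m$. Since $\tau(G)$ is a finite \emph{characteristic} (in particular normal) subgroup, it equals the normal closure of $\{w_1, \dots, w_k\}$ in $G$, so a finite presentation of $\bar G = G/\tau(G)$ is simply $\bk{X \mid R \cup \{w_1, \dots, w_k\}}$. This handles the first bullet.

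For the second bullet, $\tilde G = G/G^m$ is a finite nilpotent group. Using Theorem \ref{thm:theta-functor} (applied after quotienting by $\tau(G)$, or directly via a Mal'cev/polycyclic generating sequence derived from the presentation of $G$), I would compute effective normal forms for elements of $G$. Enumerating products of generators and reducing by collection in $G$, one lists the cosets of $G^m$: a new product is declared to represent a new coset precisely when it cannot be written as an existing representative times an $m$-th power, and membership in $G^m$ is decidable for polycyclic groups since $G^m$ is a computable finite-index subgroup (once we have Mal'cev coordinates, $G^m$ is effectively a finite-index sublattice). Once the finite set of coset representatives is in hand, the Cayley table of $\tilde G$ is obtained by multiplying representatives and reducing.

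For the third bullet, the map $\iota\colon g \mapsto (g\tau(G),\, gG^m)$ is a homomorphism whose kernel is $\tau(G)\cap G^m$, which equals $\{1\}$ by the defining property of $m$ from Lemma \ref{lem:find-tau-and-m}; recording $\iota$ amounts to writing, for each $x \in X$, its image $x\tau(G)$ in the presentation of $\bar G$ and its image $xG^m$ as an entry in the table for $\tilde G$. The main (and essentially only) delicate point is the effective enumeration of cosets of $G^m$ and the construction of the Cayley table of $\tilde G$; this reduces to normal-form computations for polycyclic groups and the subgroup membership problem for $G^m \le G$, both classical and already implicit in the Mal'cev-coordinate machinery used in Corollary \ref{cor:tffgn-explicit-arithmetic}.
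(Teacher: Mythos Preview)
Your proposal is correct. The paper itself gives no proof of this corollary, merely citing \cite[Section 7]{GS2}; your argument supplies exactly the kind of derivation from Lemma~\ref{lem:find-tau-and-m} and standard polycyclic-group algorithms that the citation is meant to indicate, so there is nothing to compare.
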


Denote $\Gamma = \aut{\bar G}$ and $\Delta = \aut{\tilde G}$. Let
$\Gamma \times \Delta$ act on $\bar G \times \tilde G$ componentwise.

\begin{lem}[{see \cite[\S 7]{GS2}}]\label{lem:membership-criterion}
  Let $\pi:\bar G \times \tilde G \onto \bar G / (\bar G)^m \times
  \tilde G$ be the canonical epimorphism.
  \begin{itemize}
  \item An element $(\bar g, \tilde h) \in \bar G \times \tilde G$
    lies in $\iota(G)$  if and only if $\pi((\bar g, \tilde h)) \in \pi \circ
    \iota(G)$.
  \item The canonical map $\alpha: \aut G \to \Gamma \times \Delta$ is an
    embedding and an element $(\gamma,\delta) \in \Gamma \times
    \Delta$ lies in $\alpha(\aut G)$ if and only if its canonical
    image $(\bar \gamma,\delta) \in \aut{\bar G/(\bar G)^m}
    \times \aut{\tilde G}$ maps $\pi\circ\iota(G)$ to itself.
  \end{itemize}
\end{lem}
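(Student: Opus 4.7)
The plan is to treat the two claims separately, with Part 1 supplying the algebraic input that immediately drives Part 2.

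For Part 1, the forward direction is tautological. For the reverse, I would suppose $\pi(\bar g, \tilde h) = \pi(\iota(k))$ for some $k \in G$. Since $\pi$ is the identity on the $\tilde G$-coordinate and reduces modulo $(\bar G)^m$ on the $\bar G$-coordinate, this gives $\tilde h = \iota_2(k)$ and $\bar g = \iota_1(k)\bar u$ for some $\bar u \in (\bar G)^m$, where $\iota_1, \iota_2$ denote the two coordinate projections of $\iota$. The key observation is that the surjection $\iota_1 : G \twoheadrightarrow \bar G$ carries $G^m$ onto $(\bar G)^m$, so we can write $\bar u = \iota_1(u)$ for some $u \in G^m$. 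Then $ku \in G$ is the required preimage: its $\bar G$-coordinate is $\iota_1(k)\iota_1(u) = \bar g$, and its $\tilde G$-coordinate is $\iota_2(k)\iota_2(u) = \tilde h$, because $G^m = \ker \iota_2$.

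For Part 2, the map $\alpha$ is well-defined because $\tau(G)$ and $G^m$ are both characteristic in $G$ (by the choice of $m$ from Lemma \ref{lem:find-tau-and-m}), and injective because $\iota$ is: if $\phi \in \aut G$ descends to the identity on both quotients, then $\iota\circ\phi = \iota$, forcing $\phi = \id$. For the membership criterion, I would first reformulate it as: $(\gamma, \delta)$ belongs to $\alpha(\aut G)$ if and only if the induced automorphism of $\bar G \times \tilde G$ preserves the subgroup $\iota(G)$. One direction is tautological, and the other uses that any automorphism of $\bar G \times \tilde G$ preserving $\iota(G)$ restricts to an automorphism of $\iota(G) \cong G$, whose image under $\alpha$ is $(\gamma,\delta)$ by construction.

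The final step is to replace preservation of $\iota(G)$ by preservation of $\pi \circ \iota(G)$. This is immediate from Part 1, which can be read as the equality $\iota(G) = \pi^{-1}(\pi(\iota(G)))$: the full preimage under $\pi$ of a $(\bar\gamma,\delta)$-invariant subset of $\bar G/(\bar G)^m \times \tilde G$ is $(\gamma,\delta)$-invariant, and conversely. I do not anticipate any serious obstacle; the substance is concentrated in Part 1, and the only subtle point there is the interplay between the surjection $\iota_1$ (which lifts elements of $(\bar G)^m$ to elements of $G^m$) and the kernel description $G^m = \ker \iota_2$, both of which are built into the choice of $m$ from Lemma \ref{lem:find-tau-and-m}.
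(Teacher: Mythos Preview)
Your proof is correct. The paper does not actually supply its own proof of this lemma: it is stated with a citation to \cite[Section 7]{GS2} and immediately followed by ``From which we immediately conclude'' leading into Corollary~\ref{cor:membership-criterion}. Your argument is a clean and complete elaboration of what is implicit in that reference, with the two essential points being (i) the surjection $\iota_1:G\twoheadrightarrow \bar G$ maps $G^m$ onto $(\bar G)^m$, allowing you to lift, and (ii) $G^m=\ker\iota_2$, so the lift does not disturb the $\tilde G$-coordinate. The reformulation of Part~1 as $\iota(G)=\pi^{-1}(\pi\iota(G))$ together with the $\pi$-equivariance of the $(\gamma,\delta)$-action then dispatches Part~2 exactly as you describe.
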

From which we immediately conclude
\begin{cor}\label{cor:membership-criterion}
  The subgroups $\iota(G) \leq \bar G \times \tilde G$ and
  $\alpha(\aut G) \leq \Gamma \times \Delta$ are finite index, and
  this index is computable. Moreover there is an effective procedure
  to decide the membership problem for these subgroups
\end{cor}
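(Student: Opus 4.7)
The strategy is to exploit Lemma \ref{lem:membership-criterion}, which reduces both membership problems to conditions that live entirely inside the finite groups $\bar G/(\bar G)^m \times \tilde G$ and $\aut{\bar G/(\bar G)^m} \times \aut{\tilde G}$. Once this reduction is made, finiteness of the index, its computability, and decidability of membership all become brute force computations in finite groups, modulo effectiveness of the reduction maps involved.

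For $\iota(G)\leq \bar G \times \tilde G$, the first clause of Lemma \ref{lem:membership-criterion} states that $\iota(G)=\pi^{-1}(\pi\circ\iota(G))$. Since the target $\bar G/(\bar G)^m \times \tilde G$ of $\pi$ is finite,
\[
[\bar G \times \tilde G : \iota(G)] \;=\; [\bar G/(\bar G)^m \times \tilde G : \pi\circ\iota(G)] \;<\; \infty.
\]
The factor $\tilde G$ is already explicit by Corollary \ref{cor:construct-embedding}, and $\bar G/(\bar G)^m$ is a finite polycyclic quotient that is effectively computable from the finite presentation of the $\tffgn$-group $\bar G$ produced by the same corollary. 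Evaluating $\pi\circ\iota$ on a generating set of $G$ produces generators of $\pi\circ\iota(G)$, so the index is found by coset enumeration in a finite group; membership in $\iota(G)$ is then decided by computing the $\pi$-image of a candidate pair and testing membership in the finite subgroup $\pi\circ\iota(G)$.

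The subgroup $\alpha(\aut G) \leq \Gamma \times \Delta$ is handled analogously. The second clause of Lemma \ref{lem:membership-criterion} identifies $\alpha(\aut G)$ as the preimage, under the reduction homomorphism $\rho\colon \Gamma \times \Delta \to F := \aut{\bar G/(\bar G)^m} \times \aut{\tilde G}$, of the stabiliser $\Stab_F(\pi\circ\iota(G))$. The target $F$ is finite, so $\rho(\Gamma\times\Delta)$ is finite and
\[
[\Gamma\times\Delta:\alpha(\aut G)] \;=\; [\rho(\Gamma\times\Delta) : \rho(\Gamma\times\Delta)\cap \Stab_F(\pi\circ\iota(G))] \;<\; \infty.
\]
Membership in $\alpha(\aut G)$ is decided by computing $\rho(\gamma,\delta)$ and checking in $F$ whether it stabilises the now-known finite set $\pi\circ\iota(G)$.

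The one computationally non-trivial step, and the main obstacle, is evaluating $\rho(\Gamma\times\Delta)$ as an explicit subgroup of $F$; this reduces to computing the image of the arithmetic group $\Gamma=\aut{\bar G}$ in $\aut{\bar G/(\bar G)^m}$. By Proposition \ref{prop:arithmetic-automorphisms}, $\Gamma$ is given as an explicitly given arithmetic subgroup of some $GL(n,\Z)$, and the quotient map $\bar G \twoheadrightarrow \bar G/(\bar G)^m$ factors the induced reduction on $\Gamma$ through reduction modulo a suitable integer, i.e. through a principal congruence subgroup of finite, computable index in $\Gamma$. Effective congruence-image computation for explicitly given arithmetic subgroups of $GL(n,\Z)$, in the framework of \cite{GS2}, then yields $\rho(\Gamma\times\Delta)$ as an explicit subgroup of $F$; intersecting with $\Stab_F(\pi\circ\iota(G))$ and extracting the index is then a finite computation.
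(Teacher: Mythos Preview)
Your approach is the same as the paper's, which treats the corollary as immediate from Lemma \ref{lem:membership-criterion} and gives no further argument. Your reduction of both membership problems to the finite groups $\bar G/(\bar G)^m \times \tilde G$ and $\aut{\bar G/(\bar G)^m}\times\aut{\tilde G}$ is exactly the intended mechanism, and your treatment of $\iota(G)$ is correct and complete.

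Your final paragraph, however, does more work than is needed and is not fully justified. The claim that the reduction $\Gamma\to\aut{\bar G/(\bar G)^m}$ factors through a principal congruence subgroup of a matrix group is plausible but not obvious as stated: $\Gamma=\aut{\bar G}$ is not itself a subgroup of $GL(n,\Z)$, only its preimage $\hat\Gamma$ is, and you would need to argue that $(\bar G)^m$, viewed inside $H\leq\tron\Z$, is cut out by congruence conditions. More to the point, the paper never needs the \emph{exact} index here. In the only application (Lemma \ref{lem:fgn-explicitly-arith}), what is used is that the index is ``algorithmically bounded'', i.e. item 2 of Definition \ref{defn:arith-group}. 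For this the trivial bound $|\aut{\bar G/(\bar G)^m}|\cdot|\aut{\tilde G}|$, which is computable once $m$ and the finite groups are known, already suffices. So you can drop the congruence-subgroup digression entirely: finite index and a computable upper bound follow directly from finiteness of $F$, and membership is decided by evaluating $\rho$ and checking stabilisation of the finite set $\pi\circ\iota(G)$.
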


The next step is to explicitly present $\Gamma \times \Delta$ and $\bar G
\times \tilde G$ as arithmetic groups. We start with a sensible linear
representation of finite groups.

Let $F$ be a finite group and denote by $S_F$ the set of permutations
of the set $F$. Let $e_i$ denote the $i$-th standard basis vector of
$\bbC^{|F|}$ and let $f \mapsto e_{i(f)}$ be some (set-theoretic)
embedding $F \into \bbC^{|F|}$. This gives a natural faithful
representation $\rho: S_F \into GL(|F|,\bbC)$ where $\sigma \in S_F$
is sent to the matrix that permutes the set of basis vectors
$\{e_1,\ldots,e_{|F|}\}$ accordingly.

We can embed $F \into S_F$ via the map $g \mapsto l_g$ where $l_g$ is
the permutation of $F$ induced by left multiplication. Each element of
$\aut F$ is already a permutation of $F$ so we put $\aut F \leq
S_F$. Note that for all $g,k \in F$ and $\delta \in \aut F$ we have
\begin{eqnarray*}
  \delta\circ l_g \circ \delta^\mo (k) &=& \delta \circ l_g
(\delta^\mo(k))\\  &=& \delta(g\delta^\mo(k))\\ &=&\delta(g)k \\ & =
& l_{\delta^\mo(g)}(k)
\end{eqnarray*} So in this representation the elements of $\aut F$ act
naturally on the image of $F$ via conjugation.

We can now embed $\bar G \times \tilde G$ into a group of matrices as
follows. Let $\Theta_{\bar G}: \bar G \into GL(n,\bbC)$ be the
embedding given in Theorem \ref{thm:theta-functor} and denote $H =
\Theta_{\bar G}(\bar G)$. Let $\rho: \tilde G \into GL(|\tilde
G|,\bbC)$ be the permutation representation described above. Then the
map \begin{eqnarray*} \psi: \bar
  G \times \tilde G & \into & GL(n+|\tilde G|,\bbC)\\
  (\bar g, \tilde h) & \mapsto & \diag{\Theta_{\bar G}(\bar
    g),\rho(\tilde h)}
\end{eqnarray*} has explicitly arithmetic image by Corollary
\ref{cor:tffgn-explicit-arithmetic} and the fact that $\tilde G$ is a
finite group.

Similarly if $\hat \Gamma$ is as given in Proposition
\ref{prop:arithmetic-automorphisms} then we also have an embedding:
\begin{eqnarray*}
  \mu: \hat\Gamma \times \Delta & \into &  GL(n+|\tilde G|,\bbC)\\
  (k,\delta) & \mapsto & \diag{k,\rho(\delta)}
\end{eqnarray*}
where $\rho: \Delta \into GL(|\tilde G|,\bbC)$ is the permutation
representation described above. $\mu$ also has an explicitly
arithmetic image. 

\begin{lem}\label{lem:fgn-explicitly-arith}
  The image $\psi\circ\iota(G) = K$ and the subgroup $\hat\Sigma$ of
  $\mu(\hat \Gamma \times \Delta)$ that normalizes $K$ and naturally
  surjects onto $\aut G$ via conjugation on $K$ are explicitly given
  arithmetic groups.
\end{lem}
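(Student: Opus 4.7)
The plan is to verify, for each of $K$ and $\hat\Sigma$, the three conditions of Definition \ref{defn:arith-group}: presentation of an ambient explicitly given $\Q$-defined algebraic group, a computable upper bound on the index of our subgroup inside the integral points, and an effective procedure to test membership.

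\textbf{For $K = \psi\circ\iota(G)$.} The ambient arithmetic group is $\psi(\bar G\times \tilde G)$. The factor $\Theta_{\bar G}(\bar G)$ is explicitly arithmetic by Corollary \ref{cor:tffgn-explicit-arithmetic}, and the permutation image $\rho(\tilde G)$ sits inside the finite group $\rho(S_{\tilde G})$, which is trivially an explicitly given arithmetic group (the $\Q$-algebraic envelope is just a finite scheme; one simply lists its finitely many elements). Because $\psi$ is block-diagonal, the two defining polynomial systems can be combined into one explicit system cutting out $\psi(\alg{\bar G}\times S_{\tilde G})$, whose $\Z$-points contain $\psi(\bar G\times \tilde G)$ with effectively computable index. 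By Corollary \ref{cor:membership-criterion}, $\iota(G)$ has finite, computable index inside $\bar G\times \tilde G$, so the same bound applies to $K$ inside the ambient integral points. Finally, to test whether a matrix lies in $K$, one first uses the ambient membership test to recognise it as $\psi(\bar g,\tilde h)$ and then applies the first bullet of Lemma \ref{lem:membership-criterion}: check that the image of $(\bar g,\tilde h)$ in the finite group $\bar G/(\bar G)^m\times \tilde G$ lies in the finitely-enumerable set $\pi\circ\iota(G)$.

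\textbf{For $\hat\Sigma$.} The ambient arithmetic group is $\mu(\hat\Gamma\times \Delta)$. Proposition \ref{prop:arithmetic-automorphisms} gives that $\hat\Gamma$ is explicitly arithmetic, and $\rho(\Delta)$ is a finite permutation group, hence again trivially explicitly arithmetic; as before the block-diagonal embedding $\mu$ assembles these into an explicit $\Q$-defined algebraic envelope with computable index bound for the integral points. The key observation is that conjugation by $\mu(k,\delta)=\diag{k,\rho(\delta)}$ on $\psi(\bar G\times\tilde G)$ decomposes componentwise, and by Theorem \ref{thm:theta-functor} conjugation by $k\in\hat\Gamma$ induces the general element $\bar\gamma\in\Gamma=\aut{\bar G}$ on the factor $H$, while conjugation by $\rho(\delta)$ induces $\delta$ on $\rho(\tilde G)$. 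Hence $\mu(k,\delta)$ normalises $K$ precisely when $(\bar\gamma,\delta)$ preserves $\iota(G)$, which by the second bullet of Lemma \ref{lem:membership-criterion} is equivalent to $(\bar\gamma,\delta)\in\alpha(\aut G)$. Therefore $\hat\Sigma$ is the preimage, under the natural surjection $\mu(\hat\Gamma\times\Delta)\twoheadrightarrow \Gamma\times\Delta$, of $\alpha(\aut G)$. By Corollary \ref{cor:membership-criterion} this subgroup has finite, computable index with decidable membership in $\Gamma\times\Delta$, and both properties pull back verbatim to $\hat\Sigma$ in the ambient arithmetic group. The stated surjection $\hat\Sigma\onto \aut G$ is then immediate from the surjectivity clause of Theorem \ref{thm:theta-functor} combined with the definition of $\alpha$.

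I expect no genuine obstacle here; the only subtlety is bookkeeping: one must confirm that the effective membership tests for $\psi(\bar G\times\tilde G)$ and $\mu(\hat\Gamma\times\Delta)$ and the finite-group calculations of Lemma \ref{lem:membership-criterion} compose correctly to produce effective membership for $K$ and $\hat\Sigma$. Since $\psi$ and $\mu$ are block-diagonal and $\tilde G,\Delta,\bar G/(\bar G)^m$ are finite with explicitly computable multiplication tables (Corollary \ref{cor:construct-embedding}), this composition is routine.
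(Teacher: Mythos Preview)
Your proposal is correct and follows essentially the same approach as the paper: verify the three conditions of Definition~\ref{defn:arith-group} for $K$ and $\hat\Sigma$ by exhibiting the ambient explicitly arithmetic groups $\psi(\bar G\times\tilde G)$ and $\mu(\hat\Gamma\times\Delta)$, invoking Corollary~\ref{cor:membership-criterion} for the index bounds, and using Lemma~\ref{lem:membership-criterion} for membership. The only stylistic difference is that the paper tests membership by exhaustively searching for a word in the generators of $\bar G\times\tilde G$ representing a given matrix, whereas you read off the components directly from the block-diagonal form and then apply the finite criterion; both work, and yours is arguably cleaner.
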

\begin{proof}
  By Corollary \ref{cor:membership-criterion}, to show that $K$ is an
  explicitly given arithmetic group, it remains to provide an
  effective procedure to decide the membership problem for $K$ in
  $\psi(\bar G \times \tilde G)$. Let
  $g \in \psi(\bar G \times \tilde G)$.  By construction we have a
  finite presentation $\bk{Z \mid T}$ of $\bar G \times \tilde G$ so
  by exhaustive searching we will find some word $g(Z)$ such that
  $\psi(g(Z))=g$, and by Corollary \ref{cor:membership-criterion} we
  can check whether $g(Z) \in \iota(G)$ so we are done.

  Obviously $\mu(\hat\Gamma \times \Delta)$ normalizes
  $\psi(\bar G \times \tilde G)$ and by Theorem
  \ref{thm:theta-functor} and Proposition
  \ref{prop:arithmetic-automorphisms} there is a surjection
  $\mu(\hat \Gamma \times \Delta) \onto \Gamma \times \Delta$ induced
  by conjugation on $\psi(\bar G \times \tilde G)$. Now $\aut G$ sits
  inside $\Gamma \times \Delta$ as a finite index subgroup whose index
  is algorithmically bounded by Corollary
  \ref{cor:membership-criterion} thus the index of $\hat \Sigma$ in
  $\mu(\hat\Gamma \times \Delta)$ also has this bound. So again we
  only need a procedure to decide membership.

  Let $\sigma \in \mu(\hat\Gamma \times \Delta)$ and let
  $\{g_1,\ldots,g_s\}$ be the image in $\psi(\bar G \times \tilde G)$
  of a generating set of $\bar G \times \tilde G$. Then again by
  exhaustive searching we can find preimages of
  $\{g_1^\sigma,\ldots,g_s^\sigma\}$ in $\bar G \times \tilde G$ and
  thus by Corollary \ref{cor:membership-criterion} decide if $\sigma$
  normalizes $K$ and hence lies in $\hat\Sigma$.
\end{proof}

Finally we have:
\begin{proof}[Proof of Theorem \ref{thm:mwhp}]
  By Lemma \ref{lem:fgn-explicitly-arith} after substituting $\hat
  \Sigma$ in place of $\hat\Gamma$ and $K$ in place of $H$ all the
  arguments from Lemma \ref{lem:linear-semidirect} to Proposition
  \ref{prop:mwhp-tffgn} go through.
\end{proof}

\bibliographystyle{alpha} \bibliography{IP_NIL}

\end{document}